\documentclass[12pt]{amsart}
\usepackage[dvipsnames]{color}
\usepackage{amsfonts,amssymb,amsmath,amscd,amstext}
 \usepackage{esint}
\usepackage[colorlinks=true,linkcolor=blue,citecolor=blue]{hyperref}
\usepackage[utf8]{inputenc}
\usepackage{graphicx}
\usepackage{tikz}
\usepackage{esint}
\usepackage{changes}
\usepackage{comment}
\usepackage{cleveref}
\usepackage{mathdots}
\usepackage[a4paper,top=2.5cm,bottom=2.5cm,left=2cm,right=2cm]{geometry}
\usepackage[style=alphabetic,maxalphanames=5,maxnames=5]{biblatex}
\addbibresource{Bib.bib}



\newcommand{\R}{\mathbb R}

\newcommand{\eps}{\epsilon}

\renewcommand{\H}{\mathbb H}

\newcommand{\C}{\mathbb C}

\newcommand{\diam}{{\rm diam}}

\newcommand{\average}{{\mathchoice {\kern1ex\vcenter{\hrule height.4pt
width 6pt depth0pt} \kern-9.7pt} {\kern1ex\vcenter{\hrule height.4pt width
4.3pt depth0pt}
\kern-7pt} {} {} }}

\newcommand{\dist}{\mbox{dist}}
\newcommand{\N}{\mathbb N}

\renewcommand{\rho}{\varrho}
\renewcommand{\epsilon}{\varepsilon}

\renewcommand{\theta}{\vartheta}
\numberwithin{equation}{section}
\theoremstyle{plain}
\newtheorem{theorem}{Theorem}[section]
\newtheorem{corollary}[theorem]{Corollary}
\newtheorem{lemma}[theorem]{Lemma}
\newtheorem{example}[theorem]{Example}
\newtheorem{proposition}[theorem]{Proposition}
\theoremstyle{definition}

\newtheorem{definition}[theorem]{Definition}
\newtheorem{definitionT}[theorem]{Definition and Theorem}

\theoremstyle{remark}
\newtheorem{remark}[theorem]{Remark}

\setcounter{tocdepth}{1}

\begin{document}

\title[Uniform rectifiability in Heisenberg groups]
{On low-dimensional uniform rectifiability\\ in Heisenberg groups 
}
\author{Katrin Fässler}
\address{Department of Mathematics and Statistics\\ University of Jyv\"askyl\"a \\ P.O. Box 35 (MaD),
FI-40014 University of Jyv\"askyl\"a, Finland}

\email{katrin.s.fassler@jyu.fi}
\author{Andrea Pinamonti}
\address{Dipartimento di Matematica, Università di Trento\\
Via Sommarive, 14, 38123 Povo TN, Italy}

\email{andrea.pinamonti@unitn.it}
\author{Kilian Zambanini}
\address{Dipartimento di Matematica, Università di Trento\\
Via Sommarive, 14, 38123 Povo TN, Italy}
\email{kilian.zambanini@unitn.it}

\thanks{This work was initiated during a visit of K.Z. to the University of Jyväskylä, which was partially financed by the Research Council of Finland, grant 
352649. K.F. was partially supported by the Academy of Finland (now: Research Council of Finland)
grant no.\ 321696. A.P. and K.Z. are members of the Istituto Nazionale di Alta Matematica (INdAM), Gruppo Nazionale per l'Analisi Matematica, la Probabilità e le loro Applicazioni (GNAMPA), and both are supported by the University of Trento, the MIUR-PRIN 2022 Project \emph{Regularity problems in sub-Riemannian structures}  Project code: 2022F4F2LH and the INdAM-GNAMPA 2025 Project \emph{Structure of sub-Riemannian hypersurfaces in Heisenberg groups}, CUP ES324001950001.}

\date{\today}

\begin{abstract} Refining an earlier result due to Hahlomaa, we provide a new Carleson-type condition for $k$-regular sets in the Heisenberg group $\mathbb{H}^n$ to have big pieces of Lipschitz images of subsets of $\mathbb{R}^k$ for $1\leq k\leq n$. Our approach passes via the corona decompositions by normed spaces, recently introduced by Bate, Hyde, and Schul. Along the way, we prove implications between several notions of quantitative rectifiability for low-dimensional sets in $\mathbb{H}^n$.
\end{abstract}
\maketitle


\section{Introduction}\label{s:Intro}

This paper concerns uniformly $k$-rectifiable sets in sub-Riemannian Heisenberg groups $\mathbb{H}^n$ for $1\leq k\leq n$. Quantitative multiscale methods for the study of (uniform) rectifiability in $\mathbb{H}^n$ have been mainly developed for sets of dimension $1$ (e.g., \cite{MR2371434,Juillet,LiSchul2,
LiSchul1,chouli,CLZ19,MR4299821,Li,MR4669029,
}) or codimension $1$ (e.g., \cite{MR3682744,MR3815462,chousionis,2019arXiv190406904R,MR4127898,MR4460594,
CLY1,2022arXiv220703013C,CLY2,MR4605201,2025arXiv251026934H}). A notable exception is Hahlomaa's preprint \cite{Hahlomaa}, which gave the initial impetus for the present work. On the other hand, in abstract metric spaces, a recent breakthrough by Bate, Hyde, and Schul \cite{Bate} laid the foundations for a general theory of uniform rectifiability by Lipschitz images from $\mathbb{R}^k$, in spirit of David and Semmes' work in Euclidean spaces \cite{David1,David2}.

\subsection*{A new sufficient condition for BPLI} We provide a new sufficient Carleson-type condition for a set in $\mathbb{H}^n$ to have \emph{big pieces of Lipschitz images (BPLI)} from $\mathbb{R}^k$, for $1\leq k\leq n$, which implies the result in \cite{Hahlomaa} as a corollary. Along the way, we introduce \emph{corona decompositions by horizontal planes} and relate them to other notions of corona decompositions  in the literature \cite{FO,didonato,Bate}. At the same time, we streamline the proof in Hahlomaa's paper by applying parts of the general metric space theory \cite{Bate}. This supplements the results in  \cite{Bate} with a family of concrete examples, namely $k$-regular sets in the non-Euclidean spaces $(\mathbb{H}^n,d)$ for $1\leq k \leq n$, where characterizations of the BPLI property 
can be meaningfully studied. Every Lipschitz function $f:A\subset \mathbb{R}^k \to (\mathbb{H}^n,d)$ is locally Lipschitz with respect to the Euclidean distance on $\mathbb{H}^n$, but only Euclidean Lipschitz maps which are \emph{horizontal} with respect to the sub-Riemannian structure on $\mathbb{H}^n$ are Lipschitz in the Heisenberg metric $d$ \cite{MR2659687}. 
In this setting, we find new sufficient \emph{Carleson measure} conditions for BPLI that
rely on the structure of the ambient $\mathbb{H}^n$ and  are not covered by the characterizations of BPLI in \cite{Bate}.

\begin{theorem}\label{t:ImprovedHahlomaa}
Let $n\in \mathbb{N}$ and $k\in \{1,\ldots,n\}$. If $E\subset \mathbb{H}^n$ is a $k$-regular set with the properties $E\in \mathrm{GLem}(\beta_{1,\pi,A(2n,k)},2)$ and $E\in \mathrm{WGL}(\beta_{\infty,\mathcal{V}_k})$, then $E$ has big pieces of Lipschitz images of subsets of $\mathbb{R}^k$. 
\end{theorem}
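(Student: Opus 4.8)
The plan is to deduce Theorem~\ref{t:ImprovedHahlomaa} from the corona-decomposition criterion of Bate, Hyde and Schul~\cite{Bate}: a complete, Ahlfors $k$-regular metric space that admits a corona decomposition by a family of normed spaces has big pieces of Lipschitz images of subsets of $\mathbb{R}^k$. The first observation is that horizontal $k$-dimensional homogeneous subgroups $\mathbb{V}\subset(\mathbb{H}^n,d)$ are precisely such spaces. Indeed, the group law restricted to the isotropic horizontal subspace underlying $\mathbb{V}$ is ordinary vector addition, so $(\mathbb{V},d|_{\mathbb{V}})$ is a translation-invariant, dilation-homogeneous metric on $\mathbb{R}^k$, that is, a normed space, and the norms arising from the family $\mathcal{V}_k$ of all horizontal $k$-planes are uniformly comparable to the Euclidean one. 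Hence it suffices to upgrade the two hypotheses to a \emph{corona decomposition of $E$ by horizontal $k$-planes}: an organization of the Heisenberg dyadic cubes of $E$ into a Carleson family of bad cubes and coherent stopping-time regions $\mathbf{S}$, on each of which one fixed plane of $\mathcal{V}_k$ stays $\epsilon\,\diam(Q)$-close to $E$ at every $Q\in\mathbf{S}$.

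To build this decomposition I would run the usual stopping-time construction, drawing each of its two features from one hypothesis. By $E\in\mathrm{WGL}(\beta_{\infty,\mathcal{V}_k})$, off a Carleson-sparse family every cube $Q$ carries a plane $V_Q\in\mathcal{V}_k$ with $\beta_{\infty,\mathcal{V}_k}(Q)<\epsilon$, i.e.\ $E$ is within $\epsilon\,\diam(Q)$ of $V_Q$ over $Q$; these are the cubes on which regions may be built. Starting from such a top cube $R$ one fixes $V_R$ and descends, stopping a cube $Q$ when it enters the bad family or when its best approximating horizontal plane has tilted from $V_R$ by more than a small fixed angle. The crux is that the tilting stops pack into a Carleson family, and here $E\in\mathrm{GLem}(\beta_{1,\pi,A(2n,k)},2)$ is used: on a $\beta_\infty$-good cube the coordinate projection $\pi\colon\mathbb{H}^n\to\mathbb{R}^{2n}$ restricts to a map on $E\cap Q$ that is bi-Lipschitz with constants close to $1$ and sends horizontal planes to affine $k$-planes, so $\beta_{1,\pi,A(2n,k)}(Q)$ is comparable to the $L^1$ deviation of $E$ from horizontal planes at $Q$. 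Inserting the resulting squared Carleson bound into the David--Semmes quasi-orthogonality estimate yields $\sum_{Q\in\mathbf{S}}\angle(V_Q,V_R)^2\,\mu(Q)\lesssim\mu(R)$ along stopping-time regions, which forces the tilting stops --- hence all stops --- to satisfy a Carleson packing condition; a final subdivision of each region, so that $V_R$ works uniformly up to $\epsilon$, produces the corona decomposition.

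The step I expect to be the main obstacle is exactly this transfer: one must show quantitatively that on $\beta_\infty$-good cubes the near-optimal Euclidean affine approximators of $\pi(E)$ are close to projections of horizontal subgroups rather than to ``vertical'' planes --- a statement about the affine Grassmannian $A(2n,k)$ subject to the isotropy constraint defining $\mathcal{V}_k$ --- and one must track the distortion between $d$ and the Euclidean metric sharply enough that the $\beta_1^2$-Carleson sum survives the comparison; a secondary point is that $\pi(E)$ need not be Ahlfors regular globally, so the David--Semmes scheme has to be localized to the bi-Lipschitz good regions. Once the corona decomposition of $E$ by horizontal planes is in hand, the passage to big pieces of Lipschitz images is the criterion of~\cite{Bate} applied to the family $\mathcal{V}_k$, and Theorem~\ref{t:ImprovedHahlomaa} follows. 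Hahlomaa's sufficient condition~\cite{Hahlomaa} is recovered as a corollary by checking that it entails both $E\in\mathrm{GLem}(\beta_{1,\pi,A(2n,k)},2)$ and $E\in\mathrm{WGL}(\beta_{\infty,\mathcal{V}_k})$.
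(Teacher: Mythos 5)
Your overall architecture is the same as the paper's: build a corona decomposition of $E$ by horizontal $k$-planes from the two hypotheses, observe that horizontal subgroups are (Euclidean) normed spaces, and invoke \cite[Theorem B]{Bate} to get BPLI; the reductions you sketch at the ends correspond to Lemma \ref{l:P-CtoN-C} and the corollaries in Section \ref{s:SuffCond}. The gap is in the step you yourself flag as the crux, and it is not a technical detail but a false claim. On a $\beta_\infty$-good cube, $\pi$ restricted to $E\cap Q$ is \emph{not} bi-Lipschitz with constant close to $1$ (two points on the same vertical fiber with $t$-gap $\sim(\epsilon\diam Q)^2$ have $d\sim\epsilon\diam Q$ but identical projections; the correct statement, Lemma \ref{l:HahlLem4.3}, is only a co-Lipschitz bound for pairs separated relative to $h_{\mathcal S}$), and consequently $\beta_{1,\pi,A(2n,k)}(Q)$ is \emph{not} comparable to the $L^1$-deviation of $E$ from horizontal planes: purely vertical oscillations of $E$ of size $\ll\epsilon\diam(Q)$ are annihilated by $\pi$ and contribute nothing to $\beta_{1,\pi,A(2n,k)}$ while contributing fully to $\beta_{1,\mathcal V_k}$. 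The Juillet curve of Section \ref{s:Juillet} makes this failure unavoidable: it satisfies the hypotheses of Theorem \ref{t:ImprovedHahlomaa} (via $\mathrm{GLem}(\widehat\beta_{1,\mathcal V_1},4)$ and Proposition \ref{p:FromStratifiedToWeakAssumpt}), its cubes are predominantly $\beta_\infty$-good, and yet $\Gamma\notin\mathrm{GLem}(\beta_{1,\mathcal V_1},2)$ (Theorem \ref{gammaglem}). So your plan of converting the projection Carleson bound into a horizontal $\beta_1^2$ Carleson bound on good cubes and then feeding it into the David--Semmes quasi-orthogonality estimate $\sum_{Q\in\mathcal S}\angle(V_Q,V_R)^2\mu(Q)\lesssim\mu(R)$ cannot be carried out.

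The paper circumvents this by a different mechanism: the \emph{angle} between two horizontal planes is determined by their projections to $\mathbb R^{2n}$ (Remark after \eqref{eq:angle}), so controlling the tilting of the $V_Q$ only requires control of the projected picture, not of the full Heisenberg deviation. Concretely, for each tree $\mathcal S$ one builds a Euclidean Lipschitz map $g:\mathbb R^k\to\mathbb R^{2n-k}$ over $V_{Q(\mathcal S)}$ by a Whitney construction (Section \ref{sss:Step 2}), whose graph approximates $\pi(K_0Q(\mathcal S))$; this also resolves your secondary worry about the lack of regularity of $\pi(E)$. One then bounds the affine approximation numbers $\gamma(p,t)$ of $g$ by the $\beta_{1,\pi,A(2n,k)}$-numbers of $E$ (Lemma \ref{l:HLem6.1}), and runs the quasi-orthogonality/Calder\'on argument for $g$ rather than for $E$, showing that a tree in $\mathcal F_3$ (many minimal cubes with a strongly tilted child) forces a large projection $\beta^2$-integral (Lemma \ref{l:HLem7.1}); a bounded-overlap count then gives the Carleson packing of $\{Q(\mathcal S):\mathcal S\in\mathcal F_3\}$ from $E\in\mathrm{GLem}(\beta_{1,\pi,A(2n,k)},2)$ (Lemma \ref{l:HahLem8.1}). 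Your question about near-optimal planes in $A(2n,k)$ versus isotropic planes is, in this scheme, absorbed into Lemma \ref{l:HLem6.1}, where arbitrary affine planes suffice precisely because only the projected graph of $g$ is being approximated. Without some substitute for this transfer through the projection, your proposal does not close.
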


This also holds with ``Lipschitz'' replaced by ``bi-Lipschitz''.
The restriction to $k\leq n$ is natural since  $(\mathbb{H}^n,d)$ is purely $k$-unrectifiable in Federer's sense for $k>n$ \cite{MR1800768,MR2105335}. In the range $k>n$,
uniform $k$-rectifiability should be built on intrinsic Lipschitz graphs \cite{MR2313532} instead of metric Lipschitz images from $\mathbb{R}^k$, and this is a direction which we do not pursue here.

The concepts in Theorem \ref{t:ImprovedHahlomaa} will be properly introduced in Definitions \ref{d:k-reg}, \ref{def_BPLI}, \ref{d:Coefficient functions}, \ref{d:GL}, and \ref{d:WGL}. For now, we simply remark that the \emph{weak geometric lemma} $\mathrm{WGL}(\beta_{\infty,\mathcal{V}_k})$ is a way of quantifying that, for each precision level $\varepsilon >0$, the set $E$ stays $\varepsilon$-close -- in a scale-invariant way -- to horizontal $k$-planes in all balls centered on $E$, except perhaps for a family of balls satisfying a Carleson packing condition. It is known in Euclidean spaces that a weak geometric lemma alone does not even imply rectifiability \cite[\S 20]{David1} since it does not sufficiently control how much the best-approximating planes at a point can spin around as one considers smaller and smaller scales. In the proof of Theorem \ref{t:ImprovedHahlomaa}, the \emph{geometric lemma} $\mathrm{GLem}(\beta_{1,\pi,A(2n,k)},2)$  allow us to control the amount of spinning for certain approximating horizontal $k$-planes. Remarkably, unlike the coefficients $\beta_{\infty,\mathcal{V}_k}$, the $\beta_{1,\pi,A(2n,k)}(x,r)$-numbers, for $x\in E$ and $r>0$, only measure how well the \emph{projection} $\pi(E\cap B(x,r))\subset \mathbb{R}^{2n}$ is approximated by \emph{arbitrary} affine $k$-planes in $\mathbb{R}^{2n}$, but this turns out to be sufficient in combination with the $\mathrm{WGL}(\beta_{\infty,\mathcal{V}_k})$ assumption.

\subsection*{Connections with geometric lemmas in the literature.} Our proof of Theorem \ref{t:ImprovedHahlomaa} is modeled on Hahlomaa's paper \cite{Hahlomaa}, which contains a version of the result under a stronger assumption. We recover this as a corollary  of Theorem \ref{t:ImprovedHahlomaa}:
\begin{corollary}[Hahlomaa]\label{c:HahlomaaWithHorizontal}
         Let $n\in \mathbb{N}$ and $k\in \{1,\ldots,n\}$. If $E\subset \mathbb{H}^n$ is a $k$-regular set such that $E\in \mathrm{GLem}(\beta_{1,\mathcal{V}_k},2)$, then $E$ has big pieces of Lipschitz images of $\mathbb{R}^k$. 
    \end{corollary}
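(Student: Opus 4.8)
The plan is to deduce the corollary from Theorem \ref{t:ImprovedHahlomaa} by showing that the single hypothesis $E \in \mathrm{GLem}(\beta_{1,\mathcal{V}_k},2)$ implies both hypotheses of the theorem, namely $E \in \mathrm{GLem}(\beta_{1,\pi,A(2n,k)},2)$ and $E \in \mathrm{WGL}(\beta_{\infty,\mathcal{V}_k})$. The key point is that the coefficients $\beta_{1,\mathcal{V}_k}(x,r)$ measure $L^1$-approximation of $E \cap B(x,r)$ by \emph{horizontal} $k$-planes, which is a stronger requirement than approximating only the Euclidean projection $\pi(E \cap B(x,r))$ by arbitrary affine $k$-planes in $\mathbb{R}^{2n}$. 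Thus one expects a pointwise inequality of the form
\[
\beta_{1,\pi,A(2n,k)}(x,r) \lesssim \beta_{1,\mathcal{V}_k}(x,r) \qquad \text{for all } x \in E,\ r>0,
\]
obtained by noting that if $V$ is a horizontal $k$-plane realizing (up to a constant) the infimum in $\beta_{1,\mathcal{V}_k}(x,r)$, then $\pi(V)$ is an affine $k$-plane in $\mathbb{R}^{2n}$ that is a competitor for $\beta_{1,\pi,A(2n,k)}(x,r)$, and the projection $\pi$ is $1$-Lipschitz for the relevant distances so it does not increase the $L^1$-discrepancy. This pointwise bound immediately transfers the Carleson packing condition defining $\mathrm{GLem}(\beta_{1,\mathcal{V}_k},2)$ to the coefficients $\beta_{1,\pi,A(2n,k)}$, yielding $E \in \mathrm{GLem}(\beta_{1,\pi,A(2n,k)},2)$.

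For the second implication, I would show that the $L^1$-geometric lemma for $\beta_{1,\mathcal{V}_k}$ self-improves to the $L^\infty$-weak geometric lemma for $\beta_{\infty,\mathcal{V}_k}$, still with horizontal planes as competitors. The standard mechanism here, which I expect to reuse from the literature (and which is available in this setting via the $k$-regularity of $E$), is a Chebyshev-type argument: for a fixed threshold $\varepsilon$, the set of balls $B(x,r)$ with $\beta_{\infty,\mathcal{V}_k}(x,r) > \varepsilon$ is controlled — up to a constant depending on $\varepsilon$ and the dimensional parameters — by the balls on which $\beta_{1,\mathcal{V}_k}$ is bounded below, because an $L^\infty$-deviation of size $\varepsilon$ at scale $r$ forces an $L^1$-deviation of size $\gtrsim \varepsilon^{k+1}$ (or a comparable power) on a definite proportion of nearby smaller balls, using that $E$ is Ahlfors $k$-regular and that horizontal planes at comparable scales are mutually comparable. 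Summing the resulting Carleson bound over a dyadic family then gives the packing condition for $\{\beta_{\infty,\mathcal{V}_k} > \varepsilon\}$, i.e.\ $E \in \mathrm{WGL}(\beta_{\infty,\mathcal{V}_k})$.

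With both hypotheses of Theorem \ref{t:ImprovedHahlomaa} verified, the conclusion that $E$ has big pieces of Lipschitz images of subsets of $\mathbb{R}^k$ follows at once; since $\mathbb{R}^k$ itself is such a subset, this is in particular big pieces of Lipschitz images of $\mathbb{R}^k$. The main obstacle, I expect, is the $L^1 \Rightarrow L^\infty$ self-improvement step: one must be careful that the comparison between best-approximating horizontal planes at a ball and at its subballs respects the anisotropic Heisenberg geometry (horizontal planes do not form a linear family under the group structure, and the set $\mathcal{V}_k$ of admissible planes is constrained), so the elementary Euclidean argument needs to be adapted to ensure that a large pointwise deviation genuinely propagates to a Carleson-large collection of scales. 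The first implication, by contrast, should be essentially a soft consequence of the $1$-Lipschitz property of $\pi$ and the inclusion $\pi(\mathcal{V}_k) \subset A(2n,k)$.
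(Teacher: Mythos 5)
Your proposal is correct and takes essentially the paper's own route: the paper likewise deduces both hypotheses of Theorem \ref{t:ImprovedHahlomaa} from $\mathrm{GLem}(\beta_{1,\mathcal{V}_k},2)$ and then applies that theorem, and its Propositions \ref{p:FromHorizToStartif} and \ref{p:FromStratifiedToWeakAssumpt} contain exactly your two steps — the pointwise domination $\beta_{1,\pi,A(2n,k)}\leq\beta_{1,\pi,\mathcal{V}_k}\leq\beta_{1,\mathcal{V}_k}$ (packaged there through the stratified $\widehat\beta$-numbers) and the standard Chebyshev-type $L^1\Rightarrow L^\infty$ self-improvement giving $\mathrm{WGL}(\beta_{\infty,\mathcal{V}_k})$, which the paper imports from \cite{David1} and which indeed works for horizontal planes since it only uses Ahlfors regularity and the triangle inequality. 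The only cosmetic difference is that you bypass the intermediate step $\mathrm{GLem}(\widehat{\beta}_{1,\mathcal{V}_k},4)$.
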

In Section \ref{s:Juillet} we present an example of a set $E\subset \mathbb{H}^1$ to demonstrate that the assumption in  Theorem \ref{t:ImprovedHahlomaa}  is strictly weaker than the one in Corollary \ref{c:HahlomaaWithHorizontal}. The set $E$ is the rectifiable curve $\Gamma$ constructed by Juillet in \cite{Juillet}. Juillet used this example to prove that the 
sufficient condition stated in \cite{MR2371434} for a subset of $\mathbb{H}^1$ to be contained in a rectifiable curve in the spirit of the \emph{analyst's traveling salesman theorem} is not a necessary condition. To apply this example in our context, we first verify that the curve $\Gamma$ from \cite{Juillet} is in fact $1$-regular. It is then easy to see from \cite{Juillet} that $\Gamma \notin \mathrm{GLem}(\beta_{\infty,\mathcal{V}_1},2)$, that is, $\Gamma$ violates a geometric lemma with exponent $2$ for $L^{\infty}$-based horizontal  $\beta$-numbers; analogous examples could be constructed for any exponent $p<4$, cf. \cite{Li}. However, this does not a priori rule out the possibility that $\Gamma$ could satisfy a corresponding geometric lemma with \emph{$L^1$-based} horizontal $\beta$-numbers as in Corollary \ref{c:HahlomaaWithHorizontal}, at least not by standard estimates such as \cite[(5.4)]{David1}. The main work in Section \ref{s:Juillet} consists in verifying that the curve $\Gamma$ is so badly approximable by horizontal lines that it even fails the $L^1$-based geometric lemma $\mathrm{GLem}(\beta_{1,\mathcal{V}_1},2)$. On the other hand, it follows from a (quantification) of Li's work \cite{Li}  that $\Gamma$ satisfies the assumptions of our Theorem \ref{t:ImprovedHahlomaa}.
This example also shows that a characterization of BPLI by means of a geometric lemma $\mathrm{GLem}(\beta_{1,\mathcal V_k},2)$ with exponent $2$, analogous to the Euclidean one \cite{David1}, cannot hold in $\H^n$.

\medskip

In fact, Theorem \ref{t:ImprovedHahlomaa} was partially motivated by the quest to provide sufficient conditions for the BPLI property in $\mathbb{H}^n$ that can be checked assuming one of the geometric lemmas in the existing literature. We present two such applications; the first one is related to the \emph{stratified $\hat\beta$-numbers} in \cite{Li}, the second one to the \emph{projection $\iota$-numbers} in \cite{FV2}.

\begin{corollary}\label{c:HahlomaaWithStratifASs}
    Let $n\in \mathbb{N}$ and $k\in \{1,\ldots,n\}$. If $E\subset \mathbb{H}^n$ is a $k$-regular set such that $E\in \mathrm{GLem}(\widehat{\beta}_{1,\mathcal{V}_k},4)$, then $E$ has big pieces of Lipschitz images of $\mathbb{R}^k$. 
\end{corollary}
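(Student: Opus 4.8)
The plan is to deduce Corollary \ref{c:HahlomaaWithStratifASs} from Theorem \ref{t:ImprovedHahlomaa}, by showing that the single hypothesis $E\in\mathrm{GLem}(\widehat{\beta}_{1,\mathcal{V}_k},4)$ encodes both of the hypotheses $E\in\mathrm{GLem}(\beta_{1,\pi,A(2n,k)},2)$ and $E\in\mathrm{WGL}(\beta_{\infty,\mathcal{V}_k})$ of that theorem. To this end I would first recall, or derive from the definitions in \cite{Li}, the structure of the stratified coefficient $\widehat{\beta}_{1,\mathcal{V}_k}(x,r)$: it is built from a \emph{horizontal stratum} --- an $L^1$-based Jones-type $\beta$-number of the vertical projection $\pi(E\cap B(x,r))\subset\mathbb{R}^{2n}$ with respect to arbitrary affine $k$-planes --- and a \emph{vertical stratum} measuring the spread of the center coordinate at the parabolic scale $r^2$, combined so that, by the homogeneity of $\mathbb{H}^n$, the geometric lemma $\mathrm{GLem}(\widehat{\beta}_{1,\mathcal{V}_k},4)$ with exponent $4$ carries within it a geometric lemma with exponent $2$ for the horizontal stratum and a geometric lemma with exponent $4$ for the (parabolically rescaled) vertical stratum.

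Granting this, the implication $\mathrm{GLem}(\widehat{\beta}_{1,\mathcal{V}_k},4)\Rightarrow\mathrm{GLem}(\beta_{1,\pi,A(2n,k)},2)$ reduces to identifying the horizontal stratum of $\widehat{\beta}_{1,\mathcal{V}_k}$ with $\beta_{1,\pi,A(2n,k)}$, up to the multiplicative constants in their definitions. The relevant elementary facts are that the vertical projection $\pi\colon(\mathbb{H}^n,d)\to(\mathbb{R}^{2n},|\cdot|)$ is $1$-Lipschitz (the horizontal layer of the group law is Euclidean translation and $|x|\le d(0,(x,t))$), that it maps horizontal $k$-planes to affine $k$-planes, and that $\beta_{1,\pi,A(2n,k)}$ is already an infimum over \emph{all} affine $k$-planes; in particular $\beta_{1,\pi,A(2n,k)}(x,r)\lesssim\widehat{\beta}^{\mathrm{hor}}_{1,\mathcal{V}_k}(x,r)$, which is the direction we need. (If $\beta_{1,\pi,A(2n,k)}$ is defined by integrating $\dist(\pi(\cdot),P)$ against $\mathcal{H}^k\res(E\cap B(x,r))$ the comparison is immediate; if instead it uses a measure directly on $\pi(E)$, one first checks --- using that $E$ is $k$-regular and $k\le n$, so $\pi|_E$ does not collapse $\mathcal{H}^k$-mass --- that the relevant measures are comparable uniformly in $x$ and $r$.) Feeding this comparison into the exponent-$2$ part of $\mathrm{GLem}(\widehat{\beta}_{1,\mathcal{V}_k},4)$ produces $E\in\mathrm{GLem}(\beta_{1,\pi,A(2n,k)},2)$.

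For the second hypothesis, $\mathrm{GLem}(\widehat{\beta}_{1,\mathcal{V}_k},4)\Rightarrow\mathrm{WGL}(\beta_{\infty,\mathcal{V}_k})$, I would use the standard pointwise-to-average estimate for Ahlfors regular sets. If some $p\in E\cap B(x,r)$ has $d$-distance at least $\varepsilon r$ from every horizontal $k$-plane, then every point of $E\cap B(p,c\varepsilon r)$ --- a set of $\mathcal{H}^k$-measure $\gtrsim(\varepsilon r)^k$ --- stays at $d$-distance $\gtrsim\varepsilon r$ from every horizontal $k$-plane, which, by the comparability of $d$ with its horizontal and (square-root of) vertical components, forces one of the two strata of $\widehat{\beta}_{1,\mathcal{V}_k}(x,2r)$ to be $\gtrsim\varepsilon^{N}$ for a dimensional exponent $N=N(k)$; hence $\beta_{\infty,\mathcal{V}_k}(x,r)^{N}\lesssim\widehat{\beta}_{1,\mathcal{V}_k}(x,2r)$. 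Thus $\mathrm{GLem}(\widehat{\beta}_{1,\mathcal{V}_k},4)$ implies $\mathrm{GLem}(\beta_{\infty,\mathcal{V}_k},4N)$, and Chebyshev's inequality turns any such geometric lemma into the Carleson packing condition defining $\mathrm{WGL}(\beta_{\infty,\mathcal{V}_k})$. With both hypotheses in hand, Theorem \ref{t:ImprovedHahlomaa} gives that $E$ has big pieces of Lipschitz --- indeed bi-Lipschitz --- images of subsets of $\mathbb{R}^k$.

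The step I expect to require the most care is not a new idea but the bookkeeping of normalizations: one must match \cite{Li}'s definition of $\widehat{\beta}_{1,\mathcal{V}_k}$ with the coefficients $\beta_{1,\pi,A(2n,k)}$ and $\beta_{\infty,\mathcal{V}_k}$ of Theorem \ref{t:ImprovedHahlomaa} and, in particular, make precise the assertion that --- because the vertical stratum lives at the parabolic scale $r^2$ --- a geometric lemma with exponent $4$ for $\widehat{\beta}_{1,\mathcal{V}_k}$ is at least as strong as a geometric lemma with exponent $2$ for its horizontal component. This is the genuinely Heisenberg-specific point, and it is exactly where the analysis of \cite{Li} enters. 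Routing through Theorem \ref{t:ImprovedHahlomaa} rather than Corollary \ref{c:HahlomaaWithHorizontal} is essential here, since the hypothesis of the latter mixes the two strata and cannot be extracted from $\mathrm{GLem}(\widehat{\beta}_{1,\mathcal{V}_k},4)$ by elementary means.
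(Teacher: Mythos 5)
Your proposal is correct and follows essentially the same route as the paper: its Proposition \ref{p:FromStratifiedToWeakAssumpt} splits $\widehat{\beta}_{1,\mathcal{V}_k}(\lambda Q)^4$ into the projection stratum (which, via Remark \ref{r:AffVsFullProjBeta}, yields $E\in\mathrm{GLem}(\beta_{1,\pi,A(2n,k)},2)$) and the full-distance stratum (which yields $E\in\mathrm{GLem}(\beta_{1,\mathcal{V}_k},4)$ and hence $E\in\mathrm{WGL}(\beta_{\infty,\mathcal{V}_k})$ by the standard average-versus-sup comparison for regular sets), after which Theorem \ref{t:ImprovedHahlomaa} applies. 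The only cosmetic differences are that the paper's second stratum is the full Kor\'anyi distance $d(y,V)$ rather than a purely vertical parabolic coordinate, and that in your $\mathrm{WGL}$ step the far point must be chosen separately for each plane $V$ (for every $V$ there is some $y\in E\cap B(x,r)$ with $d(y,V)>\varepsilon r$) rather than a single point far from all planes; neither affects the argument.
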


The curve constructed in \cite{Juillet} does not meet the assumption in Corollary \ref{c:HahlomaaWithHorizontal}, while it satisfies the assumption in Corollary \ref{c:HahlomaaWithStratifASs}.
\begin{theorem}\label{t:JuilletCurveInNotIn}
    There is a $1$-regular curve $\Gamma\subset \mathbb{H}^1$ with  $\Gamma\notin \mathrm{GLem}(\beta_{1,\mathcal{V}_1},2)$ yet $\Gamma \in \mathrm{GLem}(\widehat{\beta}_{1,\mathcal{V}_1},4)$.
\end{theorem}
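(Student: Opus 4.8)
The plan is to take for $\Gamma$ the rectifiable horizontal curve constructed by Juillet in \cite{Juillet} and to verify the three assertions in turn: that $\Gamma$ is $1$-regular, that it fails $\mathrm{GLem}(\beta_{1,\mathcal{V}_1},2)$, and that it lies in $\mathrm{GLem}(\widehat{\beta}_{1,\mathcal{V}_1},4)$. I begin by recalling the features of Juillet's construction to be used: $\Gamma$ arises as the limit of a sequence of polygonal horizontal curves built by a self-similar procedure from Heisenberg dilations and left translates of a fixed template, and it is designed so that, on a sparse sequence of scales $r_m=2^{-j_m}$, the vertical coordinate $t$ of $\Gamma$ oscillates, relative to its horizontal projection $\pi(\Gamma)\subset\mathbb R^2$, with amplitude $\asymp\varepsilon_m^2 r_m^2$, where $\sum_m\varepsilon_m^2=\infty$ (so that $\sum_Q\beta_{\infty,\mathcal{V}_1}(Q)^2\,\mathcal H^1(Q)=\infty$, which is Juillet's counterexample to the Ferrari--Franchi--Pajot sufficient condition and already yields $\Gamma\notin\mathrm{GLem}(\beta_{\infty,\mathcal{V}_1},2)$), while $\sum_m\varepsilon_m^4<\infty$, the latter being exactly what encodes rectifiability, in accordance with Li's criterion \cite{Li}. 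One reads off from the construction that the enclosed-area/bump-height bookkeeping $\varepsilon_m^2 r_m^2 \asymp h_m r_m$, $h_m\asymp\varepsilon_m^2 r_m$, with excess length $\asymp h_m^2/r_m$ per block, ties these two series together.

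For $1$-regularity: the lower bound $\mathcal H^1(\Gamma\cap B(x,r))\gtrsim r$ is automatic, since a subarc of $\Gamma$ joining $x$ to $\partial B(x,r)$ has length at least $r$. For the upper bound $\mathcal H^1(\Gamma\cap B(x,r))\lesssim r$ I would exploit the self-similar structure together with the quasi-self-similarity of Heisenberg balls under dilations: a ball $B(x,r)$ meets only a bounded number of scale-$r$ building blocks of $\Gamma$, each a dilated and translated copy of $\Gamma$ of length $\lesssim r$, and summing over them gives the bound.

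The heart of the matter --- and the step I expect to be the main obstacle --- is the failure of the $L^1$ geometric lemma. Here I would prove that for every $m$ there is a definite fraction of points $x\in\Gamma$ with $\beta_{1,\mathcal{V}_1}(x,r_m)\gtrsim\varepsilon_m$; summing over the sparse scales then gives $\sum_{Q\subseteq R}\beta_{1,\mathcal{V}_1}(Q)^2\,\mathcal H^1(Q)\gtrsim\big(\sum_m\varepsilon_m^2\big)\mathcal H^1(R)=\infty$, so the Carleson packing condition fails. The key is a lower bound on $d(y,L)$ that is uniform over all horizontal lines $L$ and holds on a definite portion of $\Gamma\cap B(x,r_m)$. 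Since a horizontal line projects to a straight line in $\mathbb R^2$ and its $t$-coordinate is an affine function of arclength along that line, estimating $d(y,L)$ reduces to (i) comparing $\pi(\Gamma)$ with a straight line and (ii) comparing the $t$-coordinate of $\Gamma$, after subtraction of an arbitrary affine function of position, with $0$. Part (i) only helps, because $\pi(\Gamma)$ is rectifiable and its deviation from straight at scale $r_m$ is $\asymp\varepsilon_m^2 r_m\ll\varepsilon_m r_m$; for part (ii), the recursive design of the vertical increments forces several full oscillations of amplitude $\asymp\varepsilon_m^2 r_m^2$ inside every ball $B(x,r_m)$ centered on the relevant part of $\Gamma$, which cannot be flattened by an affine correction, so the residual vertical deviation is $\gtrsim\varepsilon_m^2 r_m^2$ on a definite fraction; converting this into Heisenberg distance gives $d(y,L)\gtrsim\varepsilon_m r_m$ there, and averaging over $\Gamma\cap B(x,r_m)$ (of mass $\asymp r_m$) yields $\beta_{1,\mathcal{V}_1}(x,r_m)\gtrsim\varepsilon_m$. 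The genuine difficulty is the uniformity over all $L$ together with the ``positive fraction'' feature: this cannot be deduced from the $L^\infty$ failure, since the standard comparison between $L^1$- and $L^\infty$-based $\beta$-numbers loses too much; instead one must use a careful bookkeeping of Juillet's construction to see that the vertical oscillation is genuinely of second order and spread out over a fixed fraction of each relevant ball, and that no tilted horizontal line can shadow $\Gamma$ on average.

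Finally, for $\Gamma\in\mathrm{GLem}(\widehat{\beta}_{1,\mathcal{V}_1},4)$ I would invoke a quantitative, localized form of Li's traveling salesman theorem in $\mathbb H^1$ \cite{Li}: applying its necessary direction at scales $\lesssim\diam(R)$, and using $1$-regularity to pass from the resulting global length bound to a Carleson packing bound, one obtains $\sum_{Q\subseteq R}\widehat{\beta}_{\infty,\mathcal{V}_1}(Q)^2\,\mathcal H^1(Q)\lesssim\mathcal H^1(R)$, i.e.\ $\Gamma\in\mathrm{GLem}(\widehat{\beta}_{\infty,\mathcal{V}_1},2)$. Since the stratified $\beta$-numbers are bounded one has $\widehat{\beta}^4\le\widehat{\beta}^2$, and since $\widehat{\beta}_{1,\mathcal{V}_1}\lesssim\widehat{\beta}_{\infty,\mathcal{V}_1}$ for $1$-regular sets (an $L^1$-average is dominated by the supremum), this upgrades to $\sum_{Q\subseteq R}\widehat{\beta}_{1,\mathcal{V}_1}(Q)^4\,\mathcal H^1(Q)\lesssim\mathcal H^1(R)$, which is the claim. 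Alternatively, since $\Gamma$ is explicit, one could read off $\widehat{\beta}_{1,\mathcal{V}_1}(x,r_m)\lesssim\varepsilon_m$ and $\widehat{\beta}_{1,\mathcal{V}_1}\approx 0$ at the remaining scales directly and conclude from $\sum_m\varepsilon_m^4<\infty$; I expect the route through Li to be cleaner, as it only uses that $\Gamma$ is a $1$-regular rectifiable curve.
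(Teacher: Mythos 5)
Your blueprint coincides with the paper's (take Juillet's curve; prove $1$-regularity; kill $\mathrm{GLem}(\beta_{1,\mathcal V_1},2)$ by a positive-fraction lower bound against \emph{all} horizontal lines at each scale; get $\mathrm{GLem}(\widehat\beta_{1,\mathcal V_1},4)$ from Li), but the two steps carrying the real content are asserted rather than proved, and the heuristic you offer for the central one is not accurate. For $1$-regularity, ``a ball meets only boundedly many scale-$r$ blocks'' is exactly what must be established: the curve is not literally self-similar (the angles $\theta_n=C_0/n$ vary with the generation), and excluding that far-away parameter intervals re-enter a ball is the content of the inductive separation claim $B_{\rm Eucl}(p,0.6\,l_n)\cap\omega^{\mathbb C}\bigl([0,1]\setminus[\tfrac{\sigma-1}{4^n},\tfrac{\sigma+1}{4^n}]\bigr)=\emptyset$ proved in Theorem \ref{injective}, which needs $C_0$ small and an order-preservation property of the projections; nothing in your sketch substitutes for it. More seriously, for the failure of $\mathrm{GLem}(\beta_{1,\mathcal V_1},2)$ your justification of the key bound --- ``several full oscillations of amplitude $\asymp\varepsilon_m^2r_m^2$ in every ball which cannot be flattened by an affine correction'' --- misdescribes the geometry: at scale $l_n$ a block carries a single plateau-type bump of height $\sim\theta_{n+1}l_n^2$ (see the five points of $\Lambda_{\theta}$), and the danger is not an affine flattening of the height but a horizontal line whose planar direction is tilted relative to the block. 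The paper handles this with a three-case analysis in the angle $\varphi$ between $\ell^{\,\mathbb C}$ and $B^{\mathbb C}D^{\mathbb C}$, trapezoid-area estimates combined with Lemma 2.5 of \cite{Juillet}, and then a separate transfer (Claim 2 in the proof of Theorem \ref{gammaglem}) from the polygonal model $\Gamma_{n,1}$ to the limit curve, which only works on a carefully chosen positive-measure set of parameters near the subdivision points, because generically the distance between $\Gamma_n$ and $\Gamma_{n,1}$ is of the same order $\sqrt\theta$ as the bound being preserved. You correctly identify this uniformity-plus-positive-fraction issue as the main obstacle, but the proposal contains no argument for it, and executing your heuristic would force you to essentially redo this analysis.

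There is also a concrete error in your route to $\mathrm{GLem}(\widehat\beta_{1,\mathcal V_1},4)$: you pass through the claim $\Gamma\in\mathrm{GLem}(\widehat\beta_{\infty,\mathcal V_1},2)$, which is false and in fact contradicts your own part on the $L^1$ failure. Indeed $\widehat\beta_{\infty,\mathcal V_1}\geq\beta_{\infty,\mathcal V_1}$ directly from Definition \ref{d:Coefficient functions}, and $\sum_{Q\subseteq R}\beta_{\infty,\mathcal V_1}(Q)^2\mathcal H^1(Q)$ diverges for Juillet's curve (this is Juillet's theorem, and also follows from $\beta_{1,\mathcal V_1}\lesssim\beta_{\infty,\mathcal V_1}$ together with the divergence you set out to prove), so no exponent-$2$ Carleson bound for $\widehat\beta_\infty$ can hold. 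Li's necessary condition in $\mathbb H^1$ comes with exponent $2s=4$ (and measure $dr/r^4$), not $2$; the correct and direct argument --- the one used in Theorem \ref{t:JuilletCurveGLem} --- localizes Li's exponent-$4$ inequality via $1$-regularity to obtain $\Gamma\in\mathrm{GLem}(\widehat\beta_{\infty,\mathcal V_1},4)$ and then dominates $\widehat\beta_{1,\mathcal V_1}$ by $\widehat\beta_{\infty,\mathcal V_1}$. Your desired conclusion is reachable by your own method once the exponent is corrected, but the step as written fails.
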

The $\widehat{\beta}_{1,\mathcal{V}_k}$-numbers are an $L^1$-based version of the $\hat \beta$-numbers introduced in
\cite{Li} for subsets of arbitrary Carnot groups. In this paper, Li proved the following \emph{traveling salesman theorem}. A set $E$ in a Carnot group $\mathbb{G}$ of step $s$ and homogeneous dimension $Q$ lies on a finite length rectifiable curve if and only if
\begin{displaymath}
 \mathrm{diam}(E) +   \int_0^{\infty} \int_{\mathbb{G}}\hat \beta_E(x,r)^{2s}\,dx\frac{dr}{r^Q}<\infty.
\end{displaymath}
The exponent ``$4$'' in Corollary \ref{c:HahlomaaWithStratifASs} corresponds to $2s$ in case of 
$\mathbb{G}=\mathbb{H}^n$. Corollary \ref{c:HahlomaaWithStratifASs} can be interpreted as partial generalization of  a quantitative version of Li's result \cite{Li} in $\mathbb{H}^n$ from $k=1$ to $1\leq k\leq n$. 
Namely, a $1$-regular set  $E\subset \mathbb{H}^n$ with  $E\in \mathrm{GLem}(\widehat{\beta}_{\infty,\mathcal{V}_1},4)$ satisfies  $E\in \mathrm{GLem}(\widehat{\beta}_{1,\mathcal{V}_1},4)$ and consequently BPLI by
Corollary \ref{c:HahlomaaWithStratifASs}. It is known that every $1$-regular BPLI set in a complete, doubling, quasiconvex metric space is contained in a connected $1$-regular set, see \cite[Corollary 4.6]{FV1}. In conclusion, every $1$-regular set $E\in \mathbb{H}^n$ with
$E\in \mathrm{GLem}(\widehat{\beta}_{\infty,\mathcal{V}_1},4)$ can be covered by a $1$-regular connected set $\Gamma$.
This fact could also be deduced from \cite{Li}, using the argument in \cite[Section 3]{FV1}, but Corollary \ref{c:HahlomaaWithStratifASs} offers an alternative approach that provides information also for $k>1$.

Finally, we mention an application of Theorem \ref{t:ImprovedHahlomaa} to the $\iota_{1,\mathcal{V}_k}$-numbers introduced by the first author together with Violo in \cite{FV2}. Roughly speaking, these coefficients are small for a $k$-regular set $E$ at point $x\in E$ and scale $r>0$, if there exists a projection $P_V$ onto a horizontal $k$-plane $V$ such that $P_V$ is, in $L^1$-norm, close to an isometry on $E\cap B(x,r)$; see Definition \ref{d:Coefficient functions}.
In particular, if $\iota_{1,\mathcal{V}_k}(x,r)$ is very small for  $E$, the metric on  $E\cap B(x,r)$ is almost isometric to the Euclidean distance on $\mathbb{R}^k$ under the map $P_V$.

\begin{corollary}\label{c:HahlomaaWithIotaASs}
     Let $n\in \mathbb{N}$ and $k\in \{1,\ldots,n\}$. If $E\subset \mathbb{H}^n$ is a $k$-regular set such that $E\in \mathrm{GLem}(\iota_{1,\mathcal{V}_k},1)$, then $E$ has big pieces of Lipschitz images of $\mathbb{R}^k$. 
\end{corollary}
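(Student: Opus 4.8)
The plan is to derive the statement from Theorem~\ref{t:ImprovedHahlomaa} by showing that the single hypothesis $E\in\mathrm{GLem}(\iota_{1,\mathcal{V}_k},1)$ already implies both assumptions appearing there, namely $E\in\mathrm{GLem}(\beta_{1,\pi,A(2n,k)},2)$ and $E\in\mathrm{WGL}(\beta_{\infty,\mathcal{V}_k})$. Both will follow from pointwise comparisons between the coefficient functions — up to a harmless dilation of scale and the usual passage between balls and dyadic cubes, which preserve Carleson-type conditions on a $k$-regular set — together with Chebyshev's inequality. Fix $x\in E$ and $r>0$, and choose $V\in\mathcal{V}_k$ with a splitting projection $P_V$ nearly realizing $\iota_{1,\mathcal{V}_k}(x,Cr)$. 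Writing $\mathbb{H}^n=V\cdot W$ with $W$ the normal complementary subgroup (it contains the center), the projection $P_V$ is Lipschitz, since its $V$-coordinate is a linear function of the horizontal coordinates and $|\pi(p)-\pi(q)|\lesssim d(p,q)$; thus every $p$ factors as $p=P_V(p)\cdot n(p)$ with $n(p)=(\nu(p),\sigma(p))\in W$, one has $|\nu(p)|\le d(p,V)\lesssim|\nu(p)|+|\sigma(p)|^{1/2}$, and $\pi(p)=\pi(P_V(p))+\nu(p)$ with $\pi(P_V(p))$ in the linear $k$-dimensional subspace $\pi(V)\subset\mathbb{R}^{2n}$ and $\nu(p)\perp\pi(V)$.

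First I would prove the pointwise bound $\beta_{1,\pi,A(2n,k)}(x,r)^2\lesssim\iota_{1,\mathcal{V}_k}(x,Cr)$, which is anyway trivial unless $\iota$ is small. Since $p^{-1}q=n(p)^{-1}\,u\,n(q)$ with $u=P_V(p)^{-1}P_V(q)\in V$, the isometry defect of $P_V$ at $(p,q)$ equals $\bigl|\,\|n(p)^{-1}un(q)\|-\|u\|\,\bigr|$; a computation with the group law, using that $P_V(E\cap B(x,Cr))$ spreads over scale $r$ whenever $\iota$ is small (otherwise $P_V$ would collapse a direction and $\iota$ would be large), shows that averaging this defect over $E\cap B(x,Cr)$ cancels the linear contribution of $\nu(p)-\nu(q)$ and leaves a multiple of $r^{-1}\mathrm{Var}(\nu)$. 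Hence $\iota_{1,\mathcal{V}_k}(x,Cr)\gtrsim r^{-2}\mathrm{Var}(\nu)$, and taking the competitor affine plane $L=\pi(V)+\bar\nu$, where $\bar\nu$ is the mean value of $\nu$ over the ball, absorbs $\bar\nu$ and gives $\beta_{1,\pi,A(2n,k)}(x,r)\le r^{-1}\mathrm{Var}(\nu)^{1/2}\lesssim\iota_{1,\mathcal{V}_k}(x,Cr)^{1/2}$. The squaring — which matches the exponent ``$2$'' demanded by Theorem~\ref{t:ImprovedHahlomaa} — reflects exactly that a best-approximating affine plane absorbs the mean of the ``defect field'', while only its variance is seen by $P_V$, and only at second order.

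Next I would show that for every $\varepsilon>0$, $\beta_{\infty,\mathcal{V}_k}(x,r)>\varepsilon$ forces $\iota_{1,\mathcal{V}_k}(x,Cr)\gtrsim_\varepsilon 1$. The function $G(p)=|d(x,p)-d(x,P_V(p))|+|\nu(p)|$ is Lipschitz on $E$ (as $P_V$ and $\pi$ are Lipschitz), and the product expansion gives $d(p,V')\lesssim G(p)+(rG(p))^{1/2}$ for a suitable horizontal $k$-plane $V'$ — either $V$ itself, or the translate of $V$ by the mean offset of $E\cap B(x,r)$, which is again a horizontal $k$-plane precisely when that offset is $\omega$-orthogonal to the direction of $V$, the complementary part of the offset producing a genuine isometry defect through the symplectic cross-term and hence being $\lesssim\iota$. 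Since the normalized $L^1$-average of $G$ over $E\cap B(x,Cr)$ is $\lesssim\iota_{1,\mathcal{V}_k}(x,Cr)^{1/2}$, Chebyshev together with the $k$-regularity of $E$ (a Lipschitz function that is large at one point is large on a whole sub-ball, of measure $\gtrsim(\text{radius})^k$) upgrades this $L^1$ smallness to $\sup_{E\cap B(x,r)}G/r\lesssim\iota_{1,\mathcal{V}_k}(x,Cr)^{1/(2(k+1))}$, and therefore $\beta_{\infty,\mathcal{V}_k}(x,r)\le\sup_{E\cap B(x,r)}d(\cdot,V')/r\lesssim\iota_{1,\mathcal{V}_k}(x,Cr)^{1/(4(k+1))}$.

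With these two comparisons, Chebyshev applied to $E\in\mathrm{GLem}(\iota_{1,\mathcal{V}_k},1)$ shows that $\{(x,r):\iota_{1,\mathcal{V}_k}(x,r)>\delta\}$ is a Carleson set for every $\delta>0$, so by the second comparison (and the dilation bookkeeping) so is $\{(x,r):\beta_{\infty,\mathcal{V}_k}(x,r)>\varepsilon\}$ for every $\varepsilon>0$, i.e.\ $E\in\mathrm{WGL}(\beta_{\infty,\mathcal{V}_k})$; by the first comparison, summing the pointwise bound against the packing measure furnished by $\mathrm{GLem}(\iota_{1,\mathcal{V}_k},1)$ yields $E\in\mathrm{GLem}(\beta_{1,\pi,A(2n,k)},2)$. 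Theorem~\ref{t:ImprovedHahlomaa} then delivers BPLI, and, as noted there, big pieces of bi-Lipschitz images. \textbf{The step I expect to be the main obstacle} is making the two pointwise comparisons rigorous: the isometry defect of the splitting projection controls the \emph{intrinsic} Heisenberg metric of $E\cap B(x,r)$, but Theorem~\ref{t:ImprovedHahlomaa} needs information on the actual \emph{position} of $E\cap B(x,r)$ relative to horizontal $k$-planes, so one must carefully separate the displacements from $V$ that are invisible to $P_V$ (those lying on another horizontal $k$-plane) from those that force a defect through the symplectic form, with constants uniform in $n$ and $k$, and establish the spreading of $P_V(E\cap B(x,r))$ in the small-$\iota$ regime. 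The comparisons between $\iota$-numbers and $\beta$-numbers in \cite{FV2} should supply, or at least considerably streamline, most of this; the remaining ingredients — Chebyshev, the $L^1$-to-$L^\infty$ upgrades via $k$-regularity, and the dyadic/ball dictionary and changes of scale for Carleson conditions — are routine.
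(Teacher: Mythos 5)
Your overall strategy coincides with the paper's: Corollary \ref{c:HahlomaaWithIotaASs} is obtained by showing that $E\in \mathrm{GLem}(\iota_{1,\mathcal{V}_k},1)$ implies the two hypotheses of Theorem \ref{t:ImprovedHahlomaa}, the paper doing this via the pointwise bound $\widehat{\beta}_{1,\mathcal{V}_k}^4\lesssim \iota_{1,\mathcal{V}_k}$ (Proposition \ref{p:FromIotaToStratifAssumpt}) followed by $\beta_{1,\pi,\mathcal{V}_k}^2+\beta_{1,\mathcal{V}_k}^4\le \widehat{\beta}_{1,\mathcal{V}_k}^4$ and the standard Chebyshev/regularity step yielding $\mathrm{WGL}(\beta_{\infty,\mathcal{V}_k})$ (Proposition \ref{p:FromStratifiedToWeakAssumpt}). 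Your first comparison, $\beta_{1,\pi,A(2n,k)}^2\lesssim\iota_{1,\mathcal{V}_k}$, is correct, but it needs neither the mean/variance cancellation nor any ``spreading'' of $P_V(E\cap B)$: for a base point $z_0$ chosen by Fubini, and $V$ reduced (by a \emph{left group} translation) to a horizontal subgroup $V_I\times\{0\}$, one has the pointwise Pythagorean estimate $|d(z_0,y)-d(P_V(z_0),P_V(y))|\ \ge\ |\pi(y)-\pi(z_0)|-|\pi_{V_I}(\pi(y)-\pi(z_0))|\ \gtrsim\ d_{\mathrm{Eucl}}\bigl(\pi(y),\pi(z_0)+V_I\bigr)^2/r$, and one simply averages in $y$. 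Note also that the relevant translates must be group translates: any left coset of a horizontal subgroup lies in $\mathcal{V}_k$, so no $\omega$-orthogonality condition arises, whereas Euclidean translates of a horizontal plane are in general not horizontal; your caveat is a symptom of this confusion.

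The genuine gap is the inequality $d(p,V')\lesssim G(p)+(rG(p))^{1/2}$ on which your derivation of $\mathrm{WGL}(\beta_{\infty,\mathcal{V}_k})$ rests: it is false, because vertical displacements enter the isometry defect only at \emph{fourth} order. Concretely, in $\mathbb{H}^1$ let $V$ be the $x_1$-axis, $x=0$, and $p=(s,0,\tau)$ with $s\sim r$ and $\tau=\epsilon^2 r^2$; then $\nu(p)=0$ and $G(p)=(s^4+16\tau^2)^{1/4}-s\sim \epsilon^4 r$, so $G(p)+(rG(p))^{1/2}\sim\epsilon^2 r$, while $d(p,V)=2\sqrt{\tau}=2\epsilon r$. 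If $E$ contains a long piece of the axis together with such a bump, your inequality applied to the axis points (where $G\equiv 0$) forces $V'$ to contain the segment, hence $V'$ is the axis and the inequality fails at $p$ by a factor $1/\epsilon$; one can check that no horizontal line staying $\epsilon^2 r$-close to the segment comes closer than $\sim\epsilon r$ to $p$. The correct pointwise relation is quartic, $d(p,V)^4\lesssim r^3\,G(p)$ (this is exactly the computation $a^4-b^4\gtrsim d(y,V)^4$ in the proof of Proposition \ref{p:FromIotaToStratifAssumpt}), and it is precisely why the paper routes the argument through the stratified $\widehat{\beta}$-numbers with exponent $4$. Your conclusion survives the correction, since for the weak geometric lemma any positive power of $\iota$ suffices (Chebyshev plus $\mathrm{GLem}(\iota_{1,\mathcal{V}_k},1)$ then gives the Carleson packing of the bad cubes), but as written the product-expansion step fails and must be replaced by the fourth-power estimate, i.e.\ in effect by Propositions \ref{p:FromIotaToStratifAssumpt} and \ref{p:FromStratifiedToWeakAssumpt}.
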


Corollary \ref{c:HahlomaaWithIotaASs} for $k=1$ also follows from  \cite[Thm 1.4]{FV1} since the $\iota_{1,\mathcal{V}_1}$-numbers are bounded from below by the $L^1$-based Gromov-Hausdorff numbers studied in \cite{FV1}. While applicable in more general metric spaces, the approach in \cite{FV1} passes via Menger curvature and is restricted to $1$-dimensional sets, whereas Theorem \ref{t:ImprovedHahlomaa} provides an approach via corona decompositions that yields Corollary  \ref{c:HahlomaaWithIotaASs} for all $1\leq k\leq n$. The latter assumes a geometric lemma with 
exponent ``$1$'' instead of ``$4$'' as in Corollary \ref{c:HahlomaaWithStratifASs}. The formal reason is visible in 
Proposition \ref{p:FromIotaToStratifAssumpt}, but in light of \cite{MR4489627,FV1,FV2}, we believe that ``$1$'' is in fact optimal.

\subsection*{A few words about the proofs} Our proof of Theorem \ref{t:ImprovedHahlomaa} follows Hahlomaa's proof of 
Corollary \ref{c:HahlomaaWithHorizontal}, which is in turn modeled on  David and Semmes' work in Euclidean spaces \cite[(C3) $\Rightarrow$ (C4) $\Rightarrow$ (C6)]{David1}. We focus here on describing the main new elements in our approach.

Although not explicitly stated in this form, Hahlomaa deduces
the BPLI property for $k$-regular sets  $E\subset \mathbb{H}^n$ from  $\mathrm{GLem}(\beta_{1,\mathcal{V}_k},2)$ by passing via an intermediate  \emph{corona decomposition}. Roughly speaking, this condition means that a system of dyadic cubes on $E$ can be divided into a family of bad cubes, of which there are not too many, and the remaining good cubes can be further partitioned into a forest $\mathcal{F}$ of -- again not too many -- trees, so that the set $E$ has good approximation properties from the perspective of each tree  $\mathcal{S}\in \mathcal{F}$. We define \emph{corona decompositions by horizontal planes (P-C)} (Definition \ref{d:P-C}) and adapt Hahlomaa's proof to verify, for $1\leq k\leq n$, that a $k$-regular set $E$ in $\mathbb{H}^n$ with $\mathrm{GLem}(\beta_{1,\pi,A(2n,k)},2)$ and $\mathrm{WGL}(\beta_{\infty,\mathcal{V}_k})$ satisfies (P-C) (Theorem \ref{t:FromWeakAssToCoronaByNormed}). This corona decomposition is formally stronger than the one implicitly contained in \cite{Hahlomaa},
and we verify the crucial approximation property in Lemma \ref{l:HahlLem4.3}.
The reason why our weaker assumption in terms of the $\beta_{1,\pi,A(2n,k)}$-numbers suffices for (P-C) becomes visible in Lemma \ref{l:HLem6.1} and how this lemma is used to control the trees in a certain subfamily of $\mathcal{F}$. These are trees  for which a large proportion of minimal cubes has children whose best approximating horizontal planes are significantly rotated compared to best approximating plane of the top cube $Q(\mathcal{S})$. The amount of rotation is related to properties of the image of $E$ under the projection $\pi:\mathbb{H}^n \to \mathbb{R}^{2n}$, $\pi(z,t)=z$. This observation allows us to control the  rotation using $E\in \mathrm{GLem}(\beta_{1,\pi,A(2n,k)},2)$. On the other hand, this assumption (or the stronger condition $E\in \mathrm{GLem}(\beta_{1,\pi,\mathcal{V}_k},2)$) \emph{alone} does not imply BPLI, as can be seen  by considering a $1$-regular subset of the vertical $t$-axis, which is mapped to a single point under the projection $\pi$ (Example \ref{ex:ProjBetaNotEnough}). For this reason we need $\mathrm{WGL}(\beta_{\infty,\mathcal{V}_k})$ to establish that $E$ is a priori sufficiently horizontal.

Having deduced from  $E\in \mathrm{GLem}(\beta_{1,\pi,A(2n,k)},2)$ and $E\in \mathrm{WGL}(\beta_{\infty,\mathcal{V}_k})$ that $E$ satisfies the corona decomposition (P-C), we could conclude Theorem \ref{t:ImprovedHahlomaa} by following the approach in \cite{Hahlomaa}. However, to highlight that this last step is independent of  the specific ambient structure of $\mathbb{H}^n$ (unlike the first part of the proof), we apply the result in \cite{Bate} instead. More precisely, we  observe in Lemma \ref{l:P-CtoN-C} that (P-C) implies a \emph{corona decomposition by normed spaces}, as introduced in \cite{Bate}, and Theorem \ref{t:ImprovedHahlomaa}  then follows from \cite[Theorem B]{Bate}. 

\subsection*{Connections with other corona decompositions.}
In Lemmas \ref{l:PCimpliesILGC}  -- \ref{lem_equivcor} we prove:
\begin{theorem}\label{t:CorEquiv}
A $k$-regular set $E\subset \mathbb{H}^n$ ($1\leq k\leq n$) satisfies (P-C) if and only if it admits a \emph{corona decomposition by intrinsic Lipschitz graphs (ILG-C)}.
\end{theorem}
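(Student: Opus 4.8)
\textbf{Proof proposal for Theorem \ref{t:CorEquiv}.}
The plan is to prove a double implication between the two corona decompositions, in both cases keeping the system of dyadic cubes on $E$ fixed and only modifying the approximating objects attached to each tree. The easier direction should be (ILG-C) $\Rightarrow$ (P-C). Given a tree $\mathcal{S}$ in the forest coming from an intrinsic Lipschitz graph corona decomposition, the tree comes with an intrinsic Lipschitz graph $\Gamma_{\mathcal S}$ over some horizontal $k$-plane $V_{\mathcal S}$ that approximates $E$ well in every cube of $\mathcal{S}$. The observation is that an intrinsic Lipschitz graph of small intrinsic Lipschitz constant over a horizontal plane $V$ is, at every location and scale, within a controlled multiple of its intrinsic Lipschitz constant of the plane $V$ itself (this is a standard cone/graph estimate for intrinsic Lipschitz graphs in $\mathbb H^n$); after possibly refining the forest so that the intrinsic Lipschitz constants on each tree are as small as we like — which costs only a Carleson-admissible subdivision — one reads off that $E$ is well approximated by $V_{\mathcal S}$ in Hausdorff distance inside every cube of $\mathcal S$, which is exactly the approximation property required by (P-C). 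One must also check the bookkeeping: the family of bad cubes and the number of trees are unchanged up to admissible constants, and the coherence conditions that a forest must satisfy (stopping-time structure, every cube belonging to exactly one tree after discarding bad cubes) are inherited.

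For the converse (P-C) $\Rightarrow$ (ILG-C), given a tree $\mathcal S$ with an approximating horizontal $k$-plane $V_{\mathcal S}$ that is $\varepsilon$-close to $E$ in every cube of $\mathcal S$, I would build an intrinsic Lipschitz graph over $V_{\mathcal S}$ that captures $E$ on $\mathcal S$. The natural construction: let $\pi_{V_{\mathcal S}}$ be the intrinsic (vertical) projection onto $V_{\mathcal S}$ along the complementary vertical subgroup $\mathbb W_{\mathcal S}$, and define a graph map on (a neighborhood of) $\pi_{V_{\mathcal S}}(E\cap Q(\mathcal S))$ by selecting, over each point $v\in V_{\mathcal S}$, a point of $E$ in the corresponding vertical fiber — using that the $\varepsilon$-closeness forces $E$ to meet each relevant fiber and to do so in an essentially unique, well-separated way when $\varepsilon$ is small. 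The resulting map is intrinsic Lipschitz with constant $O(\varepsilon)$ by a cone argument: two points of $E$ lying in a common cube of $\mathcal S$ are both within $\varepsilon r$ of $V_{\mathcal S}$, and being close to a horizontal plane at every scale down to their mutual distance is precisely what rules out their difference landing in the intrinsic cone; one then extends this partial intrinsic Lipschitz graph to all of $V_{\mathcal S}$ by an intrinsic Lipschitz extension theorem (e.g. the Whitney-type extension for intrinsic Lipschitz graphs available in $\mathbb H^n$), without increasing the constant beyond a dimensional factor. Again, the cube system, bad cubes, and forest structure are kept, so the Carleson packing bounds transfer directly.

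The step I expect to be the main obstacle is the fiberwise selection and the uniqueness/separation argument in the (P-C) $\Rightarrow$ (ILG-C) direction. Unlike the Euclidean case, the projection $\pi_{V_{\mathcal S}}$ is only an intrinsic (not metric) Lipschitz map, its fibers are left cosets of a nonabelian vertical subgroup, and closeness of $E$ to $V_{\mathcal S}$ in the metric $d$ does not immediately prevent two distinct points of $E$ from projecting to the same fiber at widely separated heights. Controlling this requires exploiting the multiscale nature of (P-C): the fact that the approximation by $V_{\mathcal S}$ holds in \emph{every} cube of the tree, not just the top one, is what forces the portion of $E$ seen by $\mathcal S$ to be a genuine (single-valued, well-separated) intrinsic graph rather than a multigraph. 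Making this quantitative — i.e. showing that for $\varepsilon$ below a dimensional threshold the selection is unambiguous and the graph map is defined on a large enough piece of $V_{\mathcal S}$ to be extendable — is the technical heart; it is analogous to, and can be modeled on, the parametrization arguments already used to pass from (P-C) to the normed-space corona decomposition and hence to BPLI via \cite{Bate}, together with the intrinsic Lipschitz graph machinery of \cite{MR2313532}. The remaining verifications (coherence of the forest, stability of Carleson constants, the harmless refinements that shrink intrinsic Lipschitz constants) are routine bookkeeping of the kind carried out in \cite{FO,didonato}.
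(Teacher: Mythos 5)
Your proposal rests on a misreading of Definition \ref{d:P-C}: you treat (P-C) as an approximation-by-sets condition (``$V_{\mathcal S}$ is $\varepsilon$-close to $E$ in every cube of $\mathcal S$''), whereas (P-C) is a mapping condition --- it only asserts that the horizontal projection satisfies $d(x,y)\leq(1+2\eta)\,d(P_{V_{\mathcal S}}(x),P_{V_{\mathcal S}}(y))$ for pairs $x,y\in K_0Q(\mathcal S)$ with $d(x,y)>\eta\min\{h_{\mathcal S}(x),h_{\mathcal S}(y)\}$, and says nothing about $E$ being close to the plane. This breaks both directions as you have written them. In the direction (ILG-C)$\Rightarrow$(P-C), the ``standard cone/graph estimate'' you invoke --- that an intrinsic Lipschitz graph with small constant over $V$ stays, at every location and scale, within a controlled multiple of that constant of $V$ itself --- is false: the vertical translate $\{v\cdot(0,t_0):v\in V\}$ has intrinsic Lipschitz constant $0$ yet lies at distance $\sim\sqrt{t_0}$ from $V$, and, more to the point, deep inside a tree the graph need not be within $\eta\,\mathrm{diam}(Q)$ of any single fixed plane. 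Closeness to a plane is in fact never needed: for points on the graph, the metric Lipschitz estimate for the graph map \eqref{eq:GraphMapMetricLip} gives the projection lower bound directly, and the paper's Lemma \ref{lem_equivcor} combines it with a scale-selection step (a minimal cube $R\in\mathcal S$ with $\mathrm{diam}(R)\sim d(x,y)$, used together with the strengthened approximation of Remark \ref{r:ILG-Cmodif} at distance $N\,\mathrm{diam}(R)$) which your sketch omits entirely; the ``refinement of the forest'' you mention is also unnecessary, since (ILG-C) already provides a coronization for every value of the parameter.

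In the converse direction your construction starts from the unavailable hypothesis that $E$ meets the vertical fibers over $V_{\mathcal S}$ within $\varepsilon$ of the plane, and you then concede that the fiberwise selection and its uniqueness form an unresolved ``technical heart''. Read correctly, (P-C) disposes of that difficulty at once: for separated pairs the projection cannot contract distances beyond the factor $(1+2\eta)$, so $P_{V_{\mathcal S}}$ is injective, with near-isometric inverse, on any suitably separated subset of $K_0Q(\mathcal S)$. This is exactly how Lemma \ref{l:PCimpliesILGC} proceeds: it constructs a maximal net $\mathcal N$ with $d(x,y)\geq\theta\,\mathrm{diam}(Q_{x,y})$, applies \eqref{eq:P-C} to see that $\mathcal N$ is an intrinsic graph over $P_{V_{\mathcal S}}(\mathcal N)$ whose graph map is Lipschitz with constant close to $1$, upgrades this to a small \emph{intrinsic} Lipschitz constant via the quantitative Proposition \ref{prop_lipgraph} (a step you would also need but do not supply), extends the function by the intrinsic Lipschitz extension theorem of \cite{didonato}, and uses the maximality of $\mathcal N$ to obtain the approximation \eqref{eq:ILGcorona}. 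Without this mechanism, or a correct substitute for it, both implications in your proposal have genuine gaps.
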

The (ILG-C) property is a variant of the corona decomposition studied by David and Semmes \cite{David1}, where from the perspective of each tree $\mathcal{S}$, the set $E$ is well approximated by an \emph{intrinsic Lipschitz graph} with small intrinsic Lipschitz constant (in the sense of Franchi, Serapioni, and Serra Cassano \cite{MR3587666}). In contrast, (P-C), and the related corona decompositions by normed spaces, are defined via mapping properties rather than approximation by sets. Through the equivalence of (P-C) and (ILG-C), our approach to Theorem \ref{t:ImprovedHahlomaa} via (P-C) 
 yields several corollaries of independent interest. These can be seen as counterparts for classical results in the Euclidean theory of uniform rectifiability \cite{David1}.

\begin{corollary}
[Corollary of Theorems \ref{t:CorEquiv} and \ref{t:FromWeakAssToCoronaByNormed}]     \label{t:FromWeakAssToILG-C} 
    Let $n\in \mathbb{N}$ and $k\in \{1,\ldots,n\}$. If $E\subset \mathbb{H}^n$ is a $k$-regular set such that $E\in \mathrm{GLem}(\beta_{1,\pi,A(2n,k)},2)$ and $E\in \mathrm{WGL}(\beta_{\infty,\mathcal{V}_k})$, then $E$ has a corona decomposition by intrinsic Lipschitz graphs.
\end{corollary}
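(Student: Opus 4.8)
The plan is to obtain this corollary by simply chaining the two results it cites. First I would apply Theorem~\ref{t:FromWeakAssToCoronaByNormed}: since $E$ is $k$-regular and satisfies both $\mathrm{GLem}(\beta_{1,\pi,A(2n,k)},2)$ and $\mathrm{WGL}(\beta_{\infty,\mathcal{V}_k})$, that theorem asserts that $E$ admits a corona decomposition by horizontal planes, i.e.\ $E$ satisfies (P-C). Then I would invoke the forward implication of Theorem~\ref{t:CorEquiv}, namely that (P-C) implies (ILG-C). Concatenating the two gives that $E$ has a corona decomposition by intrinsic Lipschitz graphs, which is exactly the claimed conclusion. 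The only bookkeeping is to observe that the quantitative constants in (ILG-C) are permitted to depend on $n$, $k$, the $k$-regularity constant of $E$, and the Carleson constants in the two hypotheses, which is automatic from how (P-C) and (ILG-C) are set up and from the quantitative nature of Theorems~\ref{t:FromWeakAssToCoronaByNormed} and~\ref{t:CorEquiv}.

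Accordingly, at the level of this corollary there is essentially no obstacle: the substantive work sits entirely inside the two inputs. Theorem~\ref{t:FromWeakAssToCoronaByNormed} carries the Heisenberg-specific analysis — the approximation estimate of Lemma~\ref{l:HahlLem4.3}, and the control of how much the best-approximating horizontal planes rotate along the trees of the forest via Lemma~\ref{l:HLem6.1}, using the projected $\beta_{1,\pi,A(2n,k)}$-numbers together with the a priori horizontality furnished by $\mathrm{WGL}(\beta_{\infty,\mathcal{V}_k})$. The nontrivial direction of Theorem~\ref{t:CorEquiv} used here is (P-C)$\Rightarrow$(ILG-C), established in Lemmas~\ref{l:PCimpliesILGC}--\ref{lem_equivcor}, where one upgrades the mapping-based approximation encoded in (P-C) to genuine approximation by intrinsic Lipschitz graphs in the sense of Franchi, Serapioni, and Serra Cassano; the converse implication of Theorem~\ref{t:CorEquiv} is not needed for the present statement. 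Thus in practice the proof reduces to two lines: cite Theorem~\ref{t:FromWeakAssToCoronaByNormed} to get (P-C), then cite the (P-C)$\Rightarrow$(ILG-C) half of Theorem~\ref{t:CorEquiv}.
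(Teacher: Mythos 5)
Your proposal is correct and coincides with the paper's own argument: the paper's proof is exactly to invoke Theorem \ref{t:FromWeakAssToCoronaByNormed} to obtain (P-C) and then the (P-C)$\Rightarrow$(ILG-C) implication (Lemma \ref{l:PCimpliesILGC}), which is the half of Theorem \ref{t:CorEquiv} you single out. No gap.
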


\begin{corollary}[Corollary of Theorem \ref{t:CorEquiv} and \cite{Bate}]\label{t:FromCoronaToBPLI}
    Let $n\in \mathbb{N}$ and $k\in \{1,\ldots,n\}$. If $E\subset \mathbb{H}^n$ is a $k$-regular set satisfying a corona decomposition by intrinsic Lipschitz graphs, then $E$ has big pieces of Lipschitz images of $\R^k$.
\end{corollary}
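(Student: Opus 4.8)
The plan is to combine the equivalence of corona decompositions from Theorem~\ref{t:CorEquiv} with the deep result of Bate, Hyde, and Schul \cite{Bate}. The statement asserts: if a $k$-regular set $E\subset \mathbb{H}^n$ (with $1\leq k\leq n$) admits a corona decomposition by intrinsic Lipschitz graphs (ILG-C), then $E$ has big pieces of Lipschitz images of subsets of $\mathbb{R}^k$. Since this is explicitly flagged as a corollary of Theorem~\ref{t:CorEquiv} and \cite{Bate}, the proof should be short: it is a matter of chaining two implications.

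First I would invoke Theorem~\ref{t:CorEquiv}: since $E$ is $k$-regular in $\mathbb{H}^n$ and admits (ILG-C), the equivalence in that theorem gives that $E$ also satisfies (P-C), the corona decomposition by horizontal planes. Next, I would invoke Lemma~\ref{l:P-CtoN-C} (mentioned in the ``few words about the proofs'' subsection): (P-C) implies a corona decomposition by normed spaces in the sense of \cite{Bate}. Here one must check that the data produced by (P-C) -- a forest of stopping-time regions, with horizontal $k$-planes playing the role of the approximating objects, together with the Carleson packing estimates on bad cubes and on the tops of trees -- can be repackaged as the family of finite-dimensional normed spaces (in this case $\mathbb{R}^k$ with the Euclidean norm, or suitable bi-Lipschitz equivalents) with approximating maps required in the definition of a corona decomposition by normed spaces in \cite{Bate}. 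The key point is that an approximation by a horizontal $k$-plane $V$, equipped with an $L^1$- or $L^\infty$-close-to-isometric projection, is exactly the kind of ``coordinate chart into a normed space'' that \cite{Bate} takes as input, and the Carleson packing conditions transfer verbatim.

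Finally, I would apply \cite[Theorem~B]{Bate}, which states that a $k$-regular metric space admitting a corona decomposition by normed spaces has big pieces of Lipschitz images of subsets of $\mathbb{R}^k$ (in fact of bi-Lipschitz images, which is why the ``bi-Lipschitz'' strengthening in Theorem~\ref{t:ImprovedHahlomaa} also holds). Since $(\mathbb{H}^n,d)$ is complete and doubling, the hypotheses of \cite[Theorem~B]{Bate} are met, and BPLI for $E$ follows. This completes the proof.

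The main obstacle -- though it is really bookkeeping rather than a genuine difficulty -- is verifying in the passage (P-C) $\Rightarrow$ corona decomposition by normed spaces (Lemma~\ref{l:P-CtoN-C}) that the quantitative constants match up: one needs the intrinsic Lipschitz constant / flatness parameter controlling the horizontal-plane approximation in each tree to translate into the bi-Lipschitz distortion bound demanded by \cite{Bate}, and one needs the forest structure (number of trees, Carleson packing of stopping regions) to satisfy the precise axioms of a corona decomposition in \cite{Bate}. Once (P-C) is known to be formulated with approximating objects that are honest normed spaces together with controlled coordinate maps, this is routine, and the heavy lifting has already been done in \cite{Bate} and in establishing Theorem~\ref{t:CorEquiv}.
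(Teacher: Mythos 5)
Your proposal is correct and follows exactly the paper's argument: the (ILG-C) $\Rightarrow$ (P-C) implication (Lemma \ref{lem_equivcor}, i.e.\ one direction of Theorem \ref{t:CorEquiv}), then Lemma \ref{l:P-CtoN-C} to pass to a corona decomposition by normed spaces, and finally \cite[Theorem B]{Bate} to conclude BPLI. No further comment is needed.
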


The (ILG-C) property can be seen as a set-theoretic version of corona decompositions that were studied for intrinsic Lipschitz functions on $1$-dimensional horizontal subgroups in $\mathbb{H}^n$ \cite{FO,didonato}. 
In \cite[\S 3.1.1]{FO}, the first author and Orponen proved for $n=1$ that such intrinsic Lipschitz functions  admit corona decompositions by intrinsic Lipschitz functions with small Lipschitz constant, and applied this result to prove that certain singular integral operators are $L^2$-bounded on regular curves in $\mathbb{H}^1$. 
We hope that Corollary \ref{t:FromWeakAssToILG-C} could find applications for the study of $k$-dimensional singular integral operators in $\mathbb{H}^n$ for all $1\leq k\leq n$, and more generally, that the present paper will serve as a step towards developing a theory of quantitative (or \emph{uniform}) rectifiability for low-dimensional sets in Heisenberg groups. We believe that this is a natural setting to apply and advance the results in \cite{Bate}, as there is already a  rich theory of \emph{qualitative} $k$-dimensional rectifiability in $\mathbb{H}^n$ for $1\leq k\leq n$, see for instance \cite{MR2789472,MR4405432}.

\medskip

\textbf{Structure of the paper.} Section \ref{s:Prelim} contains standard notation and preliminaries about Heisenberg groups. Section \ref{s:QuantRectif} provides an overview of known and new notions of quantitative rectifiability for low-dimensional sets in Heisenberg groups. In particular, we introduce various versions of geometric lemmas and corona decompositions, and prove implications between them. In Section \ref{s:Juillet}, 
we strengthen Juillet's result by showing that the curve $\Gamma$ constructed in \cite{Juillet} is $1$-regular with $\Gamma\notin \mathrm{GLem}(\beta_{1,\mathcal{V}_1},2)$. In Section
\ref{s:SuffCond}, building on  Section \ref{s:QuantRectif},
we explain how Theorem \ref{t:ImprovedHahlomaa} follows from 
an intermediate result about a corona decomposition by horizontal planes, and we deduce the various corollaries. 
Finally, in Section \ref{s:ProofCorona}, we complete the proof of Theorem \ref{t:ImprovedHahlomaa}  by deducing the existence of the corona decomposition from the stated assumptions. 

\section{Preliminaries}\label{s:Prelim}
\subsection{Notation} 
We write $a\lesssim b$ to denote the existence of an absolute constant $C>0$ with $a\leq Cb$. When writing $a\lesssim_\lambda b$, we allow $C$ to depend on $\lambda$. We write $a\sim b$ if $a\lesssim b$ and $b\lesssim a$.
\subsection{Heisenberg groups}
We recall the definition of the Heisenberg group, and introduce objects related to its horizontal structure  relevant for the study of low-dimensional subsets.
\begin{definition}[Heisenberg group] The \textit{$n$-th Heisenberg group} $\H^n$ is defined as the set $\R^{2n+1}$ equipped with the group product
  \begin{equation}\label{grouplaw} (z,t)\cdot(z',t')=(z+z', t+t'+\omega(z,z'))\quad\text{ for }z,z'\in\R^{2n},t,t'\in\R.\end{equation}
  where \begin{equation}\label{hform}\omega(z,z'):=\frac{1}{2}\sum_{i=1}^n (z_i z_{n+i}'-z_{n+i}z_i')\quad\text{ for }z,z'\in \R^{2n}.\end{equation}
\end{definition}
\noindent We equip $\mathbb{H}^{n}$ with left invariant vector fields 
\[X_{i}=\partial_{z_{i}}-\tfrac{1}{2}z_{n+i}\partial_{t},\quad 1\leq i\leq n, \quad X_{i} = \partial_{z_{i}}+\tfrac{1}{2}z_{i-n}\partial_{t}, \quad n+1\leq i\leq 2n, \quad T=\partial_{t}.\]
Here $\partial_{z_{i}}$ and $\partial_{t}$ denote the coordinate vectors in $\mathbb{R}^{2n+1}$, which may be interpreted as operators on differentiable functions. If $[\cdot, \cdot]$ denotes the Lie bracket of vector fields, then $[X_{i}, X_{n+i}]=T$, while all the other commutation relations are trivial. Thus $\mathbb{H}^{n}$ is a Carnot group of step $2$.

\begin{definition}\label{d:horiz}
A vector in $\mathbb{R}^{2n+1}$ is \emph{horizontal} at $p \in \mathbb{R}^{2n+1}$ if it is a linear combination of the vectors $X_{i}(p)$, $1\leq i\leq 2n$.
An absolutely continuous curve $\gamma$ in the Heisenberg group is \emph{horizontal} if, at almost every point $t$, the derivative $\gamma'(t)$ is horizontal at $\gamma(t)$.
\end{definition}
For every absolutely continuous curve $\gamma_I: [a,b]\to \mathbb{R}^{2n}$, one can find $\gamma_{2n+1}:[a,b]\to \mathbb{R}$ so that $\gamma:=(\gamma_I,\gamma_{2n+1}):[a,b]\to \mathbb{H}^n$ is horizontal. If  $\gamma_{2n+1}(a)$ is prescribed, the  component $\gamma_{2n+1}$
is uniquely determined by the horizontality condition, and $\gamma$ is called a \emph{horizontal lift} of $\gamma_I$.
Specifically, for $n=1$, if the curve $\gamma_I$ is closed and $\gamma$ is one of its horizontal lifts, then the total change in height $\gamma_3(b)-\gamma_3(a)$ equals the signed area enclosed by $\gamma_I$; see for instance \cite{CDPT}.
\begin{definition}\label{d;SubspaceDef}
A vector subspace $V\subset\R^{2n}$ is said to be \textit{isotropic} if $\omega(z,z')=0$ for all $z,z'\in V$, where $\omega:\R^{2n}\times\R^{2n}\to\R$ is the form defined in \eqref{hform}. 

For $1\leq k\leq n$, 
we denote by $\mathcal V_k^0$ the family of $k$-dimensional \emph{horizontal subgroups} of $\mathbb{H}^n$. Every $V\in \mathcal{V}_k^0$ is of the form $V=V_I \times \{0\}$ for a  $k$-dimensional isotropic subspace $V_I$  of~$\mathbb{R}^{2n}$.

We call a subset $V$ of $\mathbb{H}^n$ an \emph{(affine) $k$-dimensional horizontal plane} if it can be written as $x\cdot V_0$ for some $x\in \mathbb{H}^n$ and $V_0\in \mathcal V_k^0$. We denote by $\mathcal V_k$ the collection of all affine horizontal $k$-dimensional planes of $\mathbb{H}^n$.

The family of arbitrary affine $k$-dimensional planes in $\mathbb{R}^{2n}$ will be denoted by $A(2n,k)$.
\end{definition}

\begin{definition}[Projection onto horizontal subgroups]
Let $V=V_I\times\{0\}\in \mathcal V_k^0$ be a $k$-dimensional horizontal subgroup of $\H^n$, where $V_I$ is a $k$-dimensional isotropic subspace of $\R^{2n}$. The \textit{horizontal projection} onto $V$ is defined by 
\[P_V:\H^n\to V,\quad P_V(z,t):=(\pi_{V_I}(z),0),\]
where $\pi_{V_I}:\R^{2n}\to V_I$ stands for the standard Euclidean orthogonal projection onto $V_I$.
\end{definition}

From the structure of the group law \eqref{grouplaw}, it easily follows that horizontal projections are group homomorphisms. For a more thorough discussion of these mappings and their role in geometric measure theory on $\mathbb{H}^n$, we refer the reader to \cite{MR2789472,MR2955184}.

\begin{definition}[Projection onto affine horizontal planes]\label{d:AffHeisProj}
    Let $V= x\cdot V_0 \in \mathcal{V}_k$ be a $k$-dimensional horizontal plane with $x\in \mathbb{H}^n$ and  $V_0\in  \mathcal{V}_k^0$. Then the \emph{horizontal projection} onto $V$ is defined by
    \begin{displaymath}
        P_V:\mathbb{H}^n \to V,\quad P_V(y):= x \cdot P_{V_0}(x^{-1}\cdot y).
    \end{displaymath}
\end{definition}
The affine horizontal projection $P_V$ is well-defined 
 since it does not depend on the specific choice of the point $x\in V$; see \cite[\S 2]{Hahlomaa} or \cite[\S 4.1.2]{FV2} for this and other properties of~$P_V$.

It will be convenient to introduce also other kind of projections, which we may call \textit{Euclidean coordinate projections}: if $p=(z,t)\in\H^n$, we define $\pi:\H^{n}\to\R^{2n}$ and $\pi_t:\H^n\to\R$ as
\[\pi(p)=\pi(z,t):=z,\qquad \pi_t(p)=\pi_t(z,t):=t.\]
We will denote by $|\cdot|_{\mathbb{R}^m}$, or simply by $|\cdot|$, the Euclidean norm on $\mathbb{R}^m$. The Euclidean distance is $d_{\mathrm{Eucl}}(z,z')=|z-z'|$ for every $z,z'\in\R^m$.
In the Heisenberg group $\mathbb{H}^n$, we let $d(x,y):=\|y^{-1}\cdot x\|$ to be the \emph{Korányi distance}  induced by the homogeneous \emph{Korányi norm}
\begin{equation}\label{eq:KoranyiNorm}
\|(z,t)\|:=\sqrt[4]{|z|_{\R^{2n}}^4+16\,t^2}.
\end{equation}
For any $V\in \mathcal V_k$, the horizontal projection $P_V$ is 1-Lipschitz with respect to the Korányi
distance. In addition, the space $(V,d)$ is isometric to $(\R^k, |\cdot|_{\R^k})$. For $s\geq 0,$ we will  denote by $\mathcal H^s$ the $s$-dimensional Hausdorff measure associated to the Korányi distance $d$, obtained via classical Carathéodory construction. Hausdorff measures with respect to $d_{\mathrm{Eucl}}$ will be denoted $\mathcal{H}^s_{\mathrm{Eucl}}$.
The closed ball in $(\mathbb{H}^n,d)$ with center $x\in \mathbb{H}^n$ and radius $r>0$ is denoted by $B(x,r)$, while $B^m(z,r)$ or $B_{\mathrm{Eucl}}(z,r)$ denotes a Euclidean ball in $\mathbb{R}^m$. The diameter of a set $A\subset \mathbb{H}^n$ with respect to the Korányi distance is denoted by $\mathrm{diam}(A)$.
\begin{definition}\label{d:rot}
    A linear map $\varphi:\H^n\to\H^n$ is called a \textit{rotation} if for all $z,z'\in\R^{2n}$, $t,t'\in\R$, 
    \begin{align*}\pi_t(\varphi(z,t))&=t,\\ \omega\left(\pi(\varphi(z,t)),\pi(\varphi(z',t'))\right)&=\omega(z,z'),\\ |\pi(\varphi(z,t))-\pi(\varphi(z',t'))|&=|z-z'|.\end{align*} 
\end{definition}
Any rotation is a group homomorphism and an isometry with respect to $d$ of $\H^n$. Moreover, for any $V,W\in\mathcal V_k$ there exists a rotation $\varphi$ and a point $p\in\H^n$ such that $W=p\cdot\varphi(V)$. Rotations are exactly those maps which can be expressed in coordinates as $\varphi(z,t)=(Az,t)$ with $A\in SU(n)$. For more details, see, e.g.,  \cite[Section 2]{MR2955184}\vspace{0.25 cm}\par
If $V\in \mathcal{V}_k^0$ is a $k$-dimensional horizontal subgroup of $\H^n$, the \textit{complementary orthogonal subgroup} of $V$ inside $\H^n$ is defined as the (Euclidean)  $(2n+1-k)$-dimensional orthogonal complement of  $V$ inside $\R^{2n+1}$, which is a normal subgroup of $\H^n$. If $W$ denotes the complementary orthogonal subgroup of $V$, then we define $P_W:\H^n\to W$ as $P_W(y):=(P_V(y))^{-1}\cdot y$.
The next definition is due to Franchi, Serapioni, and Serra Cassano \cite{MR2313532,MR3587666}.

\begin{definition}[Intrinsic Lipschitz graph]\label{d:kDIMiLG}
 Let $V\in \mathcal{V}_k^0$ and let $W$ be the corresponding complementary orthogonal vertical subgroup. A map $\phi:V\to W$ is called \textit{intrinsic Lipschitz} if there exists $L>0$ such that
 \begin{equation}\label{intrinsicLip}\|P_W\left(\Phi(v')^{-1}\Phi(v)\right)\|\leq L\,\|P_V\left(\Phi(v')^{-1}\Phi(v)\right)\|\quad\text{ for every }v,v'\in V,\end{equation}
 where $\Phi: V\to\H^n$ is the graph map associated to $\phi$, defined by $\Phi(v):=v\cdot\phi(v)$ for all $v\in V$. 
 The \textit{intrinsic Lipschitz constant} of $\phi$ is defined as the best constant $L$ such that \eqref{intrinsicLip} holds.

 A set $\Gamma \subset \mathbb{H}^n$ is a \emph{$k$-dimensional intrinsic Lipschitz graph}, for $1\leq k\leq n$, if there exists $V\in \mathcal{V}_k^0$ and 
 and an intrinsic Lipschitz function $\phi:V \to W$ such that $\Gamma = \Phi(V)$.

\end{definition}

If $V$ is a horizontal subgroup with orthogonal vertical subgroup $W$, and $\phi:V \to W$ is intrinsic Lipschitz as in Definition \ref{d:kDIMiLG}, then the associated graph map $\Phi:V \to \mathbb{H}^n$ is metrically Lipschitz since we have
for all $v,v'\in V$ that
\begin{align}\label{eq:GraphMapMetricLip}
d(\Phi(v),\Phi(v'))&\leq \|P_V\left(\Phi(v')^{-1}\cdot \Phi(v)\right)\|+\|P_W\left(\Phi(v')^{-1}\cdot \Phi(v)\right)\|\notag\ \leq (1+L)\|P_V\left(\Phi(v')^{-1}\cdot \Phi(v)\right)\|\notag\\
&= (1+L) \|(v')^{-1} \cdot v\| = (1+L) d(v,v').
\end{align}

We will also need precise quantitative information in the opposite direction: knowing that a graph map $\Phi(v)=v\cdot \phi(v)$ is metrically Lipschitz with constant close to $1$, we can deduce that the function $\phi:V \to W$ is intrinsic Lipschitz with small constant. 
 \begin{proposition}\label{prop_lipgraph} 
        Let $V$ be a $k$-dimensional horizontal subgroup of $\H^n$, with complementary orthogonal subgroup $W$. Let $\Phi:V\to\H^n$ denote the graph map associated to a function $\phi:V\to W$. Suppose that $\Phi$ is $(1+\eta)$-Lipschitz with respect to the Kor\'{a}nyi metric for some $0<\eta\leq 1$. Then $\phi$ is intrinsic Lipschitz and its intrinsic Lipschitz constant is at most $6\sqrt[4]{\eta}$.
    \end{proposition}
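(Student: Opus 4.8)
The plan is to reduce to a normal form by a rotation, write the horizontal and vertical projections of $\Phi(v')^{-1}\Phi(v)$ in coordinates, extract the obvious ``one-pair'' estimates from the Lipschitz hypothesis, and then bootstrap the estimate for the vertical part. Since any two $k$-dimensional horizontal subgroups are related by a rotation of $\H^n$ (a special case of the quoted property, as both contain the identity), and a rotation $\varphi$ is at once a group automorphism, a Euclidean isometry, and a $d$-isometry fixing the origin, conjugation by $\varphi$ sends $P_V$, $P_W$ and the graph map of $\phi$ to $P_{\varphi(V)}$, $P_{\varphi(W)}$ and the graph map of $\varphi\circ\phi\circ\varphi^{-1}$, while preserving the Lipschitz constant of the graph map; hence the assertion is unchanged if we replace $V$ by $\varphi(V)$. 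We may therefore take $V=V_I\times\{0\}$ with $V_I=\mathbb{R}^k\times\{0\}\subset\mathbb{R}^{2n}$, so that $W=(\{0\}\times\mathbb{R}^{2n-k})\times\mathbb{R}$. Parametrizing $V$ by $u\in\mathbb{R}^k$ through $v(u):=(u,0,0)$, we write $\phi(v(u))=(0,b(u),s(u))$ with $b(u)\in\mathbb{R}^{2n-k}$ and $s(u)\in\mathbb{R}$, so that $\Phi(v(u))=v(u)\cdot\phi(v(u))$.

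A direct computation with the group law \eqref{grouplaw}, using that $P_V$ is a homomorphism vanishing on $W$ while $P_W(g)=P_V(g)^{-1}g$, gives for all $u,u'\in\mathbb{R}^k$
\begin{align*}
P_V\big(\Phi(v(u'))^{-1}\Phi(v(u))\big)&=(u-u',0,0),\\
P_W\big(\Phi(v(u'))^{-1}\Phi(v(u))\big)&=\big(0,\;b(u)-b(u'),\;\sigma(u,u')\big),
\end{align*}
where $\sigma(u,u')=\big(s(u)-s(u')\big)+\langle u-u',b(u')\rangle_{\ast}+\omega_{\ast}\big(b(u),b(u')\big)$ for suitable bilinear forms $\langle\cdot,\cdot\rangle_{\ast}$ on $\mathbb{R}^k\times\mathbb{R}^{2n-k}$ and $\omega_{\ast}$ on $\mathbb{R}^{2n-k}\times\mathbb{R}^{2n-k}$ with $|\langle x,y\rangle_{\ast}|\le|x|\,|y|$, $|\omega_{\ast}(y,y')|\le|y|\,|y'|$, and $\omega_{\ast}(y,y')=-\omega_{\ast}(y',y)$. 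In particular $\|P_V(\Phi(v(u'))^{-1}\Phi(v(u)))\|=|u-u'|=d(v(u),v(u'))$, and by \eqref{eq:KoranyiNorm}
\begin{equation*}
\big\|P_W\big(\Phi(v(u'))^{-1}\Phi(v(u))\big)\big\|^{4}=|b(u)-b(u')|^{4}+16\,\sigma(u,u')^{2}.
\end{equation*}

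Writing $p:=\Phi(v(u'))^{-1}\Phi(v(u))$, the hypothesis says $\|p\|\le(1+\eta)|u-u'|$, and comparing the $\mathbb{R}^{2n}$- and the $t$-components of $p=P_V(p)\cdot P_W(p)$ with \eqref{eq:KoranyiNorm}, using $0<\eta\le1$, yields the one-pair estimates $|b(u)-b(u')|\le\sqrt{3\eta}\,|u-u'|$ (so $b$ is $\sqrt{3\eta}$-Lipschitz) and $|\sigma(u,u')|\le C_0\,|u-u'|^{2}$ with $C_0$ independent of $u,u'$. The key improvement is to upgrade the latter to $|\sigma(u,u')|\lesssim\sqrt\eta\,|u-u'|^{2}$. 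From the definition of $\sigma$ together with the bilinearity and antisymmetry of $\langle\cdot,\cdot\rangle_{\ast}$ and $\omega_{\ast}$ one obtains the identity
\begin{equation*}
\sigma(u,u')-\sigma(u,m)-\sigma(m,u')=\langle u-m,b(u')-b(m)\rangle_{\ast}+\omega_{\ast}\big(b(u)-b(m),b(u')-b(m)\big),\qquad u,m,u'\in\mathbb{R}^{k}.
\end{equation*}
Setting $M:=\sup_{u\neq u'}|\sigma(u,u')|/|u-u'|^{2}$, finite by the one-pair estimate, choosing $m=\tfrac12(u+u')$ and bounding the right-hand side by $\tfrac14(\sqrt{3\eta}+3\eta)|u-u'|^{2}$ via the Lipschitz bound on $b$, we get $|\sigma(u,u')|\le\tfrac{M}{2}|u-u'|^{2}+\tfrac14(\sqrt{3\eta}+3\eta)|u-u'|^{2}$, hence $M\le\tfrac12(\sqrt{3\eta}+3\eta)\le\tfrac{3+\sqrt3}{2}\sqrt\eta$ (using $\eta\le1$).

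Combining this with the Lipschitz bound on $b$ and the formula for $\|P_W(\cdot)\|^4$, and using $\eta\le1$ again,
\begin{equation*}
\big\|P_W\big(\Phi(v(u'))^{-1}\Phi(v(u))\big)\big\|^{4}\le 9\eta^{2}|u-u'|^{4}+16M^{2}|u-u'|^{4}\le\big(9+4(3+\sqrt3)^{2}\big)\,\eta\,|u-u'|^{4};
\end{equation*}
since $9+4(3+\sqrt3)^{2}=57+24\sqrt3<6^{4}$, taking fourth roots and recalling $|u-u'|=\|P_V(\Phi(v(u'))^{-1}\Phi(v(u)))\|$ gives $\|P_W(\Phi(v(u'))^{-1}\Phi(v(u)))\|\le 6\sqrt[4]{\eta}\,\|P_V(\Phi(v(u'))^{-1}\Phi(v(u)))\|$, i.e.\ \eqref{intrinsicLip} holds with $L\le 6\sqrt[4]{\eta}$. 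The only non-routine ingredient is the bootstrapping of $\sigma$: the one-pair estimate has no gain as $\eta\to0$, and the factor $\sqrt\eta$ is produced only by playing the cocycle identity for $\sigma$ against the already-established Lipschitz control on the horizontal component $b$ — the vertical component $s$ is never estimated on its own.
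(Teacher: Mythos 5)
Your proof is correct, but it reaches the vertical estimate by a genuinely different mechanism than the paper, and a more roundabout one. All the steps check out: the rotation reduction to $V_I=\R^k\times\{0\}$ is legitimate (the paper's rotations are $d$-isometric group automorphisms commuting with the relevant projections, and they act transitively on $\mathcal V_k^0$ by the property quoted after Definition \ref{d:rot}); the coordinate formulas for $P_V(p)$ and $P_W(p)$ and the cocycle identity for $\sigma$ are verified by direct computation; the absorption $M\le M/2+\tfrac14(\sqrt{3\eta}+3\eta)$ is valid because $M\le 2<\infty$ from the crude bound; and $57+24\sqrt3<6^4$ closes the argument. The paper instead gets the $\sqrt\eta$-gain on the vertical part in one line from the quartic structure of the Korányi norm: since $|\pi(p)|\ge |u-u'|$, the hypothesis gives $16\,\pi_t(p)^2\le \|p\|^4-|u-u'|^4\le\big((1+\eta)^4-1\big)|u-u'|^4\le 15\,\eta\,|u-u'|^4$, i.e. $|\pi_t(p)|\le\sqrt\eta\,|u-u'|^2$ (this is \eqref{2ineq}); combined with your Lipschitz bound on $b$ and the relation $\sigma(u,u')=\pi_t(p)-\omega\big((u-u',0),(0,b(u)-b(u'))\big)$, this yields $|\sigma(u,u')|\lesssim\sqrt\eta\,|u-u'|^2$ directly, with no bootstrap. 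So your closing remark that ``the one-pair estimate has no gain as $\eta\to0$'' is inaccurate: the gain is already there, you simply bounded $|\pi_t(p)|$ by $\|p\|^2/4$ instead of subtracting the horizontal fourth power as you did (at the level of squares) for $b$. This does not invalidate your argument — the midpoint/cocycle bootstrap is a correct substitute, and it has the mild conceptual virtue of using only the gain on the horizontal component $b$ — but it buys nothing here and somewhat obscures where the exponent $1/4$ comes from, namely the anisotropy $\|(0,t)\|\sim\sqrt{|t|}$ in \eqref{eq:KoranyiNorm}. The paper then concludes via the triangle inequality in the Korányi norm rather than your exact identity $\|P_W(p)\|^4=|b(u)-b(u')|^4+16\,\sigma^2$; both routes give the constant $6\sqrt[4]{\eta}$.
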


    \begin{proof}
Consider $v,v'\in V$. Our assumption tells us that \begin{equation}\label{lipassum}\|\Phi(v')^{-1}\cdot\Phi(v)\|\leq (1+\eta)\|(v')^{-1}\cdot v\|=(1+\eta)|\pi(v)-\pi(v')|_{\R^{2n}},\end{equation}
and our goal is to prove that
\begin{equation}\label{lipgoal}\|P_W(\Phi(v')^{-1}\cdot \Phi(v))\|\leq 6\sqrt[4]{\eta}\,\|P_V(\Phi(v')^{-1}\cdot \Phi(v))\|=6\sqrt[4]{\eta}\,|\pi(v)-\pi(v')|_{\R^{2n}}.\end{equation}
From \eqref{lipassum}, we deduce that 
\[\begin{split}\sqrt{|\pi(v)-\pi(v')|^2_{\R^{2n}}+|\pi(\phi(v))-\pi(\phi(v'))|^2_{\R^{2n}}}&=|\pi(\Phi(v')^{-1}\cdot\Phi(v))|_{\R^{2n}}\leq \|\Phi(v')^{-1}\cdot\Phi(v)\|\\ &\leq (1+\eta)|\pi(v)-\pi(v')|_{\R^{2n}},\end{split}\]
which implies, squaring both sides and collecting terms,
\begin{equation}\label{1ineq}|\pi(\phi(v))-\pi(\phi(v'))|_{\R^{2n}}\leq 2\sqrt{\eta}\,|\pi(v)-\pi(v')|_{\R^{2n}}.\end{equation}
Again, by \eqref{lipassum}, we get that
\[\sqrt[4]{|\pi(v)-\pi(v')|_{\R^{2n}}^4+16|\pi_t(\Phi(v')^{-1}\cdot \Phi(v))|^2}\leq \|\Phi(v')^{-1}\cdot\Phi(v)\|\leq (1+\eta)|\pi(v)-\pi(v')|_{\R^{2n}}.\]
Similarly as before, this implies that 
\begin{equation}\label{2ineq}
|\pi_t(\Phi(v')^{-1}\cdot \Phi(v))|\leq \sqrt\eta|\pi(v)-\pi(v')|_{\R^{2n}}^2.
\end{equation}
We now write $V=V_I\times \{0\}$ and consequently $W=V_I^\perp\times\R$, where $V_I$ is an isotropic subspace of $\R^{2n}$. 
Using the definitions of $P_V$ and $P_W$, we can explicitly write \[\begin{split}P_W(z,t)=(z-\pi_{V_I}(z),t-\omega(\pi_{V_1}(z),z))&=(\pi_{V_I^\perp}(z), t-\omega(\pi_{V_I}(z),\pi_{V_I^\perp} (z)))\\ &=(\pi_{V_I^\perp}(z),t)\cdot(0,-\omega(\pi_{V_I}(z),\pi_{V_I^\perp} (z))),\end{split}\]
where $\omega:\R^{2n}\times\R^{2n}\to\R$ is the form appearing in the group law \eqref{grouplaw}. Using this expression for $P_W$, we get by the triangular inequality
\[\begin{split}\|P_W(\Phi(v')^{-1}\cdot \Phi(v))\|\leq&\left\|\big(\pi(\phi(v))-\pi(\phi(v')),\pi_t(\Phi(v')^{-1}\cdot \Phi(v))\big)\right\|\,+\\&+\left\|\big(0,-\omega(\pi(v)-\pi(v'),\pi(\phi(v))-\pi(\phi(v')))\big)\right\|.\end{split}\]
Now we estimate the two terms on the right-hand side.
\[\left\|\big(\pi(\phi(v))-\pi(\phi(v')),\pi_t(\Phi(v')^{-1}\cdot \Phi(v))\big)\right\|\leq |\pi(\phi(v))-\pi(\phi(v'))|_{\R^{2n}}+2\sqrt{|\pi_t(\Phi(v')^{-1}\cdot \Phi(v))|}.\]
Using \eqref{1ineq} and \eqref{2ineq}, we deduce that 
\[\left\|\big(\pi(\phi(v))-\pi(\phi(v')),\pi_t(\Phi(v')^{-1}\cdot \Phi(v))\big)\right\|\leq 4\sqrt[4]{\eta}|\pi(v)-\pi(v')|_{\R^{2n}}.\]
Concerning the other term, 
\[\begin{split}\left\|\big(0,-\omega(\pi(v)-\pi(v'),\pi(\phi(v))-\pi(\phi(v')))\big)\right\|&=2\sqrt{|\omega(\pi(v)-\pi(v'),\pi(\phi(v))-\pi(\phi(v')))|}\\ &\leq 2\sqrt{\frac{1}{2}|\pi(v)-\pi(v')|_{\R^{2n}}|\pi(\phi(v))-\pi(\phi(v'))|_{\R^{2n}}}\\ &\leq 2\sqrt[4]{\eta}|\pi(v)-\pi(v')|_{\R^{2n}},\end{split}\]
where we used \eqref{1ineq} and the simple inequality $|\omega(z,z')|\leq\frac{1}{2}|z|_{\R^{2n}}|z'|_{\R^{2n}}$ for every $z,z'\in \R^{2n}$. Combining the previous inequalities, we finally get \eqref{lipgoal}.
    \end{proof}

\section{Quantitative notions of rectifiability for low-dimensional sets in $\mathbb{H}^n$}\label{s:QuantRectif}

We discuss quantitative conditions that express in a  multi-scale fashion how well a low-dimensional set $E\subset \mathbb{H}^n$ is covered or approximated by ``nice'' objects such as Lipschitz images, intrinsic Lipschitz graphs, or horizontal planes. This intuition is made precise with the notions of \emph{big pieces} (Section \ref{s:BPLI}), \emph{geometric lemmas} (Section \ref{s:GLem}) and \emph{corona decompositions} (Section \ref{ss:CoronaDef}), originating from the work of David and Semmes in Euclidean spaces \cite{David1,David2}. Here we present new and known incarnations of these classical notions.

\subsection{Big pieces of Lipschitz images}\label{s:BPLI}
\begin{definition}[$s$-regular sets]\label{d:k-reg}
  Let $s>0$. A set $E\subset\H^n$ is said to be \textit{AD-$s$-regular} (or simply $s$-\textit{regular}) if it is closed and there exists a constant $C_E>1$ such that
  \begin{equation}\label{def_Ahlfors}
       C_E^{-1}r^s\leq\mathcal H^s(E\,\cap\,B(x,r) )\leq C_Er^s\quad\text{ for every }x\in E,\, 0<r\leq\diam(E),\,r<\infty. \end{equation}
We call the \textit{regularity constant} of $E$ the best possible constant $C_E$ satisfying \eqref{def_Ahlfors}.
\end{definition}
\begin{definition}\label{def_BPLI}
    Let $k\in\{1,\dots,n\}$. We say that a $k$-regular set $E\subset\H^n$ has \emph{big pieces of Lipschitz images of subsets of $\R^k$} (BPLI) if there exist constants $c, L>0$ such that for every $x\in E$, $0<r<\diam(E)$ there exist $A \subset B^k(0,r)$ and an $L$-Lipschitz function $f:A\to\H^n$ with the property that 
    \begin{equation}\label{eq:BPLI}\mathcal H^k\left(E\,\cap B(x,r)\cap f(A)\right)\geq cr^k.
    \end{equation}
\end{definition}
Slightly different definitions are used in the literature, but in our setting they turn out to be all equivalent.
First, by the Lipschitz extension result \cite{WY10}, any Lipschitz function $f: A\subset\R^k\to\H^n$ can be extended on the full $\R^k$ with controlled Lipschitz constant. Notice that this is possible exactly when $k\leq n$ (see \cite{BF09}). Hence it is equivalent to consider functions defined on $B^k(0,r)$ or on $\R^k$, and replace in \eqref{eq:BPLI} the set $f(A)$ by $f(B^k(0,r))$.
Remarkably, this is further equivalent to the condition where $f:A\subset B^k(0,r)\to \H^n$ is required to be \textit{bilipschitz} (by \cite[Corollary 1.9]{schul09}), as in \cite[Theorem 1.1]{Hahlomaa}. In particular, Theorem \ref{t:ImprovedHahlomaa} and its corollaries in Section \ref{s:Intro} all hold with the conclusion  ``big pieces of Lipschitz images'' replaced by ``big pieces of bi-Lipschitz images'' in the sense of \cite[p.1]{Hahlomaa}; see also \cite[\S 1.2]{Bate}. We decided to focus on Definition \ref{def_BPLI} since in \cite{Bate}, BPLI is taken as a definition of uniform rectifiability in metric spaces.


Because a Lipschitz function $f:\R^k\to(\H^n,d)$ is locally Euclidean Lipschitz, while the converse is not true, proving that a set $E\subset \H^n$ satisfies BPLI is not just a Euclidean result. Consequently, sufficient conditions for BPLI in $\mathbb{H}^n$, as introduced in the following, have to take into account the horizontal structure  and cannot be mere copies of 
 the conditions in \cite{David1}.

\subsection{Geometric lemmas}\label{s:GLem}

For any $E\subset \H^n$ and $x\in\H^n$ we denote by
\[d(x, E):=\inf\,\{d(x,y):y\in E\}\]
the distance between the point $x$ and the set $E$.
\begin{definition}\label{def_betaknumbers}
   Let $n\in \mathbb{N}$, $E\subset \H^n, k\in\{1,\ldots,n\}, p\geq 1, x\in\H^n, r>0$. We define
   \begin{align*}\beta_{p, \mathcal V_k}(x,r)=\beta^E_{p, \mathcal V_k}(x,r)&:=\inf_{V\in\mathcal V_k}\left(r^{-k}\int_{E\cap B(x,r)}\left(\frac{d(y,V)}{r}\right)^p\,d\mathcal H^k(y)\right)^\frac{1}{p},\quad p<\infty\\
  \beta_{\infty,\mathcal V_k}(x,r)= \beta^E_{\infty,\mathcal V_k}(x,r)&:=\inf_{V\in\mathcal V_k} \sup_{y\in E\cap B(x,r)} \frac{d(y,V)}{r}.
   \end{align*}
\end{definition}

\begin{definition}\label{def_geometriclemma}
    Let $E\subset \H^n$ be AD-$k$-regular, $1\leq p\leq\infty$, $1\leq q<+\infty$.  We say that \textit{$E$ satisfies the $q$-geometric lemma with respect to $\beta_{p,\mathcal V_k}$}, and we write  $E\in{\rm GLem}(\beta_{p,\mathcal V_k},q)$ if there exists $C>0$ such that for every $R>0$ and every $x\in E$ it holds
    \begin{equation}\label{geometriclemma}\int_0^R\int_{E\cap B(x,R)}\beta_{p, \mathcal V_k}(y,r)^q\,d\mathcal H^k(y)\frac{dr}{r}\leq CR^k.\end{equation}
    \end{definition}

    Condition \eqref{geometriclemma} is one way of expressing that the set $E$ is well approximated by horizontal planes at most places and scales. This is equivalent to saying that $\beta_{p,\mathcal V_k}(y,r)^q d\mathcal H^k(y)\,\frac{dr}{r}$ is a Carleson measure on $E\times \R^+$.
    Some of the results we will use have been stated in terms of different versions of geometric lemmas, so we  review the relevant definitions here. Instead of using the double integral in \eqref{geometriclemma}, geometric lemmas can be formulated with the help of 
    \emph{systems of dyadic cubes}.

    \subsubsection{Carleson conditions for dyadic cubes}

The following is a special case of a construction due to Christ \cite{christ}, with minor modifications that we will indicate below.
    
\begin{definitionT}[Dyadic cubes] \label{dt:dyad}
    Assume that $E\subset \mathbb{H}^n$ is a $k$-regular set with regularity constant $C_E$. 
    Then there exist constants $\varrho \in (0,1)$ and $D\in (1,\infty)$, depending only on $C_E$ and $k$, and a collection $\mathcal{D}=\cup_{j\in \mathbb{J}}\mathcal{D}_j$ where, for every $j\in\mathbb J$, $\mathcal D_j$ is a family of pairwise disjoint Borel sets and 
    \begin{enumerate}
        \item For each $j\in \mathbb{J}$, $E=\bigcup_{Q\in \mathcal{D}_j}Q$.
        \item If $Q_1,Q_2 \in \mathcal{D}$ and $Q_1\cap Q_2\neq \emptyset$, then $Q_1 \subseteq Q_2$ or $Q_2\subset Q_1$.
        \item If $j\in \mathbb{J}$ and $Q\in \mathcal{D}_j$, then $\mathrm{diam}(Q)\leq D \varrho^{j}$.
        \item For each $j\in \mathbb{J}$ and $Q\in \mathcal{D}_j$, there exists $x_Q\in E$ such that $B(x_Q,D^{-1}\rho^{j})\cap E \subset Q$.
        \item $\mathcal{H}^k(\{x\in Q:\, d(x,E\setminus Q)\leq \eta \varrho^j\})\leq D \eta^{\frac{1}{D}}\mathcal{H}^k(Q)$ for all $j\in \mathbb{J}$, $Q\in \mathcal{D}_j$, $\eta>0$.
    \end{enumerate}
 Here $\mathbb{J}=\mathbb{Z}$ if $E$ is unbounded, and  $\mathbb{J}=\{j\in \mathbb{Z}:\,j\geq J_0\}$ where $J_0 \in \mathbb{Z}$ is such that $
    \rho^{J_0+1}\leq \mathrm{diam}(E)<
    \rho^{J_0}$ otherwise.
     
 We also define
    \begin{displaymath}
        \mathcal{D}_{Q_0}:=\{Q\in \mathcal{D}:\,Q\subset Q_0\},\quad Q_0\in \mathcal{D},
    \end{displaymath}
    and for a given constant $\lambda>1$, we set
    \begin{displaymath}
        \lambda Q:= \{x\in E:\, d(x,Q)\leq (\lambda-1)\mathrm{diam}(Q)\}.
    \end{displaymath}
\end{definitionT}
Choosing $D$ large enough, we may and will assume that
\begin{equation}\label{eq:H_(14)}
    D^{-1}\rho^j \leq \mathrm{diam}(Q)\leq D \rho^j \quad \text{and}\quad D^{-1}\rho^{jk}\leq \mathcal{H}^k(Q)\leq D\rho^{jk},\quad Q\in \mathcal{D}_j,\,j\in \mathbb{J},
    \end{equation}
    cf.\ \cite[(14)]{Hahlomaa} or similar computations leading to \cite[(2.8)]{FV1}.
If $Q\in\mathcal D_j$ for some $j\in\mathbb J$, we denote by $\mathcal C(Q)$ the set of \textit{children} of $Q$, namely $\mathcal C(Q):=\{R\in \mathcal D_{j+1}: R\subset Q\}$. 
 Also, for $\mathcal S\subset \mathcal D$, we let $\min(\mathcal S)$ to be the family of minimal (with respect to the inclusion) cubes in $\mathcal S$.
\begin{remark}\label{r:Dyad1}
The cubes in Definition and Theorem \ref{dt:dyad} are essentially as in \cite[(8)--(14)]{Hahlomaa} with $\varrho = 1/\alpha$ and  $\mathcal{D}_j$ playing the role of cubes in generation ``$-j$''.   
The main difference is that we assume that the cubes of a fixed generation cover $E$ entirely, not only up to a $\mathcal{H}^k$-null set. The existence of such cubes can be proven by modifying Christ's original construction as explained in \cite[Section 4]{MR2965363}, see also \cite[Lemma 2.6.1]{Bate}. The resulting cubes are not necessarily open, but  Borel sets, which is sufficient to run the argument in  \cite{Hahlomaa}.
\end{remark}

\begin{remark}\label{r:Dyad2}
Given dyadic cubes as in Definition and Theorem \ref{dt:dyad} for some $\varrho\in (0,1)$, one can always construct a new family that satisfies the same properties with ``$\varrho$'' replaced by ``$1/2$'' and some of the constants possibly depending on $\varrho$, see the proof of \cite[Proposition 2.12]{MR3589162}. Since the original $\varrho$ can be chosen depending only on $k$ and $C_E$, we could without loss of generality assume in Definition and Theorem \ref{dt:dyad}  that $\varrho=1/2$. In particular, all $k$-regular sets in $\mathbb{H}^n$ admit dyadic systems with the properties stated in \cite[Theorem and Definition 2.5]{FV1}.
We did not to specify the value of $\varrho$ in Definition and Theorem \ref{dt:dyad} in order to have a more flexible definition that allows us to apply results stated in the literature for various values of $\varrho$.
\end{remark}

\begin{remark}\label{r:Dyad3}
The dyadic systems used in \cite{Bate} have all the properties required in our Definition and Theorem \ref{dt:dyad}. In fact,
\cite[Lemma 2.6.1]{Bate} states additional properties concerning the ``centers'' of the cubes and the values of constants. These will not be needed for most of our paper, only when applying  \cite[Theorem 9.0.1]{Bate}, we will restrict attention to dyadic systems as in \cite[Lemma 2.6.1]{Bate} (as we may by Remark \ref{r:IndepDyadSyst}).
\end{remark}

We will compare geometric lemmas for different coefficient functions that we now introduce. We recall from Definition \ref{d;SubspaceDef} that 
$\mathcal{V}_k$ denotes the family of all horizontal $k$-planes in $\mathbb{H}^n$, whereas  $A(2n,k)$ stands for Grassmanian of all affine $k$-planes in $\mathbb{R}^{2n}$.

\begin{definition}[Coefficient functions]\label{d:Coefficient functions}
Let $n\in \mathbb{N}$, $k\in \{1,\ldots,n\}$, $E\subset \mathbb{H}^n$ be a $k$-regular set with a dyadic system $\mathcal{D}$, $Q\in \mathcal{D}$, $p\geq 1$, and $\lambda >1$. We define 
\begin{itemize}
\item the \emph{horizontal $\beta$-numbers}
\begin{align*}\beta_{p,\mathcal{V}_k}(\lambda Q)&:=\inf_{V\in\mathcal V_k}\left(\fint_{\lambda Q}\left(\frac{d(y,V)}{\diam (\lambda Q)}\right)^p\,d\mathcal{H}^k(y)\right)^{\frac{1}{p}},\quad p<\infty,\\
\beta_{\infty,\mathcal{V}_k}(\lambda Q)&:=\inf_{V\in\mathcal V_k}
\,\sup_{y\in \lambda Q} \frac{d(y,V)}{\diam (\lambda Q)}
\end{align*}
    \item  the \emph{stratified $\beta$-numbers}
    \begin{align*}\widehat{\beta}_{p,\mathcal{V}_k}(\lambda Q)&:=\inf_{V\in\mathcal V_k}\left(\left[\fint_{\lambda Q}\left(\tfrac{d_{\mathrm{Eucl}}(\pi(y),\pi(V))}{\diam (\lambda Q)}\right)^p\,d\mathcal{H}^k(y)\right]^{\frac{2}{p}}+\left[\fint_{\lambda Q}\left(\tfrac{d(y,V)}{\diam (\lambda Q)}\right)^p\,d\mathcal{H}^k(y)\right]^{\frac{4}{p}}\right)^{\frac{1}{4}}, p<\infty,\\
\widehat{\beta}_{\infty,\mathcal{V}_k}(\lambda Q)&:=\inf_{V\in\mathcal V_k}\,\left(\sup_{y\in \lambda Q}\left[\frac{d_{\mathrm{Eucl}}(\pi(y),\pi(V))}{\diam (\lambda Q)}\right]^2+\sup_{y\in \lambda Q}\left[\frac{d(y,V)}{\diam (\lambda Q)}\right]^4\right)^{\frac{1}{4}}
\end{align*}
    \item  the \emph{horizontal projection $\beta$-numbers}
    \begin{align*}\beta_{p,\pi,\mathcal{V}_k}(\lambda Q)&:=\inf_{V\in\mathcal V_k}\left(\fint_{\lambda Q}\left(\frac{d_{\mathrm{Eucl}}(\pi(y),\pi(V))}{\diam (\lambda Q)}\right)^p\,d\mathcal{H}^k(y)\right)^{\frac{1}{p}},\quad p<\infty,\\
\beta_{\infty,\pi,\mathcal{V}_k}(\lambda Q)&:=\inf_{V\in\mathcal V_k}\,  \sup_{y\in \lambda Q} \frac{d_{\mathrm{Eucl}}(\pi(y),\pi(V))}{\diam (\lambda Q)}
\end{align*}
    \item  the \emph{projection $\beta$-numbers}
    \begin{align*}\beta_{p,\pi,A(2n,k)}(\lambda Q)&:=\inf_{W\in A(2n,k)}\left(\fint_{\lambda Q}\left(\frac{d_{\mathrm{Eucl}}(\pi(y),W)}{\diam (\lambda Q)}\right)^p\,d\mathcal{H}^k(y)\right)^{\frac{1}{p}},\quad p<\infty,\\
\beta_{\infty,\pi,A(2n,k)}(\lambda Q)&:=\inf_{W\in A(2n,k)}\,  \sup_{y\in \lambda Q} \frac{d_{\mathrm{Eucl}}(\pi(y),W)}{\diam (\lambda Q)}
\end{align*}
\item the \emph{projection $\iota$-numbers}
\begin{align*}\iota_{p,\mathcal{V}_k}(\lambda Q) &:=\inf_{V\in\mathcal V_k }\left(\fint_{\lambda Q}\fint_{\lambda Q}\left(\frac{\left|d(x,y)-d(P_V(x),P_V(y))\right|}{\mathrm{diam}(\lambda Q)}\right)^p\,d\mathcal{H}^k(x)\,d\mathcal{H}^k(y)\right)^{\frac{1}{p}},\quad p<\infty,\\\iota_{\infty,\mathcal{V}_k}(\lambda Q) &:=\inf_{V\in\mathcal V_k }\,  \sup_{y\in \lambda Q} \frac{\left|d(x,y)-d(P_V(x),P_V(y))\right|}{\mathrm{diam}(\lambda Q)} \end{align*}
\end{itemize}
\end{definition}
\begin{remark}\label{r:BallGeom}
The functions $h\in \{\beta, \widehat\beta,\beta_{\pi},\iota\}$, $h:\mathcal{D} \to [0,\infty)$, have obvious counterparts $h:E \times [0,\infty) \to [0,\infty)$ defined using balls as in Definition \ref{def_betaknumbers} instead of cubes. More precisely, one replaces ``$\fint_{\lambda Q}$'' with ``$r^{-k}\int_{E\cap B(x,r)}$'', and ``$\mathrm{diam}(\lambda Q)$'' with ``$r$''.
Abusing notation, we will denote the values of these functions by $h(y,r)$. 
\end{remark}

Horizontal $\beta_{\infty}$-numbers in $\mathbb{H}^n$ were introduced for $k=n=1$ in connection with the traveling salesman theorem \cite{MR2371434}; the higher-dimensional $L^1$-based variants were used by Hahlomaa \cite{Hahlomaa}. The stratified $\beta$-numbers we define here are inspired by Li's definition \cite{Li} for $p=\infty$ in Carnot groups. The projection $\iota$-numbers were introduced by the first author and Violo in \cite{FV2}. The projection $\beta$-numbers have to the best of our knowledge not been used before, and we comment on them in the next remark.

\begin{remark}\label{r:AffVsFullProjBeta}
The horizontal projection $\beta$-numbers $\beta_{p,\pi,\mathcal V_k}$ arise naturally in relation with the stratified $\beta$-numbers $\widehat\beta_{p,\pi,\mathcal V_k}$ and the horizontal $\beta$-numbers $\beta_{p,\mathcal V_k}$. 
Since, for any $V\in \mathcal V_k$, the projection $\pi(V)$ is an \textit{affine isotropic subspace} of $\R^{2n}$ (namely $\pi(V)=z+W$, with $z\in \R^{2n}$ and $W$ an isotropic subspace of $\R^{2n}$) and, conversely, any  $k$-dimensional affine isotropic subspace of $\R^{2n}$ can be expressed as $\pi(V)$ for some $V\in\mathcal V_k$, it follows that in the definition of $\beta_{p,\pi,\mathcal V_k}$ one can equivalently consider isotropic subspaces of dimension $k$ in $\R^{2n}$ instead of projections of affine horizontal $k$-planes. Since the projection $\beta$-numbers are instead computed by minimizing among \textit{all} $W\in A(2n,k)$, it follows that, for any $p\geq 1, \lambda>1$,
\[\beta_{p,\pi,A(2n,k)}(\lambda Q)\leq \beta_{p,\pi,\mathcal V_k}(\lambda Q).\] 
\end{remark}

The properties stated in the next two lemmas hold more generally for all coefficient functions in Definition \ref{d:Coefficient functions}, but we focus here on the statements that will be applied later in the paper.

\begin{lemma}\label{lem_translations}
 For any $x,y, z\in\H^n$, $V\in \mathcal V_k$ it holds \[|\pi(x)-\pi(y)|=|\pi(z\cdot x)-\pi(z\cdot y)|,\quad d(P_V(x),P_V(y))=d(P_{z\cdot V}(z\cdot x),P_{z\cdot V}(z\cdot y)).\] 
 
 Moreover, if $z\in \mathbb{H}^n$ and $E$ is a $k$-regular set with a dyadic system $\mathcal{D}$, then $\{z\cdot Q\}$ is a dyadic system for the $k$-regular set $z\cdot E$ and, for all $p\in [1,\infty]$, $\lambda\geq 1, Q\in \mathcal{D}$, we have the following identities between the coefficient functions associated to $E$ and $z\cdot E$, respectively:
  \[\widehat\beta_{p,\mathcal{V}_k}(\lambda Q)=\widehat \beta_{p,\mathcal{V}_k}(\lambda (z\cdot Q)), \; \beta_{p,\pi,A(2n,k)}(\lambda Q)=\beta_{p,\pi,A(2n,k)}(\lambda (z\cdot Q)), \;\iota_{p,\mathcal V_k}(\lambda Q)=\iota_{p,\mathcal V_k}(\lambda(z\cdot Q)).\]
\end{lemma}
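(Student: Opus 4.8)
The plan is to exploit the fact that left translations $L_z(w) = z\cdot w$ are isometries of $(\mathbb{H}^n, d)$ together with the explicit structure of the group law \eqref{grouplaw}. First I would establish the two displayed identities in the first sentence. For the projection identity $|\pi(x) - \pi(y)| = |\pi(z\cdot x) - \pi(z\cdot y)|$, I would simply compute $\pi(z\cdot x) = \pi(z) + \pi(x)$ directly from \eqref{grouplaw}, since the first $2n$ coordinates add, so the difference $\pi(z\cdot x) - \pi(z\cdot y) = \pi(x) - \pi(y)$ is literally unchanged. For the second identity $d(P_V(x), P_V(y)) = d(P_{z\cdot V}(z\cdot x), P_{z\cdot V}(z\cdot y))$, the key observation is the intertwining relation $P_{z\cdot V} \circ L_z = L_z \circ P_V$, which follows from Definition \ref{d:AffHeisProj}: writing $V = x_0 \cdot V_0$ with $V_0 \in \mathcal{V}_k^0$, one has $z\cdot V = (z\cdot x_0)\cdot V_0$, and then $P_{z\cdot V}(z\cdot y) = (z\cdot x_0)\cdot P_{V_0}((z\cdot x_0)^{-1}\cdot z\cdot y) = z\cdot\big(x_0 \cdot P_{V_0}(x_0^{-1}\cdot y)\big) = z\cdot P_V(y)$. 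Once this intertwining is in hand, the distance identity is immediate: $d(P_{z\cdot V}(z\cdot x), P_{z\cdot V}(z\cdot y)) = d(z\cdot P_V(x), z\cdot P_V(y)) = d(P_V(x), P_V(y))$, using that $L_z$ is an isometry.

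Next I would treat the dyadic system claim. Since $L_z$ is an isometry of $(\mathbb{H}^n,d)$, it preserves $\mathcal{H}^k$, maps $E$ to the $k$-regular set $z\cdot E$ with the same regularity constant, and maps balls to balls of the same radius. Therefore all five properties (1)--(5) in Definition and Theorem \ref{dt:dyad}, as well as the refinements \eqref{eq:H_(14)}, are preserved verbatim under pushing forward by $L_z$: property (1) because $L_z$ is a bijection respecting unions, property (2) because it respects inclusions, properties (3)--(5) because diameters, the balls $B(x_Q, D^{-1}\varrho^j)$, and the Hausdorff measure are all invariant. Hence $\{z\cdot Q : Q \in \mathcal{D}\}$ is a dyadic system for $z\cdot E$ with the same constants $\varrho, D$, and the cube $z\cdot(\lambda Q)$ in this new system is exactly $\lambda(z\cdot Q)$ by the definition of the dilated cube $\lambda Q$ (again using that $L_z$ preserves distances and diameters).

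For the final three identities between coefficient functions, the strategy is the same in each case: the defining integrals are over $\lambda(z\cdot Q) = z\cdot(\lambda Q)$, and one changes variables via the measure-preserving map $L_z$, reducing to an integral over $\lambda Q$; it then remains to check that the integrand is invariant. For $\beta_{p,\pi,A(2n,k)}$, after substituting $y = z\cdot y'$ one needs $\inf_{W\in A(2n,k)} d_{\mathrm{Eucl}}(\pi(z\cdot y'), W) = \inf_{W} d_{\mathrm{Eucl}}(\pi(y'), W)$ pointwise, which holds because $\pi(z\cdot y') = \pi(z) + \pi(y')$ and the family $A(2n,k)$ of affine planes is invariant under the Euclidean translation $w \mapsto w + \pi(z)$; the diameters $\mathrm{diam}(\lambda(z\cdot Q)) = \mathrm{diam}(\lambda Q)$ match by isometry. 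For $\iota_{p,\mathcal{V}_k}$ one uses the substitution in both variables together with the intertwining $P_{z\cdot V}\circ L_z = L_z\circ P_V$ and the isometry property to see that the integrand $|d(x,y) - d(P_V(x), P_V(y))|$ is unchanged, while the infimum over $V \in \mathcal{V}_k$ is preserved because $V \mapsto z\cdot V$ is a bijection of $\mathcal{V}_k$. The argument for $\widehat\beta_{p,\mathcal{V}_k}$ combines the two previous ones, using $\pi(z\cdot V) = \pi(z) + \pi(V)$ so that $d_{\mathrm{Eucl}}(\pi(z\cdot y'), \pi(z\cdot V)) = d_{\mathrm{Eucl}}(\pi(y'), \pi(V))$ and $d(z\cdot y', z\cdot V) = d(y', V)$. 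I expect no serious obstacle here; the only point requiring slight care is bookkeeping the correspondence between the infimum-defining plane for $z\cdot E$ and the one for $E$ (namely $V \leftrightarrow z\cdot V$), and verifying once and for all that $L_z$ carries the dilated cube $\lambda Q$ of the $E$-system to the dilated cube $\lambda(z\cdot Q)$ of the $(z\cdot E)$-system.
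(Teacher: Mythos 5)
Your proposal is correct and follows essentially the same route as the paper: the additivity of $\pi$ under the group law, the intertwining identity $P_{z\cdot V}(z\cdot y)=z\cdot P_V(y)$ from Definition \ref{d:AffHeisProj}, and the left-invariance of $d$ and $\mathcal{H}^k$ together with the invariance of the families $\mathcal{V}_k$ and $A(2n,k)$ under the respective translations. Your write-up is merely more explicit than the paper's (e.g.\ on the dyadic-system claim and the identity $z\cdot(\lambda Q)=\lambda(z\cdot Q)$), with no substantive difference.
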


\begin{proof}
The claim about the dyadic systems holds due to the left-invariance of $d$. 
     The property $|\pi(x)-\pi(y)|=|\pi(z\cdot x)-\pi(z\cdot y)|$ follows immediately from the structure of the group law and the invariance under translations of the Euclidean distance.
     Together with the left-invariance of $d$ and the resulting left-invariance of $\mathcal{H}^k$, it implies the claimed left-invariance property of the $\widehat \beta$-numbers and of the $\beta_{\pi}$-numbers, observing also that $\mathcal{V}_k =\{z\cdot V:\, V\in \mathcal{V}_k\}$ for all $z\in \mathbb{H}^n$, and $A(2n,k)=\{u+ W:\, W\in A(2n,k)\}$ for all $u\in \mathbb{R}^{2n}$.
     
     
 For the second property, recall from Definition \ref{d:AffHeisProj} that if $V=q\cdot V_0$ with $V_0$ isotropic, then $P_V(x)=q\cdot P_{V_0}(q^{-1}\cdot x)$. This implies that $P_{z\cdot V}(z\cdot x)=z\, q\cdot P_{V_0}(q^{-1}z^{-1}z\,x)=z\cdot P_V(x).$ In particular we get that
  \begin{equation}\label{eq_transinv}d(P_V(x),P_V(y))=d(z\cdot P_V(x),z\cdot P_V(y))=d(P_{z\cdot V}(z\cdot x),P_{z\cdot V}(z\cdot y)).\end{equation}
  This identity, together with the left-invariance properties  of $d$ and  $\mathcal{H}^k$, implies the claimed left-invariance property for the $\iota$-numbers. 
\end{proof}

Similar arguments as in the proof of Lemma \ref{lem_translations} yield the following result.

\begin{lemma}\label{l:RotInv}
    If $\varphi:\mathbb{H}^n\to \mathbb{H}^n$ is a rotation and $E$ is a $k$-regular set with a dyadic system $\mathcal{D}$, then $\{\varphi(Q)\}$ is a dyadic system for the $k$-regular set $\varphi( E)$ and, for all $p\in [1,\infty]$, $\lambda\geq 1, Q\in \mathcal{D}$, we have
  \[ \beta_{p,\pi,A(2n,k)}(\lambda Q)=\beta_{p,\pi,A(2n,k)}(\lambda \varphi( Q)).\]
\end{lemma}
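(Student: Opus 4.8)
The plan is to follow the proof of Lemma~\ref{lem_translations} verbatim, with left translations replaced by the rotation $\varphi$. I would use exactly two structural facts recalled after Definition~\ref{d:rot}: first, that a rotation is a group automorphism and an isometry of $(\mathbb{H}^n,d)$; and second, that in coordinates $\varphi(z,t)=(Az,t)$ with $A\in SU(n)\subseteq SO(2n)$, so that $\pi\circ\varphi=A\circ\pi$, where $A$ acts as a Euclidean linear isometry of $\R^{2n}$. (If one prefers to avoid the coordinate description, the third identity in Definition~\ref{d:rot} already forces $y\mapsto\pi(\varphi(y))$ to depend only on the horizontal part and to be an orthogonal linear map of $\R^{2n}$, which is all that is needed.)

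First I would check that $\{\varphi(Q)\}$ is a dyadic system for $\varphi(E)$. Since $\varphi$ is an isometry of $(\mathbb{H}^n,d)$, it maps the closed set $E$ to a closed set with the same regularity constant, hence $\varphi(E)$ is $k$-regular, and $\varphi$ pushes $\mathcal{H}^k\res E$ forward to $\mathcal{H}^k\res\varphi(E)$. Every item (1)--(5) of Definition and Theorem~\ref{dt:dyad}, and the estimates \eqref{eq:H_(14)}, are phrased purely in terms of $d$ and $\mathcal{H}^k$, so they transfer to $\{\varphi(Q)\}$ with the same constants $\varrho,D$; children and the fattenings $\lambda Q$ also transfer, since $\varphi(\lambda Q)=\lambda\,\varphi(Q)$ and $\mathrm{diam}(\lambda\varphi(Q))=\mathrm{diam}(\lambda Q)$ by isometry invariance. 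This part is routine.

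For the identity between the $\beta_{p,\pi,A(2n,k)}$-numbers I would then argue as follows. Using $\varphi(\lambda Q)=\lambda\,\varphi(Q)$, the change of variables $y'=\varphi(y)$, the invariance of $\mathcal{H}^k$, $\mathrm{diam}(\lambda\varphi(Q))=\mathrm{diam}(\lambda Q)$, and $\pi(\varphi(y))=A\pi(y)$ with $A$ Euclidean-orthogonal (so $d_{\mathrm{Eucl}}(A\pi(y),W)=d_{\mathrm{Eucl}}(\pi(y),A^{-1}W)$), one gets for each fixed $W\in A(2n,k)$ and $p<\infty$:
\begin{align*}
\fint_{\lambda\varphi(Q)}\Big(\tfrac{d_{\mathrm{Eucl}}(\pi(y'),W)}{\mathrm{diam}(\lambda\varphi(Q))}\Big)^{p}\,d\mathcal{H}^k(y')
&=\fint_{\lambda Q}\Big(\tfrac{d_{\mathrm{Eucl}}(A\pi(y),W)}{\mathrm{diam}(\lambda Q)}\Big)^{p}\,d\mathcal{H}^k(y)\\
&=\fint_{\lambda Q}\Big(\tfrac{d_{\mathrm{Eucl}}(\pi(y),A^{-1}W)}{\mathrm{diam}(\lambda Q)}\Big)^{p}\,d\mathcal{H}^k(y).
\end{align*}
Since $W\mapsto A^{-1}W$ is a bijection of $A(2n,k)$ onto itself, taking the infimum over $W$ on both sides yields $\beta_{p,\pi,A(2n,k)}(\lambda\varphi(Q))=\beta_{p,\pi,A(2n,k)}(\lambda Q)$; the case $p=\infty$ is identical with $\sup_{y}$ in place of $\fint$.

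I do not expect a genuine obstacle here. The only point that needs a moment's care is the identification $\pi\circ\varphi=A\circ\pi$ with $A$ a Euclidean isometry of $\R^{2n}$ --- which is precisely the coordinate description of rotations recalled after Definition~\ref{d:rot}, together with $SU(n)\subseteq SO(2n)$ --- and the observation that the relevant quantities ($d_{\mathrm{Eucl}}\circ\pi$, $A(2n,k)$, the fattenings $\lambda Q$, $\mathrm{diam}$, and $\mathcal{H}^k$) are all either invariant or transform in the elementary way described above; everything else is a transcription of Lemma~\ref{lem_translations}.
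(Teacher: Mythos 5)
Your proof is correct and follows exactly the route the paper intends: the paper gives no separate argument for this lemma, stating only that ``similar arguments as in the proof of Lemma \ref{lem_translations}'' apply, and your transcription (isometry invariance of $d$, $\mathcal{H}^k$, $\diam$ and the fattenings $\lambda Q$, plus $\pi\circ\varphi=A\circ\pi$ with $A$ Euclidean-orthogonal and $W\mapsto A^{-1}W$ a bijection of $A(2n,k)$) is precisely that argument spelled out.
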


\begin{lemma}\label{l:NonAffineProj}
Let $V= z \cdot V_0 \in \mathcal{V}_k$ with $z\in \mathbb{H}^n$ and $V_0 \in \mathcal{V}_k^0$. Then
\begin{displaymath}
    d(P_V(x),P_V(y))=d(P_{V_0}(x),P_{V_0}(y)),\quad x,y\in\mathbb{H}^n.
\end{displaymath}
In particular, in the definition of $\iota_{p,\mathcal{V}_k}$ we can replace ``$\mathcal{V}_k$'' by ``$\mathcal{V}_k^0$'' without changing the values of the coefficients. 
\end{lemma}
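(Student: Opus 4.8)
The plan is to reduce the identity entirely to two facts already recorded in the preliminaries: the left-invariance of the Kor\'anyi distance $d$, and the fact that the horizontal projection $P_{V_0}$ onto a horizontal subgroup $V_0 \in \mathcal{V}_k^0$ is a group homomorphism.

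First, I would unfold the definition of the affine projection. Writing $V = z \cdot V_0$, Definition \ref{d:AffHeisProj} gives $P_V(x) = z \cdot P_{V_0}(z^{-1}\cdot x)$ and $P_V(y) = z \cdot P_{V_0}(z^{-1}\cdot y)$, so left-invariance of $d$ yields
\[
d(P_V(x), P_V(y)) = d\big(P_{V_0}(z^{-1}\cdot x),\, P_{V_0}(z^{-1}\cdot y)\big).
\]
Next, using that $P_{V_0}$ is a homomorphism, $P_{V_0}(z^{-1}\cdot x) = P_{V_0}(z^{-1}) \cdot P_{V_0}(x)$ and similarly for $y$; applying left-invariance of $d$ once more, this time translating on the left by $(P_{V_0}(z^{-1}))^{-1}$, collapses the factor $P_{V_0}(z^{-1})$ and gives $d(P_{V_0}(z^{-1}\cdot x), P_{V_0}(z^{-1}\cdot y)) = d(P_{V_0}(x), P_{V_0}(y))$, which is the claim.

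For the final assertion, note that $\mathcal{V}_k^0 \subset \mathcal{V}_k$, so the infimum defining $\iota_{p,\mathcal{V}_k}(\lambda Q)$ over $\mathcal{V}_k$ is at most the corresponding infimum over $\mathcal{V}_k^0$. Conversely, every $V \in \mathcal{V}_k$ is of the form $z \cdot V_0$ with $V_0 \in \mathcal{V}_k^0$, and the identity just proven shows that the double average (resp.\ supremum, when $p = \infty$) defining the $\iota$-functional is unchanged upon replacing $V$ by $V_0$; hence each admissible competitor in $\mathcal{V}_k$ has a competitor in $\mathcal{V}_k^0$ producing the same value, and the two infima coincide.

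Finally, regarding difficulty: there is no genuine obstacle here, as the statement is a bookkeeping consequence of left-invariance of $d$ together with the homomorphism property of $P_{V_0}$. The only point meriting attention is to keep track of the side on which translations act, so that left-invariance of $d$ applies verbatim in both steps, and to use the affine-projection formula of Definition \ref{d:AffHeisProj} (rather than the subgroup formula) when $V$ is not itself a subgroup.
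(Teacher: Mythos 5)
Your proof is correct and follows essentially the same route as the paper: both begin by reducing $d(P_V(x),P_V(y))$ to $d(P_{V_0}(z^{-1}\cdot x),P_{V_0}(z^{-1}\cdot y))$ using $P_V(x)=z\cdot P_{V_0}(z^{-1}\cdot x)$ and left-invariance of $d$ (in the paper this is the identity \eqref{eq_transinv} from Lemma \ref{lem_translations}). The only difference is the finish: the paper removes the translation by writing the distance on $V_0$ in coordinates, $d(P_{V_0}(u),P_{V_0}(v))=|\pi_{V_{0,I}}(\pi(u))-\pi_{V_{0,I}}(\pi(v))|$, and then invoking the group law and linearity of the Euclidean projection, whereas you invoke the homomorphism property of $P_{V_0}$ (recorded in the paper after the definition of horizontal projections) together with left-invariance once more — an equally valid, slightly more coordinate-free argument; your treatment of the ``in particular'' clause about $\iota_{p,\mathcal{V}_k}$ is also correct.
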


\begin{proof}
   By \eqref{eq_transinv}, the fact that $V_0=V_{0,I}\times \{0\}$ is a subgroup, the structure of the group law and the linearity of Euclidean orthogonal projections, we obtain
  \[\begin{split}d(P_V(x),P_V(y))&=d(P_{V_0}(z^{-1}x),P_{V_0}(z^{-1}y))=|\pi_{V_{0,I}}(\pi(z^{-1}x))-\pi_{V_{0,I}}(\pi(z^{-1}y))|\\ &=|\pi_{V_{0,I}}(\pi(x))-\pi_{V_{0,I}}(\pi(y))|=d(P_{V_0}(x), P_{V_0}(y)).\qedhere\end{split}\]
\end{proof}

\begin{definition}[Geometric lemma]\label{d:GL}
Let $n\in \mathbb{N}$, $k\in \{1,\ldots,n\}$, and let $h$ be one of the coefficient functions in Definition \ref{d:Coefficient functions}.
Given $1\leq q<+\infty$, a $k$-regular set $E\subset \mathbb{H}^n$ with a dyadic system $\mathcal{D}$ 
and $\lambda >1$, we say that $E$ satisfies the
\emph{$q$-geometric lemma with respect to $h$}, and  write
$E\in\mathrm{GLem}(h,q)$, if there exists a constant $C$ such that
we have
\begin{equation}\label{eq:SGL}
\sum_{Q\in \mathcal{D}_{Q_0}} h(\lambda Q)^q\, \mathcal{H}^k(Q) \leq C \mathcal{H}^k(Q_0), \quad
Q_0\in \mathcal{D}.
\end{equation}
\end{definition}

A few remarks are in order:
\begin{remark}\label{r:IndepDyadSyst}
 Dyadic systems are not unique, but if $h$ is one of the coefficient functions in Definition \ref{d:GL}, and $E$ satisfies \eqref{eq:SGL} for one dyadic system $\mathcal{D}$, then it fulfills the same condition for \emph{all} possible such systems (with possibly different, but quantitatively controlled constants). This was proven in \cite[Remark 2.16]{FV1} for dyadic systems that satisfy the properties of Definition and Theorem \ref{dt:dyad} with $\varrho=1/2$. 
 The result for arbitrary $\varrho$, with $C$ depending 
possibly on $\varrho$, follows by the procedure in the proof of \cite[Proposition 2.12]{MR3589162}. Alternatively, one can pass via the integral characterization in Lemma \ref{l:DiffGeomLem} to verify that the  validity of the geometric lemma does not depend on the chosen dyadic system on $E$. 
 \end{remark}

 \begin{remark}\label{r_independenceoflambda}
  For $h$ as in Definition \ref{d:GL}, the validity of \eqref{eq:SGL} (up to changing the constant ``$C$'' in a quantitative way) is independent of the choice of $\lambda>1$, see \cite[Remark 2.30]{FV1} and \cite[Lemma 2.23]{FV1}, which holds as stated for $K_0 >1$.
\end{remark}

Geometric lemmas for standard Jones-type $\beta$-numbers in Euclidean spaces can be equivalently stated with a Carleson measure condition in the spirit of Definition \ref{def_geometriclemma} or with a discrete condition in the spirit of Definition \ref{d:GL}, see, for instance, the comment around (1.51) in  \cite[p.20]{David2}. An analogous reformulation is possible in our setting.

\begin{lemma}\label{l:DiffGeomLem}
Let $n\in \mathbb{N}$, $k\in \{1,\ldots,n\}$, $q\in [1,+\infty)$, and let $h$ be one of the coefficient functions in Definition \ref{d:Coefficient functions}.
Then  a $k$-regular set $E\subset \mathbb{H}^n$ satisfies $E\in\mathrm{GLem}(h,q)$ in the sense of Definition \ref{d:GL} if and only if there exists $C>0$ such that for any $R>0$ and $x\in E$ it holds
\begin{equation}\label{eq:IntGLem}
    \int_0^R \int_{E\cap B(x,R)}h(y,r)^q\,d\mathcal{H}^k(y)\,\frac{dr}{r}\leq C R^k.
\end{equation}
\end{lemma}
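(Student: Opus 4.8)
The plan is to prove the two implications separately, bridged by the elementary fact that for $y\in Q\in\mathcal D_j$ and $r$ of the order of $\diam(Q)$ the cube-based coefficient $h(\lambda Q)$ and the ball-based coefficient $h(y,r)$ from Remark \ref{r:BallGeom} are comparable. The one input I would record first is a monotonicity property common to all coefficient functions of Definition \ref{d:Coefficient functions}: if $A'\subset A$ are subsets of $E$ with $\mathcal H^k(A')\sim\mathcal H^k(A)$ and $\diam(A')\sim\diam(A)$, then $h(A')\lesssim h(A)$. Indeed, testing the infimum defining $h(A')$ with a near-optimal (horizontal, or affine) plane for $h(A)$, enlarging the domain of integration only increases the integral, while the normalising factors $\mathcal H^k(\cdot)$ and $\diam(\cdot)$ change by bounded factors; for $\widehat\beta$ one argues so for each of the two averages, and for $\iota$ for the double average. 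Combining this with properties (3)--(4) of Definition and Theorem \ref{dt:dyad} and with \eqref{eq:H_(14)} yields: there are constants $1<c_1<c_2$, depending only on $C_E,k,\varrho,D,\lambda$, such that, writing $I_j:=[c_1\varrho^j,c_2\varrho^j]$, for every $Q\in\mathcal D_j$ one has $h(\lambda Q)\lesssim h(y,r)$ whenever $y\in Q$ and $r\in I_j$ (because then $\lambda Q\subset E\cap B(y,r)$ and all masses and diameters are $\sim\varrho^{jk}$, $\sim\varrho^j$), and conversely $h(y,r)\lesssim h(\lambda Q)$ for $y\in Q$, $r\in I_j$ (because then $E\cap B(y,r)\subset\lambda' Q$ for a suitable larger $\lambda'$, and $\lambda' Q$ may be replaced by $\lambda Q$ by Remark \ref{r_independenceoflambda}). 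By Remarks \ref{r:IndepDyadSyst} and \ref{r_independenceoflambda} we may fix throughout the Christ system of Definition and Theorem \ref{dt:dyad} and the most convenient value of $\lambda$; we also arrange $c_2\geq c_1/\varrho$, so that $\bigcup_{j\geq j_0}I_j=(0,c_2\varrho^{j_0}]$ and the intervals $\{I_j\}$ have bounded overlap.

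For the implication \eqref{eq:SGL} $\Rightarrow$ \eqref{eq:IntGLem}, fix $x\in E$ and $R>0$ and let $j_R$ be the largest integer with $c_2\varrho^{j_R}\geq R$, so $\varrho^{j_R}\sim R$ and $(0,R]\subset\bigcup_{j\geq j_R}I_j$. For $r\in I_j$ ($j\geq j_R$) and $y\in E\cap B(x,R)$, let $Q(y)\in\mathcal D_j$ be the cube containing $y$; then $h(y,r)\lesssim h(\lambda Q(y))$, and since the cubes of $\mathcal D_j$ meeting $B(x,R)$ partition $E\cap B(x,R)$ with $\mathcal H^k(Q)\sim\varrho^{jk}$, integrating in $y$ and then in $r\in I_j$ (where $\int_{I_j}\frac{dr}{r}\sim1$) gives $\int_{I_j}\int_{E\cap B(x,R)}h(y,r)^q\,d\mathcal H^k(y)\frac{dr}{r}\lesssim\sum_{Q\in\mathcal D_j,\,Q\cap B(x,R)\neq\emptyset}h(\lambda Q)^q\,\mathcal H^k(Q)$. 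Summing over $j\geq j_R$, the right-hand side becomes a sum over all dyadic cubes meeting $B(x,R)$ of generation $\geq j_R$ (modulo boundedly many generations that can be absorbed into the top level); by $k$-regularity there are only boundedly many cubes $Q_0^{(1)},\dots,Q_0^{(N)}\in\mathcal D_{j_R}$ meeting $B(x,R)$, each of those cubes is contained in some $Q_0^{(i)}$, and hence the total is $\leq\sum_{i=1}^N\sum_{Q\in\mathcal D_{Q_0^{(i)}}}h(\lambda Q)^q\,\mathcal H^k(Q)\lesssim\sum_{i=1}^N\mathcal H^k(Q_0^{(i)})\lesssim R^k$ by \eqref{eq:SGL} and \eqref{eq:H_(14)}.

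For the converse \eqref{eq:IntGLem} $\Rightarrow$ \eqref{eq:SGL}, fix $Q_0\in\mathcal D_{j_0}$ and put $R:=c_2\varrho^{j_0}+\diam(Q_0)\sim\varrho^{j_0}$, so that for every $Q\in\mathcal D_{Q_0}\cap\mathcal D_j$ one has $E\cap B(x_Q,D^{-1}\varrho^j)\subset Q\subset Q_0$, $B(x_Q,D^{-1}\varrho^j)\subset B(x_{Q_0},R)$ and $I_j\subset(0,R]$. By $k$-regularity $\mathcal H^k(E\cap B(x_Q,D^{-1}\varrho^j))\gtrsim\varrho^{jk}\gtrsim\mathcal H^k(Q)$, and $h(\lambda Q)\lesssim h(y,r)$ for $y$ in this ball and $r\in I_j$; since $\int_{I_j}\frac{dr}{r}\sim1$ this gives $h(\lambda Q)^q\,\mathcal H^k(Q)\lesssim\int_{I_j}\int_{E\cap B(x_Q,D^{-1}\varrho^j)}h(y,r)^q\,d\mathcal H^k(y)\frac{dr}{r}$. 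Summing over $Q\in\mathcal D_{Q_0}$: for fixed $j$ the sets $E\cap B(x_Q,D^{-1}\varrho^j)$, $Q\in\mathcal D_j$, are pairwise disjoint and contained in $E\cap B(x_{Q_0},R)$, so the spatial integrals add up to at most $\int_{E\cap B(x_{Q_0},R)}h(y,r)^q\,d\mathcal H^k(y)$; and since the intervals $\{I_j\}$ have bounded overlap, summing the $r$-integrals over $j$ costs only a constant. Therefore $\sum_{Q\in\mathcal D_{Q_0}}h(\lambda Q)^q\,\mathcal H^k(Q)\lesssim\int_0^R\int_{E\cap B(x_{Q_0},R)}h(y,r)^q\,d\mathcal H^k(y)\frac{dr}{r}\lesssim R^k\sim\mathcal H^k(Q_0)$ by \eqref{eq:IntGLem} and \eqref{eq:H_(14)}.

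The only genuinely non-bookkeeping point -- and the hard part to get cleanly -- is the comparison $h(\lambda Q)\sim h(y,r)$ valid simultaneously for all six coefficient functions of Definition \ref{d:Coefficient functions}; the composite nature of the $\widehat\beta$-numbers and the double average in the $\iota$-numbers require slightly more care than the plain $\beta$-numbers, but in each case it reduces to the monotonicity-under-enlargement principle above together with the comparability of the $\mathcal H^k$-masses and diameters of $\lambda Q$, $B(x_Q,D^{-1}\varrho^j)$ and $B(y,r)$ provided by Definition and Theorem \ref{dt:dyad} and \eqref{eq:H_(14)}. Everything else -- matching the generation index to the radius range, disjointness of same-generation Christ balls, finite overlap of the $I_j$, finiteness of $N$, the harmless dependence on $\lambda$ and on the choice of dyadic system (Remarks \ref{r:IndepDyadSyst} and \ref{r_independenceoflambda}), and the degenerate case $R>\diam(E)$ (where $E\cap B(x,R)=E$ and, if $\diam(E)<\infty$, only finitely many scales occur) -- is routine.
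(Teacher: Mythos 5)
Your argument is correct and follows essentially the same route as the paper's own sketch: both directions rest on comparing the cube coefficients $h(\lambda Q)$ with the ball coefficients $h(y,r)$ at matching locations and comparable scales, combined with $k$-regularity, overlap/disjointness counting for same-generation cubes, and the independence of the Carleson condition from $\lambda$ and from the dyadic system (Remarks \ref{r:IndepDyadSyst}, \ref{r_independenceoflambda}). The only cosmetic difference is that in the direction \eqref{eq:IntGLem} $\Rightarrow$ \eqref{eq:SGL} you use the pairwise disjoint inner balls $B(x_Q,D^{-1}\varrho^j)\subset Q$, whereas the paper uses the larger balls $B(x_Q,\varrho^j)$ together with a bounded-multiplicity count; both are fine.
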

Since the argument is standard, we only sketch the proof.
\begin{proof} Assume first that $E$ satisfies \eqref{eq:IntGLem}. Let $\mathcal{D}$ be a dyadic system on $E$, $\lambda>1$, write $\mu=\mathcal{H}^k_\llcorner E$, and fix $Q_0 \in \mathcal{D}$. Let $j_0$ be such that $Q_0 \in \mathcal{D}_{j_0}$. The coefficient function $h$ has the following crucial property: there exists a constant $M>1$, depending only on $C_E$, $k$, and $\lambda$, such that 
for all $Q\in \mathcal{D}_j$, $Q\subset Q_0$, we have 
\begin{equation}\label{eq:h-ball-cube}
    h(\lambda Q) \leq M h(y,M r),\quad y\in B(x_Q,\rho^{j})\cap E,\, r\in [\rho^{j},\rho^{j-1}].
\end{equation}
This, and the $k$-regularity of $E$, allows us to bound the sum over dyadic cubes as follows:
\begin{align}\label{eq:BoundDiscr}
    \sum_{Q\in\mathcal{D}_{Q_0}}h(\lambda Q)^q \mu(Q) &\lesssim
    \sum_{j\geq j_0} \int_{\rho^{j}}^{\rho^{j-1}}\sum_{Q\in \mathcal{D}_j\cap \mathcal{D}_{Q_0}}\int_{B(x_Q,\rho^j)}h(y,Mr)^q\,d\mu(y)\,\frac{dr}{r}\notag\\
   & =  \sum_{j\geq j_0} \int_{\rho^{j}}^{\rho^{j-1}} \int_{B(x_{Q_0},C_1 \rho^{j_0})}h(y,Mr)^q \sum_{Q\in \mathcal{D}_j\cap \mathcal{D}_{Q_0}}\chi_{B(x_Q,\rho^{j})}(y)\,d\mu(y)\,\frac{dr}{r},
\end{align}
where we have used in the last step that there exists a constant $C_1>1$, depending only on $C_E$ and $k$, such that $B(x_Q,\rho^j) \subset B(x_{Q_0},C_1 \rho^{j_0})$ for all $Q\in \mathcal{D}_j\cap  \mathcal{D}_{Q_0}$.  By the properties of dyadic systems (Definition and Theorem \ref{dt:dyad}), we have $d(x_Q,x_{Q'})\geq D^{-1} \rho^j$ for $Q,Q'\in \mathcal{D}_j$. The $k$-regularity of $E$ and a standard volume counting argument then imply that $\mathrm{card}(Q\in \mathcal{D}_j\cap \mathcal{D}_{Q_0}:\, y\in B(x_Q,\rho^j))\lesssim 1$ for all $y\in E$. This finally allows to bound \eqref{eq:BoundDiscr} by 
\begin{displaymath}
    \int_0^{\rho^{j_0-1}}\int_{B(x_{Q_0},C_1 \rho^{j_0})}h(y,Mr)^q \, d\mu(y)\,\frac{dr}{r},
\end{displaymath}
from where $E\in\mathrm{GLem}(h,q)$ follows (after a change of variables in $r$) since $Q_0$ was chosen arbitrarily. Here, the constant in $\mathrm{GLem}(h,q)$ depends on the constant $C$ in \eqref{eq:IntGLem}, as well as possibly on $C_E$, $k$, and $\lambda$. 

\medskip

In the opposite direction, assume that $E\in\mathrm{GLem}(h,q)$ in the sense of Definition \ref{d:GL}. Let $\mathcal{D}$  be a dyadic system on $E$, pick $x\in E$ and $R>0$. Let $j_0\in \mathbb{Z}$ be such that $\rho^{j_0}\leq R<\rho^{j_0-1}$. We begin by writing
\begin{equation}\label{eq:doubleIntBdd}
    \int_0^R \int_{B(x,R)} h(y,r)^q\, d\mu(y)\, \frac{dr}{r}\sim \sum_{j\geq j_0} \int_{\rho^j}^{\rho^{j-1}}\int_{B(x,R)}h(y,\rho^{j-1})^q\,d\mu(y)\,\frac{dr}{r},
\end{equation}
where we used that $h(y,r)\lesssim h(y,\rho^{j-1})$ for $r\in [\rho^{j},\rho^{j-1}]$ thanks to $k$-regularity of $E$. Moreover, there exists $Q_0\in \mathcal{D}$ such that $B(x,R) \subset K Q_0$ with $R \sim \diam(Q_0)$ and $K$ depending only on $k$ and $C_E$, see e.g., \cite[(2.9)]{FV1}. We cover $KQ_0$ with a number $m \lesssim 1$ of cubes $Q_{0,1},\ldots,Q_{0,m}$ from generation $j_0$ as in \cite[(2.12)]{FV1}. An inequality similar to \eqref{eq:h-ball-cube} with the roles of balls and cubes reversed, and the assumption $E\in\mathrm{GLem}(h,q)$ (with Remark \ref{r_independenceoflambda}) finally allow us to bound the expression in \eqref{eq:doubleIntBdd} by $\mu(Q_{0,1})+\cdots + \mu(Q_{0,m}) $, which concludes the proof.
\end{proof}

\begin{definition}[Weak geometric lemma]\label{d:WGL} Let $n\in \mathbb{N}$ and $k\in \{1,\ldots,n\}$.
Given  a $k$-regular set $E\subset \mathbb{H}^n$ with a dyadic system $\mathcal{D}$ 
 and   $h$ one of the coefficient functions in Definition \ref{d:Coefficient functions}, we say that $E$ satisfies the
\emph{weak geometric lemma with respect to $h$}, and  write
$E\in\mathrm{WGL}(h)$, if for each $\varepsilon>0$ and $\lambda>1$, there is a constant $C=C(\varepsilon,\lambda)$ such that
\begin{equation}\label{eq:WGL}
\sum_{Q\in \mathcal{D}_{Q_0}, h(\lambda Q)>\varepsilon} \mathcal{H}^k(Q) \leq C(\varepsilon,\lambda) \mathcal{H}^k(Q_0), \quad
Q_0\in \mathcal{D}.
\end{equation}
\end{definition}

\begin{remark}\label{r:WGLNothingelse}
Clearly, if $E\subset \H^n$ is $k$-regular, the implication $E\in\mathrm{Glem}(h,q)\Rightarrow E\in \mathrm{WGL}(h)$ holds for any $q\in[1,+\infty)$. The converse implication is not true in general. For an example in Euclidean $\mathbb{R}^2$, see \cite[Section 20]{David1}. By isometrically embedding $\mathbb{R}^2$ into $\mathbb{H}^2$, and using \cite[Lemma 4.38]{FV2}, this also produces an example of a set $E\subset \mathbb{H}^2$ with $E\in \mathrm{WGL}(\beta_{\infty,\mathcal{V}_1})$, yet $E\notin\mathrm{Glem}(\beta_{\infty,\mathcal{V}_1},q)$ for $q=2$. Moreover, since the example in \cite[Section 20]{David1} is not Euclidean rectifiable in $\mathbb{R}^2$, the set $E\subset \mathbb{H}^2$ fails to be Euclidean rectifiable and, in particular, cannot have BPLI in the sense of Definition \ref{def_BPLI}.

An example in $\mathbb{H}^1$ that satisfies $\mathrm{WGL}(\beta_{\infty,\mathcal{V}_1})$ (since it satisfies a strong geometric lemma with exponent $4$) yet fails $\Gamma\notin\mathrm{Glem}(\beta_{\infty,\mathcal{V}_1},q)$ for $q=2$ is provided by the curve  $\Gamma$ in Section \ref{s:Juillet}. 
\end{remark}

\begin{remark}
The weak geometric lemmas in Definition \ref{d:WGL} can be equivalently stated in terms of a Carleson set condition, similarly as the geometric lemmas allow for the reformulation  in 
Lemma \ref{l:DiffGeomLem}; see for instance \cite[Lemma 2.6.5]{Bate}. This also shows that the validity of a weak geometric lemma does not depend on the specific choice of dyadic system $\mathcal{D}$.
\end{remark}

We next discuss relations between geometric lemmas for the  functions in Definition \ref{d:Coefficient functions}.

\begin{proposition}\label{p:FromIotaToStratifAssumpt}
    Let $n\in \mathbb{N}$ and $k\in \{1,\ldots,n\}$. Assume that $E\subset \mathbb{H}^n$ is a $k$-regular set and $\mathcal{D}$ a dyadic system on $E$. Then we have, for $\lambda \geq 1$,
    \begin{displaymath}
        \widehat{\beta}_{1,\mathcal{V}_k}(\lambda Q)^4 \lesssim_{\lambda} \iota_{1,\mathcal{V}_k}(\lambda Q),\quad Q\in \mathcal{D}.    \end{displaymath}
    In particular, if
$E\in \mathrm{GLem}(\iota_{1,\mathcal{V}_k},1)$, then $E\in \mathrm{GLem}(\widehat{\beta}_{1,\mathcal{V}_k},4)$.
\end{proposition}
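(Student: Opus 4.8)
The plan is to prove the pointwise inequality $\widehat{\beta}_{1,\mathcal{V}_k}(\lambda Q)^4 \lesssim_{\lambda} \iota_{1,\mathcal{V}_k}(\lambda Q)$ directly, and then deduce the geometric lemma implication by summing. For the summing step: since $\widehat{\beta}_{1,\mathcal{V}_k}(\lambda Q)^4 \lesssim_\lambda \iota_{1,\mathcal{V}_k}(\lambda Q) = \iota_{1,\mathcal{V}_k}(\lambda Q)^1$, the assumption $E\in \mathrm{GLem}(\iota_{1,\mathcal{V}_k},1)$ means $\sum_{Q\in \mathcal{D}_{Q_0}} \iota_{1,\mathcal{V}_k}(\lambda Q)\,\mathcal{H}^k(Q) \leq C\mathcal{H}^k(Q_0)$, and hence $\sum_{Q\in \mathcal{D}_{Q_0}} \widehat\beta_{1,\mathcal{V}_k}(\lambda Q)^4\,\mathcal{H}^k(Q) \lesssim_\lambda C \mathcal{H}^k(Q_0)$, which is exactly $E\in \mathrm{GLem}(\widehat\beta_{1,\mathcal{V}_k},4)$ (using that the value of the geometric lemma does not depend on $\lambda$ by Remark \ref{r_independenceoflambda}).

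For the pointwise estimate, fix $Q$ and $\lambda$, write $\mu = \mathcal{H}^k\llcorner E$, $d_Q = \mathrm{diam}(\lambda Q)$, and let $V=z\cdot V_0\in \mathcal{V}_k$ be (nearly) optimal for $\iota_{1,\mathcal{V}_k}(\lambda Q)$; by Lemma \ref{l:NonAffineProj} I may take $V=V_0\in \mathcal{V}_k^0$ a horizontal subgroup, so that $d(P_{V}(x),P_{V}(y)) = |\pi_{V_I}(\pi(x))-\pi_{V_I}(\pi(y))|$. The idea is that if $P_V$ is $L^1$-close to an isometry on $\lambda Q$, then $\lambda Q$ must, in an averaged sense, lie close to a translate of $V$. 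Concretely, I will compare each point $y\in \lambda Q$ to its ``reconstruction'' via $P_V$: for $y\in \lambda Q$, writing $P_V(y)=w$, the natural candidate affine plane is some $\bar V = p\cdot V \in \mathcal{V}_k$, and I want to control $d(y,\bar V)$ and $d_{\mathrm{Eucl}}(\pi(y),\pi(\bar V))$ in terms of $\iota$. Using the Fubini structure of $\iota_{1,\mathcal{V}_k}$ (a double average over $\lambda Q \times \lambda Q$), there is a point $x_0 \in \lambda Q$ such that $\fint_{\lambda Q} |d(x_0,y) - d(P_V(x_0),P_V(y))|\,d\mu(y) \lesssim \iota_{1,\mathcal{V}_k}(\lambda Q)\, d_Q$; take $\bar V := x_0 \cdot V$ (an affine horizontal plane through $x_0$ in the "direction" of $V$). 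Then for $y\in\lambda Q$, $d(y,\bar V) \leq d(y, x_0\cdot P_V(x_0^{-1}x_0)\cdots)$ — more carefully, $d(y,\bar V) = d(P_{\bar V}(y), y)$ is controlled by how far $d(x_0,y)$ is from $d(P_{\bar V}(x_0),P_{\bar V}(y)) = d(P_V(x_0),P_V(y))$, since $\bar V$ passes through $x_0$ and $P_{\bar V}$ is $1$-Lipschitz. This should give $\fint_{\lambda Q} d(y,\bar V)\,d\mu(y) \lesssim \iota_{1,\mathcal{V}_k}(\lambda Q)\,d_Q$, hence $\beta_{1,\mathcal{V}_k}(\lambda Q) \lesssim \iota_{1,\mathcal{V}_k}(\lambda Q)$; and since $d_{\mathrm{Eucl}}(\pi(y),\pi(\bar V)) \leq d(y,\bar V)$ (because $\pi$ and the projection onto $\pi(\bar V)$ are $1$-Lipschitz for the relevant metrics, using $\|(z,t)\| \geq |z|$), also the $\widehat\beta$-projection term is bounded by $\iota$. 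Combining the two terms in the definition of $\widehat\beta_{1,\mathcal{V}_k}$: the first contributes (via $d_{\mathrm{Eucl}}(\pi(\cdot),\pi(\bar V))$) at the exponent $2/1$, the second at $4/1$, so $\widehat\beta_{1,\mathcal{V}_k}(\lambda Q)^4 \lesssim [\fint d_{\mathrm{Eucl}}(\pi(y),\pi(\bar V))/d_Q\,d\mu]^2 + [\fint d(y,\bar V)/d_Q\, d\mu]^4 \lesssim \iota_{1,\mathcal{V}_k}(\lambda Q)^2 + \iota_{1,\mathcal{V}_k}(\lambda Q)^4 \lesssim \iota_{1,\mathcal{V}_k}(\lambda Q)$, where in the last step I use that $\iota_{1,\mathcal{V}_k}(\lambda Q) \lesssim 1$ (since $\mathrm{diam}(P_V(\lambda Q)) \leq \mathrm{diam}(\lambda Q)$, the quantity $|d(x,y)-d(P_V(x),P_V(y))| \leq d(x,y) \leq 2\,d_Q$ pointwise... wait, actually I should double-check: $\iota$ could be as large as $\sim 1$ but not much more, which is what I need — and $u^2 + u^4 \lesssim u$ for $u \lesssim 1$).

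The main obstacle I expect is the geometric step of turning the "$P_V$ nearly preserves distances from a fixed point $x_0$" statement into the bound $d(y,\bar V) \lesssim \iota\, d_Q$ with the \emph{correct} affine plane $\bar V = x_0\cdot V$, i.e. verifying that $d(y, x_0\cdot V)$ really is comparable to $|d(x_0,y) - d(P_V(x_0),P_V(y))|$ up to a multiplicative constant. This requires a Heisenberg-geometry computation: writing $x_0^{-1}\cdot y = (z,t)$ and decomposing $z = \pi_{V_I}(z) + \pi_{V_I^\perp}(z)$, one has $d(x_0,y) = \|(z,t)\|$ while $d(P_V(x_0),P_V(y)) = |\pi_{V_I}(z)|$, and $d(y, x_0\cdot V) = \|P_W(x_0^{-1}y)\|$ where $W$ is the complementary orthogonal subgroup of $V_0$; one must show $\|P_W(x_0^{-1}y)\| \lesssim \big| \|(z,t)\| - |\pi_{V_I}(z)| \big|$ — but this is \emph{false} in general at a single point (e.g. a point directly "above" $V$ in the $t$-direction at large height has $|\pi_{V_I}(z)|$ small and $\|(z,t)\|$ large but comparable, yet $\|P_W\|$ large). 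So the single-point reduction is too lossy; instead I should keep the full double average and, for each fixed $x$, choose $\bar V_x = x \cdot V$, bound $\fint_{\lambda Q} d(y,\bar V_x)\,d\mu(y)$ — which still faces the same pointwise issue — or better, exploit that $\iota$ small forces the set to be \emph{globally} flat: pick two points $x_1,x_2 \in \lambda Q$ with $d(x_1,x_2) \gtrsim d_Q$ (possible by regularity, for a positive-measure set of pairs), note $P_V$ nearly preserves their distance, iterate/triangulate to pin down a single affine plane $\bar V$ well-approximating \emph{all} of $\lambda Q$. This averaging/pigeonholing to extract one good plane, and the Heisenberg estimate $d(y,\bar V) \lesssim \big|d(x,y) - d(P_V(x),P_V(y))\big| + (\text{averaged error})$, is where the real work lies; the rest is bookkeeping with exponents and the summation over dyadic cubes.
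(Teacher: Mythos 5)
Your reduction to a single base point $z_0$ (via Fubini) and your choice of approximating plane $\bar V=x_0\cdot V$ coincide with the paper's proof, but the estimate you then aim for is not only unproven — it is false, and this is a genuine gap. Your strategy is to show the \emph{linear} bounds $\fint_{\lambda Q} d(y,\bar V)\,d\mu \lesssim \iota_{1,\mathcal{V}_k}(\lambda Q)\,\diam(\lambda Q)$ and $\fint_{\lambda Q} d_{\mathrm{Eucl}}(\pi(y),\pi(\bar V))\,d\mu \lesssim \iota_{1,\mathcal{V}_k}(\lambda Q)\,\diam(\lambda Q)$, and then conclude via $u^2+u^4\lesssim u$ for $u\lesssim 1$. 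Neither linear bound holds. Take $V=V_I\times\{0\}$ and a $k$-regular set whose points in $\lambda Q$ are of the form $(v,0,\tau)$ with $v\in V_I$, $|v|\lesssim \diam(\lambda Q)$ and $|\tau|\sim\delta^2\diam(\lambda Q)^2$: since $V_I$ is isotropic, for typical pairs $d(x,y)-d(P_V(x),P_V(y))=(|u-v|^4+16(\tau-\sigma)^2)^{1/4}-|u-v|\sim\delta^4\diam(\lambda Q)$, so $\iota_{1,\mathcal{V}_k}(\lambda Q)\sim\delta^4$, while $d(y,V)\sim 2\sqrt{|\tau|}\sim\delta\,\diam(\lambda Q)$, so $\beta_{1,\mathcal{V}_k}(\lambda Q)\sim\delta$. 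Thus only $\beta_{1,\mathcal{V}_k}^4\lesssim\iota_{1,\mathcal{V}_k}$ can be true (and similarly only $\beta_{1,\pi,\mathcal{V}_k}^2\lesssim\iota_{1,\mathcal{V}_k}$), which is exactly the obstruction you half-detected with your own counterexample; your proposed fixes (keeping the double average, or pigeonholing two far-apart points and ``triangulating'' to a global plane) are not carried out and do not address this power-loss phenomenon.

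The paper's proof avoids the linear comparison entirely. After choosing $z_0$ by pigeonholing, translating it to the origin and taking $V=V_I\times\{0\}$ a subgroup (so the plane is your $\bar V=z_0\cdot V$), it proves two \emph{pointwise} lower bounds for the single-point discrepancy: $\|y\|-|\pi_{V_I}(\pi(y))|\geq \frac{d_{\mathrm{Eucl}}(\pi(y),V_I)^2}{2\,\diam(\lambda Q)}$ (by writing the difference as a ratio of squares) and $\|y\|-|\pi_{V_I}(\pi(y))|\gtrsim \frac{d(y,V)^4}{\diam(\lambda Q)^3}$ (by writing it as $\frac{a^4-b^4}{a^3+a^2b+ab^2+b^3}$ and using the fourth-power structure of the Kor\'anyi norm, $a^4-b^4\gtrsim\|P_W(y)^{-1}\cdot y\|^4$). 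Averaging these over $\lambda Q$ against the choice of $z_0$ and applying Jensen's inequality with $\varphi(x)=x^2$ and $\varphi(x)=x^4$ to the two terms in the definition of $\widehat\beta_{1,\mathcal{V}_k}$ gives $\widehat\beta_{1,\mathcal{V}_k}(\lambda Q)^4\lesssim\iota_{1,\mathcal{V}_k}(\lambda Q)+\varepsilon$ directly, with no need of $\iota\lesssim 1$, no global flatness or triangulation argument, and with the quadratic/quartic exponents built in from the start. To repair your argument you would need to replace your linear target by precisely these power-type pointwise estimates; your summation step and the reduction via Lemmas \ref{lem_translations} and \ref{l:NonAffineProj} are fine as written.
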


\begin{proof} Let $E$ be $k$-regular with a dyadic system $\mathcal{D}$. For simplicity, we denote $\mu:=\mathcal{H}^k\llcorner_E$. 
Fix $\epsilon>0$. Then, by definition of $\iota_{1,\mathcal V_k}(\lambda Q)$, there exists $V\in\mathcal V_k$ such that 
  \[\fint_{\lambda Q}\fint_{\lambda Q}\frac{\left|d(x,y)-d(P_V(x),P_V(y))\right|}{\mathrm{diam}(\lambda Q)}\,d\mu(x)\,d\mu(y)< \iota_{1,\mathcal V_k}(\lambda Q)+\epsilon.\]
  Therefore there exists at least a point $z_0\in \lambda Q$ such that \begin{equation}\label{eq_z_0}\fint_{\lambda Q}\frac{\left|d(z_0,y)-d(P_V(z_0),P_V(y))\right|}{\mathrm{diam}(\lambda Q)}\,d\mu(y)< \iota_{1,\mathcal{V}_k}(\lambda Q)+\varepsilon.\end{equation}
By Lemma \ref{lem_translations} it suffices to prove $\widehat{\beta}_{1,\mathcal{V}_k}(\lambda[z_0^{-1}\cdot Q])^4\lesssim \iota_{1,\mathcal{V}_k}(\lambda [z_0^{-1}\cdot Q])$, so without loss of generality, we will assume in the following that $z_0=0$. Moreover, 
by  Lemma \ref{l:NonAffineProj}, we can assume that $V=V_I\times \{0\}\in \mathcal V_k^0$. 
  In this case $P_V(y)=(\pi_{V_I}(\pi(y)),0)$. Then for every $y\in\lambda Q$ \begin{align}
    |d(z_0,y)-|\pi_{V_I}(\pi(z_0))-\pi_{V_I}(\pi(y))||&=\|y\| -|\pi_{V_I}(\pi(y))|\notag\\
    &\geq |\pi(y)|-|\pi_{V_I}(\pi(y))|\notag\\
    &= \frac{|\pi(y)|^2-|\pi_{V_I}(\pi(y))|^2}{|\pi(y)|+|\pi_{V_I}(\pi(y))|}=\frac{|\pi_{V_I^{\bot}}(\pi(y))|^2}{|\pi(y)|+|\pi_{V_I}(\pi(y))|}\geq \frac{|\pi_{V_I^{\bot}}(\pi(y))|^2}{2\, \mathrm{diam}(\lambda Q)}\notag\\
    &=\frac{d_{\mathrm{Eucl}}(\pi(y),V_I)^2}{2\,\mathrm{diam}(\lambda Q)}.\label{eq:estHalfStratif1}
\end{align}
This estimate will be useful for one of the two summands appearing in the definition of $\widehat{\beta}_{1,\mathcal{V}_k}(\lambda Q)$. For the other summand, we use a similar estimate.

Denoting $a:=\|y\|$ and $b:=|\pi_{V_I}(\pi(y))|$, we have
\begin{displaymath}
     |d(z_0,y)-|\pi_{V_I}(\pi(z_0))-\pi_{V_I}(\pi(y))||= a-b = \frac{a^4-b^4}{a^3 + a^2 b + ab^2 + b^3}\gtrsim \frac{a^4-b^4}{\mathrm{diam}(\lambda Q)^3}.
\end{displaymath} Here
\begin{align*}
    a^4 -b^4 &= |\pi(y)|^4 + 16 \pi_t(y)^2 - |\pi_{V_I}(\pi(y))|^4\\
    &=(|\pi_{V_I}(\pi(y))|^2 +|\pi_{V_I^{\bot}}(\pi(y))|^2)^2 + 16 \pi_t(y)^2 - |\pi_{V_I}(\pi(y))|^4\\
    &= |\pi_{{V_I}^{\bot}}(\pi(y))|^4 + 2 |\pi_{V_I}(\pi(y))|^2 |\pi_{{V_I}^{\bot}}(\pi(y))|^2 + 16 \pi_t(y)^2\\
    &\gtrsim \|P_V(y)^{-1}\cdot y\|^4\geq d(y,V)^4,
\end{align*}
where we used the fact that \begin{align*}\|P_V(y)^{-1}\cdot y\|^4&=|\pi(P_V(y)^{-1}\cdot y)|^4+16(\pi_t(P_V(y)^{-1}\cdot y))^2\\&=|\pi_{{V_I}^\bot}(\pi(y))|^4+16(\pi_t(y)-\omega(\pi_{V_I}(\pi(y)),\pi(y)))^2\\ &\leq |\pi_{{V_I}^\bot}(\pi(y))|^4+32\pi_t(y)^2+8|\pi_{V_I}(\pi(y))|^2 |\pi_{V_I^{\bot}}(\pi(y))|^2,\end{align*}
where $\omega$ is the form appearing in the group law \eqref{grouplaw}.
Thus
\begin{equation}\label{eq:estHalfStratif2}
     |d(z_0,y)-|\pi_{V_I}(\pi(z_0))-\pi_{V_I}(\pi(z))|| \gtrsim \frac{d(y,V)^4}{\mathrm{diam}(\lambda Q)^3}.
\end{equation} 

Finally combining \eqref{eq:estHalfStratif1} and \eqref{eq:estHalfStratif2}, we obtain
  \begin{align*}
       & \frac{d(z_0,y)-|\pi_{V_I}(\pi(z_0))-\pi_{V_I}(\pi(y))|}{\mathrm{diam}(\lambda Q)}\\&= \frac{1}{2}\frac{d(z_0,y)-|\pi_{V_I}(\pi(z_0))-\pi_{V_I}(\pi(y))|}{\mathrm{diam}(\lambda Q)} + \frac{1}{2}\frac{d(z_0,y)-|\pi_{V_I}(\pi(z_0))-\pi_{V_I}(\pi(y))|}{\mathrm{diam}(\lambda Q)}\\
        &\gtrsim \frac{d_{\mathrm{Eucl}}(\pi(y),V_I)^2}{\mathrm{diam}(\lambda Q)^2}+\frac{d(y,V)^4}{\mathrm{diam}(\lambda Q)^4}.
    \end{align*}
Recalling the definition of $\widehat{\beta}$, and applying Jensen's inequality with $\varphi(x)=x^2$ and $\varphi(x)=x^4$, respectively, we  deduce that
\begin{align*}
    \widehat{\beta}_{1,\mathcal{V}_k}(\lambda Q)^4 &\leq 
    \fint_{\lambda Q} \frac{\mathrm{dist}_{\mathrm{Eucl}}(\pi(y),V_I)^2}{\mathrm{diam}(\lambda Q)^2}+\frac{d(y,V)^4}{\diam (\lambda Q)^4}\,d\mu(y)\\
    &\lesssim \fint_{\lambda Q} \frac{|d(z_0,y)-|\pi_{V_I}(\pi(z_0))-\pi_{V_I}(\pi(y))||}{\diam (\lambda Q)}d\mu(y)\stackrel{\eqref{eq_z_0}}\leq \iota_{1,\mathcal V_k}(\lambda Q)+\varepsilon.
\end{align*}
We get the conclusion by the arbitrariness of $\varepsilon$.
 \end{proof}

\begin{proposition}\label{p:FromStratifiedToWeakAssumpt}
    Let $n\in \mathbb{N}$ and $k\in \{1,\ldots,n\}$. Assume that $E\subset \mathbb{H}^n$ is a $k$-regular set and $\mathcal{D}$ a dyadic system on $E$. Then we have, for $\lambda \geq 1$,
    \begin{equation}\label{eq:FromStratifiedToWeakAssumpt}
   \beta_{1,\pi,\mathcal{V}_k}(\lambda Q)^2 +\beta_{1,\mathcal{V}_k}(\lambda Q)^4 \leq      \widehat{\beta}_{1,\mathcal{V}_k}(\lambda Q)^4,\quad Q\in \mathcal{D}.
    \end{equation}
   Moreover, if $E\in \mathrm{GLem}(\widehat{\beta}_{1,\mathcal{V}_k},4)$, then $E\in \mathrm{GLem}(\beta_{1,\pi,A(2n,k)},2)$ and $E\in \mathrm{WGL}(\beta_{\infty,\mathcal{V}_k})$.
\end{proposition}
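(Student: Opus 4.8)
The plan is to establish the pointwise inequality \eqref{eq:FromStratifiedToWeakAssumpt} first, and then derive the two geometric-lemma implications from it by comparing Carleson sums.

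For the pointwise estimate, fix $Q\in\mathcal{D}$ and $\lambda\geq 1$, and let $\epsilon>0$. By definition of $\widehat{\beta}_{1,\mathcal{V}_k}(\lambda Q)$ there is $V\in\mathcal{V}_k$, which by Lemma \ref{l:NonAffineProj} (more precisely by its proof, since $\widehat\beta$ involves $\pi(V)$ rather than $P_V$, but recall from Remark \ref{r:AffVsFullProjBeta} that $\pi(V)$ ranges over affine isotropic $k$-subspaces of $\mathbb{R}^{2n}$) we may take of the form $V=V_I\times\{0\}$ up to an affine shift in $\pi$, realizing
\begin{displaymath}
\left[\fint_{\lambda Q}\Big(\tfrac{d_{\mathrm{Eucl}}(\pi(y),\pi(V))}{\diam(\lambda Q)}\Big)\,d\mu(y)\right]^{2}+\left[\fint_{\lambda Q}\Big(\tfrac{d(y,V)}{\diam(\lambda Q)}\Big)\,d\mu(y)\right]^{4}<\widehat{\beta}_{1,\mathcal{V}_k}(\lambda Q)^4+\epsilon,
\end{displaymath}
where $\mu=\mathcal{H}^k\llcorner E$. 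The left-hand side is, up to a factor, exactly $\beta_{1,\pi,\mathcal{V}_k}(\lambda Q)^2+\beta_{1,\mathcal{V}_k}(\lambda Q)^4$ once we use the competitor $V$ (respectively $\pi(V)$) in the infima defining $\beta_{1,\pi,\mathcal{V}_k}$ and $\beta_{1,\mathcal{V}_k}$; here I use that $\beta_{1,\pi,\mathcal V_k}\leq\beta_{1,\pi,A(2n,k)}$ is \emph{not} yet needed, only that a common minimizer $V$ for $\widehat\beta$ is an admissible competitor for both of the other two $\beta$-numbers. So $\beta_{1,\pi,\mathcal{V}_k}(\lambda Q)^2+\beta_{1,\mathcal{V}_k}(\lambda Q)^4\leq\widehat{\beta}_{1,\mathcal{V}_k}(\lambda Q)^4+\epsilon$, and letting $\epsilon\to 0$ gives \eqref{eq:FromStratifiedToWeakAssumpt}. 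This part is essentially bookkeeping: the content is in the definition of $\widehat\beta$ being built from the same two quantities that define $\beta_{1,\pi,\mathcal{V}_k}$ and $\beta_{1,\mathcal{V}_k}$.

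For the implications, suppose $E\in\mathrm{GLem}(\widehat{\beta}_{1,\mathcal{V}_k},4)$ with constant $C$. By \eqref{eq:FromStratifiedToWeakAssumpt} and Remark \ref{r:AffVsFullProjBeta} we have $\beta_{1,\pi,A(2n,k)}(\lambda Q)^2\leq\beta_{1,\pi,\mathcal{V}_k}(\lambda Q)^2\leq\widehat{\beta}_{1,\mathcal{V}_k}(\lambda Q)^4$ for every $Q\in\mathcal{D}$, so summing over $Q\in\mathcal{D}_{Q_0}$ and using the definition of $\mathrm{GLem}$ (Definition \ref{d:GL}) yields
\begin{displaymath}
\sum_{Q\in\mathcal{D}_{Q_0}}\beta_{1,\pi,A(2n,k)}(\lambda Q)^2\,\mathcal{H}^k(Q)\leq\sum_{Q\in\mathcal{D}_{Q_0}}\widehat{\beta}_{1,\mathcal{V}_k}(\lambda Q)^4\,\mathcal{H}^k(Q)\leq C\,\mathcal{H}^k(Q_0),
\end{displaymath}
which is $E\in\mathrm{GLem}(\beta_{1,\pi,A(2n,k)},2)$. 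For the weak geometric lemma, fix $\varepsilon>0$ and $\lambda>1$. By \eqref{eq:FromStratifiedToWeakAssumpt}, $\beta_{\infty,\mathcal{V}_k}(\lambda Q)>\varepsilon$ would — \emph{if} $\beta_\infty$ were controlled by $\beta_1$ — force $\widehat\beta_{1,\mathcal V_k}(\lambda Q)$ to be bounded below; but the cleaner route is to pass to the $L^1$-based $\beta_{1,\mathcal{V}_k}$ first: for a $k$-regular set, $\beta_{\infty,\mathcal{V}_k}(\lambda Q)>\varepsilon$ implies $\beta_{1,\mathcal{V}_k}(\mu Q)\gtrsim_{C_E,k}\varepsilon^{k+1}$ for a slightly enlarged $\mu Q$ (standard Chebyshev/regularity argument, cf.\ the Euclidean estimate \cite[(5.4)]{David1}), so that every cube counted on the left of \eqref{eq:WGL} contributes a fixed amount to the Carleson sum $\sum\beta_{1,\mathcal{V}_k}(\mu Q)^4\,\mathcal{H}^k(Q)\leq\sum\widehat{\beta}_{1,\mathcal{V}_k}(\mu Q)^4\,\mathcal{H}^k(Q)\lesssim\mathcal{H}^k(Q_0)$, the last bound by $E\in\mathrm{GLem}(\widehat{\beta}_{1,\mathcal{V}_k},4)$ together with Remark \ref{r_independenceoflambda} to absorb the change $\lambda\mapsto\mu$. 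Thus $\sum_{Q\in\mathcal{D}_{Q_0},\,\beta_{\infty,\mathcal{V}_k}(\lambda Q)>\varepsilon}\mathcal{H}^k(Q)\lesssim\varepsilon^{-4(k+1)}\mathcal{H}^k(Q_0)$, giving $E\in\mathrm{WGL}(\beta_{\infty,\mathcal{V}_k})$.

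The main obstacle is the $L^\infty$-to-$L^1$ comparison for the $\beta_{\mathcal{V}_k}$-numbers needed in the WGL step: one must show that if $E\cap\lambda Q$ contains a point far (distance $>\varepsilon\diam(\lambda Q)$) from every horizontal $k$-plane, then, using $k$-regularity, a whole ball's worth of $E$ around that point is at distance $\gtrsim\varepsilon\diam(\lambda Q)$ from the \emph{same} plane, forcing $\beta_{1,\mathcal{V}_k}$ on a controlled enlargement to be $\gtrsim\varepsilon^{k+1}$. This is routine in Euclidean space but requires care here because the relevant balls must be taken in the Korányi metric and one must ensure the enlargement $\mu Q\supset\lambda Q$ is admissible and that the Carleson sum over the enlarged cubes is still controlled, which is exactly what Remark \ref{r_independenceoflambda} provides. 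Everything else is a direct application of the pointwise inequality together with Definitions \ref{d:GL} and \ref{d:WGL}.
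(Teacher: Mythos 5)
Your proof is correct and takes essentially the same route as the paper: the pointwise bound \eqref{eq:FromStratifiedToWeakAssumpt} is obtained by testing the two separate infima with a (near-)optimal plane from the definition of $\widehat{\beta}_{1,\mathcal{V}_k}$, the $\mathrm{GLem}(\beta_{1,\pi,A(2n,k)},2)$ conclusion follows via Remark \ref{r:AffVsFullProjBeta} and summation over $\mathcal{D}_{Q_0}$, and the $\mathrm{WGL}(\beta_{\infty,\mathcal{V}_k})$ step is the standard Chebyshev/Ahlfors-regularity comparison $\beta_{\infty,\mathcal{V}_k}(\lambda Q)\lesssim \beta_{1,\mathcal{V}_k}(\mu Q)^{1/(k+1)}$ on enlarged cubes (handled by Remark \ref{r_independenceoflambda}), which you spell out where the paper simply cites \cite[p.27]{David1}. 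The only cosmetic remarks are that your appeal to Lemma \ref{l:NonAffineProj} in the pointwise step is unnecessary (the specific form of $V$ is never used) and that in fact no multiplicative factor is lost when passing from the near-optimal $V$ to the two individual $\beta$-numbers.
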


\begin{proof}
We recall from Definition \ref{d:Coefficient functions} that
\begin{displaymath}
    \widehat{\beta}_{1,\mathcal{V}_k}(\lambda Q)^4=\inf_{V\in\mathcal V_k}\left\{\left[\fint_{\lambda Q}\frac{d_{\mathrm{Eucl}}(\pi(y),\pi(V))}{\diam (\lambda Q)}\,d\mathcal{H}^k(y)\right]^{2}+\left[\fint_{\lambda Q}\frac{d(y,V)}{\diam (\lambda Q)}\,d\mathcal{H}^k(y)\right]^{4}\right\},
\end{displaymath}
which immediately implies \eqref{eq:FromStratifiedToWeakAssumpt}. Clearly, this also shows that if $E\in \mathrm{GLem}(\widehat{\beta}_{1,\mathcal{V}_k},4)$, then $E\in \mathrm{GLem}(\beta_{1,\pi,\mathcal{V}_k},2)$ and $E\in \mathrm{GLem}(\beta_{1,\mathcal{V}_k},4)$. Since $\beta_{1,\pi,A(2n,k)}\leq \beta_{1,\pi,\mathcal{V}_k}$ by Remark \ref{r:AffVsFullProjBeta}, this implies also  $E\in \mathrm{GLem}(\beta_{1,\pi,A(2n,k)},2)$.

To conclude the proof of the proposition, it remains to show that $E\in \mathrm{GLem}(\beta_{1,\mathcal{V}_k},4)$ implies $E\in \mathrm{WGL}(\beta_{\infty,\mathcal{V}_k})$. This follows by the same argument as in Euclidean spaces, which works irrespective of the integrability exponents; see \cite[p.27]{David1} or \cite[p.27]{David2}.
\end{proof}

\begin{proposition}\label{p:FromHorizToStartif}
    Let $n\in \mathbb{N}$ and $k\in \{1,\ldots,n\}$. Assume that $E\subset \mathbb{H}^n$ is a $k$-regular set and $\mathcal{D}$ a dyadic system on $E$. Then we have, for $\lambda \geq 1$,
    \begin{equation}\label{eq:FromStratifiedToWeakAssumpt1}
  \widehat{\beta}_{1,\mathcal{V}_k}(\lambda Q)^4 \lesssim \beta_{1,\mathcal{V}_k}(\lambda Q)^2,\quad Q\in \mathcal{D}.
    \end{equation}
In particular, if $E\in \mathrm{GLem}(\beta_{1,\mathcal{V}_k},2)$, then 
   $E\in \mathrm{GLem}(\widehat{\beta}_{1,\mathcal{V}_k},4)$.
\end{proposition}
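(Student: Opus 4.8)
The plan is to obtain the pointwise estimate \eqref{eq:FromStratifiedToWeakAssumpt1} directly from the definition of $\widehat\beta_{1,\mathcal V_k}$ in Definition \ref{d:Coefficient functions}, namely
\[
\widehat\beta_{1,\mathcal V_k}(\lambda Q)^4
=\inf_{V\in\mathcal V_k}\left\{\Bigl[\fint_{\lambda Q}\tfrac{d_{\mathrm{Eucl}}(\pi(y),\pi(V))}{\diam(\lambda Q)}\,d\mathcal H^k(y)\Bigr]^{2}+\Bigl[\fint_{\lambda Q}\tfrac{d(y,V)}{\diam(\lambda Q)}\,d\mathcal H^k(y)\Bigr]^{4}\right\},
\]
using a single elementary comparison between the ``horizontal'' distance $d$ and the ``projected'' Euclidean distance. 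Concretely, the first step is to prove the pointwise bound $d_{\mathrm{Eucl}}(\pi(y),\pi(V))\le d(y,V)$ for all $y\in\mathbb{H}^n$ and $V\in\mathcal V_k$. This follows because $\pi$ is a group homomorphism from $\mathbb{H}^n$ to $(\R^{2n},+)$: for $v\in V$ we have $\pi(v^{-1}\cdot y)=\pi(y)-\pi(v)$, hence by \eqref{eq:KoranyiNorm}
\[
d(y,v)=\|v^{-1}\cdot y\|\ge|\pi(v^{-1}\cdot y)|=|\pi(y)-\pi(v)|\ge d_{\mathrm{Eucl}}(\pi(y),\pi(V)),
\]
and taking the infimum over $v\in V$ gives the claim; here I use that $\pi(V)=\{\pi(v):v\in V\}$ is the affine isotropic subspace occurring in Definition \ref{d:Coefficient functions}, cf.\ Remark \ref{r:AffVsFullProjBeta}.

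The second step is the trivial a priori bound $\beta_{1,\mathcal V_k}(\lambda Q)\le 1$: choosing any $y_0\in Q$ and any $V_0\in\mathcal V_k$ through $y_0$, one has $d(y,V_0)\le d(y,y_0)\le\diam(\lambda Q)$ for every $y\in\lambda Q$, so $\beta_{1,\mathcal V_k}(\lambda Q)\le\fint_{\lambda Q}\tfrac{d(y,V_0)}{\diam(\lambda Q)}\,d\mathcal H^k\le 1$. Then, for given $\varepsilon>0$, I would pick $V\in\mathcal V_k$ with $b:=\fint_{\lambda Q}\tfrac{d(y,V)}{\diam(\lambda Q)}\,d\mathcal H^k(y)<\beta_{1,\mathcal V_k}(\lambda Q)+\varepsilon\le 1+\varepsilon$, use it as a competitor in the infimum defining $\widehat\beta_{1,\mathcal V_k}(\lambda Q)^4$, and bound the first summand by the pointwise estimate from the previous paragraph; this yields
\[
\widehat\beta_{1,\mathcal V_k}(\lambda Q)^4\le b^2+b^4=b^2(1+b^2)<\bigl(\beta_{1,\mathcal V_k}(\lambda Q)+\varepsilon\bigr)^2\bigl(1+(1+\varepsilon)^2\bigr).
\]
Letting $\varepsilon\to0$ gives $\widehat\beta_{1,\mathcal V_k}(\lambda Q)^4\le 2\,\beta_{1,\mathcal V_k}(\lambda Q)^2$, which is \eqref{eq:FromStratifiedToWeakAssumpt1} (with an absolute implied constant).

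The ``in particular'' assertion then follows immediately: multiplying \eqref{eq:FromStratifiedToWeakAssumpt1} by $\mathcal H^k(Q)$ and summing over $Q\in\mathcal D_{Q_0}$, the geometric lemma $E\in\mathrm{GLem}(\beta_{1,\mathcal V_k},2)$ in the sense of Definition \ref{d:GL} transfers verbatim to $E\in\mathrm{GLem}(\widehat\beta_{1,\mathcal V_k},4)$.

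I do not expect any real obstacle here; the content is entirely in choosing the right comparison. The one point requiring care is the \emph{direction} of the pointwise inequality: it must read $d_{\mathrm{Eucl}}(\pi(y),\pi(V))\le d(y,V)$, so that the ``projected'' first summand of $\widehat\beta$ is dominated by the horizontal $\beta_1$-number --- and this is precisely what the $1$-Lipschitz property of $\pi$ provides. The fourth-power summand is harmless exactly because $\beta_{1,\mathcal V_k}(\lambda Q)\le1$ a priori, so a fourth power is controlled by a square; in particular $\lambda$ plays no role, consistent with the absolute implied constant in \eqref{eq:FromStratifiedToWeakAssumpt1}.
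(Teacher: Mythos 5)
Your proof is correct and follows essentially the same route as the paper: the whole content is the pointwise comparison $d_{\mathrm{Eucl}}(\pi(y),\pi(V))\le d(y,V)$ (i.e.\ the $1$-Lipschitz property of $\pi$) plugged into the definition of $\widehat\beta_{1,\mathcal V_k}$, giving $\widehat\beta_{1,\mathcal V_k}(\lambda Q)^4\le 2\,\beta_{1,\mathcal V_k}(\lambda Q)^2$ and hence the transfer of the geometric lemma. Your only addition is to make explicit the a priori bound $\beta_{1,\mathcal V_k}(\lambda Q)\le 1$ (via a near-optimal competitor), which justifies dominating the fourth-power summand by the square --- a step the paper leaves implicit.
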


\begin{proof}
Recalling that $d_{\mathrm{Eucl}}(\pi(x),\pi(y))\leq d(x,y)$ for all $x,y\in \mathbb{H}^n$, this follows directly from Definition \ref{d:Coefficient functions} since
\begin{align*}
    \widehat{\beta}_{1,\mathcal{V}_k}(\lambda Q)^4&=\inf_{V\in\mathcal V_k}\left\{\left[\fint_{\lambda Q}\frac{d_{\mathrm{Eucl}}(\pi(y),\pi(V))}{\diam (\lambda Q)}\,d\mathcal{H}^k(y)\right]^{2}+\left[\fint_{\lambda Q}\frac{d(y,V)}{\diam (\lambda Q)}\,d\mathcal{H}^k(y)\right]^{4}\right\}\\
   & \leq \inf_{V\in\mathcal V_k}\left\{2\left[\fint_{\lambda Q}\frac{d(y,V)}{\diam (\lambda Q)}\,d\mathcal{H}^k(y)\right]^{2}\right\}.\qedhere
\end{align*}
\end{proof}

\subsubsection{Multiresolution families}

\begin{definition}[Dyadic nets and multiresolution families]\cite{schul,Juillet}\label{d:multiRes}
    Given $E\subset \H^n$ compact, a \textit{dyadic net} in $E$ is a collection $\Delta=(\Delta_j)_{j\in\mathbb Z}$ of subsets of $E$ such that
    \begin{enumerate}
        \item[(i)] $\Delta_{j}\subset \Delta_{j+1}$ for every $j\in\mathbb Z$;
        \item[(ii)] If $x_1,x_2\in \Delta_j$, then $x_1=x_2$ or $d(x_1,x_2)>2^{-j};$
        \item[(iii)] If $y\in E$ and $j\in\mathbb Z$, there exists $x\in \Delta_j$ such that $d(y,x)\leq 2^{-j}$.
    \end{enumerate}
    For fixed dyadic net $(\Delta_j)_j$ in $E$, $1\leq p\leq +\infty$ and $d\in\mathbb N$, let also
    \begin{equation}\label{eq:BDef}
    B^\Delta_{p,d}(E):=\sum_{j\in\mathbb Z}2^{-j}\sum_{x\in\Delta_j}\beta^2_{p,\mathcal V_d}(x, A\,2^{-j}),\end{equation}
    for some constant $A>1$.
    The corresponding \textit{multiresolution family} is the collection
    \[ \mathcal G:=\{B(x, A\,2^{-j}): x\in\Delta_j, \,j\in\mathbb Z\}.\]
\end{definition}
In Section \ref{s:Juillet}, we will need the following connection (for $k=1$) between a geometric lemma and a multiresolution family as in Definition \ref{d:multiRes}. 
\begin{lemma}\label{Glemvsmultires}
If $E\subset\mathbb{H}^n$ is bounded and $k$-regular with  $E\in{\rm GLem}(\beta_{p,\mathcal V_k},2)$, then $B_{p,k}^\Delta(E)<\infty$ for every dyadic net in $E$.   
\end{lemma}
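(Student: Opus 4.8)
The plan is to pass from the discrete sum defining $B^{\Delta}_{p,k}(E)$ to the integral form of the geometric lemma and to compare the two scale by scale. By Lemma~\ref{l:DiffGeomLem} (with $q=2$), the hypothesis $E\in\mathrm{GLem}(\beta_{p,\mathcal V_{k}},2)$ is equivalent to the existence of $C>0$ with $\int_{0}^{R}\int_{E\cap B(x_{0},R)}\beta_{p,\mathcal V_{k}}(y,r)^{2}\,d\mathcal H^{k}(y)\,\tfrac{dr}{r}\le CR^{k}$ for all $x_{0}\in E$ and $R>0$. Since $E$ is compact and $E\subset B(x_{0},\diam(E))$ for every $x_{0}\in E$, this gives in particular that $\int_{0}^{R}\int_{E}\beta_{p,\mathcal V_{k}}(y,r)^{2}\,d\mathcal H^{k}(y)\,\tfrac{dr}{r}<\infty$ for every fixed $R\ge\diam(E)$. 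I would then fix a dyadic net $(\Delta_{j})_{j\in\mathbb Z}$ in $E$ and split $B^{\Delta}_{p,k}(E)=\sum_{j}2^{-j}\sum_{x\in\Delta_{j}}\beta^{2}_{p,\mathcal V_{k}}(x,A2^{-j})$ into the ranges $2^{-j}>\diam(E)$ (``large scales'') and $2^{-j}\le\diam(E)$ (``small scales'').

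For the large scales, property~(ii) of Definition~\ref{d:multiRes} forces $\Delta_{j}$ to consist of a single point $x_{j}$, and $E\cap B(x_{j},A2^{-j})=E$; choosing any horizontal $k$-plane $V$ through $x_{j}$ gives $d(y,V)\le\diam(E)$ for all $y\in E$, so together with $\mathcal H^{k}(E)\lesssim\diam(E)^{k}$ one gets $\beta_{p,\mathcal V_{k}}(x_{j},A2^{-j})\lesssim_{A,p}(\diam(E)/2^{-j})^{1+k/p}$, whence $2^{-j}\beta^{2}_{p,\mathcal V_{k}}(x_{j},A2^{-j})\lesssim\diam(E)^{2+2k/p}\,(2^{-j})^{-1-2k/p}$. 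Summing over $j$ with $2^{-j}>\diam(E)$ is a convergent geometric series (the exponent $1+2k/p$ being positive), so the large-scale part of $B^{\Delta}_{p,k}(E)$ is $\lesssim\diam(E)<\infty$.

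The small scales are the heart of the matter, and the key tool is a one-sided comparison of $\beta$-numbers under enlarging the ball. For $2^{-j}\le\diam(E)$, $x\in\Delta_{j}$, $y\in E\cap B(x,2^{-j-1})$ and $r\in[2^{-j-1},2^{-j}]$, one has $B(x,A2^{-j})\subset B(y,A'r)$ with $A':=2(A+1)$, and $A2^{-j}\sim A'r$; testing the definition of $\beta_{p,\mathcal V_{k}}(x,A2^{-j})$ with a near-optimal plane for $\beta_{p,\mathcal V_{k}}(y,A'r)$ then yields $\beta_{p,\mathcal V_{k}}(x,A2^{-j})\lesssim_{A,p}\beta_{p,\mathcal V_{k}}(y,A'r)$. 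As this holds for \emph{all} admissible $y$ and $r$, I would average the squared inequality over $y\in E\cap B(x,2^{-j-1})$ (whose $\mathcal H^{k}$-measure is $\sim 2^{-jk}$ by $k$-regularity) and over $r\in[2^{-j-1},2^{-j}]$ against $\tfrac{dr}{r}$, obtaining
\[\beta^{2}_{p,\mathcal V_{k}}(x,A2^{-j})\lesssim 2^{jk}\int_{2^{-j-1}}^{2^{-j}}\int_{E\cap B(x,2^{-j-1})}\beta_{p,\mathcal V_{k}}(y,A'r)^{2}\,d\mathcal H^{k}(y)\,\frac{dr}{r}.\]
Since the balls $B(x,2^{-j-1})$, $x\in\Delta_{j}$, are pairwise disjoint (again by property~(ii)), summing over $x\in\Delta_{j}$ and multiplying by $2^{-j}$ gives $2^{-j}\sum_{x\in\Delta_{j}}\beta^{2}_{p,\mathcal V_{k}}(x,A2^{-j})\lesssim 2^{j(k-1)}\int_{2^{-j-1}}^{2^{-j}}\int_{E}\beta_{p,\mathcal V_{k}}(y,A'r)^{2}\,d\mathcal H^{k}(y)\,\tfrac{dr}{r}$.

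It then remains to sum over $j$ with $2^{-j}\le\diam(E)$. When $k=1$ --- the case relevant here, cf.\ the remark preceding the statement --- the leftover weight $2^{j(k-1)}$ equals $1$, the dyadic intervals $[2^{-j-1},2^{-j}]$ tile $(0,\diam(E)]$, and after the substitution $s=A'r$ the sum telescopes into $\int_{0}^{A'\diam(E)}\int_{E}\beta_{p,\mathcal V_{1}}(y,s)^{2}\,d\mathcal H^{1}(y)\,\tfrac{ds}{s}$, which is finite by the first paragraph; combined with the large-scale bound this gives $B^{\Delta}_{p,1}(E)<\infty$. I expect this last step to be the main obstacle: the sum $B^{\Delta}_{p,k}(E)$ weights a scale $2^{-j}$ by the \emph{length} $2^{-j}$, while the $k$-dimensional Carleson measure $\beta^{2}_{p,\mathcal V_{k}}\,d\mathcal H^{k}\,\tfrac{dr}{r}$ weights it by the \emph{volume} $\mathcal H^{k}(Q)\sim 2^{-jk}$, and these two weightings coincide precisely for $k=1$; so it is exactly for $k=1$ that the comparison above closes, which is the only case needed in Section~\ref{s:Juillet}.
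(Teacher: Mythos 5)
Your argument is correct and is essentially the same standard comparison the paper sketches: you bound each net ball's $\beta$-number by $\beta$-numbers at comparable locations and scales and then invoke the Carleson condition, using the $2^{-j}$-separation of $\Delta_j$ (property (ii) of Definition \ref{d:multiRes}) to control overlap. The only packaging difference is that you route through the integral form of the geometric lemma via Lemma \ref{l:DiffGeomLem}, averaging over $y\in E\cap B(x,2^{-j-1})$ and $r\in[2^{-j-1},2^{-j}]$, whereas the paper assigns to each ball $B\in\mathcal G$ a dyadic cube $Q_B$ with $B\cap E\subset KQ_B$, compares $\beta(B)$ with $\beta(KQ_B)$, and controls the multiplicity of the assignment; by Lemma \ref{l:DiffGeomLem} the two formulations are interchangeable, so this is cosmetic. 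Your large-scale reduction (for $2^{-j}>\diam(E)$ the set $\Delta_j$ is a single point and the trivial bound $\beta_{p,\mathcal V_k}(x,A2^{-j})\lesssim(\diam(E)/2^{-j})^{1+k/p}$ gives a convergent tail) and the small-scale inclusion $B(x,A2^{-j})\subset B(y,A'r)$ with disjointness of the balls $B(x,2^{-j-1})$, $x\in\Delta_j$, are both fine. Your closing caveat is also well taken, and it applies verbatim to the paper's own sketch: since the weight in \eqref{eq:BDef} is $2^{-j}$ while the Carleson condition carries the weight $\mathcal H^k(Q)\sim\diam(Q)^k$, the geometric lemma dominates $B^\Delta_{p,k}(E)$ only when $k=1$; for $k\geq 2$ neither your averaging argument nor the cube-assignment argument closes (the leftover factor $2^{j(k-1)}$, equivalently $\diam(Q)^{1-k}$, blows up), and the statement in that generality would require the weight $2^{-jk}$ or some further input. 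Since the lemma is introduced ``for $k=1$'' and is only applied with $k=1$ in Theorem \ref{gammaglem}, this is a limitation of the statement's stated generality rather than a gap in your reasoning.
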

The proof is standard, so we only sketch the idea. Let $\mathcal{D}$ be a system of dyadic cubes as in Definition \ref{dt:dyad}. Recalling Remark \ref{r:Dyad2}, we may assume that the conditions hold with $\varrho=1/2$, so that a cube of generation $j$ satisfies $\mathrm{diam}(Q)\sim 2^{-j}$.
The idea for Lemma \ref{Glemvsmultires} is to assign to each $B\in \mathcal{G}$ (as in Definition \ref{d:multiRes}) a cube $Q_B \in \mathcal{D}$ so that $B\cap E \subset K Q_B$ for a constant independent of $B$, and then bound the $\beta$-number of $B$ in terms of the corresponding number for $K Q_B$. The same $Q\in \mathcal{D}$ might arise as $Q=Q_B$ for several $B\in \mathcal{G}$, but this multiplicity can be controlled using the $k$-regularity of $E$ and property (ii) of the dyadic net $\Delta$. The desired conclusion then follows from the assumed geometric lemma, which we may equivalently consider in the version with dyadic cubes or double integrals, according to Lemma \ref{l:DiffGeomLem}.

\subsection{Corona decompositions}\label{ss:CoronaDef}
In this section, we define different versions of corona decompositions for $k$-regular sets in $\mathbb{H}^n$. All of them are based on the notion of \emph{coronization}, which is an adaptation of the original definition by David and Semmes \cite[(3.14)-(3.18)]{David2}.

\begin{definition}[Coronization]\label{d:coronization}
 Let $n\in \mathbb{N}$, $k\in \{1,\ldots,n\}$, and $E\subset \mathbb{H}^n$ be $k$-regular with a dyadic system $\mathcal{D}$.  A \emph{coronization of $E$ with constant $C>0$} is a decomposition $\mathcal{D}=\mathcal{G} \dot{\cup}\mathcal{B}$ of the dyadic cubes into a  \emph{good set} $\mathcal G$ and a \emph{bad set}
 $\mathcal B$ with the following properties:
    \begin{itemize}
        \item for every $R\in\mathcal{D}$ \begin{equation*}
            \sum_{Q\in\mathcal B, Q\subseteq R} \mathcal{H}^k(Q)\leq C\mathcal{H}^k(R);
        \end{equation*}
        \item the good set $\mathcal G$ can be partitioned into a family $\mathcal F$ of disjoint \emph{trees} $\mathcal S$ (also called \emph{stopping time regions}) such that:
        \begin{itemize}
           \item each $\mathcal S\in\mathcal F$ is \textit{coherent}: it has a (unique) maximal element, denoted by $Q(\mathcal S)$, that contains all other elements of $\mathcal{S}$ as subsets, has the property that if 
        $Q\in \mathcal S, Q'\in\mathcal{D}$ with $Q\subseteq Q'\subseteq Q(\mathcal S)$, then $Q'\in\mathcal S$, and finally is such that if $Q\in \mathcal{S}$, then either all of the children of $Q$ lie in $\mathcal{S}$ or none of them do;
         \item for every $R\in\mathcal{D}$ \begin{equation}\label{carlesonpackingcond}
            \sum_{\mathcal S\in\mathcal F, Q(\mathcal S)\subseteq R} \mathcal{H}^k(Q(\mathcal S))\leq C\mathcal{H}^k(R).
        \end{equation}
    \end{itemize}
 \end{itemize}
\end{definition}
Coronizations are useful if the trees $\mathcal{S}$ are such that $E$ has good (approximation) properties at the scales and locations of each $Q\in \mathcal{S}$. In this case we will say that $E$ admits a \emph{corona decomposition} (with respective properties). As in the Euclidean case (\cite[p.20]{David1} or \cite[p.58]{David2}) the existence of such a decomposition should be independent of the specific choice of dyadic system in the definition, but we will not need this fact.

We introduce three definitions of corona decomposition for regular subsets $E\subset \H^n$.
The first one, a corona decomposition by intrinsic Lipschitz graphs with small constants, 
is inspired by the classical definition in $\R^n$ of a corona decomposition 
(see  \cite[Definition 3.19]{David2} or \cite{David1}). It is also related to a corona decomposition for intrinsic Lipschitz functions by intrinsic Lipschitz functions with small constants, which was established for $k=n=1$ in \cite{MR4299821}, motivated by an application to singular integral operators on regular curves. Moreover, it is an instance of the general corona decompositions  studied in \cite{MR4485846}. By \cite[Theorem 1.1]{MR4485846}, a set with (ILG-C) therefore has big pieces squared (in the sense of \cite[Definition 2.11]{MR4485846}) of intrinsic Lipschitz graphs.
\begin{definition}[Corona decomposition by intrinsic Lipschitz graphs (ILG-C)]\label{d:ILG-C}
Let $n\in \mathbb{N}$, $k\in \{1,\ldots,n\}$, and $E\subset \mathbb{H}^n$ be $k$-regular.  We say that $E$ admits a \textit{corona decomposition by intrinsic Lipschitz graphs} (ILG-C) if, for every $\eta>0$, there exists a constant $C=C(\eta)>0$ such that $E$ admits a coronization $\mathcal{D}=\mathcal{G}\dot{\cup}\mathcal{B}$ with constant $C$  
where,  for each $\mathcal S\in\mathcal F$, there exists a $k$-dimensional intrinsic Lipschitz graph $\Gamma=\Gamma_\mathcal S$ with intrinsic Lipschitz constant bounded by $\eta$ so that, for all $Q\in\mathcal S$, it holds that
     \begin{equation}\label{eq:ILGcorona}
     \dist(x,\Gamma)\leq \eta\,\diam ( Q)\quad\text{if }x\in E, \,\dist(x,Q)\leq \diam(Q).
     \end{equation}
\end{definition}

\begin{remark}\label{r:ILG-Cmodif}
We will need an improvement of (ILG-C), where we can replace ``$d(x,Q)\leq \mathrm{diam}(Q)$'' by ``$d(x,Q)\leq N \mathrm{diam}(Q)$'' in Definition \ref{d:ILG-C} for some constant $N>1$, with the constant $C=C(\eta,N)$ in the coronization allowed to depend also on $N$. This is a priori a stronger condition, but in fact the two versions are equivalent. In the Euclidean case, this is stated on \cite[p.20]{David1}, see also
\cite[Lemma 3.31]{David2}, The proof does not use any features of Euclidean spaces and relies purely on axiomatic properties of dyadic families and coronizations, see the outline of the proof in \cite{David1,David2}.
\end{remark}

The next version of the corona decomposition is inspired by \cite[Lemma 4.3]{Hahlomaa}. 
Here and in the following, $K_0$ is a fixed constant, which will be chosen big enough depending on $k$ and $C_E$, as in \cite[p.4]{Hahlomaa}. We also assume that at least $K_0\geq 2$.

\begin{definition}[Corona decomposition by horizontal planes (P-C)
]\label{d:P-C}
Let $n\in \mathbb{N}$, $k\in \{1,\ldots,n\}$, and $E\subset \mathbb{H}^n$ be $k$-regular.  
We say that $E$ admits a \textit{corona decomposition  by horizontal planes} (P-C)  if, for every $\eta>0$, there exists a constant $C=C(\eta)>0$ such that $E$ admits a coronization $\mathcal{D}=\mathcal{G}\dot{\cup}\mathcal{B}$ with constant $C$  
where,  for each $\mathcal S\in\mathcal F$,
there exists an affine horizontal plane $V_\mathcal S$ such that, whenever $x,y\in K_0 Q(\mathcal S)$ are chosen in such a way that $d(x,y)>\eta\min\{h_\mathcal S(x),h_\mathcal S(y)\}$, then 
\begin{equation}\label{eq:P-C}
d(x,y)\leq (1+2\eta)d(P_{V_\mathcal S}(x), P_{V_\mathcal S}(y)),\end{equation}
where 
\[h_\mathcal S(x):=\inf\{d(x,Q)+\diam(Q):Q\in\mathcal S\}.\]
\end{definition}

Hahlomaa worked with a condition analogous to (P-C), but with \eqref{eq:P-C} only required to hold for $d(x,y)>D^{-2} \min\{h_{\mathcal{S}}(x),h_{\mathcal{S}}(y)\}$ for a constant $D$ depending on $k$ and the Ahlfors regularity constant of $E$. However, an inspection of the proof in \cite{Hahlomaa} reveals that in fact (P-C) can be obtained, even under our weaker Carleson-type assumptions on $E$, see the proof of Lemma \ref{l:HahlLem4.3}. The condition (P-C) can be compared more easily with other corona decompositions from the literature.


The third type of corona decomposition we consider is the one by normed spaces, introduced by Bate, Hyde, and Schul \cite[Definitions 6.4.1, 6.4.2]{Bate} in metric spaces:
\begin{definition}[Corona decomposition by normed spaces (N-C)]\label{d:N-C} Let $n\in \mathbb{N}$, $k\in \{1,\ldots,n\}$, and $E\subset \mathbb{H}^n$ be $k$-regular. 
We say that $E\subset \H^n$ admits a \textit{Corona decomposition by normed spaces} (N-C) \textit{with constants $\Lambda_1,\Lambda_2\geq 1$} if for every $\eta>0$ there exists $C=C(\eta)>0$ such that $E$ admits a coronization $\mathcal{D}=\mathcal{G}\dot{\cup}\mathcal{B}$ with constant $C$  
where,  for each $\mathcal S\in\mathcal F$,
 there exists a norm $\|\cdot\|_\mathcal S$ on $\R^k$, a point $y_\mathcal S\in\R^k$ and a map $\varphi_\mathcal S: 3B_{Q(\mathcal S)}\to B_{\|\cdot\|_{\mathcal{S}}}(y_\mathcal S, 3 \mathrm{diam}(Q(\mathcal S))) $ such that if $ Q\in\mathcal S$ and $x,y\in 3B_Q$ satisfy $d(x,y)\geq \eta \mathrm{diam}(Q)$, then
     \[\frac{1}{\Lambda _1}d(x,y)\leq \|\varphi_\mathcal S(y)-\varphi_\mathcal S (x)\|_\mathcal S\leq \Lambda_2 d(x,y),\] 
     where  $B_Q$ is defined as in \cite{Bate}, while $B_{\|\cdot\|_{\mathcal{S}}}$ denotes a ball with respect to the norm $\|\cdot\|_{\mathcal{S}}$.
\end{definition}
In the subsequent Lemmas \ref{l:PCimpliesILGC}--\ref{l:P-CtoN-C}, we discuss relations between the different corona decompositions stated in Definitions \ref{d:ILG-C} (ILG-C), \ref{d:P-C} (P-C), and \ref{d:N-C} (N-C).

\begin{lemma}\label{l:PCimpliesILGC} Let $n\in \mathbb{N}$, $k\in \{1,\ldots,n\}$, and $E\subset \mathbb{H}^n$ be $k$-regular. If $E$ satisfies (P-C), then also (ILG-C).    
\end{lemma}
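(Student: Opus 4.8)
The plan is to adapt to $\mathbb{H}^n$ the classical construction of David and Semmes \cite{David1,David2} that builds an approximating Lipschitz graph out of a coronization, with ``affine plane'' replaced by ``affine horizontal plane'' and ``Lipschitz graph'' by ``intrinsic Lipschitz graph''. Fix $\eta>0$. First I would apply (P-C) with a small parameter $\epsilon=\epsilon(\eta)\in(0,1)$, to be fixed at the end, obtaining a coronization $\mathcal{D}=\mathcal{G}\dot{\cup}\mathcal{B}$ with constant $C(\epsilon)$, a forest $\mathcal{F}$, and for every $\mathcal{S}\in\mathcal{F}$ an affine horizontal plane $V_{\mathcal{S}}$ for which \eqref{eq:P-C} holds with $\epsilon$ in place of $\eta$. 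The same coronization (with constant $C(\epsilon(\eta))$, which is an admissible function of $\eta$) will witness (ILG-C); the content is to produce, for each tree $\mathcal{S}$, an intrinsic Lipschitz graph $\Gamma_{\mathcal{S}}$ with intrinsic Lipschitz constant at most $\eta$ satisfying \eqref{eq:ILGcorona}, in fact its strengthening from Remark \ref{r:ILG-Cmodif} with ``$d(x,Q)\leq N\diam(Q)$'' for a fixed $N=N(K_0)$.

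Fix $\mathcal{S}$ and write $Q_0=Q(\mathcal{S})$. By left-invariance (Lemma \ref{lem_translations}) I may translate so that $V_{\mathcal{S}}=W_0\in\mathcal{V}_k^0$ is a horizontal subgroup; let $W$ be its complementary vertical subgroup, so $P_{V_{\mathcal{S}}}=P_{W_0}$ and $(W_0,d)$ is a copy of $(\mathbb{R}^k,|\cdot|)$. Since $P_{W_0}$ is $1$-Lipschitz, \eqref{eq:P-C} upgrades to the bi-Lipschitz estimate $(1+2\epsilon)^{-1}d(x,y)\leq d(P_{W_0}(x),P_{W_0}(y))\leq d(x,y)$ whenever $x,y\in K_0Q_0$ with $d(x,y)>\epsilon\min\{h_{\mathcal{S}}(x),h_{\mathcal{S}}(y)\}$; in particular $P_{W_0}$ is $(1+2\epsilon)$-bi-Lipschitz on $Z_{\mathcal{S}}:=\{x\in K_0Q_0:\ h_{\mathcal{S}}(x)=0\}$ and, using $h_{\mathcal{S}}\lesssim\diam(R)$ on $E\cap B(x_R,c\diam(R))\subset R$, also on $E\cap B(x_R,c\diam(R))$ for pairs at distance $\gtrsim\epsilon\diam(R)$, for each minimal cube $R\in\mathcal{S}$. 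The computation in the proof of Proposition \ref{prop_lipgraph} uses only the pointwise inequality $d(x,y)\leq(1+\epsilon)d(P_{W_0}(x),P_{W_0}(y))$, not that the underlying set is a full graph; hence on any set where $P_{W_0}$ is $(1+2\epsilon)$-bi-Lipschitz it yields the cone inequality $\|P_W(y^{-1}\cdot x)\|\leq 6\sqrt[4]{2\epsilon}\,\|P_{W_0}(y^{-1}\cdot x)\|$, which in particular controls the ``$W$-component'' of $x$ relative to $y$ by $d(P_{W_0}(x),P_{W_0}(y))$.

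It remains to construct $\Gamma_{\mathcal{S}}=\Phi_{\mathcal{S}}(W_0)$. Following \cite{David1,David2} I would take a Whitney decomposition of $W_0\cong\mathbb{R}^k$ adapted to the $1$-Lipschitz stopping scale $\ell(v):=\inf\{d(v,P_{W_0}(x))+h_{\mathcal{S}}(x):x\in K_0Q_0\}$ (whose zero set is the closure of $P_{W_0}(Z_{\mathcal{S}})$), and define $\phi_{\mathcal{S}}(v)$ by a partition-of-unity average, in the Euclidean coordinates of $W$, of the values $P_W(x)$ over points $x\in K_0Q_0$ whose $W_0$-projection lies within $\ell(v)$ of $v$; over the shadow of a minimal cube this makes $\Gamma_{\mathcal{S}}$ a regularization of $E$ at the scale of that cube, while over the zero set of $\ell$ it passes through $E$ itself. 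Three things then have to be checked. (i) $\phi_{\mathcal{S}}$ is intrinsic Lipschitz with constant $\lesssim\sqrt[4]{\epsilon}$, hence $\leq\eta$ for $\epsilon$ small: for base points close relative to the local scale this follows from the smoothness of the average together with the cone inequality of the previous paragraph at that scale, and for far apart base points from a chaining argument, in both cases reproducing the group-law estimate $|\omega(z,z')|\leq\tfrac{1}{2}|z||z'|$ of Proposition \ref{prop_lipgraph}. (ii) The approximation \eqref{eq:ILGcorona} in the form of Remark \ref{r:ILG-Cmodif}: for $Q\in\mathcal{S}$ and $x\in E$ with $d(x,Q)\leq N\diam(Q)$ one has $\ell(P_{W_0}(x))\leq h_{\mathcal{S}}(x)\lesssim\diam(Q)$, so the cone inequality forces the point $\gamma\in\Gamma_{\mathcal{S}}$ with $P_{W_0}(\gamma)=P_{W_0}(x)$ to satisfy $d(x,\gamma)\lesssim\sqrt[4]{\epsilon}\,\diam(Q)\leq\eta\diam(Q)$. (iii) The Carleson packing \eqref{carlesonpackingcond} and the bad-set bound are inherited verbatim from the (P-C) coronization.

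The main obstacle is the construction of $\Gamma_{\mathcal{S}}$ near the minimal cubes of $\mathcal{S}$: since (P-C) controls $E$ only at scales $\gtrsim h_{\mathcal{S}}$, two $E$-points in adjacent minimal cubes near their common boundary need not be comparably projected onto $V_{\mathcal{S}}$, so $\Gamma_{\mathcal{S}}$ cannot simply be taken to pass through a net of $E$ but must be pieced together by the Whitney interpolation so as to follow $E$ at each minimal scale while remaining one intrinsic Lipschitz graph of small constant. Carrying out this interpolation in $\mathbb{H}^n$ requires some care, because the vertical subgroup $W$ need not be abelian and $P_W$ is not linear, but the required estimates are exactly those of Proposition \ref{prop_lipgraph}, applied scale by scale. (Alternatively, one can first extract from (P-C) a David--Semmes-type coronization of $E$ by per-cube affine horizontal planes, all $\epsilon$-close to $V_{\mathcal{S}}$ within a tree, and then run the graph construction in that formulation.)
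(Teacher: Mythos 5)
Your reduction steps are sound: applying (P-C) with a small parameter $\epsilon(\eta)$, keeping the same coronization, reducing to the case where $V_{\mathcal S}$ is a horizontal subgroup, and observing that the computation in Proposition \ref{prop_lipgraph} is purely pointwise, so that any pair $x,y$ with $d(x,y)\leq(1+2\epsilon)\,d(P_{V_{\mathcal S}}(x),P_{V_{\mathcal S}}(y))$ satisfies the cone estimate $\|P_W(y^{-1}\cdot x)\|\lesssim\sqrt[4]{\epsilon}\,\|P_{V_{\mathcal S}}(y^{-1}\cdot x)\|$; this observation is also at the heart of the paper's argument. The genuine gap is the step you yourself flag as ``the main obstacle'': the Whitney/partition-of-unity construction of $\Gamma_{\mathcal S}$ is asserted, not carried out, and it is precisely the hard part of the lemma. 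Condition (P-C) gives no information on pairs at distance below $\epsilon\min\{h_{\mathcal S}(x),h_{\mathcal S}(y)\}$, so at and below the stopping scale the graph must be produced by interpolation; averaging the values $P_W(x)$ in the Euclidean coordinates of $W$ does not obviously yield an intrinsic Lipschitz function of small constant, because $P_W$ is not a homomorphism, the graph map couples base and fibre through the form $\omega$, and the vertical coordinate scales quadratically, so the ``scale by scale'' and ``chaining'' estimates you invoke are nontrivial claims that would have to be proved (in the Euclidean model this gluing occupies several sections of \cite{David1} and is not a formal consequence of the cone condition). As written, items (i) and (ii) of your checklist are therefore unproven.

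The paper avoids the interpolation entirely. Following \cite[Remark 6.4.3]{Bate}, for each tree it takes a maximal set $\mathcal N\subset K_0Q(\mathcal S)$ with $d(x,y)\geq\theta\,\mathrm{diam}(Q_{x,y})$ for distinct $x,y\in\mathcal N$, where $Q_{x,y}$ is a smallest cube of $\mathcal S$ whose $K_0$-enlargement contains both points, and (P-C) is applied with parameter $\theta/K_0$ for a suitable $\theta(\eta)$. Maximality yields that $\mathcal N$ is $\eta\,\mathrm{diam}(Q)$-dense in $K_0Q$ for every $Q\in\mathcal S$ (this density claim, proved via a minimal-ancestor estimate, is the only delicate point), while the separation guarantees that every pair of points of $\mathcal N$ falls under the (P-C) hypothesis, so $P_{V_{\mathcal S}}|_{\mathcal N}$ is injective with almost isometric inverse and $\mathcal N$ is the intrinsic graph of a partially defined function of intrinsic Lipschitz constant $\lesssim\sqrt[4]{\theta}$ by Proposition \ref{prop_lipgraph}. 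One then invokes the intrinsic Lipschitz extension theorem of \cite{didonato} to obtain a graph $\Gamma_{\mathcal S}$ over all of $V_{\mathcal S}$, and the density of $\mathcal N$ gives \eqref{eq:ILGcorona}. To complete your argument you would either have to carry out the Whitney estimates in detail, or replace that construction by this net-plus-extension device, which reduces the whole step to a few lines; note also that for this implication you only need \eqref{eq:ILGcorona} itself, not the strengthened form of Remark \ref{r:ILG-Cmodif}, which is used in the converse direction.
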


\begin{proof}
Fix $\eta>0$. We will find $\theta=\theta(\eta)=\theta(\eta,K_0,k,C_E)>0$ sufficiently small and apply (P-C) with parameter $\frac{\theta}{K_0}$. 
Let $\mathcal{D}=\mathcal{G}\dot{\cup}\mathcal{B}$ be the coronization given by (P-C) applied with ``$\eta$'' replaced by ``$\frac{\theta}{K_0}$''; this will also serve as a coronization for (ILG-C). We only need to verify the existence of intrinsic Lipschitz graphs as in \eqref{eq:ILGcorona}.

Let $\mathcal{F}$ be the forest of trees associated with the coronization, and fix $\mathcal S\in\mathcal F$. For every $x,y\in K_0Q(\mathcal S)$, with $x\neq y$, let us denote by $Q_{x,y}$ a smallest -- with respect to diameter -- cube 
in $\mathcal{S}$ 
with $x,y\in K_0 Q_{x,y}$. (While the cubes in $\mathcal{S}$ of a fixed generation are disjoint, the $K_0$-enlarged cubes generally are not, so that the choice of $Q_{x,y}$ need not be unique, but any admissible choice will do. Moreover, the assumption that $x,y\in K_0 Q(\mathcal{S})$ ensures that there exists at least one cube with the desired properties.)

We perform a construction inspired by \cite[Remark 6.4.3]{Bate}. Let $\mathcal N$ be a 
set in $K_0Q(\mathcal S)$ with the following property \textbf{(P)}:
 if $x,y\in\mathcal N$ are distinct, then $d(x,y)\geq \theta\,\diam(Q_{x,y})$, and $\mathcal{N}$ is \emph{maximal} in the sense that it is impossible to add an element to $\mathcal{N}$ while preserving the separation condition.
By Zorn's lemma, the existence of $\mathcal N$ is guaranteed.\\
 We claim that, choosing $\theta=\theta(\eta)$ small enough, then for every $x\in K_0 Q(\mathcal S)$ and any cube $Q\in\mathcal S$ such that $x\in K_0 Q$, there exists $x_\mathcal N\in\mathcal N$ satisfying $d(x,x_\mathcal N)\leq \eta\, \diam(Q)$. In fact, 
let $x\in K_0 Q$ for some $Q\in\mathcal S$ and assume by contradiction that 
\begin{equation}\label{eq:CounterAss}
d(x,y)>\eta\,\diam(Q)\quad \text{for every }y\in\mathcal N.
\end{equation}
In particular $x\not\in\mathcal N$. If we prove that $\mathcal N \cup \{x\}$ still satisfies the separation property in \textbf{(P)}, we then conclude by maximality of $\mathcal N$. In other words, for fixed $y\in\mathcal N$ we want to show that $d(x,y)\geq \theta\, \diam(Q_{x,y})$. Let $R\in\mathcal S$ be the smallest ancestor of $Q$ such that 
\begin{equation}\label{ancestor}\diam(R)\geq \frac{1}{K_0-1}d(x,y)+\diam(Q).\end{equation}
If such an ancestor does not exist in $\mathcal S$, then set $R=Q(\mathcal S)$. In the first case, we have
\[d(y,R)\leq d(y,Q)\leq d(x,y)+d(x,Q)\leq d(x,y)+(K_0-1)\diam(Q)\leq (K_0-1)\diam (R).\]
This means that $y\in K_0 R$. The same conclusion also holds in the case $R=Q(\mathcal S)$, since clearly $y\in K_0 Q(\mathcal S)=K_0 R$. Since also $x\in K_0 R$, we get that $\mathrm{diam}(R)\geq \mathrm{diam}(Q_{x,y})$ by the choice of $Q_{x,y}$. 
Then we can estimate, by the minimality of $R$ and properties of dyadic cubes (see \eqref{eq:H_(14)} and Definition \ref{dt:dyad}), and our counter-assumption \eqref{eq:CounterAss} that
\[\begin{split}D^{-2}\rho\,\diam (R)\leq \tfrac{1}{K_0-1}d(x,y)+\diam(Q)\overset{\eqref{eq:CounterAss}}{\leq} \tfrac{1}{K_0-1}d(x,y)+\tfrac{1}{\eta}d(x,y)=\left(\tfrac{1}{K_0-1}+\tfrac{1}{\eta}\right)d(x,y).\end{split}\]
This holds also in the case in which we could not find the ancestor satisfying \eqref{ancestor}, since in that case
\[D^{-2}\rho\,\diam (R)\leq \diam(R)\leq \frac{1}{K_0-1}d(x,y)+\diam(Q).\]
Hence we find that 
\[d(x,y)\geq \frac{D^{-2}\rho}{\frac{1}{K_0-1}+\frac{1}{\eta}}\diam(R)\]
and, choosing 
\begin{equation}\label{eq:ChoiceTheta}\theta\leq \frac{D^{-2}\rho}{\frac{1}{K_0-1}+\frac{1}{\eta}} \,(\lesssim_{k,C_E} \eta),
\end{equation}
we finally get
\[d(x,y)\geq \theta\,\diam(R)\geq \theta \,\diam(Q_{x,y})\] and the claim is proved.
Now, by construction of $\mathcal N$, for any $x,y\in \mathcal N$, we know \[d(x,y)\geq \theta\, \diam(Q_{x,y})\geq \tfrac{\theta}{K_0} h_\mathcal S(x)\geq \tfrac{\theta}{K_0}\,\min\{h_\mathcal S(x),h_\mathcal S(y)\}.\]
The second inequality follows by the fact that $x\in K_0 Q_{x,y}$, so that $h_\mathcal S(x)\leq d(x,Q_{x,y})+\diam(Q_{x,y})\leq (K_0-1)\diam(Q_{x,y})+\diam(Q_{x,y})=K_0\diam(Q_{x,y}).$
By (P-C), applied with $\frac{\theta}{K_0}$, we deduce that there exists an affine horizontal plane $V_{\mathcal{S}}$ such that
\begin{equation}\label{eq:InjCond}d(x,y)\leq (1+2\tfrac{\theta}{K_0})d(P_{V_\mathcal S}(x),P_{V_{\mathcal S}}(y)),\quad x,y\in \mathcal{N}.\end{equation}
By Lemma \ref{l:NonAffineProj}, we may without loss of generality assume that $V_{\mathcal{S}}$ is a horizontal \emph{subgroup}. Condition \eqref{eq:InjCond} implies that the 
horizontal projection ${P_{V_{\mathcal{S}}}}|_{\mathcal{N}}$ is injective. Thus there exists a well-defined map
 $\Phi_{\mathcal{S}}:=({P_{V_{\mathcal{S}}}}|_{\mathcal{N}})^{-1}:P_{V_S}(\mathcal N)\to\mathcal N$ that sends $P_{V_\mathcal S}(x)\mapsto x$, and $\mathcal{N}= \Phi_{\mathcal{S}}(P_{V_{\mathcal{S}}}(\mathcal{N}))$ is an in intrinsic graph over $P_{V_{\mathcal{S}}}(\mathcal{N})\subset V_{\mathcal{S}}$. More precisely, by the injectivity of ${P_{V_{\mathcal{S}}}}|_{\mathcal{N}}$, we obtain that for every $v\in P_{V_{\mathcal{S}}}(\mathcal{N})$, there exists a unique $\varphi_{\mathcal{S}}(v)$ in the complementary orthogonal subgroup $W$ such that $v\cdot \varphi_{\mathcal{S}}(v) \in \mathcal{N}$. Then $\Phi_{\mathcal{S}}$ is the graph map of $\varphi_{\mathcal{S}}$, and by \eqref{eq:InjCond}, the latter 
 is intrinsic Lipschitz with intrinsic Lipschitz constant $\lesssim \sqrt[4]{\theta}$  (see Proposition \ref{prop_lipgraph}). Thus, by \eqref{eq:ChoiceTheta}, the intrinsic Lipschitz constant of $\varphi_{\mathcal{S}}$ can be bounded by $\lesssim_{k,C_E} \sqrt[4]{\eta}$.
 By \cite{didonato}, we can extend $\varphi_{\mathcal{S}}$ to the full space $V_{\mathcal S}$ with a control on the Lipschitz constant. Let $\Gamma_\mathcal S$ be the associated intrinsic Lipschitz graph.

Let now $Q\in\mathcal S$ and $x\in E$ with $d(x,Q)\leq \diam(Q)$. Then $x\in K_0 Q$ and, by the previous claim, there exists $x_\mathcal N\in\mathcal N$ satisfying $d(x,y)\leq \eta\,\diam(Q)$. Therefore
\begin{equation}\label{eq:GraphApprox}d(x,\Gamma_{\mathcal{S}})\leq d(x,\mathcal N)\leq d(x,x_\mathcal N)\leq \eta \,\diam(Q).\end{equation}
This proves that the chosen coronization satisfies the condition required in (ILG-C). To be precise, the intrinsic graph we have constructed may not have intrinsic Lipschitz constant bounded by $\eta$; this is not a problem, since the constant is controlled by $\eta$, in the sense that it goes to $0$ as $\eta \to 0$, so by choosing $\theta(\eta)$ initially even smaller if needed, we can arrange that the intrinsic Lipschitz constant of $\Gamma_{\mathcal{S}}$ is less than $\eta$ while also \eqref{eq:GraphApprox} holds.
\end{proof}

Next, we consider the converse of the implication in Lemma \ref{l:PCimpliesILGC}.

\begin{lemma}\label{lem_equivcor} Let $n\in \mathbb{N}$, $k\in \{1,\ldots,n\}$, and $E\subset \mathbb{H}^n$ be $k$-regular. If $E$ satisfies  (ILG-C), then also (P-C).    
\end{lemma}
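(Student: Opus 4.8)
The plan is to recycle the coronization furnished by (ILG-C) and to convert the \emph{approximation} property \eqref{eq:ILGcorona} into the \emph{mapping} property \eqref{eq:P-C}, the bridge being the metric Lipschitz estimate \eqref{eq:GraphMapMetricLip} for graph maps. Fix $\eta>0$ (and WLOG $\eta\le1$, since (P-C) for all $\eta\le1$ entails (P-C) for all $\eta$). I would apply the $K_0$-enlarged version of (ILG-C) from Remark \ref{r:ILG-Cmodif} (``$d(x,Q)\le\diam(Q)$'' replaced by ``$d(x,Q)\le K_0\diam(Q)$'') with a small parameter $\theta=\theta(\eta)>0$, to be fixed at the very end, obtaining a coronization $\mathcal D=\mathcal G\dot\cup\mathcal B$ with constant $C(\theta,K_0)=C(\eta)$; this same coronization will witness (P-C). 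For each tree $\mathcal S\in\mathcal F$ we are given a $k$-dimensional intrinsic Lipschitz graph $\Gamma_{\mathcal S}=\Phi_{\mathcal S}(V_{\mathcal S})$, where $V_{\mathcal S}\in\mathcal V_k^0$, $\Phi_{\mathcal S}$ is the graph map of an intrinsic Lipschitz function of constant $\le\theta$, and $\dist(x,\Gamma_{\mathcal S})\le\theta\diam(Q)$ whenever $x\in E$, $Q\in\mathcal S$ and $d(x,Q)\le K_0\diam(Q)$. I take $V_{\mathcal S}$ (which is a horizontal subgroup, hence lies in $\mathcal V_k$, and for which the affine projection of Definition \ref{d:AffHeisProj} agrees with the subgroup projection) as the plane in (P-C).

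The first step is a purely dyadic claim: there is $C_1=C_1(K_0,D,\varrho)$ such that for every $x\in K_0Q(\mathcal S)$ and every $\varepsilon>0$ there is a cube $Q_x\in\mathcal S$ with $x\in K_0Q_x$ and $\diam(Q_x)\le C_1(h_{\mathcal S}(x)+\varepsilon)$. To prove it, pick $Q\in\mathcal S$ with $d(x,Q)+\diam(Q)\le h_{\mathcal S}(x)+\varepsilon$; if $x\in K_0Q$ set $Q_x=Q$, and otherwise let $Q_x$ be a smallest ancestor of $Q$ in $\mathcal S$ with $x\in K_0Q_x$ (such exists since $Q(\mathcal S)$ qualifies). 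In the latter case the child $Q'$ of $Q_x$ inside $\mathcal S$ that contains $Q$ fails $x\in K_0Q'$, so $d(x,Q')>(K_0-1)\diam(Q')$, while $d(x,Q')\le d(x,Q)\le h_{\mathcal S}(x)+\varepsilon$; combined with the parent--child diameter comparison $\diam(Q_x)\lesssim_{D,\varrho}\diam(Q')$ contained in \eqref{eq:H_(14)}, this gives the bound. Feeding $Q_x$ into the $K_0$-enlarged (ILG-C) (legitimate because $d(x,Q_x)\le(K_0-1)\diam(Q_x)\le K_0\diam(Q_x)$) and letting $\varepsilon\to0$ yields
\[
\dist(x,\Gamma_{\mathcal S})\ \le\ \theta\,C_1\,h_{\mathcal S}(x),\qquad x\in K_0Q(\mathcal S).
\]

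Now take $x,y\in K_0Q(\mathcal S)$ with $d(x,y)>\eta\min\{h_{\mathcal S}(x),h_{\mathcal S}(y)\}$, say $h_{\mathcal S}(x)\le h_{\mathcal S}(y)$; we may assume $x\neq y$. Since $h_{\mathcal S}$ is $1$-Lipschitz, $h_{\mathcal S}(y)\le h_{\mathcal S}(x)+d(x,y)<(\eta^{-1}+1)d(x,y)$, so by the displayed estimate both $\dist(x,\Gamma_{\mathcal S})$ and $\dist(y,\Gamma_{\mathcal S})$ are $\le\varepsilon_0\,d(x,y)$ with $\varepsilon_0:=\theta C_1(\eta^{-1}+1)$. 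Pick $x',y'\in\Gamma_{\mathcal S}$ realising these distances ($\Gamma_{\mathcal S}$ is closed), write $x'=\Phi_{\mathcal S}(u)$, $y'=\Phi_{\mathcal S}(v)$, and use $P_{V_{\mathcal S}}\circ\Phi_{\mathcal S}=\mathrm{id}_{V_{\mathcal S}}$ together with \eqref{eq:GraphMapMetricLip} to get $d(x',y')\le(1+\theta)\,d(P_{V_{\mathcal S}}(x'),P_{V_{\mathcal S}}(y'))$. Combining this with the triangle inequality for $d(x,y)$ in terms of $d(x',y')$, with the $1$-Lipschitzness of $P_{V_{\mathcal S}}$ to pass from $d(P_{V_{\mathcal S}}(x'),P_{V_{\mathcal S}}(y'))$ to $d(P_{V_{\mathcal S}}(x),P_{V_{\mathcal S}}(y))$, and absorbing the resulting $O(\varepsilon_0)\,d(x,y)$ terms, I obtain
\[
d(x,y)\ \le\ \frac{1+\theta}{1-2\varepsilon_0(2+\theta)}\;d\bigl(P_{V_{\mathcal S}}(x),P_{V_{\mathcal S}}(y)\bigr),
\]
valid once $2\varepsilon_0(2+\theta)<1$. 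As $\theta\to0$ with $\eta$ fixed we have $\varepsilon_0\to0$ and the prefactor tends to $1$, so choosing $\theta=\theta(\eta)$ (depending also on $K_0,D,\varrho$, hence on $k$ and $C_E$) small enough makes it $\le1+2\eta$, which is \eqref{eq:P-C}. The step requiring the most care is the dyadic claim together with the bookkeeping of constants; the only substantive, non-cosmetic point is that one must invoke the $N$-enlarged form of (ILG-C) with $N=K_0$, because the cube $Q_x$ produced by the claim only satisfies $d(x,Q_x)\le(K_0-1)\diam(Q_x)$ rather than $d(x,Q_x)\le\diam(Q_x)$.
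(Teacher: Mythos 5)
Your proof is correct and follows essentially the same route as the paper's: reuse the (ILG-C) coronization, take $V_{\mathcal S}$ to be the base plane of $\Gamma_{\mathcal S}$, approximate $x,y$ by nearest points of $\Gamma_{\mathcal S}$ at scale $O_{\eta}(\theta)\,d(x,y)$ via the enlarged form of (ILG-C) from Remark \ref{r:ILG-Cmodif}, and conclude with \eqref{eq:GraphMapMetricLip}, the $1$-Lipschitzness of $P_{V_{\mathcal S}}$, and an absorption argument. The only (harmless) difference is bookkeeping: you first prove the pointwise bound $\dist(\cdot,\Gamma_{\mathcal S})\lesssim_{K_0}\theta\,h_{\mathcal S}(\cdot)$ on $K_0Q(\mathcal S)$ using the enlargement with the fixed constant $N=K_0$, whereas the paper locates a single cube $R$ with $\diam(R)\sim_{\eta}d(x,y)$ and invokes the enlargement with $N=2(\eta^{-1}+1)K_0$.
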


\begin{proof}
Fix $\eta>0$. Our goal is to verify the conditions of (P-C) for this parameter. Find $0<\overline{\eta}\leq \eta$ such that \begin{equation}\label{etabar}\frac{1+\overline \eta^2}{1-6\,\overline \eta^2D^2\rho^{-1}-6\overline \eta}\leq 1+2\eta.\end{equation} Apply now (ILG-C) with parameter $\overline{\eta}^2$ and find the corresponding coronization $\mathcal{D}=\mathcal{G} \dot{\cup}\mathcal{B}$ with forest $\mathcal{F}$ and  constant $C=C(\overline{\eta}^2)=C(\eta,k,C_E)$.
The only property that has to be verified is that for each $\mathcal S\in\mathcal F$ there exists an affine horizontal plane $V_\mathcal S$ such that, whenever $x,y\in {K_0}Q(\mathcal S)$ are chosen in such a way that $d(x,y)>\eta\min\{h_\mathcal S(x),h_\mathcal S(y)\}$, then 
\begin{equation}\label{eq:GoalHorizPlaneCorona}
     d(x,y)\leq (1+2\eta)d(P_{V_\mathcal S}(x), P_{V_\mathcal S}(y)).
     \end{equation}
  Fix $\mathcal S\in\mathcal F$. We take as affine horizontal plane $V_\mathcal S$ the domain of the Lipschitz graph $\Gamma_\mathcal S$ which is associated to $\mathcal{S}$ by the (ILG-C) property. Suppose now $x,y\in {K_0}Q(\mathcal S)$ with $d(x,y)>\eta\min\{h_\mathcal S(x),h_\mathcal S(y)\}$. Assume without loss of generality that $\min\{h_\mathcal S(x),h_\mathcal S(y)\}=h_\mathcal S(x)$. By definition of $h_\mathcal S$, there exists a cube $Q\in\mathcal S$ such that $d(x,y)>\eta (\mathrm{diam}(Q)+d(x,Q))$. Consider now the minimal cube $R\supset Q$, $R\in \mathcal{S}$, such that $\mathrm{diam}({K_0}R)\geq d(x,y)$. Such a cube exists since $\mathrm{diam}({K_0}Q(\mathcal S))\geq d(x,y)$. Then there are two possibilities:
  \begin{itemize}
      \item If there exists a child $\widehat R$ of $R$ which contains $Q$, then by minimality of $R$ and by the properties of dyadic cubes,
      \[\frac{\mathrm{diam}(R)}{D^2\rho^{-1}}\leq \mathrm{diam}(\widehat R){\leq \mathrm{diam}(K_0\widehat R)}< d(x,y),\] 
      Hence $\mathrm{diam}(R)\leq D^2\rho^{-1} \,d(x,y)$;
      \item If instead there exists no child of $R$ containing $Q$, this implies that $R=Q$. Therefore
      \[\overline\eta\, \mathrm{diam}(R)\leq \eta \,\mathrm{diam}(R)=\eta \,\mathrm{diam}(Q)\leq \eta\,(\mathrm{diam}(Q)+d(x,Q))<d(x,y),\]
      which gives $\mathrm{diam}(R)\leq \overline\eta^{-1}d(x,y)$.
  \end{itemize}
  In any case, we get $\mathrm{diam}(R)\leq (D^2\rho^{-1}+\overline\eta^{-1})d(x,y)$. 
  
  Moreover $d(x,R)\leq d(x,Q)<\eta^{-1}d(x,y)\leq \eta^{-1}\mathrm{diam}({K_0}R){\leq 2\eta^{-1}K_0 \mathrm{diam}(R)}$. Similarly, by the triangle inequality, $d(y,R)\leq d(x,y)+d(x, R)\leq (\eta^{-1}+1)\mathrm{diam}({K_0}R){\leq 2(\eta^{-1}+1)K_0\mathrm{diam}(R)}$. Therefore we can apply (ILG-C) (with the improvement from Remark \ref{r:ILG-Cmodif} for the constant $N=2(\eta^{-1}+1)K_0 $) and get
  \begin{equation}\label{eq:LipGraphApproxN}
  d(x,\Gamma_{\mathcal{S}})\leq \overline\eta^2 \mathrm{diam}(R)\quad \text{and}\quad d(y,\Gamma_{\mathcal{S}})\leq \overline\eta^2 \mathrm{diam}(R)
  \end{equation}
  since $d(x,R)\leq N \mathrm{diam}(R)$ and $d(y,R)\leq N \mathrm{diam}(R)$.

  Let now $x_\Gamma$ and $y_\Gamma$ be two points on $\Gamma_{\mathcal{S}}$ which realize $d(x,\Gamma_{\mathcal{S}})$ and $d(y,\Gamma_{\mathcal{S}})$ respectively. Since $\Gamma_{\mathcal{S}}$ is an intrinsic Lipschitz graph with constant smaller than $\overline\eta^2$, we get by metric Lipschitz continuity of the graph map \eqref{eq:GraphMapMetricLip} that 
  \begin{equation}\label{eq:MetricLip}
  d(x_\Gamma, y_\Gamma)\leq (1+\overline\eta^2) d(P_{V_\mathcal S}(x_\Gamma), P_{V_\mathcal S}(y_\Gamma)).
  \end{equation}
  Therefore, if $\overline\eta$ is small enough, \eqref{eq:LipGraphApproxN},\eqref{eq:MetricLip}, and the $1$-Lipschitz property of $P_{V_{\mathcal{S}}}$ yield that
  \[\begin{split}d(x,y)&\leq d(x, x_\Gamma)+ d(x_\Gamma, y_\Gamma)+ d(y,y_\Gamma)
  {\leq} (1+\overline\eta^2) d(P_{V_\mathcal S}(x_\Gamma), P_{V_\mathcal S}(y_\Gamma))+2\overline\eta^2 \mathrm{diam}(R) \\ &\leq (1+\overline\eta^2)d(P_{V_\mathcal S}(x), P_{V_\mathcal S}(y))+(1+\overline\eta^2)d(P_{V_\mathcal S}(x), P_{V_\mathcal S}(x_\Gamma))+(1+\overline\eta^2)d(P_{V_\mathcal S}(y), P_{V_\mathcal S}(y_\Gamma))\\&\quad +2\overline\eta^2 \mathrm{diam}(R)\\ &\leq (1+\overline\eta^2)d(P_{V_\mathcal S}(x), P_{V_\mathcal S}(y))+(1+\overline\eta^2)d(x,x_\Gamma)+(1+\overline\eta^2)d(y, y_\Gamma)+2\overline\eta^2 \mathrm{diam}(R)\\ &\leq (1+\overline\eta^2)d(P_{V_\mathcal S}(x), P_{V_\mathcal S}(y))+6\overline\eta^2 \mathrm{diam}(R)\\
  &\leq (1+\overline\eta^2)d(P_{V_\mathcal S}(x), P_{V_\mathcal S}(y))+6\overline\eta^2 (D^2\rho^{-1} +\overline\eta^{-1})d(x,y).\end{split}\]
  If $\overline\eta$ is small enough, absorbing the last term by the left-hand side of the inequality, and using \eqref{etabar}, we get the desired estimate \eqref{eq:GoalHorizPlaneCorona}.
  \end{proof}

If a set $E$ admits a corona decomposition by horizontal planes (P-C) then, by definition, for each tree $\mathcal{S}\in \mathcal{F}$, there is a horizontal projection with bi-Lipschitz constant close to $1$ when restricted to cubes $Q\in \mathcal{S}$. Naturally, this implies an analogous condition where ``close to $1$'' is replaced by a definite constant. 

\begin{lemma}\label{l:P-CtoN-C}
    Let $n\in \mathbb{N}$, $k\in \{1,\ldots,n\}$, and $E\subset \mathbb{H}^n$ be $k$-regular. If $E$ satisfies  (P-C), then also (N-C) with constants $\Lambda_1=L$ and $\Lambda_2=1$ for arbitrary $L>1$.
\end{lemma}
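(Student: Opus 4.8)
The plan is to realize the required structure with, for \emph{every} tree, the target normed space being $(\mathbb{R}^k,|\cdot|_{\mathbb{R}^k})$ itself (so the norm is the Euclidean one, which is admissible and much simpler than a general norm) and the map $\varphi_{\mathcal{S}}$ an isometric copy of the horizontal projection onto the plane $V_{\mathcal{S}}$ furnished by (P-C). The key structural facts are that such a horizontal projection is $1$-Lipschitz for the Kor\'anyi distance and that $(V_{\mathcal{S}},d)$ is isometric to $(\mathbb{R}^k,|\cdot|_{\mathbb{R}^k})$: the first gives the upper Lipschitz estimate with constant $\Lambda_2=1$ for free, and (P-C) delivers the matching lower estimate with a constant as close to $1$ as desired, hence $\Lambda_1=L$ for arbitrary $L>1$.

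Concretely, I would fix $\eta>0$ and $L>1$, and first record, from the properties of the Christ-type balls $B_Q$ used in \cite{Bate} (which we may take with $\varrho\le 1/2$, cf.\ Remark \ref{r:Dyad2}), a constant $C'=C'(C_E,k)>1$ such that, for every tree $\mathcal{S}$ and every $Q\in\mathcal{S}$, one has $3B_Q\subseteq 3B_{Q(\mathcal{S})}\subseteq K_0 Q(\mathcal{S})$ and $h_{\mathcal{S}}(x)\le d(x,Q)+\mathrm{diam}(Q)\le C'\,\mathrm{diam}(Q)$ for all $x\in 3B_Q$. Then I set $\eta':=\min\{\eta/(2C'),(L-1)/2\}>0$ and apply (P-C) with parameter $\eta'$. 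This yields a coronization $\mathcal{D}=\mathcal{G}\dot{\cup}\mathcal{B}$ with constant $C(\eta')$, a forest $\mathcal{F}$, and for each $\mathcal{S}\in\mathcal{F}$ an affine horizontal $k$-plane $V_{\mathcal{S}}$ satisfying \eqref{eq:P-C}; since $L$ is fixed, $C(\eta')$ is an admissible value of the function $C(\eta)$ in Definition \ref{d:N-C}, and I use this coronization. For each $\mathcal{S}$ I fix a surjective isometry $\iota_{\mathcal{S}}:(V_{\mathcal{S}},d)\to(\mathbb{R}^k,|\cdot|_{\mathbb{R}^k})$ and put $\|\cdot\|_{\mathcal{S}}:=|\cdot|_{\mathbb{R}^k}$, $\varphi_{\mathcal{S}}:=\iota_{\mathcal{S}}\circ P_{V_{\mathcal{S}}}$ restricted to $3B_{Q(\mathcal{S})}$, and $y_{\mathcal{S}}:=\varphi_{\mathcal{S}}(x_{Q(\mathcal{S})})$, with $x_{Q(\mathcal{S})}$ the center of $B_{Q(\mathcal{S})}$.

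The verification then consists of three short checks. Since $\iota_{\mathcal{S}}$ is an isometry, $\|\varphi_{\mathcal{S}}(y)-\varphi_{\mathcal{S}}(x)\|_{\mathcal{S}}=d\big(P_{V_{\mathcal{S}}}(x),P_{V_{\mathcal{S}}}(y)\big)$ for all $x,y$ in the domain; the $1$-Lipschitz property of $P_{V_{\mathcal{S}}}$ then gives at once the upper bound $\|\varphi_{\mathcal{S}}(y)-\varphi_{\mathcal{S}}(x)\|_{\mathcal{S}}\le d(x,y)$ (so $\Lambda_2=1$) and, with $x=x_{Q(\mathcal{S})}$, that $\varphi_{\mathcal{S}}$ maps $3B_{Q(\mathcal{S})}$ into $B_{\|\cdot\|_{\mathcal{S}}}(y_{\mathcal{S}},3\,\mathrm{diam}(Q(\mathcal{S})))$, using that $3B_{Q(\mathcal{S})}$ has radius at most $3\,\mathrm{diam}(Q(\mathcal{S}))$ in the normalization of \cite{Bate}. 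For the lower bound, let $Q\in\mathcal{S}$ and $x,y\in 3B_Q$ with $d(x,y)\ge\eta\,\mathrm{diam}(Q)$. Then $x,y\in 3B_{Q(\mathcal{S})}\subseteq K_0 Q(\mathcal{S})$, and since $\min\{h_{\mathcal{S}}(x),h_{\mathcal{S}}(y)\}\le C'\,\mathrm{diam}(Q)$ and $\eta\ge 2\eta'C'$,
\[
d(x,y)\ \ge\ \eta\,\mathrm{diam}(Q)\ \ge\ 2\eta'C'\,\mathrm{diam}(Q)\ >\ \eta'\min\{h_{\mathcal{S}}(x),h_{\mathcal{S}}(y)\}
\]
(the strict inequality being clear also when the minimum vanishes, as then $d(x,y)>0$ suffices). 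Hence \eqref{eq:P-C} applies and, because $1+2\eta'\le L$,
\[
d(x,y)\ \le\ (1+2\eta')\,d\big(P_{V_{\mathcal{S}}}(x),P_{V_{\mathcal{S}}}(y)\big)\ \le\ L\,\|\varphi_{\mathcal{S}}(y)-\varphi_{\mathcal{S}}(x)\|_{\mathcal{S}},
\]
which is the lower bound with $\Lambda_1=L$. The coronization, together with its Carleson constant, is inherited verbatim from (P-C), so this proves (N-C) with $\Lambda_1=L$ and $\Lambda_2=1$.

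I do not expect a genuine obstacle; the only thing that needs care is bookkeeping. One must check that the various enlargements involved ($3B_Q$ versus $3B_{Q(\mathcal{S})}$ versus $K_0 Q(\mathcal{S})$, and the relation between the radius of $B_Q$ and $\mathrm{diam}(Q)$) are consistent with the precise normalizations of the dyadic balls in \cite[Lemma 2.6.1]{Bate} and with the fixed constant $K_0$, and that the single parameter $\eta'$ can be taken small enough to absorb \emph{simultaneously} the $\eta$-separation demanded by (N-C) and the requirement $1+2\eta'\le L$. No estimate beyond the $1$-Lipschitz property of $P_V$, the isometry $(V,d)\cong(\mathbb{R}^k,|\cdot|_{\mathbb{R}^k})$, and (P-C) itself is required.
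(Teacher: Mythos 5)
Your argument is correct and is essentially the paper's own proof: take the normed space to be $V_{\mathcal{S}}$ with the Kor\'anyi distance (equivalently $(\mathbb{R}^k,|\cdot|_{\mathbb{R}^k})$ via an isometry), take $\varphi_{\mathcal{S}}$ to be the horizontal projection $P_{V_{\mathcal{S}}}$, get $\Lambda_2=1$ from its $1$-Lipschitz property and $\Lambda_1=L$ by applying \eqref{eq:P-C} with a parameter small enough (relative to $\eta$, $L$, and the comparability of $h_{\mathcal{S}}$ with $\mathrm{diam}(Q)$ on $3B_Q$), noting that the coronization constant may depend on $L$. Your write-up merely makes explicit the bookkeeping (choice of $\eta'$, the inclusion $3B_Q\subseteq K_0Q(\mathcal{S})$, and the range condition for $\varphi_{\mathcal{S}}$) that the paper leaves implicit.
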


\begin{proof}
    Let $n\in \mathbb{N}$, $k\in \{1,\ldots,n\}$, and $E\subset \mathbb{H}^n$ be $k$-regular. Let $L>1$. If $E$ satisfies (P-C), then, for any $\eta$ sufficiently small, we get from \eqref{eq:P-C} that 
\begin{equation}\label{eq:PL-C}
d(x,y)\leq L d(P_{V_\mathcal S}(x), P_{V_\mathcal S}(y))\end{equation}
whenever $x,y\in 3B_Q$ with $d(x,y)\geq \eta\diam(Q)$. 
Then $E$ satisfies the conditions of (N-C) where $(\R^k,\|\cdot\|_\mathcal S)$ is $V_S$ (as in Definition \ref{d:P-C}) equipped with the induced distance, $\varphi_\mathcal S$ is the projection map onto $V_\mathcal S$,
$\Lambda_1=L$ and $\Lambda_2=1$. Notice that the constant ``$C$'' in the definition of (N-C) is allowed to depend also on $\Lambda_1=L$.
\end{proof}





\section{An elaboration on Juillet's construction}\label{s:Juillet}
The goal of this section is to prove Theorem \ref{t:JuilletCurveInNotIn} by using a construction due to Juillet.
In \cite{Juillet}, the author constructed a Lipschitz curve $\omega:[0,1]\to\H^1$ whose associated $\beta_{\infty,\mathcal V_1}$-numbers are not square-summable, providing in this way a counterexample to the classical traveling salesman problem (with exponent 2) in $\H^1$, which shows that the condition formulated in \cite{MR2371434} for a compact set $E\subset\H^1$ to be contained in a rectifiable curve is not necessary. The curve $\omega$ is obtained as a horizontal lift of a Lipschitz curve $\omega^\C:[0,1]\to\R^2$, whose construction requires a uniform approximation by polygonal curves. Let $\Gamma$ denote the support of the curve $\omega$. It was observed in \cite[Proposition 4.37]{FV2} that $\Gamma\not\in{\rm GLem}(\beta_{\infty,\mathcal V_1},2)$, but this does not a priori rule out the possibility that $\Gamma$ could satisfy
${\rm GLem}(\beta_{p,\mathcal V_1},2)$
with some exponent $p<\infty$. In order to show that the assumptions of our main result, Theorem \ref{t:ImprovedHahlomaa}, are strictly weaker than the ones used by Hahlomaa in \cite{Hahlomaa}, we aim to prove that $\Gamma$ is a $1$-regular set in $(\H^1, d)$ (Theorem \ref{injective}) with $\Gamma\not\in{\rm GLem}(\beta_{1,\mathcal V_1},2)$ (Theorem \ref{gammaglem}), even if $\Gamma\in {\rm GLem}(\widehat\beta_{1,\mathcal V_1},4)$ (Theorem \ref{t:JuilletCurveGLem}). The converse implication, 
${\rm GLem}(\beta_{1,\mathcal V_1},2){\,\Rightarrow\rm GLem}(\widehat\beta_{1,\mathcal V_1},4)$, 
always holds by Proposition \ref{p:FromHorizToStartif}.

As in the rest of the paper, we work here with the Kor\'{a}nyi distance $d$. In \cite{Juillet}, the sub-Riemannian distance is used instead, but this change is insignificant due to the bi-Lipschitz equivalence of the two metrics.

We recall the following definition, which can be given in any metric space.
\begin{definition}
 Let $\gamma:[a,b]\to\H^1$ be a (continuous) curve. If $d$ denotes the Korányi distance, the \textit{length} of $\gamma$ in $(\H^1,d)$ is defined as
 \[\ell_d(\gamma):=\sup_{k\in\N}\left\{\sum_{i=1}^kd(\gamma(s_{i-1}),\gamma(s_i)):a=s_0<s_1<\dots<s_k=b\right\}.\]
\end{definition}
If  $\gamma$ is a horizontal curve, then its length $\ell_d(\gamma)$ coincides with the \textit{Euclidean} length of the projection $\pi\circ\gamma$ in $\R^2$ (see \cite[Proposition 1.1]{MR3417082} for the analogous statement with the sub-Riemannian distance $d_{cc}$, and \cite[below Prop. 6.2]{MR3739202}, \cite[Lemma 5.1]{MR4062792} or \cite[Proposition 2.1]{MR2738530} for the inequality $d\leq d_{cc}$).

We now recall the main steps in the construction of the curve $\omega$ with trace $\Gamma$ in Theorem \ref{t:JuilletCurveInNotIn}, since we will use them in our argument (see \cite{Juillet} for more details about the construction).
Let 
\begin{equation}\label{eq:theta_n}
\theta_n:=\frac{C_0}{n}
\end{equation}
be a sequence of angles, where the constant $C_0$ is chosen small enough (we will assume $C_0\leq 0.2$ as in the original construction and point out all the instances where additional requirements for $C_0$ appear). We start by defining a sequence $(\omega^\C_n)_{n\in\N}$ of planar curves in $\R^2$. The curve $\omega_0^\C:[0,1]\to \mathbb{R}^2$ is simply the constant speed parameterization of the segment connecting the points $(-1,0)$ and $(1,0)$. Each curve $\omega_n^\C:[0,1]\to \mathbb{R}^2$ is a polygonal line formed by $4^n$ segments of the same length $l_n$, parameterized with constant speed $L_n = 4^n \cdot l_n$. Inductively, the curve $\omega_{n+1}^\C$ is obtained from $\omega_{n}^\C$ by replacing each segment line in $\omega_n^\C$ by a polygonal curve (keeping the same starting and ending points) made of four new segments of length $l_{n+1}=\frac{l_n}{4\cos\theta_{n+1}}$, forming an angle $\theta_{n+1}$ with the original one.
\begin{figure}[h]
        \centering
        \includegraphics[width=17 cm, height=5 cm]{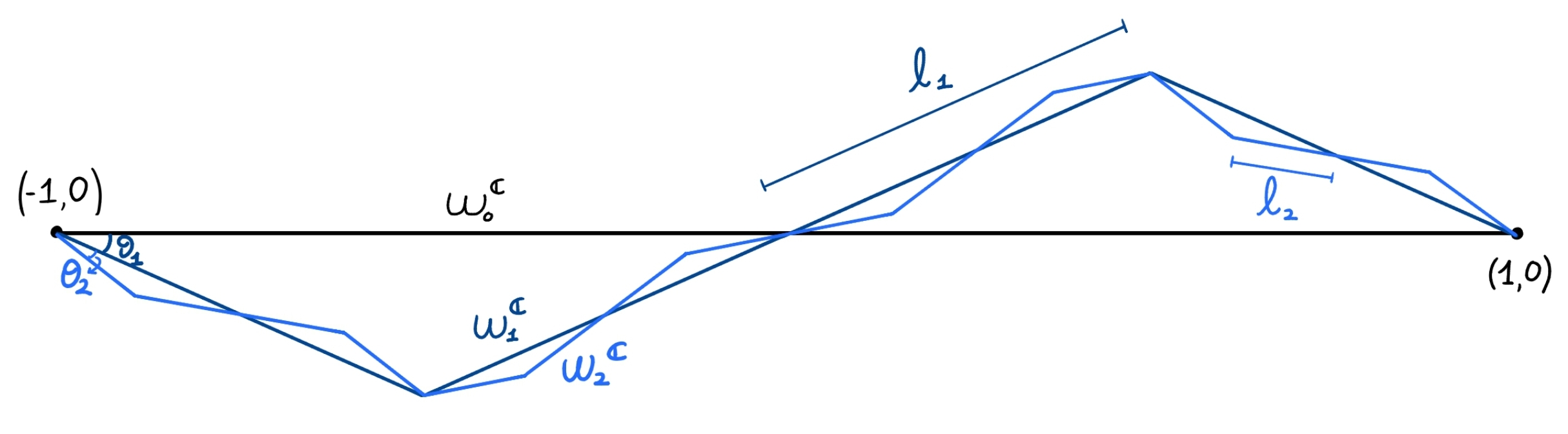}\caption{The construction of the curve $\omega$}
    \end{figure}
    
\noindent Notice that, by construction, 
\begin{equation}\label{eq:**}
    2\cdot4^{-n}\leq l_n\leq 2^{-n+1}\text{ for every }n\geq 0.
\end{equation}
It is proved in \cite[Proposition 3.1 and Lemma 3.2]{Juillet} that the sequence of curves $(\omega_n^\C)_n$ uniformly converges to a Lipschitz curve $\omega^\C$ of length $L\leq 2.4$, again parameterized with constant speed. 
Actually, the proof of \cite[Lemma 3.2]{Juillet} shows that
\begin{equation}\label{eq:*}
    L \leq 2 \cdot \exp(0.08)\leq 2.2.
\end{equation}
Moreover \begin{equation}\label{limitlength}L=\lim_{n\to\infty} 4^nl_n= \sup_n\, 4^nl_n,\end{equation} where $4^nl_n$ represents the length of the curve $\omega_n^\C$. \medskip\\Finally, let $\omega:[0,1]\to\H^1$ be the horizontal lift of the curve $\omega^\C$ starting from $(-1,0,0) $. We denote by $\omega_n$ the horizontal lift of $\omega_n^\C$ starting from the same point. Notice that the Heisenberg length of the curve $\omega$ is still $L$, while  the one of $\omega_n$ is $4^n l_n$. By \cite[Lemma 3.3]{Juillet}, for every $n\leq m$, the curves $\omega_n$ and $\omega_{m}$ coincide at each parameter point of the form $\frac{\sigma}{4^n}$ for $\sigma=0,1,\dots,4^n$: this is easy to see for the curves $\omega_n^\C$ and $\omega_m^\C$, while it holds also for the lifted curves since the signed area spanned by $\omega_n^\C$ and $\omega_m^\C$ on an interval $[\frac{\sigma}{4^n},\frac{\sigma+1}{4^n}]$ is the same; recall the comment below Definition \ref{d:horiz}.
Moreover, $\omega_n\to\omega$ uniformly on $[0,1]$. This is not explicitly stated in \cite{Juillet}, but it follows by 
the Arzel\`{a}–Ascoli theorem 
from the uniform convergence  $\omega_n^\C\to\omega^\C$, the horizontality of $\omega_n$ and the equi-boundedness of the derivatives, $|\dot{\omega}_n^\C|\leq L_n\leq L$.
Denoting by $\Gamma$ the support of $\omega$, our main goal is now to prove that $\Gamma\not\in{\rm GLem}(\beta_{1,\mathcal V_1},2)$, see Theorem \ref{gammaglem}.
\medskip\\
We first show that $\omega$ is injective: the same argument is also used to prove 1-regularity of $\Gamma=\omega([0,1])$ (with respect to the Korányi distance in $\H^1$):
\begin{theorem}\label{injective}
    The curve $\omega^\C$ (hence also  $\omega$) is injective, and $\Gamma$ is 1-regular in $(\mathbb{H}^1,d)$.
\end{theorem}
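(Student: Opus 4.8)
The plan is to reduce both assertions to a single \emph{chord-arc estimate} for the planar curve $\omega^\C$: there is a constant $C=C(C_0)\ge1$ such that
\[
\len(\omega^\C|_{[s,t]})\ \le\ C\,|\omega^\C(s)-\omega^\C(t)|\qquad\text{for all }0\le s<t\le1.
\]
Granting this, $|\omega^\C(s)-\omega^\C(t)|>0$ whenever $s<t$, so $\omega^\C$ is injective, and then $\omega$ is injective because $\pi\circ\omega=\omega^\C$. Since $\omega$ is horizontal, its $d$-length over $[s,t]$ equals the Euclidean length of $\omega^\C|_{[s,t]}$ (remark below Definition~\ref{d:horiz}), and $|\omega^\C(s)-\omega^\C(t)|=d_{\mathrm{Eucl}}(\pi(\omega(s)),\pi(\omega(t)))\le d(\omega(s),\omega(t))$; hence the estimate upgrades to $\ell_d(\omega|_{[s,t]})\le C\,d(\omega(s),\omega(t))$, i.e.\ $\omega$ is a chord-arc arc in $(\H^1,d)$. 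Because $\omega$ is parameterised with constant speed $L$, this yields $1$-regularity of $\Gamma$: if $[a,b]$ is the convex hull of $\omega^{-1}(B(x,r))$ then $\omega(a),\omega(b)\in B(x,r)$, so $L(b-a)=\ell_d(\omega|_{[a,b]})\le 2Cr$ and $\mathcal H^1(\Gamma\cap B(x,r))\le L\,\mathcal L^1(\omega^{-1}(B(x,r)))\le 2Cr$; conversely $\omega$ maps the parameter interval of radius $r/L$ around $\omega^{-1}(x)$ into $B(x,r)$, and the $\mathcal H^1$-measure of its injective image is $L$ times its parameter length, hence $\ge r$. So everything rests on the chord-arc estimate.

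To prove it, fix $s<t$ and let $n=n(s,t)$ be the largest integer such that some generation-$n$ dyadic interval $J$ of the construction contains both $s$ and $t$. The upper half is elementary: $\len(\omega^\C|_{[s,t]})\le\len(\omega^\C|_{J})=L\,4^{-n}\le\tfrac{L}{2}l_n$ by \eqref{limitlength}, \eqref{eq:*}, \eqref{eq:**}. The content is the lower bound $|\omega^\C(s)-\omega^\C(t)|\gtrsim l_n$ (or, in the degenerate case below, $\gtrsim\len(\omega^\C|_{[s,t]})$ directly). I would first record two consequences of \cite[Lemma~3.3]{Juillet} and the area-neutrality of the four-segment replacement noted there: (i) $\omega$ coincides with $\omega_m$ at every parameter $\rho\,4^{-m}$, so consecutive ``vertices'' $\omega(\rho4^{-m}),\omega((\rho+1)4^{-m})$ lie at $d$-distance between $l_m$ and $\tfrac{L}{2}l_m$ (use $d\ge d_{\mathrm{Eucl}}\circ\pi$ and that $\ell_d$ of one such interval is $L4^{-m}$); (ii) $\omega^\C|_{J}$ deviates from the straight generation-$n$ segment through $J$ by at most $c_1\,l_n\,\theta_{n+1}$ for a universal $c_1$, since the finer deformations contribute a geometric series dominated by its first term.

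The lower bound is then obtained by localising to $J$ and projecting onto the direction $\vec u_J$ of its generation-$n$ segment. At generation $n+1$ the points $s,t$ lie in distinct children $I_\sigma,I_\tau$ ($\sigma<\tau$) of $J$; the generation-$(n+1)$ vertices inside $J$ have $\vec u_J$-coordinates in arithmetic progression with common difference exactly $l_n/4$, and by (ii) the $\vec u_J$-coordinate of $\omega^\C(r)$ for $r$ in the $\rho$-th child stays within $c_1 l_n\theta_{n+1}$ of the $\vec u_J$-range of the $\rho$-th sub-segment. As $\theta_{n+1}\le\theta_1=C_0\le0.2$, this error is far smaller than $l_n/4$, so if $\tau-\sigma\ge2$ the $\vec u_J$-coordinates of $\omega^\C(s)$ and $\omega^\C(t)$ differ by $\gtrsim l_n$, and the estimate follows. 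If $\tau-\sigma=1$ — the bottleneck case, where $s$ and $t$ are pinched around their common vertex — one follows $s$ through the chain of \emph{last} children of $I_\sigma$ and $t$ through the chain of \emph{first} children of $I_\tau$; since $s\neq t$ this chain breaks at a finite generation $m$, and an index computation shows that at generation $m$ the $\omega_{m-1}$-segments containing $s$ and $t$ abut at a common vertex $P$, with $\omega^\C(s),\omega^\C(t)$ on opposite sides of $P$, at distance $\gtrsim l_{m-1}$ from it, and subtending an angle $>\tfrac{2\pi}{3}$ there (the turning estimate again using $C_0\le 0.2$); a law-of-cosines argument then gives $|\omega^\C(s)-\omega^\C(t)|\gtrsim l_{m-1}$, which is comparable to $\len(\omega^\C|_{[s,t]})\le L\,4^{-(m-1)}$ because $[s,t]$ is covered by two generation-$(m-1)$ chain-intervals.

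The main obstacle is this lower bound, and specifically the bottleneck case: one must recurse through the scales until the pinching near the shared vertex resolves, and then keep the comparison between chord and arclength uniform. The systematic use of $C_0\le0.2$ is exactly what guarantees that at every relevant scale the curve stays close to a straight segment and that, over the bounded number of segments that matter, it turns by strictly less than $\pi/3$; carefully tracking these constants is the bulk of the work, after which injectivity of $\omega^\C$ (hence of $\omega$) and $1$-regularity of $\Gamma$ follow as explained above.
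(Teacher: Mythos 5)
Your proposal is correct in outline but takes a genuinely different route from the paper. The paper organizes everything around the vertex-separation Claim \eqref{eq:claim} --- every Euclidean ball $B_{\rm Eucl}\bigl(\omega^\C(\sigma/4^n),0.6\,l_n\bigr)$ misses the part of the curve parameterized outside $\bigl[\tfrac{\sigma-1}{4^n},\tfrac{\sigma+1}{4^n}\bigr]$ --- proved by induction on the generation $n$; injectivity and the upper $1$-regularity bound are then read off by choosing $n$ adapted to $|a-b|$, respectively to $r$, and lower regularity is immediate from connectedness. You instead prove a chord--arc inequality $\len(\omega^\C|_{[s,t]})\le C\,|\omega^\C(s)-\omega^\C(t)|$, by locating the splitting generation of $s,t$, separating via the scalar projection onto the parent segment direction when the two children are non-adjacent, and recursing through the chains of extreme children in the pinched (adjacent-children) case, where the vertex angle, which stabilizes under refinement and stays within $2\theta_1$ of $\pi$, prevents cancellation. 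Both arguments rest on the same geometric inputs (deviation from each generation-$n$ segment is $O(l_n\theta_{n+1})$, projections onto segment directions preserve order, consecutive segments meet at angle $\ge\pi-2\theta_1$) and the same numerics ($C_0\le 0.2$, $l_n\ge 2\cdot 4^{-n}$, $L\le 2.2$ from \eqref{eq:theta_n}, \eqref{eq:**}, \eqref{eq:*}). Your route buys a strictly stronger conclusion --- the chord--arc property of $\Gamma$ in $(\H^1,d)$ --- from which both assertions of Theorem \ref{injective} follow cleanly, while the paper's induction involves less case analysis but yields only what is needed.

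One step of your bottleneck case is stated too strongly: when the chain breaks at generation $m$, only the point whose chain broke is guaranteed to lie at distance $\gtrsim l_{m-1}$ from the shared vertex $P$; the other point may be arbitrarily close to $P$ (its parameter is only known to be within $4^{-(m-1)}$ of the vertex parameter), in which case the direction of the ray from $P$ to it is uncontrolled and the law-of-cosines estimate with angle $>\tfrac{2\pi}{3}$ between the two rays does not apply as written. The repair is the projection device you already use elsewhere: project onto the direction $u$ from $P$ towards the far point's generation-$(m-1)$ segment; the far point's scalar projection onto $u$ is at least $\tfrac{l_{m-1}}{4}-O(l_{m-1}\theta_m)$, while the near point's is at most $O(l_{m-1}\theta_m)$, because its segment points into the opposite closed half-space (the angle at $P$ being $\ge\pi-2\theta_1\ge\pi/2$). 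With that adjustment, and with the constants tracked as you indicate, the chord--arc estimate and hence the theorem follow.
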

\begin{proof}
The proof of injectivity relies on the following claim:\medskip\\
  \textbf{Claim}. Let $p=\omega^\C(\sigma/4^n)$ for some $\sigma=0,1,\dots, 4^n$. Then
  \begin{equation}\label{eq:claim} B_{\rm Eucl}(p,0.6\, l_n)\,\cap\,\omega^\C\left([0,1]\setminus\left[\frac{\sigma-1}{4^n},\frac{\sigma +1}{4^n}\right]\right)=\emptyset. 
  \end{equation}
We now assume the claim and prove that $\omega^\C$ is injective. Let $0\leq a<b\leq 1$ and choose $n\in\N$, $\sigma=0,1,\dots, 4^n$ such that 
\begin{equation}\label{distancefroma}
    \left|a-\frac{\sigma}{4^n}\right|\leq \frac{1}{2\cdot4^n}
\end{equation}
and 
\begin{equation}\label{distancefromb}
   \left|b-\frac{\sigma}{4^n}\right|> \frac{1}{4^n}. 
\end{equation}
This is always possible, since one can choose $n$ large enough such that
\[|a-b|> \frac{3}{2\cdot4^n}\] and subsequently choose $\sigma=0,1,\dots,4^n$ such that \eqref{distancefroma} holds. By the triangular inequality, \eqref{distancefromb} holds as well.
From \eqref{distancefroma} it follows that 
\begin{equation}\label{eq:ainball}\left|\omega^\C(a)-\omega^\C\left(\frac{\sigma}{4^n}\right)\right|\leq L\left|a-\frac{\sigma}{4^n}\right|\leq L\,\frac{1}{2\cdot 4^n}\overset{\eqref{eq:*}}{\leq} \frac{2.4}{2\cdot 4^n}=0.6\frac{2}{4^n}\overset{\eqref{eq:**}}{\leq} 0.6 \,l_n.\end{equation} Hence $\omega^\C(a)\in B_{\rm Eucl}(p,0.6\, l_n)$. On the other hand,  \eqref{distancefromb} means that $b\in[0,1]\setminus [\frac{\sigma-1}{4^n},\frac{\sigma +1}{4^n}]$. Therefore, the claim implies that $\omega^\C(b)\not\in B_{\rm Eucl}(p,0.6\, l_n)$ and in particular we deduce $\omega^\C(a)\neq \omega^\C(b)$, proving injectivity of $\omega^\C$ (and clearly of $\omega$).\medskip\\
 Assuming the claim, we now show that $\omega([0,1])$ is 1-regular. Fix now $x=(z,t)\in \omega([0,1])$ and $r>0$. In order to prove upper 1-regularity, we can assume that $r<1/20$, since otherwise we use the simple estimate
  \[\mathcal H^1\left(B(x,r)\,\cap\,\omega\left([0,1]\right)\right)\leq  L\leq 20 L\,r.\]
For $r<1/20$,  choose $n\in\N$ such that $l_{n+1}/40\leq r<l_n/40$.
  By $L$-Lipschitz continuity of $\omega^\C$, there exists $\sigma=0,1,\dots 4^n$ such that $|z-p|\leq \frac{L}{2\cdot 4^n}$, where $p=\omega^\C(\sigma/4^n)$. In particular, 
  \[|z-p|\leq \frac{L}{2\cdot 4^n}\overset{\eqref{eq:*}}{\leq} \frac{2.2}{2\cdot 4^n}=\frac{11}{20}\frac{2}{4^n}\overset{\eqref{eq:**}}{\leq} \frac{11}{20}l_n.\]
  Hence, if $z'\in B_{\rm Eucl}(z,r)$, then
  \[|z'-p|\leq |z'-z|+|z-p|\leq r+\tfrac{11}{20} l_n\leq \left(\tfrac{1}{40}+\tfrac{11}{20}\right)l_n\leq 0.6\, l_n.\]
  This proves that $B_{\rm Eucl}(z,r)\subseteq B_{\rm Eucl}(p,0.6\,l_n)$. Moreover, \eqref{eq:claim} implies that
  \begin{equation}\label{eq:BallCurveIncl}
B_{\rm Eucl}(p,0.6\, l_n)\,\cap\,\omega^\C\left([0,1]\right)\subseteq \omega^\C\left(\left[\frac{\sigma-1}{4^n},\frac{\sigma +1}{4^n}\right]\right).
  \end{equation}
  Altogether, this shows that
  \begin{align*}
      \pi\left(B(x,r)\right)\cap \pi\left(\omega([0,1])\right)
     & = \pi\left(B(x,r)\right)\cap \omega^{\mathbb{C}}\left([0,1] \right)=B_{\mathrm{Eucl}}(z,r)\cap \omega^{\mathbb{C}}\left([0,1]\right)\\
  &{=}  B_{\rm Eucl}(p,0.6\,l_n)\cap \omega^{\mathbb{C}}\left([0,1]\right)
 \overset{\eqref{eq:BallCurveIncl}}{=} \omega^\C\left(\left[\frac{\sigma-1}{4^n},\frac{\sigma +1}{4^n}\right]\right).
  \end{align*}
Therefore, $\pi\left(B(x,r)\cap \omega([0,1])\right)\subset \omega^\C\left(\left[\frac{\sigma-1}{4^n},\frac{\sigma +1}{4^n}\right]\right)$. The injectivity of $\omega^{\mathbb{C}}$  implies then that also
\begin{equation}\label{eq:CovByCurv}
    B(x,r)\cap \omega([0,1]) \subset \omega\left(\left[\frac{\sigma-1}{4^n},\frac{\sigma +1}{4^n}\right]\right).
\end{equation}
Moreover,
by the $L$-Lipschitz continuity of $\omega^\C$ and the same estimates as in \eqref{eq:ainball}, we have \begin{equation}\label{1regularity}\mathcal{H}^1_{\mathrm{Eucl}}\left(\omega^\C\left(\left[\frac{\sigma-1}{4^n},\frac{\sigma +1}{4^n}\right]\right)\right)\leq L\,\frac{2}{4^n}\overset{\eqref{eq:**},\eqref{eq:*}}{\leq} 2.4\,l_n.\end{equation}
The upper 1-regularity of $\omega([0,1])$ then follows since, by \eqref{eq:CovByCurv} and the horizontality of the curve $\omega$,
  \[\begin{split}\mathcal H^1(B(x,r)\cap\, \omega([0,1]))\leq \mathcal H^1\left(\omega\left(\left[\frac{\sigma-1}{4^n},\frac{\sigma +1}{4^n}\right]\right)\right)&= \mathcal{H}^1_{\mathrm{Eucl}}\left(\omega^{\mathbb{C}}\left(\left[\frac{\sigma-1}{4^n},\frac{\sigma +1}{4^n}\right]\right)\right)\\&\overset{\eqref{1regularity}}{\leq} 2.4\cdot 4\cdot 40 \,r.\end{split}\]
 
  The lower regularity of $\omega([0,1])$ is immediate since it is a  connected set.
\end{proof}
\begin{proof}[Proof of the Claim \eqref{eq:claim} ]
   We prove the claim by induction on $n$. Let us first show the conclusion for $n=1$.  Since $\omega^\C$ is $L$-Lipschitz, then $\omega^\C\left([0,1]\setminus\left[\frac{\sigma-1}{4^n},\frac{\sigma +1}{4^n}\right]\right)$ lies in the $\frac{L}{16}$-neighbourhood of \[E:=\left\{\omega^\C\left(\frac{\tau}{16}\right): \tau=0,\dots,4\sigma-4\,\text{ or }\tau=4\sigma+4,\dots, 16\right\}.\]
   Hence, if $q\in \omega^\C\left([0,1]\setminus\left[\frac{\sigma-1}{4^n},\frac{\sigma +1}{4^n}\right]\right)$, then there exists $\bar q\in E$ such that $|q-\bar q|\leq \frac{L}{16}$. Then
   \begin{equation}\label{qnotinball}\left|q-\omega^\C\left(\frac{\sigma}{4}\right)\right|\geq\left|\bar q-\omega^\C\left(\frac{\sigma}{4}\right)\right|-\left|q-\bar q\right|\geq l_1- \frac{L}{16}\geq l_1-\frac{2.4}{16}.\end{equation}  Here we exploit the fact that $\left|\bar q-\omega^\C\left(\frac{\sigma}{4}\right)\right|\geq l_1$: this follows if the constant $C_0$ (and hence the angle $\theta_1$) in \eqref{eq:theta_n} is chosen small enough, so that if $0\leq t_1<t_2<t_3\leq 16$ are natural numbers, then
   \begin{align}\label{eq:Ordner}
   \left|\omega^\C\left(\frac{t_1}{16}\right)-\omega^\C\left(\frac{t_2}{16}\right)\right|
   &\leq
   (t_2-t_1)  \ell_2\\
&\overset{\eqref{eq:theta_n}}{\leq} \left(1 + t_2-t_1\right)\cdot \cos (\theta_1+\theta_2) \ell_2\notag\\
   &\leq (t_3-t_1) \cdot  \cos(\theta_1+\theta_2)\ell_2
   \leq \left|\omega^\C\left(\frac{t_1}{16}\right)-\omega^\C\left(\frac{t_3}{16}\right)\right|.\notag
   \end{align}
   The last inequality holds true since, if $\theta_1$ and $\theta_2$ are chosen small enough, the projection of $\omega^{\mathbb{C}}_2$ to the segment between $\omega^{\mathbb{C}}(0)$ and  $\omega^{\mathbb{C}}(1)$
   preserves the order in the following sense:
   if $\tau<\tau'$, then the first coordinates satisfy $(\omega_2)_1^{\mathbb{C}}(\tau/16)<(\omega_2)_1^{\mathbb{C}}(\tau'/16)$, and moreover, the individual segments in $\omega^{\mathbb{C}}_2$ project to intervals of length at least $ \cos(\theta_1+\theta_2)\ell_2$.

   
   Since $\theta_n\leq \theta_1$, an analogous conclusion as in \eqref{eq:Ordner} also holds for ``packages" of future generations, namely $\left|\omega^\C\left(\frac{t_1}{4^n}\right)-\omega^\C\left(\frac{t_2}{4^n}\right)\right|\leq \left|\omega^\C\left(\frac{t_1}{4^n}\right)-\omega^\C\left(\frac{t_3}{4^n}\right)\right|$ for every $n\geq 2$ and $16\,\rho\leq t_1<t_2<t_3\leq 16\,(\rho+1) $ for $  \rho=0,1,\dots, 4^{n-2}-1$.
   Moving back to \eqref{qnotinball}, since by construction $l_1\geq \frac{1}{2}$, it follows $l_1-\frac{2.4}{16}\geq \,0.7 l_1>0.6 \,l_1$. We deduce that $q\not\in B_{\rm Eucl}(\omega^\C\left(\sigma/4\right), 0.6\,l_1)$, as desired.\medskip\\
   We now deal with the inductive step. Let $p=\omega^\C(\sigma/4^n)$ for some $\sigma=0,1,\dots, 4^n$. We distinguish two cases, according to the value of $\sigma$. If $\sigma=4\sigma_0$ for some $\sigma_0=0,1,\dots, 4^{n-1}$, we can write $p=\omega^\C(\sigma_0/4^{n-1})$. By induction hypothesis, since clearly $B_{\rm Eucl}(p,0.6\,l_n)\subset B_{\rm Eucl}(p, 0.6 \,l_{n-1})$, we deduce that
   \[B_{\rm Eucl}(p,0.6\,l_n)\cap \omega^\C\left([0,1]\setminus\left[\frac{\sigma_0-1}{4^{n-1}},\frac{\sigma_0 +1}{4^{n-1}}\right]\right)=\emptyset.\]
   Notice that $\left[\frac{\sigma_0-1}{4^{n-1}},\frac{\sigma_0 +1}{4^{n-1}}\right]=\left[\frac{\sigma-4}{4^n},\frac{\sigma+4}{4^n}\right]$: hence we only need to show that \[\begin{cases} B_{\rm Eucl}(p,0.6\,l_n)\cap \omega^\C\left(\left[\frac{\sigma-4}{4^n},\frac{\sigma-1}{4^n}\right]\right)=\emptyset \\ B_{\rm Eucl}(p,0.6\,l_n)\cap \omega^\C\left(\left[\frac{\sigma+1}{4^n},\frac{\sigma+4}{4^n}\right]\right)=\emptyset.\end{cases}\]
 These properties can be proved exactly with the same argument as in the $n=1$ case, replacing $l_1$ with $l_n$ and 16 with $4^{n+1}$, using the relation $l_n\geq 2/4^n$ from \eqref{eq:**}.\medskip\\
 If instead $\sigma\in \{0,1,\ldots,4^n\}$ in $p=\omega^\C(\sigma/4^n)$  is not an integer multiple of 4, choose $\sigma_0=0,1,\dots, 4^{n-1}$ such that $\frac{\sigma_0}{4^{n-1}}<\frac{\sigma}{4^n}<\frac{\sigma_0+1}{4^{n-1}}.$
 In particular $\sigma\in\{4\sigma_0+1, 4\sigma_0+2, 4\sigma_0+3\}$. Our goal is now to prove
 \begin{equation}\label{intermediategoal}
     B_{\rm Eucl}(p,0.6\,l_n)\cap \omega^\C\left([0,1]\setminus\left[\frac{\sigma_0}{4^{n-1}},\frac{\sigma_0 +1}{4^{n-1}}\right]\right)=\emptyset.
 \end{equation}
 Once this is achieved, we reach the conclusion by arguing again as in the previous case.  
 Let $p_0:=\omega^\C(\frac{\sigma_0}{{4^{n-1}}})$ and $p_1:=\omega^\C(\frac{\sigma_0+1}{4^{n-1}})$. We first prove that
 \begin{equation}\label{intermediateclaim2} B_{\rm Eucl}(p,0.6\,l_n)\subseteq  B_{\rm Eucl}(p_0,0.6\,l_{n-1})\cup B_{\rm Eucl}(p_1,0.6\,l_{n-1}). \end{equation}
Assume for instance $\sigma=4\sigma_0+1$. Then, if $q\in B_{\rm Eucl}(p, 0.6\,l_n)$, 
\[|q-p_0|\leq |q-p|+|p-p_0|\leq 0.6\,l_n+l_n=1.6\,\frac{l_{n-1}}{4\cos \theta_n}<0.6\,l_{n-1}.\]
 If instead $\sigma=4\sigma_0+3$, then we argue in the same way considering $p_1$ instead of $p_0$. Hence it remains to discuss the case $\sigma=4\sigma_0+2$. In this situation, $|p_0-p|=|p_1-p|=l_{n-1}/2$. Consider $q\in B_{\rm Eucl}(p, 0.6\,l_n)$ and assume for instance that $|q-p_0|\leq |q-p_1|$ (the other case is analogous). By Pythagorean theorem 
 \[|q-p_0|\leq\sqrt{ (0.5 \,l_{n-1})^2+(0.6\,l_n)^2}=\left(0.25+\frac{0.36}{16\cos^2\theta_n}\right)^\frac{1}{2}l_{n-1}\leq 0.6\,l_{n-1},\]
proving \eqref{intermediateclaim2}. Let us now conclude by showing \eqref{intermediategoal}.\medskip\\
Let $q\in B_{\rm Eucl}(p, 0.6\,l_n)$. By \eqref{intermediateclaim2}, $q\in B_{\rm Eucl}(p_i, 0.6\,l_{n-1})$ for some $i\in\{0,1\}$. Let us assume without loss of generality that $i=0$. Then, by inductive hypothesis, $q\not\in \omega^\C([0,1]\setminus\left[\frac{\sigma_0-1}{4^{n-1}},\frac{\sigma_0+1}{4^{n-1}}\right])$. In order to establish \eqref{intermediategoal}, we need then to prove that $q\not\in \omega^\C\left(\left[\frac{\sigma_0-1}{4^{n-1}},\frac{\sigma_0}{4^{n-1}}\right]\right)$. 
This is achieved using a similar argument as before: since $\omega^\C$ is $L$-Lipschitz, $\omega^\C\left(\left[\frac{\sigma_0-1}{4^{n-1}},\frac{\sigma_0}{4^{n-1}}\right]\right)$ lies in the $\frac{L}{4^{n+1}}$-neighbourhood of 
\[E_0:=\left\{\omega^\C\left(\frac{16\sigma_0-16}{4^{n+1}}\right),\omega^\C\left(\frac{16\sigma_0-15}{4^{n+1}}\right),\dots,\omega^\C\left(\frac{16\sigma_0}{4^{n+1}}\right)\right\}.\]
Assume by contradiction that $q=\omega^\C(s)$ for some $s\in \left[\frac{\sigma_0-1}{4^{n-1}},\frac{\sigma_0}{4^{n-1}}\right]$. Then there exists $\bar q\in E_0$ such that $|q-\bar q|\leq \frac{L}{4^{n+1}}$. Hence
\begin{equation}\label{eq:ball_est}
|q-p|\geq |p-\bar q|-|q-\bar q|\geq l_n-\frac{L}{4^{n+1}}\geq l_n-\frac{2.4}{4^{n+1}}.
\end{equation}
The fact that $|p-\bar q|\geq l_n$ follows by the analogue observation after \eqref{qnotinball}, if $C_0$ is chosen small enough. However, while we previously had to consider only the points in one ``package'', we now need the same conclusion for two consecutive ``packages''. To this end,  
we also use the fact that the angle between  two consecutive segments of $\omega_{n-1}^\C$ is always greater than $\pi-2\theta_1$, which can be made very close to $\pi$ by choosing $C_0$ small enough; recall \eqref{eq:theta_n}.
\\ Since $l_n\geq 2\cdot 4^{-n}$, it follows that $l_n-\frac{2.4}{4^{n+1}}\geq 0.7\, l_n>0.6\,l_n$, which implies by \eqref{eq:ball_est} that $q\not\in B_{\rm Eucl}(p, 0.6\,l_n)$, a contradiction.
\end{proof}

We next aim to show that $\Gamma=\omega([0,1])$ is so badly approximable by horizontal lines that it does not satisfy  ${\rm GLem}(\beta_{1,\mathcal V_1},2)$. To this end, we will use the following standard estimate for the distance between two points in $\H^1$ in terms of the Euclidean distance between their projections and the area spanned by any horizontal curve connecting them. 
\begin{lemma}\label{distance}
    Let $\gamma:[a,b]\to\H^1$ be a horizontal curve joining $p_1=\gamma(a)$ and $p_2=\gamma(b)$. Then 
    \begin{equation}\label{eq:DistEst}
    d(p_1,p_2)\lesssim |\pi(p_1)-\pi(p_2)|+ \sqrt{|A_\gamma|},\end{equation}
    where $A_\gamma$ denotes the algebraic area enclosed by the curve $\pi\circ\gamma$ concatenated with the segment joining $\pi(p_2)$ and $\pi(p_1)$. 
\end{lemma}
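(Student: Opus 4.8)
The plan is to reduce \eqref{eq:DistEst} to an explicit evaluation of the Korányi norm of $p_2^{-1}\cdot p_1$ together with one application of Green's theorem to handle the vertical component. Throughout write $p_i=(z_i,t_i)$, $i=1,2$, and $\gamma=(\gamma_1,\gamma_2,\gamma_3)$. First I would use the definition \eqref{eq:KoranyiNorm} of the Korányi norm together with the group law \eqref{grouplaw} to compute
\[ d(p_1,p_2)=\|p_2^{-1}\cdot p_1\|=\sqrt[4]{\,|z_1-z_2|^4+16\,\pi_t(p_2^{-1}\cdot p_1)^2\,},\qquad \pi_t(p_2^{-1}\cdot p_1)=t_1-t_2-\omega(z_2,z_1). \]
Since $s\mapsto s^{1/4}$ is subadditive on $[0,\infty)$, it follows that
\[ d(p_1,p_2)\le |z_1-z_2|+2\sqrt{\,|t_1-t_2-\omega(z_2,z_1)|\,}=|\pi(p_1)-\pi(p_2)|+2\sqrt{\,|(t_2-t_1)+\omega(z_2,z_1)|\,}, \]
so it remains to identify $(t_2-t_1)+\omega(z_2,z_1)$ with the algebraic area $A_\gamma$.

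For this, I would recall that horizontality of $\gamma$ (Definition \ref{d:horiz}) means $\dot\gamma_3=\tfrac12(\gamma_1\dot\gamma_2-\gamma_2\dot\gamma_1)$ for a.e.\ $s\in[a,b]$, whence
\[ t_2-t_1=\gamma_3(b)-\gamma_3(a)=\frac12\int_a^b(\gamma_1\dot\gamma_2-\gamma_2\dot\gamma_1)\,ds=\frac12\int_{\pi\circ\gamma}(x\,dy-y\,dx). \]
Parametrizing the affine segment from $\pi(p_2)=z_2$ to $\pi(p_1)=z_1$ and integrating, a one-line computation using the explicit form of $\omega$ in \eqref{hform} for $n=1$ gives $\tfrac12\int_{[z_2,z_1]}(x\,dy-y\,dx)=\omega(z_2,z_1)$. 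Adding the two contributions, $(t_2-t_1)+\omega(z_2,z_1)$ equals $\tfrac12\oint(x\,dy-y\,dx)$ along the closed curve obtained by concatenating $\pi\circ\gamma$ with the segment from $\pi(p_2)$ to $\pi(p_1)$, which is exactly the algebraic area $A_\gamma$ enclosed by that closed curve. Substituting this into the displayed bound yields $d(p_1,p_2)\le |\pi(p_1)-\pi(p_2)|+2\sqrt{|A_\gamma|}$, i.e.\ \eqref{eq:DistEst} with implicit constant $2$.

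The argument is elementary, and I do not foresee any genuine obstacle. The only points deserving care are: (i) keeping the signs straight in $\pi_t(p_2^{-1}\cdot p_1)$ and in the definition of $\omega$ — an error here is in any case harmless, since only $|A_\gamma|$ enters the statement; and (ii) interpreting $A_\gamma$ via the line integral $\tfrac12\oint(x\,dy-y\,dx)$ rather than as the area of an enclosed planar region, since $\gamma$ is merely absolutely continuous. This causes no trouble, because $\gamma_1,\gamma_2$ are continuous, hence bounded, on the compact interval $[a,b]$ while $\dot\gamma_1,\dot\gamma_2\in L^1([a,b])$, so all the integrals above converge absolutely and the usual additivity of the line integral under concatenation applies.
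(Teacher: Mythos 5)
Your proof is correct and follows essentially the same route as the paper's: bound the Korányi distance by the horizontal displacement plus the square root of the vertical component of $p_2^{-1}\cdot p_1$, then use horizontality ($\dot\gamma_3=\tfrac12(\gamma_1\dot\gamma_2-\gamma_2\dot\gamma_1)$) to identify that vertical component with the algebraic area $A_\gamma$. The only cosmetic difference is that the paper first left-translates so that $p_1=0$ (making the closing segment's area contribution vanish), whereas you carry the term $\omega(z_2,z_1)$ explicitly and check it equals the segment's contribution, which also yields the explicit constant $2$.
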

\begin{proof}
As both sides of the inequality \eqref{eq:DistEst} are invariant by left translations, we may assume without loss of generality that $p_1=\gamma(a)=0$. Since
\begin{displaymath}
    d(\gamma(b),0)\lesssim |\pi(\gamma(b))|+\sqrt{|\gamma_3(b)|}
\end{displaymath}
and, by horizontality of $\gamma=(\gamma_1,\gamma_2,\gamma_3)$, we have $\dot{\gamma}_3=\frac{1}{2}(\gamma_1\dot{\gamma}_2-\gamma_2 \dot{\gamma}_1)$ almost everywhere, the claim follows, cf. \cite[(2.22)] {CDPT} or \cite[(1.1)]{Juillet}.
\end{proof}

\begin{theorem}\label{gammaglem}
    $\Gamma\not\in{\rm GLem}(\beta_{1,\mathcal V_1},2)$.
\end{theorem}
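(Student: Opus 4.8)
The plan is to show that $\Gamma\notin\mathrm{GLem}(\beta_{1,\mathcal V_1},2)$ by verifying that the sum in Definition \ref{d:GL} (equivalently, by Lemma \ref{l:DiffGeomLem}, the double integral \eqref{eq:IntGLem} with $q=2$ and $h=\beta_{1,\mathcal V_1}$) diverges. Since $\Gamma$ is $1$-regular by Theorem \ref{injective}, dyadic systems on $\Gamma$ exist, and by Remark \ref{r_independenceoflambda} we may take the enlargement parameter $\lambda>1$ as large as convenient. The whole argument reduces to the following \emph{key estimate}: there are $c=c(C_0)>0$ and $n_0\in\mathbb N$ so that for all $n\ge n_0$, all $y\in\Gamma$ and all scales $r$ comparable to $l_n$ (over a range of $r$ of definite logarithmic length, by \eqref{eq:**}) one has
\[
\beta_{1,\mathcal V_1}(y,r)\;\ge\;c\,\sqrt{\theta_n}\;=\;c\,\sqrt{C_0/n}.
\]
Granting this, the conclusion is immediate: grouping $\mathcal D_{Q_0}$ by generations (for a cube $Q_0$ of definite size), the generation-$j$ cubes together contribute $\gtrsim\theta_j\,\mathcal H^1(Q_0)$, and $\sum_{j\ge j_0}\theta_j=C_0\sum_{j\ge j_0}1/j=+\infty$, so no constant can satisfy \eqref{eq:SGL}. (In the integral formulation one splits $\int_0^R$ into the corresponding dyadic scale bands and uses $\mathcal H^1(\Gamma)\gtrsim 1$.)

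To prove the key estimate, fix $y\in\Gamma$ and $r\sim l_n$. By the claim \eqref{eq:claim} inside Theorem \ref{injective}, the arc $\Gamma\cap B(y,r)$ is a connected sub-arc of length $\sim r$, contained in $\omega$ of a pair of adjacent generation-$m$ subintervals (for some $m$ with $|m-n|\lesssim 1$), and --- after enlarging by a fixed factor $\lambda$ if needed --- it contains $\omega$ of at least one full generation-$m$ subinterval $I$. On $I$ the planar curve $\omega^{\mathbb C}$ stays within $\lesssim r\theta_{m+1}$ of the straight generation-$m$ segment $\ell_0$ through its endpoints, while the \emph{area profile} $A(s):=$ (signed area between $\omega^{\mathbb C}|_{[\,\cdot\,,s]}$ and the chord) is, up to corrections of size $\lesssim r^2\theta_{m+2}\ll r^2\theta_{m+1}$, a single convex-then-concave ``hump'' of amplitude $\sim r^2\theta_{m+1}$ produced by the generation-$(m+1)$ zig-zag; the smallness of the corrections follows because the relevant quantities decay geometrically in the generation, by \eqref{eq:theta_n} and \eqref{eq:**}.

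Next fix an arbitrary $V\in\mathcal V_1$; after a left translation (harmless by left-invariance of $d$ and $\mathcal H^1$) we may assume an endpoint of $I$ maps to $0$, so that the height coordinate of $\omega$ on $I$ equals $A$. Writing $V$ as the horizontal lift of a planar line $\ell$, the lower-bound companion of Lemma \ref{distance} gives
\[
d(\omega(s),V)\;\gtrsim\;\mathrm{dist}_{\mathrm{Eucl}}\big(\pi(\omega(s)),\ell\big)\;+\;\sqrt{\big|A(s)-a_V(\pi(\omega(s)))\big|},
\]
where $a_V$ is the affine function of the planar point recording the height of the lift of $\ell$. If $\ell$ is not $O(r\theta_{m+1})$-close to $\ell_0$, the first term is $\gtrsim r\sqrt{\theta_{m+1}}$ on a definite fraction of $I$; otherwise $a_V$ is an affine function, which cannot flatten the unimodal hump $A$ to oscillation $\ll r^2\theta_{m+1}$, and the finer corrections (of strictly smaller order) do not help, so $|A(s)-a_V(\pi(\omega(s)))|\gtrsim r^2\theta_{m+1}$ on a definite fraction of $I$, whence the second term is $\gtrsim r\sqrt{\theta_{m+1}}$ there. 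Either way, averaging over $\Gamma\cap B(y,r)$ yields $r^{-1}\int_{\Gamma\cap B(y,r)}d(\omega(s),V)\,d\mathcal H^1\gtrsim\sqrt{\theta_{m+1}}\gtrsim\sqrt{\theta_n}$, and taking the infimum over $V\in\mathcal V_1$ proves the estimate.

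The main obstacle is this last step: making rigorous the lower bound $d(\omega(s),V)\gtrsim\sqrt{|A(s)-a_V(\pi(\omega(s)))|}$ \emph{uniformly} in the horizontal line $V$ --- the converse direction of Lemma \ref{distance}, where the Heisenberg group law mixes the vertical and horizontal contributions and this mixing must be tracked --- together with the quantitative statement that a convex-then-concave area profile of amplitude $\sim r^2\theta_{m+1}$ can neither be cancelled by subtracting an affine function (tilting or vertically translating $V$) nor be swamped by the finer oscillations, which are $O(r^2\theta_{m+2})$. This is exactly where the precise features of Juillet's construction --- the sign pattern of the zig-zags and the geometric decay encoded in \eqref{eq:theta_n} and \eqref{eq:**} --- have to be used.
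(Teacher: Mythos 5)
Your overall reduction is the same as the paper's: lower-bound $\beta_{1,\mathcal V_1}$ at every location and at every scale $\sim l_n$ by $c\sqrt{\theta_n}$, then sum the divergent series $\sum_n \theta_n$. That part is fine (the paper does it through the multiresolution quantity $B^\Delta_{1,1}$ and Lemma \ref{Glemvsmultires}, but your dyadic/integral version is equivalent). The problem is that the key estimate itself is not proved: everything you label ``the main obstacle'' is precisely the content of the paper's proof. Concretely, two things are missing. First, the pointwise lower bound $d(\omega(s),V)\gtrsim d_{\mathrm{Eucl}}(\pi(\omega(s)),\ell)+\sqrt{|A(s)-a_V(\pi(\omega(s)))|}$, uniform over horizontal lines $V$, is asserted but never established; because of the symplectic term $\omega(\cdot,\cdot)$ in the group law, the relevant vertical offset is not simply ``height minus an affine function of the planar point'', and the cross terms of size $d_{\mathrm{Eucl}}(\pi(p),\ell)\cdot|\pi(p)|$ must be tracked. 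The paper sidesteps this by reusing Juillet's Proposition 4.1 and Lemma 2.5 (area of trapezoids between pairs of points lying on one horizontal segment), and then upgrades the single-point obstruction to a set of definite $\mathcal H^1$-measure via the three-case analysis on the polygonal curve $\Gamma_{n,1}$ (construction of $M$), followed by the comparison $d(y_{(i+1)},y_{(i)})\lesssim \sqrt{\theta}\,\bar d\,L$ (Claim 2) to transfer $M$ to a set $N\subset\Gamma_n$. None of this upgrading is present in your sketch, and it is where all the work lies.

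Second, one of your supporting claims is false as stated: the corrections to the area profile from generations $\geq m+2$ are \emph{not} of strictly smaller order than the main hump, because $\theta_n=C_0/n$ decays only harmonically, so $\theta_{m+2}/\theta_{m+1}=(m+1)/(m+2)\to 1$; it is the lengths $l_n$, not the angles, that decay geometrically. The finer oscillations are therefore smaller than the generation-$(m+1)$ hump only by a bounded constant factor (roughly the factor coming from $l_{m+1}^2/l_m^2$), so the assertion that ``the finer corrections (of strictly smaller order) do not help'' cannot be used as stated; one has to chase explicit constants, exactly as Juillet and the paper do (the choice $C_0\leq 0.2$, the $0.6\,l_n$ separation in \eqref{eq:claim}, the $l_\theta/10$ pieces, the tail estimate \eqref{eq:tail} and the set $F$). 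Also note that the nodal heights of $\Lambda_\theta$ in \eqref{eq:Lambda5Pts} are $0,\tfrac{\tan\theta}{4},\tfrac{\tan\theta}{4},\tfrac{\tan\theta}{4},0$, so the profile is not a clean convex-then-concave hump; the non-approximability by a single horizontal line comes from the joint planar and vertical data of all five points, which is what Juillet's Proposition 4.1 encodes. In summary, the strategy is right and matches the paper's, but the quantitative core --- the uniform-in-$\ell$, positive-measure lower bound with workable constants --- is left unproved, so the argument has a genuine gap.
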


\begin{proof}
 We will show that 
 \begin{equation}\label{eq:B_infty}
     B_{1,1}^\Delta(\Gamma):=\sum_{k\in\mathbb Z}2^{-k}\sum_{x\in\Delta_k}\beta_{1,\mathcal V_1}(x, A\,2^{-k})^2=+\infty,
 \end{equation}
where $\Delta=(\Delta_k)_k$ is a dyadic net and $B_{1,1}^{\Delta}$ is defined as in \eqref{eq:BDef} with a constant $A\geq 5$, cf.\ \cite[(0.1)]{Juillet}. Then the conclusion follows from Lemma \ref{Glemvsmultires}. Notice that the index $k$ is chosen to uniformize the notation with the one used in \cite{Juillet} and should not be confused with the dimension of $\Gamma$, which is 1.
In order to verify \eqref{eq:B_infty}, we need suitable lower bounds for $\mathrm{card}(\Delta_k)$ and $\beta_{1,\mathcal V_1}(x, A\,2^{-k})$, $x\in \Delta_k$, respectively.

 It is proved in \cite[\S 4]{Juillet} that $\mathrm{card}(\Delta_k)\geq 2^k$, $k\in \mathbb{N}$.
 Moreover, in \cite[(4.1)]{Juillet}, a lower bound for $\beta_{\infty,\mathcal V_1}(x, A\,2^{-k})$ is derived. We need to verify an analogous bound for $\beta_{1,\mathcal V_1}(x, A\,2^{-k})$, which requires a finer analysis: we have to show that  $\Gamma\cap B(x, A\,2^{-k})$ stays at large distance from every horizontal line not only in a point, but in a large $\mathcal{H}^1$-measure set of points.
 
 In \cite[Section 4]{Juillet} it is shown
 that, taking $n=\lceil k/2\rceil$, for every $x\in\Delta_k$ there exists $\sigma\in\{0,1,\dots, 4^n-1\}$ such that\begin{equation}\label{pieceofcurve}\omega\left(\left[\frac{\sigma}{4^n},\frac{\sigma+1}{4^n}\right]\right)\subseteq B(x,A\,2^{-k}).\end{equation}
 Nevertheless, it also holds that 
 \begin{equation}\label{control}\mathcal H^1(\Gamma\cap B(x, A\,2^{-k}))\lesssim\mathcal H^1\left(\omega\left(\left[\frac{\sigma}{4^n},\frac{\sigma+1}{4^n}\right]\right)\right).\end{equation}
 Indeed, on the one hand, $\mathcal H^1(\Gamma\cap B(x, A\,2^{-k}))\lesssim 2^{-k}$ by 1-regularity of $\Gamma$ established in Theorem \ref{injective}. On the other, since $\pi:(\H^1,d)\to(\R^2,d_e)$, $\pi(x,y,t):=(x,y)$, is a 1-Lipschitz map, by construction of $\omega$ we deduce\[\begin{split}\mathcal H^1\left(\omega\left(\left[\frac{\sigma}{4^n},\frac{\sigma+1}{4^n}\right]\right)\right)&\geq \mathcal H^1_{\mathrm{Eucl}}\left(\pi\left(\omega\left(\left[\frac{\sigma}{4^n},\frac{\sigma+1}{4^n}\right]\right)\right)\right)\\&=\mathcal H^1_{\mathrm{Eucl}}\left(\omega^\C\left(\left[\frac{\sigma}{4^n},\frac{\sigma+1}{4^n}\right]\right)\right)\\&= \frac{1}{4^n}\mathcal H^1_{\mathrm{Eucl}}\left(\omega^\C\left([0,1]\right)\right)\sim 2^{-k}.\end{split}\]
 Hence \eqref{control} follows. Recall the construction 
 of the curve described after \eqref{eq:theta_n}.
 Proceeding as in \cite{Juillet}, we rescale $\omega\left(\left[\frac{\sigma}{4^n},\frac{\sigma+1}{4^n}\right]\right)$ using the similitudes of $\H^1$ (see \cite[Section 1.3]{Juillet}) and we obtain another horizontal curve $\Gamma_n$ which passes through the subset $\Lambda_{\theta_{n+1}}$ given by
 {\small\begin{equation}\label{eq:Lambda5Pts}\left\{(-1,0,0), \left(-\frac{1}{2},-\frac{\tan \theta_{n+1}}{2},\frac{\tan \theta_{n+1}}{4}\right), \left(0,0,\frac{\tan \theta_{n+1}}{4}\right), \left(\frac{1}{2},\frac{\tan \theta_{n+1}}{2},\frac{\tan \theta_{n+1}}{4}\right), (1,0,0)\right\}\end{equation}}and coincides with the curve obtained as $\Gamma$, but using the sequence of angles $(\theta_{n+m})_{m\geq 1}$. The scaling factor between $\Gamma_n$ and $\omega\left(\left[\frac{\sigma}{4^n},\frac{\sigma+1}{4^n}\right]\right)$ is $l_n/2$. Let us denote $\theta\equiv\theta_{n+1}$ and let
 \[D_\theta^1:=\inf_{\ell\in\mathcal V_1} \fint_{\Gamma_n} d(y,\ell)\,d\mathcal H^1(y).\]
 Easy computations show that
 \[\begin{split}\beta^{\,\Gamma}_{1,\mathcal V_1}(x, A\,2^{-k})&\sim\frac{1}{2^{-k}}\inf_{\ell\in \mathcal V_1}\fint_{\Gamma\cap B(x, A\,2^{-k})}d(y,\ell)\,d\mathcal H^1(y)\\ &\overset{\eqref{pieceofcurve},\eqref{control}}{\gtrsim}\frac{1}{2^{-k}}\inf_{\ell\in \mathcal V_1}\fint_{\omega\left(\left[\frac{\sigma}{4^n},\frac{\sigma+1}{4^n}\right]\right)}d(y,\ell)\,d\mathcal H^1(y)\\ &=\frac{1}{2^{-k}}\frac{l_n}{2} D_\theta^1 \gtrsim D_\theta^1.\end{split}\]
 where we used the 1-regularity of $\Gamma$, \eqref{pieceofcurve} and \eqref{control}, the invariance of the distance and the fact that $l_n\geq 4^{-n}\gtrsim2^{-k}$, recall \eqref{eq:**}.

 In order to conclude \eqref{eq:B_infty}, we need a large enough lower bound for $D^1_{\theta}$, which is the content of the following claim.
 \medskip

\noindent \textbf{Claim 1:} $D_\theta^1\gtrsim\sqrt\theta$.\\
 Assuming the validity of Claim 1, we now conclude that
 \[B_{1,1}^\Delta(\Gamma):=\sum_{k\in\mathbb Z}2^{-k}\sum_{x\in\Delta_k}\beta_{1,\mathcal V_1}(x, A\,2^{-k})^2\gtrsim\sum_{k\in\mathbb N}2^{-k}\,2^k\,(D_\theta^1)^2\gtrsim \sum_{k\in\mathbb N}\theta_{\lceil k/2\rceil+1}\overset{\eqref{eq:theta_n}}{\sim}\sum_{k\in\mathbb N}\tfrac{1}{\lceil k/2\rceil+1}=+\infty.\]
 It remains to prove Claim 1, that can be read as follows: there exists a constant $\widetilde K>0$ (not depending on $n$, and in particular not on  $\theta=\theta_{n+1}$) such that, for all $\ell \in \mathcal V_1$, it holds
 \[\fint_{\Gamma_n} d(y,\ell)\,d\mathcal H^1(y)\geq \widetilde K\sqrt \theta.\]
 By \cite[Proposition 4.1]{Juillet} there exists a constant $K>0$ such that, for any $\ell\in\mathcal V_1$ and any $0<n\in\mathbb N$, there exists $\bar y\in \Lambda_{\theta}$ (for $\Lambda_{\theta}$ as defined in \eqref{eq:Lambda5Pts}) with the property that $d(\bar y,\ell)\geq K\sqrt\theta$.
Let $0<n\in\N$ and $\ell\in \mathcal V_1$. 
We want to show that there exists a subset $N\subseteq \Gamma_n$ satisfying $\mathcal H^1(N)\geq c\,\mathcal H^1(\Gamma _n)$, for some constant $c$ independent of $\theta$ and $\ell$, such that
\begin{equation}\label{eq:PropN}
d(y,\ell)\geq \frac{K}{4}\sqrt{\theta}\quad\text{for every $y\in N$.}\end{equation}
If this is possible, then we reach the conclusion by simple estimates:
\[\begin{split} \fint_{\Gamma_n} d(y,\ell)\,d\mathcal H^1(y)&=\frac{1}{\mathcal H^1(\Gamma_n)}\int_{\Gamma_n} d(y,\ell)\,d\mathcal H^1(y)\geq \frac{1}{\mathcal H^1(\Gamma_n)}\int_{N} d(y,\ell)\,d\mathcal H^1(y)\\ &\geq \frac{c}{\mathcal H^1(N)}\int_{N}\frac{K}{4}\sqrt\theta \,\,d\mathcal H^1(y)=\frac{cK}{4}\sqrt\theta=:\widetilde K\sqrt\theta.
\end{split}\]
Hence, it remains to construct the set $N$ with the previous properties.
\medskip\\ For each $n\in\N$, let $\Gamma_{n,1}$ be the polygonal curve which connects the points in $\Lambda_{\theta}$, in ascending order with respect to the first coordinate of $\H^1\equiv \R^3$ (not to be confused with the rescaled copy $\Gamma_n$ of $\Gamma)$. Notice that $\Gamma_{n,1}$ is a horizontal curve. We show now that there exists a subset $M\subseteq \Gamma_{n,1}$ with $\mathcal H^1(M)\geq c'\mathcal H^1(\Gamma_{n,1})$ and such that \begin{equation}\label{propofM}d(y,\ell)\geq \frac{K}{2}\sqrt\theta\quad\text{for every }y\in M.\end{equation}
Then we will construct the corresponding set $N$ on $\Gamma_n$.\medskip\\
In order to define the set $M\subset \Gamma_{n,1}$, we use ideas from \cite{Juillet}. We denote the points in $\Lambda _\theta$ by $\{A,B,C,D,E\}$, with respect to the same order used above. (We recall that ``$A$'' has also denoted a constant in 
 \eqref{eq:B_infty}, but there should be no risk of confusion in the following).
We distinguish three cases, according to the value of the planar angle $\varphi\in[0,\frac{\pi}{2}]$ between $\ell^{\,\C}$ and the line $B^\C D^\C$, where $\ell^{\mathbb{C}}$, $B^{\mathbb{C}}$, $D^{\mathbb{C}}$ are the images of the fixed horizontal line $\ell$ and the points $B,D\in \mathbb{H}^1$ under the projection $\pi$, see Figure \ref{eq:FigCases}.
\begin{figure}[h]
        \centering
        \includegraphics[width=17.5 cm, height=11 cm]{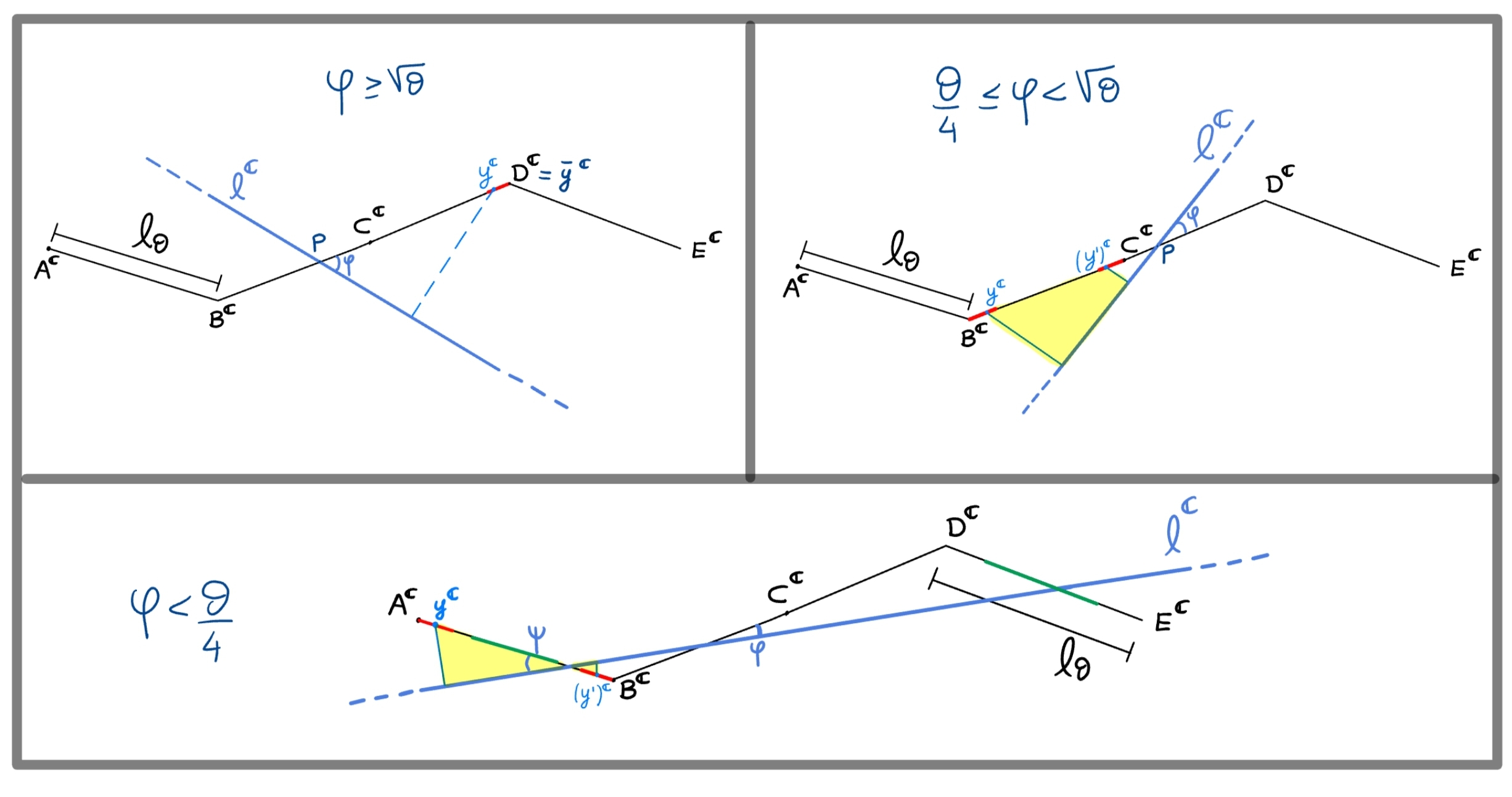}\caption{The three cases described below}\label{eq:FigCases}
    \end{figure}

\begin{itemize}
    \item If $\varphi\geq \sqrt\theta$, then $\ell^{\,\C}$ intersects $B^\C D^\C$ in a single point $P$. Then we can choose
    \[\bar y=\begin{cases}
        D &  \text{if }\ d_{\mathrm{Eucl}}(P, D^\C)\geq d_{\mathrm{Eucl}}(P, B^\C)\\ B & \text{if }\ d_{\mathrm{Eucl}}(P, D^\C)< d_{\mathrm{Eucl}}(P, B^\C)
    \end{cases} \]
    and in this case we let \begin{equation*}\label{eq:DefM}M:=\{y\in C\bar y:d(y,\bar y)\leq l_\theta/10\},\end{equation*}
    where $l_\theta$ is the distance between two consecutive points in $\Lambda_\theta$ (and it is also the Euclidean distance between the corresponding projections on $\R^2$).\\ Notice that $\mathcal H^1(M)=\frac{1}{40}\mathcal H^1(\Gamma_{n,1}).$ 
    Moreover, if $y\in M$, by simple trigonometry
    \[d(y,\ell)\geq d_{\mathrm{Eucl}}(y^\C, \ell^{\,\C})=d_{\mathrm{Eucl}}(P,y^\C)\sin\varphi\geq \frac{1}{2}l_\theta\sin \sqrt\theta,\]
    which, by choice of $K$ in \cite[Proposition 4.1]{Juillet}, is greater than $\frac{1}{2}K\sqrt{\theta}$.
    \item If $\frac{\theta}{4}\leq \varphi<\sqrt\theta$, at least one of the segments $B^\C C^\C$ or $C^\C D^\C$ is not intersected by the line $\ell ^{\,\C}$. Without loss of generality we can assume $\ell^{\,\C}$ does not intersect $B^\C C^\C$. 
    For every $y\in BC$ 
    we let $y'$ to be the unique point on $BC$ such that $d(y,B)=d(y',C)$. This also means that \[d_{\mathrm{Eucl}}(y^\C,B^\C)=d(y,B)=d(y',C)=d_{\mathrm{Eucl}}((y')^{\,\C},C^\C).\]
    Fix now $y\in BC$ such that $d(y,B)\leq l_\theta/10$ and consider the Euclidean trapezoid $T$ in $\R^2$ having as vertices the points $y^\C$, $(y')^\C$ and the corresponding projections onto $\ell^{\,\C}$. 
    Then elementary geometric computations show that
    \[\text{Area}\,(T)\geq\frac{1}{8}l_\theta^2\sin\varphi\cos\varphi,\]
    which gives, arguing as in the proof of Proposition 4.1 in \cite{Juillet} that $\sqrt{\text{Area}\,(T)}\geq \frac{\sqrt \theta}{8\sqrt\pi}.$
    Applying \cite[Lemma 2.5]{Juillet} (in particular see the second line in that proof), by the fact that $y$ and $y'$ lie on the same horizontal line $BC$, we deduce that
    \[d(y,\ell)+d(y',\ell)\geq \sqrt {\text{Area}\,(T)}\geq \frac{1}{8\sqrt\pi}\sqrt\theta.\]
    Therefore there exists a point $p=p(y)\in\{y,y'\}$ such that \begin{equation}\label{eq:p_ell_est}d(p,\ell)\geq \sqrt {\text{Area}\,(T)}\geq \frac{1}{16\sqrt\pi}\sqrt\theta\geq \frac{K}{2}\sqrt\theta,\end{equation}
    by choosing $K\leq \frac{1}{8\sqrt{\pi}}$.
    If we define 
    \[M:=\{p(y):y\in BC,\, d(y,B)\leq l_\theta/10\},\]
    then $\mathcal H^1(M)=\frac{1}{10}l_\theta=\frac{1}{40}\mathcal H^1(\Gamma_{n,1})$ and, by  \eqref{eq:p_ell_est}, $d(p,\ell)\geq \frac{K}{2}\sqrt\theta$ for every $p\in M$.
    \item If $\varphi<\frac{\theta}{4}$, then, by \cite[Lemma 4.2]{Juillet}, the line $\ell^{\,\C}$ cannot intersect both the central segments of $A^\C B^\C$ and $D^\C E^\C$. Assume for instance that $\ell^{\,\C}$ does not intersect the central segment of $A^\C B^\C$. Let $\psi$ be the positive angle between $\ell^{\,\C}$ and the line $A^\C B^\C$: it has been proved by Juillet that $\frac{7\theta}{4}\leq \psi\leq \frac{\pi}{4}$. Let $P$ be the intersection between the line $A^\C B^\C$ and $\ell^\C$ and assume for instance that $d_{\mathrm{Eucl}}(P, B^\C)\leq d_{\mathrm{Eucl}}(P, A^\C)$. Similarly as in the previous case, for every $y\in AB$, let $y'$ denote the unique point on the same segment such that $d(y,A)=d(y',B)$. Fix $y\in AB$ with $d(y,A)\leq l_\theta/10$ and denote by $T$ the Euclidean trapezoid in $\R^2$ having as vertices the points $y^\C$, $(y')^\C$ and their projections onto $\ell ^{\,\C}$. It can happen that this trapezoid is self-intersecting. So, using the same strategy of Juillet and exploiting the fact that $d(y^\C, A^\C)=d(y,A)\leq l_\theta/10$,  its algebraic area can be estimated by
    \[\begin{split}\text{Area}\,(T)&\geq \left(\frac{3}{4}l_\theta-\frac{1}{10}l_\theta\right)^2\frac{\sin(2\psi)}{4}-\left(\frac{1}{4}l_\theta \right)^2\frac{\sin(2\psi)}{4}\\ &=\frac{9}{25}l_\theta^2\frac{\sin 2\psi}{4}\geq\frac{9}{400}\sin 2\psi\geq\frac{1}{4\cdot 32}\sin(2\psi). \end{split}\]
    Hence, using the same estimates as in \cite{Juillet}, $\sqrt{\text{Area}\,(T)}\geq \frac{1}{2}\sqrt{\theta\frac{7}{32\pi}}$. Applying Lemma 2.5 in the same paper we conclude exactly as in the case $\frac{\theta}{4}\leq \varphi<\sqrt{\theta}$.
\end{itemize}
\medskip
In conclusion, we have found a set $M\subset \Gamma_{n,1}$ with $\mathcal{H}^1(M)\gtrsim \mathcal{H}^1(\Gamma_{n,1})$ and the property required in \eqref{propofM}.
Now we use $M$ to construct the desired set $N\subset \Gamma_n$ with 
$\mathcal{H}^1(N)\gtrsim \mathcal{H}^1(\Gamma_n)$ and property
\eqref{eq:PropN}. 
First of all, we need to introduce some notation. Let $\gamma_n:[0,1]\to \H^1$ be a constant-speed parameterization of $\Gamma_n$ starting from the point $A$. In particular, if $\Psi:\H^1\to \H^1$ denotes the similitude which maps $\omega([\frac{\sigma}{4^n},\frac{\sigma+1}{4^n}])$ to $\Gamma_n$, then $\gamma_n(s)=(\Psi\circ\,\omega)(\frac{\sigma+s}{4^n})$. We also define $\gamma_{n,i}:[0,1]\to\H^1$, for $i\in\N$, as \[\gamma_{n,i}(s):=(\Psi\circ\omega_{n+i})\left(\frac{\sigma+s}{4^n}\right),\] so that $\Gamma_{n,i}$ is the support of $\gamma_{n,i}$.
Notice that $\gamma_{n,i}$ and $\gamma_n$ are horizontal curves. Specifically, $\gamma_{n,1}:[0,1]\to\H^1$ turns out to be a constant speed parameterization of $\Gamma_{n,1}$ starting from $A$. 
 If $y\in \Gamma_n$, we denote by $y_{(i)}$ the point on $\Gamma_{n,i}$ corresponding to $y$, which means 
 \begin{equation}\label{eq:yi}
 y_{(i)}:=\gamma_{n,i}(\gamma_n^{-1}(y)).\end{equation}
 Recall here that $\omega$ (hence $\gamma_n$) is injective by Theorem \ref{injective}.
 Our goal is now to estimate the distance between $\Gamma_n$ and $\Gamma_{n,1}$. More precisely, for fixed $y\in \Gamma_n$, we want to estimate $d(y,y_{(1)})$. Recall that $\gamma_{n,i}\to \gamma_n$ uniformly (since the same holds for $\omega_n$ and $\omega$), hence in particular $y_{(i)}\to y$ as $i\to\infty$. Therefore
 \[d(y, y_{(1)})\leq \sum_{i=1}^{+\infty}d(y_{(i+1)}, y_{(i)}).\]
 The terms on the right-hand side can be estimated thanks to the following\medskip \\
 \textbf{Claim 2:} there exists an absolute constant $C>0$ such that, for every $0<i\in\N$ and $y\in \Gamma_n$, if $\bar d=\bar d_{y,i}$ is defined as 
\begin{equation}\label{dbar}\bar d:=\bar d_{y,i}:=\min\left\{\left|\gamma_n^{-1}(y)-\frac{\tau}{4^i}\right|:\tau=0,1,\dots,4^i\right\},\end{equation}
 then $d(y_{(i+1)}, y_{(i)})\leq C\sqrt{\theta}\, \bar d\, L$, where $L$ is the length of $\omega^{\mathbb{C}}$ (and of $\omega$), and $y_{(i)},y_{(i+1)}$ are as in \eqref{eq:yi}.
In particular it holds that \[d(y_{(i+1)}, y_{(i)})\leq \frac{C}{2^i}\sqrt{\theta}.\]
\phantom{}\\ 
 We now assume the validity Claim 2 and conclude the proof. Choose $\bar i\geq 1$ such that \begin{equation}\label{eq:tail}\sum_{i>\bar i}^{+\infty}\frac{1}{2^i}<\frac{K}{8\,C}\end{equation}
 where $K$ and $C$ are found as above. Fix also $0<\delta<\frac{1}{10}$ so small such that $\delta\, C\,L\leq 3K/8$.
 Define $F\subset \Gamma_n$ as 
 \[F:=\left\{y\in\Gamma_n: |\gamma_n^{-1}(y)-\bar s|\leq \delta\left(\frac{1}{4}\right)^{\bar i}\;\;\text{for some }\bar s\in\left\{0,\frac{1}{4},\frac{1}{2},\frac{3}{4},1\right\}\right\}.\]
If $y\in F$ and $i\leq \bar i$, it follows that $\bar d_{y,i}\leq \delta\left(\frac{1}{4}\right)^{\bar i}$. Hence, by Claim 2 and the last estimates,
 \[\begin{split} d(y, y_{(1)})\leq \sum_{i=1}^{+\infty}d(y_{(i+1)}, y_{(i)})&=\sum_{i=1}^{\bar i}d(y_{(i+1)}, y_{(i)})+ \sum_{i>\bar i}^{+\infty}d(y_{(i+1)}, y_{(i)})\\ &\leq \sum_{i=1}^{\bar i}C\sqrt{\theta} \,\delta \left(\frac{1}{4}\right)^{\bar i}L+ \sum_{i>\bar i}^{+\infty}C\sqrt{\theta}\frac{1}{2^i}\\
 &\leq C\sqrt\theta\,L\,\delta\left(\sum_{i=1}^{+\infty}\left(\frac{1}{4}\right)^i\right) + C\sqrt\theta\sum_{i>\bar i}^{+\infty}\frac{1}{2^i}\\ &\overset{\eqref{eq:tail}}{\leq }\frac{3K}{8}\sqrt\theta\,\frac{1}{3}+C\sqrt\theta\frac{K}{8C}=\frac{K}{4} \sqrt\theta.\end{split}\]
 Finally, using the set $M$ with property \eqref{propofM}, we define $N\subset \Gamma_n$ as
 \[N:=\{y\in F: y_{(1)}\in M\}.\]
 Then, combining the previous estimate with \eqref{propofM}, it follows that for every $y\in N$ and each $\ell\in \mathcal V_1$,
 \[d(y,\ell)\geq d(y_{(1)},\ell)-d(y,y_{(1)})\geq \frac{K}{2}\sqrt\theta-\frac{K}{4}\sqrt\theta=\frac{K}{4}\sqrt\theta,\]
 as required in \eqref{eq:PropN}.
 We need also to show that $\mathcal H^1(N)\geq c\,\mathcal H^1(\Gamma _n)$, for some constant $c$.
 First of all, by definition of $F$ it is clear that $\gamma_{n}^{-1}(F)$ coincides with the $\delta\left(\frac{1}{4}\right)^{\bar i}$-neighborhood of $\left\{0,\frac{1}{4},\frac{1}{2},\frac{3}{4},1\right\}$ inside $[0,1]$.
Moreover, note that the set $M\subset \Gamma_{n,1}$ constructed in the three cases discussed below \eqref{propofM} is contained in a line segment 
and, in addition, it has the following property: for every $\epsilon<l_\theta/10$, $M$ contains a point which is at distance exactly $\epsilon$ from one of the two ends of the segment (see the construction of $M$). Since $\gamma_{n,1}$ parameterizes $\Gamma_{n,1}$ with constant speed, then the same property holds for $\gamma_{n,1}^{-1}(M)$: there is $\tau\in\{0,1,2,3\}$ such that, for every $\epsilon<1/40$, there exists $s\in\gamma_{n,1}^{-1}(M)$ with $\left|s-\frac{\tau}{4}\right|=\eps$ or $\left|s-\frac{\tau+1}{4}\right|=\eps$. \medskip\\ Since $\gamma_n^{-1}(N)=\gamma_n^{-1}(F)\,\cap \gamma_{n,1}^{-1}(M)$, we deduce by the previous remarks that \[\mathcal L^1(\gamma_n^{-1}(N))\geq\delta\left(\frac{1}{4}\right)^{\bar i}.\] 
Finally, since $\gamma_n$ parametrizes $\Gamma_n$ with constant speed, we conclude that 
\[\mathcal H^1(N)\geq \delta\left(\frac{1}{4}\right)^{\bar i}\mathcal H^1(\Gamma_n).\]
This concludes the proof of Claim 1, up to the proof of Claim 2.
In order to complete the proof of Theorem \ref{gammaglem}, it remains to show the validity of Claim 2. Notice that the final part of the claim follows from the fact that $\bar d\,L\leq \left(\frac{1}{2}\frac{1}{4^i}\right)\cdot 4\leq \frac{1}{2^i}$ for every $0<i\in\N$. Hence, to prove Claim 2, it suffices to show that, for $y\in\Gamma_n$ and $0<i\in\N$, 
\begin{equation}\label{eq:Goaldy_i}
d(y_{(i+1)}, y_{(i)})\leq C\sqrt{\theta}\, \bar d\, L.
\end{equation}
In order to compute this distance we use Lemma \ref{distance}. Let $\bar\tau$ be the value reaching the minimum in the definition \eqref{dbar} of $\bar d = \bar d_{y,i}$. The curves $\gamma_{n,i}$ and $\gamma_{n,i+1}$ coincide at $\frac{\bar\tau}{4^i}$: this follows from the fact that
\[\gamma_{n,i}\left(\frac{\bar\tau}{4^i}\right)=\Psi\left(\omega_{n+i}\left(\frac{4^i\sigma +\bar\tau}{4^{n+i}}\right)\right)=\Psi\left(\omega_{n+i+1}\left(\frac{4^i\sigma+\bar\tau}{4^{n+i}}\right)\right)=\gamma_{n,i+1}\left(\frac{\bar\tau}{4^i}\right),\]
since $\omega_{n+i}$ and $\omega_{n+i+1}$ coincide on integer multiples of $\frac{1}{4^{n+i}}$. \medskip\\ Choose $\rho:=\bar \tau$ if $\gamma_n^{-1}(y)\geq\frac{\bar\tau}{4^i}$, while $\rho:=\bar\tau-1$ if $\gamma_n^{-1}(y)\leq\frac{\bar\tau}{4^i}$.
Notice that the set $\gamma_{n,i}([\frac{\rho}{4^i},\frac{\rho+1}{4^i}])$ is a horizontal segment, parameterized with constant speed. The set $\gamma_{n,i+1}([\frac{\rho}{4^i},\frac{\rho+1}{4^i}])$ is instead a polygonal curve (again parameterized with constant speed), whose projection on $\R^2$ forms, with the projection of $\gamma_{n,i}([\frac{\rho}{4^i},\frac{\rho+1}{4^i}])$, two congruent isosceles triangles with base angles equal to $\theta_{n+i+1}$ (see Figure \ref{fig:gamma_{n,i}}). Since both curves are parametrized with constant speed, the line segment $\pi(y_{(i)})\,\pi(y_{(i+1)})$ is orthogonal to $\pi(\gamma_{n,i}([\frac{\rho}{4^i},\frac{\rho+1}{4^i}]))$.
\begin{figure}[h]
        \centering
        \includegraphics[width=17 cm, height=5.1 cm]{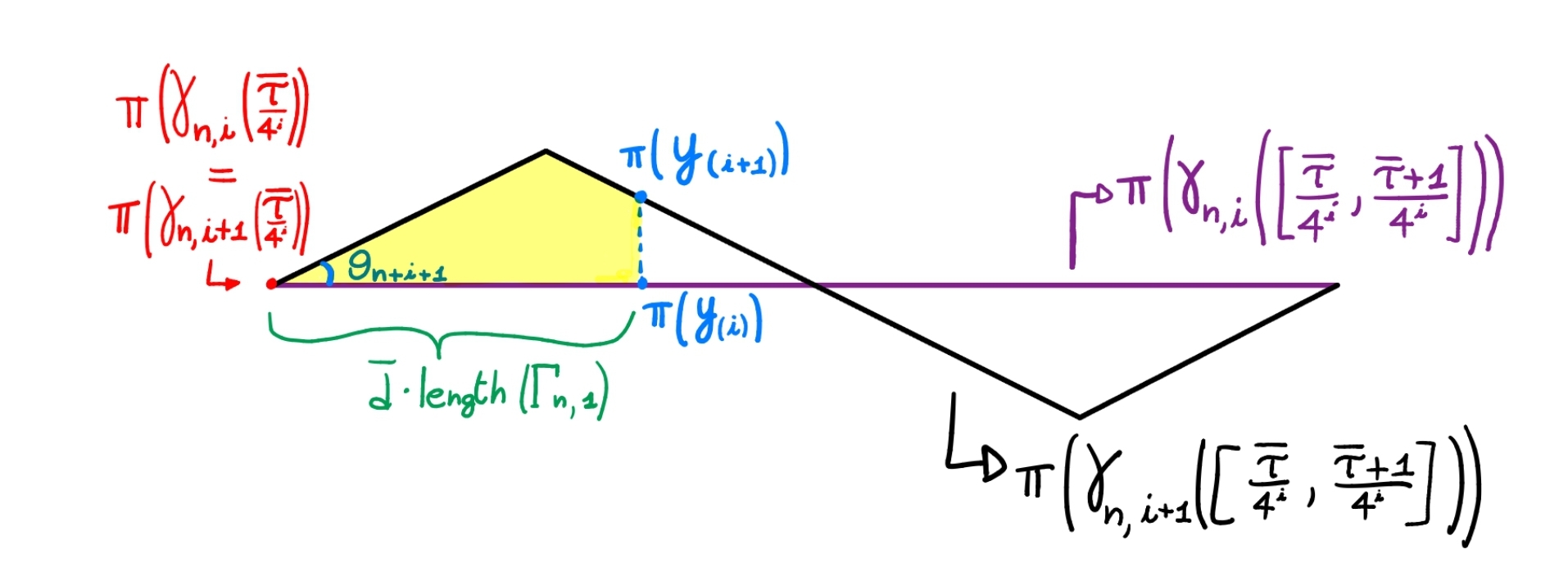}\caption{Part of the curves $\pi\circ\gamma_{n,i}$ and $\pi\circ\gamma_{n,i+1}$.}\label{fig:gamma_{n,i}}
    \end{figure}
    
\noindent Elementary computations show that 
\[\begin{split}d_e(\pi(y_{(i)}),\pi(y_{(i+1)}))&\leq  d_e\left(\pi\left(\gamma_{n,i}\left(\bar \tau/{4^i}\right)\right), \pi(y_{(i)})\right)\tan \theta_{n+i+1}\\&\leq \bar d\,\,\ell_d (\Gamma_{n,i})\tan \theta\\ & \leq \bar d\,L \,2\theta\leq \bar d\,L\sqrt\theta.\end{split}\]
In the previous estimates, we used the fact that $\theta_{n+i+1}\leq \theta\equiv\theta_{n+1} \leq 1/4$, the property that $\Gamma_{n,i}$ is parametrized with constant speed and the inequalities $\ell_d(\Gamma_{n,i})\leq\,\ell_d(\Gamma_n)\leq L$ (which follow since $\ell_d(\Gamma_n)=\sup_i\, \ell_d(\Gamma_{n,i})$ as in \eqref{limitlength} and by the fact that $\theta_{n+j}\leq \theta_j$ for $j\geq 1$).\medskip\\
Similarly, if we compute the area $A$ enclosed by the (three or four) segments joining $\pi(y_{(i)})$, $\pi(\gamma_{n,i}(\bar\tau/4^i))$, $\pi(y_{(i+1)})$, i.e. traveling on $\pi(\Gamma_{n,i})$, on $\pi(\Gamma_{n,\,i+1})$ and on $\pi(y_{(i+1)})\pi(y_{(i)}) $, one gets
\[\begin{split}A&\leq \frac{1}{2}d_{\mathrm{Eucl}}\left(\pi\left(\gamma_{n,i}\left(\bar \tau/{4^i}\right)\right), \pi(y_{(i)})\right)^2\tan \theta_{n+i+1}\\&\leq\frac{1}{2}\bar d^{\,2}\ell^2_d (\Gamma_{n,i})\tan\theta\\ &\leq \frac{1}{2}\bar d^{\,2}L^2\,2\theta=\bar d^{\,2}L^2\theta.\end{split}\]
Applying Lemma \ref{distance}, since $\gamma_{n,i}(\bar\tau/4^i)=\gamma_{n,i+1}(\bar\tau/4^i)$, we get 
\[d(y_{(i)}, y_{(i+1)})\lesssim d_{\mathrm{Eucl}}(\pi(y_{(i)}),\pi(y_{(i+1)}))+\sqrt A\lesssim \bar d\, L\sqrt\theta\]
as required in \eqref{eq:Goaldy_i}, 
and we conclude the proof of Claim 2, and thus of Theorem \ref{gammaglem}.
\end{proof}

\begin{theorem}\label{t:JuilletCurveGLem}
    $\Gamma\in{\rm GLem}(\widehat\beta_{1,\mathcal V_1},4)$.
\end{theorem}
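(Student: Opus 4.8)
The plan is first to reduce to the $L^\infty$-based coefficients. For every dyadic cube $Q$, every $\lambda>1$ and every $V\in\mathcal{V}_1$ one has $\fint_{\lambda Q}(\cdot)\,d\mathcal{H}^1\leq \sup_{\lambda Q}(\cdot)$, so comparing the two sums in Definition \ref{d:Coefficient functions} gives the pointwise bound $\widehat{\beta}_{1,\mathcal{V}_1}(\lambda Q)\leq \widehat{\beta}_{\infty,\mathcal{V}_1}(\lambda Q)$. Hence it suffices to show that $\Gamma\in\mathrm{GLem}(\widehat{\beta}_{\infty,\mathcal{V}_1},4)$ in the sense of Definition \ref{d:GL}. (This last inclusion can also be read off from a quantitative form of Li's traveling salesman theorem \cite{Li}, using that $\Gamma$ is a connected $1$-regular set of finite length by Theorem \ref{injective}; but I would give a self-contained argument, which is in fact the elementary ``easy half'' of such a quantification.) Throughout I would work with a dyadic system $\mathcal{D}$ on $\Gamma$ and, by Remark \ref{r:Dyad2}, assume $\varrho=1/2$, so that $\mathrm{diam}(Q)\sim 2^{-k}\sim\mathcal{H}^1(Q)$ for $Q\in\mathcal{D}_k$.

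The heart of the matter is the scale-by-scale estimate $\widehat{\beta}_{\infty,\mathcal{V}_1}(\lambda Q)^4\lesssim k^{-2}$ for $Q\in\mathcal{D}_k$ (for $k$ above an absolute threshold; for the finitely many coarser generations one only needs the trivial bound $\widehat{\beta}_{\infty,\mathcal{V}_1}(\lambda Q)^4\lesssim 1$, obtained by choosing a horizontal line through a point of $\Gamma\cap\lambda Q$). To prove it, I would set $n:=\lceil k/2\rceil$, so that $l_n\sim 4^{-n}\sim 2^{-k}\sim\mathrm{diam}(\lambda Q)$, and argue exactly as in the proofs of Theorems \ref{injective} and \ref{gammaglem} (via the $L$-Lipschitz continuity of $\omega^{\mathbb{C}}$ and the Claim \eqref{eq:claim}) that $\Gamma\cap\lambda Q$ is contained in $\omega([\tfrac{\sigma-c}{4^n},\tfrac{\sigma+c}{4^n}])$ for some $\sigma$ and an absolute constant $c$. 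Rescaling by the similitude of $\mathbb{H}^1$ that maps this piece to a standard copy (built from the angle sequence $(\theta_{n+m})_{m\geq1}$, of length $\leq L$ by \eqref{limitlength}), I would choose for $V$ the horizontal line through an endpoint with direction the chord of the generation-$(n+1)$ polygon, and estimate the two summands in $\widehat{\beta}_{\infty,\mathcal{V}_1}^4$ separately. For the horizontal-projection summand: the generation-$(n+1)$ refinement produces a deviation of $\pi(\Gamma\cap\lambda Q)$ from $\pi(V)$ of order $\theta_{n+1}\,\mathrm{diam}(\lambda Q)$, and the deviations introduced at generations $n+m$, $m\geq2$, are of order $4^{-(m-1)}\theta_{n+1}\,\mathrm{diam}(\lambda Q)$, so they sum to a geometric series dominated by the first term; thus this summand is $\lesssim\theta_{n+1}^2\sim k^{-2}$. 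For the metric summand: writing $y=(z,t)$ and $V=V_I\times\{0\}$ one has
\[
d(y,V)^4\;\sim\; d_{\mathrm{Eucl}}(\pi(y),\pi(V))^4+\bigl|t-\omega(\pi_{V_I}(z),z)\bigr|^2 ,
\]
and the vertical term is, up to a bounded factor, the signed area swept between $\pi\circ\omega$ and the line $\pi(V)$ up to the parameter of $y$, which by the construction (cf.\ \cite[Proposition 4.37]{FV2} and the comment below Definition \ref{d:horiz}, or Lemma \ref{distance}) is $\lesssim\theta_{n+1}\,\mathrm{diam}(\lambda Q)^2$; since $\theta_{n+1}\leq1$ this gives $d(y,V)^4\lesssim\theta_{n+1}^2\,\mathrm{diam}(\lambda Q)^4$, so the metric summand is also $\lesssim\theta_{n+1}^2\sim k^{-2}$.

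Granting the per-scale estimate, the conclusion is immediate: since $\theta_n=C_0/n$, fix $Q_0\in\mathcal{D}_{k_0}$; by $1$-regularity there are $\lesssim 2^{k-k_0}$ cubes of $\mathcal{D}_k$ inside $Q_0$, each of $\mathcal{H}^1$-measure $\lesssim 2^{-k}$, so using $\widehat{\beta}_{1,\mathcal{V}_1}\leq\widehat{\beta}_{\infty,\mathcal{V}_1}$,
\[
\sum_{Q\in\mathcal{D}_{Q_0}}\widehat{\beta}_{1,\mathcal{V}_1}(\lambda Q)^4\,\mathcal{H}^1(Q)\;\lesssim\;\sum_{k\geq k_0}2^{k-k_0}\cdot k^{-2}\cdot 2^{-k}\;=\;2^{-k_0}\sum_{k\geq k_0}k^{-2}\;\lesssim\;2^{-k_0}\;\sim\;\mathcal{H}^1(Q_0),
\]
which is \eqref{eq:SGL} with exponent $q=4$. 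The main obstacle is the per-scale bound, and within it the swept-area estimate giving $d(y,V)\lesssim\sqrt{\theta_{n+1}}\,\mathrm{diam}(\lambda Q)$: it requires re-entering the fine structure of Juillet's construction. In contrast with the delicate lower bounds of Theorem \ref{gammaglem}, however, here one only needs \emph{upper} bounds on the planar deviation and on the enclosed area, both governed by the refinement angle $\theta_{n+1}$, so this step should be comparatively short.
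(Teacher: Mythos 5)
There is a genuine gap in the step you call the heart of the matter: the uniform per-scale bound $\widehat{\beta}_{\infty,\mathcal V_1}(\lambda Q)^4\lesssim k^{-2}$ for \emph{all} $Q\in\mathcal D_k$ is false. The reason is that corners created at early generations of Juillet's construction persist at all finer scales. Inspecting the replacement pattern \eqref{eq:Lambda5Pts}, a vertex created at generation $m$ carries a planar corner of angle $2\theta_m$ between the incoming and outgoing branches, and refining both branches rotates them by the \emph{same} amounts, so this angle never changes. Hence, for the generation-one vertex $\omega(1/4)$ and any radius $r\lesssim l_1$, the set $\Gamma\cap B(\omega(1/4),r)$ consists of two almost straight arcs of length $\sim r$ meeting at angle $\pi-2\theta_1$, which forces $\beta_{\infty,\pi,\mathcal V_1}\gtrsim\theta_1=C_0$ and therefore $\widehat{\beta}_{\infty,\mathcal V_1}(\lambda Q)^4\gtrsim C_0^2$ for every cube $Q$, of \emph{every} generation $k$, meeting that vertex. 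This also shows where your reduction breaks: $\Gamma\cap\lambda Q$ is contained in a union of one or two generation-$n$ ``packages'', but it is \emph{not} a single rescaled standard copy built from the angles $(\theta_{n+m})_{m\geq1}$; at the junction the packages can meet at an angle $2\theta_m$ with $m\ll n$, which is much larger than $\theta_{n+1}$. Consequently the final summation, which uses the uniform $k^{-2}$ bound, does not establish \eqref{eq:SGL}. (Your estimates \emph{inside} a single package, including the swept-area bound giving $d(y,V)\lesssim\sqrt{\theta_{n+1}}\,\mathrm{diam}(\lambda Q)$, are fine; the problem is exactly the cubes near old vertices.)

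The conclusion is still true, but repairing your route requires stratifying the cubes of each generation $k$ according to the generation $m$ of the oldest corner they see, using that a generation-$m$ vertex is met by only boundedly many cubes per scale and contributes $\widehat{\beta}_{\infty,\mathcal V_1}^4\sim\theta_m^2$ at scales below $l_m$, and then summing geometrically in the scale and over vertices; this is essentially a re-proof, for this specific curve, of the upper-bound half of Li's traveling salesman theorem. The paper avoids this entirely: it quotes \cite[Theorem 1.5]{Li}, which gives $\int_0^\infty\int_{\H^1}\widehat{\beta}_{\infty,\mathcal V_1}(x,r)^4\,dx\,\tfrac{dr}{r^4}\lesssim\mathcal H^1(\Gamma_E)$ for any curve $\Gamma_E$ containing $E$, and then localizes to dyadic cubes using the $1$-regularity of $\Gamma$ from Theorem \ref{injective} (via \cite[Corollary 3.2]{FV1}), before passing from $\widehat{\beta}_{\infty,\mathcal V_1}$ to $\widehat{\beta}_{1,\mathcal V_1}$ exactly as you do. Either carry out the corner-counting argument in full, or switch to this citation-plus-localization route.
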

\begin{proof}
    By \cite[Theorem 1.5]{Li}, if $E\subset \mathbb{H}^1$ and $\Gamma_E$ is any curve containing $E$, then 
    \begin{equation*}\label{gammatraveling}\int_0^{+\infty}\int_{\H^1}\widehat\beta_{\infty,\mathcal V_1}(x,r)^4\,dx\,\frac{dr}{r^4}\lesssim \mathcal H^1(\Gamma_E),\end{equation*}
where the $\widehat{\beta}$-numbers refer to those of $E$.
It is then a standard argument to deduce from this property applied for dyadic cubes 
on the $1$-regular curve $\Gamma$ that  $\Gamma\in{\rm GLem}(\widehat\beta_{\infty,\mathcal V_1},4)$, for instance with the help of \cite[Corollary 3.2]{FV1}. See also
  the proof of \cite[Proposition 3.1]{chouli}. Finally, 
 $\Gamma\in{\rm GLem}(\widehat\beta_{\infty,\mathcal V_1},4)$ implies $\Gamma\in{\rm GLem}(\widehat\beta_{1,\mathcal V_1},4)$.
\end{proof}

\section{Sufficient conditions for uniform rectifiability}\label{s:SuffCond}
In this section, we establish several sufficient conditions for a $k$-regular set $E\subset \H^n$ to have \emph{big pieces of Lipschitz images of subsets of $\R^k$}, for $1\leq k\leq n$. All geometric lemmas in the following statements can be phrased equivalently in terms of discrete sums over dyadic cubes, or in terms of double integrals over points and scales, according to Lemma \ref{l:DiffGeomLem}. We recall the statement of Theorem~\ref{t:ImprovedHahlomaa}: 

\medskip

A $k$-regular set in $\mathbb{H}^n$, for $1\leq k\leq n$, with the geometric lemma $\mathrm{GLem}(\beta_{1,\pi,A(2n,k)},2)$ and the weak geometric lemma $\mathrm{WGL}(\beta_{\infty,\mathcal{V}_k})$ has BPLI.  

\medskip

The core of the proof of Theorem \ref{t:ImprovedHahlomaa} is the following result.

\begin{theorem}\label{t:FromWeakAssToCoronaByNormed}
    Let $n\in \mathbb{N}$ and $k\in \{1,\ldots,n\}$. If $E\subset \mathbb{H}^n$ is a $k$-regular set with $E\in \mathrm{GLem}(\beta_{1,\pi,A(2n,k)},2)$ and $E\in \mathrm{WGL}(\beta_{\infty,\mathcal{V}_k})$, then $E$ has a corona decomposition by horizontal planes (P-C).
\end{theorem}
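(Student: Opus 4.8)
The plan is to follow the blueprint of David and Semmes' Euclidean argument \cite{David1} as adapted by Hahlomaa \cite{Hahlomaa}, but with the weaker hypotheses of the theorem. Fix $\eta>0$; we must produce a coronization $\mathcal D=\mathcal G\,\dot\cup\,\mathcal B$ so that, from the perspective of each tree, $E$ admits an affine horizontal plane $V_{\mathcal S}$ for which the bi-Lipschitz estimate \eqref{eq:P-C} holds on $K_0 Q(\mathcal S)$. The first step is to set up the \emph{stopping-time construction}: starting from a top cube $Q_0$, we choose a best-approximating horizontal plane $V_{Q_0}$ and let a tree $\mathcal S$ grow downward, declaring a cube $Q$ a \emph{minimal} (stopping) cube as soon as one of two things happens --- either the $\beta_{\infty,\mathcal V_k}$-number at some ancestor exceeds a small threshold $\varepsilon_1$ (so that $E$ is no longer close to a horizontal plane at all), or the best horizontal plane $V_Q$ for $Q$ has \emph{rotated} too much relative to $V_{Q_0}$, in a sense quantified by the behaviour of the projection $\pi(E)$. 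The cubes never assigned to any tree, together with the minimal cubes of the first type, form $\mathcal B$; the minimal cubes of the second type spawn new top cubes, and we must show the resulting collection of top cubes satisfies the Carleson packing \eqref{carlesonpackingcond}.

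The heart of the matter is the two Carleson estimates. For stopping of the first kind, the $\mathrm{WGL}(\beta_{\infty,\mathcal V_k})$ hypothesis directly gives a packing bound on the set of cubes where $\beta_{\infty,\mathcal V_k}(\lambda Q)>\varepsilon_1$, so those contribute an admissible (though $\varepsilon_1$- and $\eta$-dependent) constant to $\mathcal B$. For stopping of the second kind --- the \emph{rotation} stopping --- the key technical input is Lemma \ref{l:HLem6.1} (referenced in the introduction), which translates ``the best horizontal plane at a child has rotated by a definite amount compared to the top plane'' into a \emph{lower bound on a $\beta_{1,\pi,A(2n,k)}$-type quantity} computed on the \emph{projection} $\pi(E)$ at the relevant scale. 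Since rotations of horizontal planes change the affine plane $\pi(V)$ in $\mathbb R^{2n}$ only through genuine Euclidean tilting, a large rotation forces $\pi(E\cap B)$ to be poorly approximable by \emph{any} affine $k$-plane in $\mathbb R^{2n}$; summing these lower bounds against $\mathrm{GLem}(\beta_{1,\pi,A(2n,k)},2)$ (via the equivalence with the integral form, Lemma \ref{l:DiffGeomLem}) gives the Carleson packing for the rotation-type top cubes. One subtlety is that a single bad child does not suffice: one needs that a \emph{definite proportion} (in $\mathcal H^k$-measure) of minimal cubes of a tree are of rotation type before spawning, which is where the ``large proportion'' language in the introduction enters and where a pigeonhole/summation argument over generations, combined with $k$-regularity and property (5) of Definition \ref{dt:dyad}, is carried out.

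With the coronization in hand, the final step is the \emph{approximation property}, which is Lemma \ref{l:HahlLem4.3} (referenced in the introduction as the place where Hahlomaa's proof is adapted to our weaker assumptions): for a fixed tree $\mathcal S$ with associated plane $V_{\mathcal S}:=V_{Q(\mathcal S)}$, and for $x,y\in K_0 Q(\mathcal S)$ with $d(x,y)>\eta\min\{h_{\mathcal S}(x),h_{\mathcal S}(y)\}$, one must show $d(x,y)\le(1+2\eta)\,d(P_{V_{\mathcal S}}(x),P_{V_{\mathcal S}}(y))$. The idea: the stopping rules guarantee that at every scale between $d(x,y)$ and roughly $\min\{h_{\mathcal S}(x),h_{\mathcal S}(y)\}$ and along the ``path'' of cubes in $\mathcal S$ joining $x$ and $y$, the set $E$ is $\varepsilon_1$-close to a horizontal plane that is only slightly rotated from $V_{\mathcal S}$; stitching together good points close to $x$ and to $y$, and using that projection onto a nearly-parallel horizontal plane nearly preserves distances (this uses the explicit structure of horizontal projections and the Korányi norm, as in the computations behind Proposition \ref{prop_lipgraph}), yields the estimate with constant arbitrarily close to $1$ once $\varepsilon_1$ and the rotation threshold are chosen small depending on $\eta$, $k$, and $C_E$. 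I expect the main obstacle to be making Lemma \ref{l:HLem6.1} quantitatively sharp enough --- that is, ensuring that the rotation of the \emph{horizontal} plane is genuinely detected by a $\beta_{1,\pi,A(2n,k)}$-number for \emph{arbitrary} affine planes in $\mathbb R^{2n}$ (not merely isotropic ones), and that the measure-theoretic bookkeeping of ``which proportion of minimal cubes is of rotation type'' meshes with the Carleson packing; everything else is a careful but standard adaptation of the Euclidean stopping-time machinery.
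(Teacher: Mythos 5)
Your outline reproduces the general David--Semmes/Hahlomaa stopping-time skeleton correctly (good cubes via $\varepsilon^2$-flatness, bad cubes controlled by $\mathrm{WGL}(\beta_{\infty,\mathcal{V}_k})$, rotation-type stopping, and the bi-Lipschitz projection estimate of Lemma \ref{l:HahlLem4.3} for the trees), but the mechanism you propose for the crucial Carleson packing of the rotation-type top cubes does not work, and it is exactly there that the real content of the proof lies. You claim that a large rotation of the best horizontal plane at a child relative to $V_{Q(\mathcal S)}$ forces $\pi(E\cap B)$ to be poorly approximable by \emph{any} affine $k$-plane at the relevant scale, so that one can sum pointwise lower bounds against $\mathrm{GLem}(\beta_{1,\pi,A(2n,k)},2)$. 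This is false: by construction every cube in the tree (and its children, before stopping) lies in $\mathcal G_1$, i.e.\ it is $\varepsilon^2\,\mathrm{diam}(Q)$-close to \emph{some} horizontal plane, so the single-scale numbers $\beta_{1,\pi,A(2n,k)}$ there are $\lesssim\varepsilon^2$, small rather than large. A rotation between consecutive scales is invisible to any one-scale $\beta$-number; it only manifests in the aggregate over many scales, and turning ``a definite proportion of minimal cubes have a rotated child'' into a lower bound on the Carleson sum is precisely the step your proposal leaves out. You also misread Lemma \ref{l:HLem6.1}: in the paper it is an \emph{upper} bound, transferring the $\beta_{1,\pi,A(2n,k)}$-control on $E$ to the $\gamma$-numbers of an auxiliary Euclidean Lipschitz function, not a lower bound on $\beta$-numbers in terms of rotations.

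What the paper actually does for $\mathcal F_3$ (Sections \ref{sss:Step 2}--\ref{sss:Step4}) is: construct, for each tree $\mathcal S$, a $C\sqrt\eta$-Lipschitz function $g:\mathbb{R}^k\to\mathbb{R}^{2n-k}$ whose graph approximates $\pi(K_0Q(\mathcal S))$ (Lemma \ref{lem_propertiesofg}); prove Lemma \ref{l:HLem6.1}, bounding $\int\!\!\int\gamma(p,t)^2\,dp\,\tfrac{dt}{t}$ for $g$ by $\varepsilon\mu(Q(\mathcal S))$ plus a multiple of the integrated $\beta_{1,\pi,A(2n,k)}^2$ over the tree; and then argue by contradiction in Lemma \ref{l:HLem7.1}, using Calder\'on's reproducing formula (a square-function estimate for $g$) to show that if the $\gamma$-numbers were that small, the oscillation of $g$ could not accommodate a set of rotation-type minimal cubes covering half of $Q(\mathcal S)$. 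Only after this does the hypothesis $E\in\mathrm{GLem}(\beta_{1,\pi,A(2n,k)},2)$ enter, in Step \ref{sss:Step5}, to sum the bound of Lemma \ref{l:HLem7.1} over trees. Without the Lipschitz graph $g$, the $\gamma$-numbers, and the Calder\'on reproducing formula argument, your proof of the packing condition for rotation-type trees has no valid substitute, so the proposal as written has a genuine gap at its central step.
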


Once Theorem \ref{t:FromWeakAssToCoronaByNormed} is established, Theorem \ref{t:ImprovedHahlomaa} follows by the work of Bate, Hyde, and Schul \cite{Bate} combined with the results in Section \ref{ss:CoronaDef} for the different corona decompositions.

\begin{proof}[Proof of Theorem \ref{t:ImprovedHahlomaa} assuming Theorem \ref{t:FromWeakAssToCoronaByNormed}]
    By Theorem \ref{t:FromWeakAssToCoronaByNormed}, if $E\subset \mathbb{H}^n$ is  $k$-regular with $E\in \mathrm{GLem}(\beta_{1,\pi,A(2n,k)},2)$ and $E\in \mathrm{WGL}(\beta_{\infty,\mathcal{V}_k})$, it has (P-C). It follows from Lemma 
    \ref{l:P-CtoN-C}
    that $E$ has a corona decomposition by normed spaces. \cite[Theorem B]{Bate} yields that $E$ has BPLI.
\end{proof}

The proof of Theorem \ref{t:FromWeakAssToCoronaByNormed} is postponed to the next section. Here we limit ourselves to discussing the assumptions in Theorems \ref{t:ImprovedHahlomaa}  and \ref{t:FromWeakAssToCoronaByNormed}, and deducing the corollaries stated in the introduction. First, it is clear that the weak geometric lemma $\mathrm{WGL}(\beta_{\infty,\mathcal{V}_k})$ alone does not imply the BPLI property of a $k$-regular set in $\mathbb{H}^n$ ($1\leq k\leq n$), see for instance Remark \ref{r:WGLNothingelse}. Moreover, a single summability condition on the $\beta_\pi$-numbers, as in the statement of Theorem \ref{t:ImprovedHahlomaa}, cannot suffice to prove BPLI as the following example shows.

\begin{example}[$\beta_{\pi}$-number conditions alone do not imply BPLI]\label{ex:ProjBetaNotEnough} Let $K\subset [0,1]$ be the standard Cantor middle-half set of Hausdorff dimension $1/2$. The set $E=\{(0,t):\; t\in K\}\subset \mathbb{H}^1$ is $1$-regular in the Heisenberg distance. Since $\pi(E)=\{0\}$ is entirely contained in any $1$-dimensional subspace of $\mathbb{R}^{2}$, the associated $\beta_{1,\pi,\mathcal{V}_k}$-numbers (hence also the $\beta_{1,\pi,A(2n,k)}$-numbers) vanish identically. However, $E$ cannot have BPLI, for if it did, then it would be contained in a regular curve (\cite[Corollary 4.6]{FV1}) and \cite{LiSchul1} (with an argument as in the proof of Theorem \ref{t:JuilletCurveGLem}) would therefore imply that $E\in \mathrm{GLem}(\beta_{\infty,\mathcal{V}_1},4)$.
However, this is impossible since $E$ is badly approximable by horizontal lines. Indeed, for every $x\in E$ and $0<r<\mathrm{diam}(E)$, the set $E\cap B(x,r)$ contains two (in fact many) points on the vertical $t$-axis at Heisenberg distance $\gtrsim r$ from each other. This implies that for ever horizontal line $V\in \mathcal{V}_1$ considered in the infimum $\beta_{\infty,\mathcal{V}_1}(x,r)$, there exists at least one point in $E\cap B(x,r)$ with distance $\gtrsim r$ from $V$. Therefore $\beta_{\infty,\mathcal{V}_1}(x,r)\gtrsim 1$ for $x\in E$, $0<r<\mathrm{diam}(E)$.
\end{example}

As discussed in the introduction, Theorem \ref{t:ImprovedHahlomaa} leads to several corollaries related to existing literature. In particular, uniform rectifiability for $k$-regular sets in $\H^n$ can be inferred from geometric lemmas expressed in terms of Hahlomaa's  horizontal $\beta$-numbers \cite{Hahlomaa}, Li's stratified $\beta$-numbers \cite{Li}, or the $\iota$-numbers introduced by the first author and Violo \cite{FV1}.
    \begin{proof}[Proof of Corollary \ref{c:HahlomaaWithHorizontal} ($\mathrm{GLem}(\beta_{1,\mathcal{V}_k},2)$ implies BPLI)]
  This follows from Theorem \ref{t:ImprovedHahlomaa}       with the help of Propositions \ref{p:FromStratifiedToWeakAssumpt} and \ref{p:FromHorizToStartif}.
    \end{proof}

\begin{proof}[Proof of Corollary \ref{c:HahlomaaWithStratifASs}  ($\mathrm{GLem}(\widehat{\beta}_{1,\mathcal{V}_k},4)$ implies BPLI)]
    This follows from Theorem \ref{t:ImprovedHahlomaa} with the help of Proposition \ref{p:FromStratifiedToWeakAssumpt}.
\end{proof}

\begin{proof}[Proof of Corollary \ref{c:HahlomaaWithIotaASs} ($\mathrm{GLem}(\iota_{1,\mathcal{V}_k},1)$ implies BPLI)]
    This follows from Theorem \ref{t:ImprovedHahlomaa} with  Propositions \ref{p:FromIotaToStratifAssumpt} and \ref{p:FromStratifiedToWeakAssumpt}.
    \end{proof}
Through the equivalence of (P-C) with (ILG-C), we also obtain from \cite{Bate} that (ILG-C) implies BPLI, which mirrors one of the implications proven by David and Semmes in Euclidean spaces \cite{David1}. 
\begin{proof}[Proof of Corollary \ref{t:FromCoronaToBPLI} ((ILG-C) implies BPLI)]
By Lemma \ref{lem_equivcor}, if $E$ satisfies (ILG-C), then also (P-C). The result then follows by 
 Lemma \ref{l:P-CtoN-C} and \cite[Theorem B]{Bate}. 
\end{proof}

\section{Proof of the corona decomposition}\label{s:ProofCorona}

In this section, we revisit the corona decomposition and  prove Theorem \ref{t:FromWeakAssToCoronaByNormed}. The general procedure for constructing corona decompositions is outlined in \cite[p.19]{David1}.  We closely follow the proof by Hahlomaa \cite[p.5-17]{Hahlomaa}, which, in turn, is significantly inspired by David and Semmes' original construction of a corona decomposition for sets satisfying the geometric lemma $\mathrm{GLem}(\beta_1,2)$ in Euclidean spaces. Our task is to verify that all the proof steps can be carried out under our strictly weaker assumptions $E\in \mathrm{GLem}(\beta_{1,\pi,A(2n,k)},2)$ and $E\in \mathrm{WGL}(\beta_{\infty,\mathcal{V}_k})$. We will present the arguments in all detail where they deviate from \cite{Hahlomaa}. For the parts of the proof that follow \cite{Hahlomaa}, we omit details if they can either be verified with a straightforward computation or they proceed as in the standard Euclidean construction  \cite{David1}. The structure of the proof is as follows:
\begin{itemize}
    \item[\ref{ss:PrelimThm}] Preliminaries for the proof of Theorem \ref{t:FromWeakAssToCoronaByNormed}
    \item[\ref{ss:CoronaStep1}] Assigning planes to good cubes
   \item[\ref{ss:BuildingCoronization}] Building the coronization
\item[\ref{ss:VerifP-C}] Verifying the corona decomposition by horizontal planes (P-C)
\end{itemize}

Hahlomaa does not explicitly state the conclusion of his proof in the form of a corona decomposition, but we decided to do so for two reasons: first, it is straightforward to deduce from the corona decomposition by horizontal planes (P-C) the one by normed spaces (N-C), which allows us to apply the recent result by Bate, Hyde, and Schul \cite{Bate}. Second, as we have shown in Lemma \ref{l:PCimpliesILGC} and Lemma \ref{lem_equivcor}, (P-C) is equivalent to the corona decomposition by intrinsic Lipschitz graphs (ILG-C), so Theorem \ref{t:FromWeakAssToCoronaByNormed} can be seen as a Heisenberg counterpart for David and Semmes' proof that the geometric lemma implies the  existence of corona decompositions by Lipschitz graphs, recall  Corollary \ref{t:FromWeakAssToILG-C}.

\begin{proof}[Proof of Corollary  \ref{t:FromWeakAssToILG-C}] ($\mathrm{Glem}(\beta_{1,\pi, A(2n,k)}, 2)$ and $\mathrm{WGL}(\beta_{\infty,\mathcal V_k})$ imply (ILG-C)).
    This follows immediately from Theorem \ref{t:FromWeakAssToCoronaByNormed} and  Lemma \ref{l:PCimpliesILGC}. 
\end{proof}

\subsection{Preliminaries for the proof of Theorem \ref{t:FromWeakAssToCoronaByNormed}}\label{ss:PrelimThm}
Let $E\subset\H^n$ be a $k$-regular set with $E\in \mathrm{GLem}(\beta_{1,\pi,A(2n,k)},2)$ and $E\in \mathrm{WGL}(\beta_{\infty,\mathcal{V}_k})$. Let $C_E$ denote the regularity constant of $E$. We also use the notation $\mu:=\mathcal H^k|_E$. Given $V_1,V_2\in\mathcal V_k$, we define the \textit{angle} between $V_1$ and $V_2$ as 
\begin{equation}\label{eq:angle}
\angle(V_1,V_2):=\min\{C\geq 1: d(x,y)\leq Cd(P_{V_2}(x),P_{V_2}(y))\;\text{ for all }x,y\in V_1\}.
\end{equation}
\begin{remark}\label{r:angle}
This is the definition used in \cite[p.4]{Hahlomaa}, which we adopt here to facilitate comparison of certain arguments in our proof with the corresponding ones in \cite{Hahlomaa}. However, this definition of angle differs from the ones used in \cite{David1,David2,FV2}. Notably, $\angle(V_1,V_2)=1$ for $V_1=V_2$ according to \eqref{eq:angle} (whereas this angle would be $0$
 according to the definitions in the cited references). Moreover, if, e.g.,  $V_1,V_2$ are two horizontal lines through the origin with an angle of $\delta$, then  $\angle(V_1,V_2)\sim 1+ \frac{\delta^2}{2}$, for small $\delta$, with the definition \eqref{eq:angle}.%
 \end{remark}
 \begin{remark}
As proved in \cite[Lemma 2.3]{Hahlomaa}, the angle $\angle(V_1,V_2)$ can be computed by only considering the projections of $V_1$ and $V_2$ on $\R^{2n}$:
     \[\angle(V_1,V_2)=\min\{C\geq 1:|x-y|\leq C|\pi_{\pi(V_2)}(x)-\pi_{\pi(V_2)}(y)|\;\text{ for all }x,y\in \pi(V_1)\}.\]
This is applied in the proof of Lemma  \ref{lem_propertiesofg} and Lemma \ref{l:HLem7.1}. Although the definition is not expressed in a symmetric form, it is not difficult to see that $V_1$ and $V_2$ can be interchanged.\end{remark}
If $A\subset\H^n$ and $r>0$ we will denote by $A(r)$ the $r$-neighborhood of $A$, 
that is defined by
\[A(r):=\{x\in\H^n: d(x,A)\leq r\}.\]
We will show that there exists a constant $\eta_0>0$, depending only on $k$ and  $C_E$, such that $E$ satisfies the conditions in Definition \ref{d:P-C} (P-C)
 for all $0<\eta<\eta_0$ (and thus, a posteriori, for all $\eta>0$). 
\textbf{All the following constants may depend on $k$ and the $k$-regularity constant $C_E$ of $E$ without explicit mentioning.}

\begin{enumerate}
\item The constant $K_0>1$ will be chosen large enough, as specified in Sections \ref{ss:Step3} -- \ref{sss:Step4}.
\item The constant $0<\eta\leq \eta_0$ will be chosen small enough, as in Sections \ref{sss:Step 2} 
-- \ref{sss:Step4}.
\item The constant $K>1$ will be chosen sufficiently large, depending on $K_0$ and $\eta$; see the proof of Lemma \ref{l:HahlLem4.3} and Sections \ref{sss:Step 2} and \ref{sss:Step4}. 
\item Finally, the constant $0<\varepsilon<1$ will be chosen small enough, depending on $\eta$ and $K$, as explained in Section \ref{ss:CoronaStep1}, in the proof of Lemma \ref{l:HahlLem4.3}, as well as in Sections \ref{sss:Step 2} and \ref{sss:Step4}.
\end{enumerate}
The constants $K$ and $\varepsilon$ will be used to define the good cubes in the coronization for parameter $\eta$, and as such they are allowed to depend on $\eta$.\\ 
We fix $0<\eta<\eta_0$ and aim to show that $E$ satisfies the conditions of (P-C) for this constant. That is, we have to show that $E$ admits a coronization (with a constant $C=C(\eta)$ depending on $\eta$) such that the projection condition \eqref{eq:P-C} holds with parameter $\eta$.

\subsection{Assigning planes to good cubes}
\label{ss:CoronaStep1}

Let $\mathcal{D}$ be an arbitrary but fixed dyadic system on $E$. We define a ``good family'' $\mathcal{G}_1\subset \mathcal{D}$ such that each 
$Q\in \mathcal{G}_1$ is well approximated by a horizontal plane. More precisely, for fixed parameters $\varepsilon>0$, $K>1$ (which will be suitably determined later), we let
\[\mathcal G_1=\mathcal G_1(\varepsilon, K):=\{Q\in\mathcal D: KQ\subset V(\varepsilon^2\mathrm{diam}(Q))\;\;\text{for some }V\in\mathcal V_k\}.\]
For each $Q\in\mathcal G_1$, we then fix $V_Q\in\mathcal V_k$ to be an affine horizontal $k$-dimensional plane with the property that $KQ\subset V_Q(\varepsilon^2\diam(Q))$.
The assumption $E\in \mathrm{WGL}(\beta_{\infty,\mathcal{V}_k})$ implies a Carleson-packing condition for the remaining cubes $\mathcal{D}\setminus \mathcal{G}_1$ (in 
\cite[p.5]{Hahlomaa} this is deduced from the assumption $E\in \mathrm{GLem}(\beta_{1,\mathcal{V}_k},2)$). 
\begin{lemma}
    There exists a constant $C=C(\varepsilon, K)$ such that
    \begin{equation}\label{eq_carlesoncondition}\sum_{Q\in \mathcal D\setminus \mathcal G_1,\, Q\subset R}\mu(Q)\leq C\mu(R),\quad R\in\mathcal D.\end{equation}
\end{lemma}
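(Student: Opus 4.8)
The plan is to derive \eqref{eq_carlesoncondition} from the weak geometric lemma $E\in \mathrm{WGL}(\beta_{\infty,\mathcal{V}_k})$ by showing that every cube $Q\notin \mathcal{G}_1$ has a relatively large $\beta_{\infty,\mathcal V_k}$-number on a fixed enlargement $\lambda Q$, with $\lambda$ a dimensional constant; then \eqref{eq_carlesoncondition} is precisely the Carleson packing condition \eqref{eq:WGL} for the threshold $\varepsilon' = \varepsilon'(\varepsilon)$ and dilation $\lambda$ that this produces, applied with $Q_0 = R$. The key point is the contrapositive: if $\beta_{\infty,\mathcal V_k}(\lambda Q)$ is small, then $Q\in \mathcal{G}_1$.

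First I would quantify this. Suppose $\beta_{\infty,\mathcal{V}_k}(\lambda Q) \leq \varepsilon'$ for a suitable $\lambda > 1$ to be fixed below. By definition of the $\beta_{\infty}$-number there is $V \in \mathcal{V}_k$ with $d(y,V) \leq \varepsilon' \diam(\lambda Q)$ for all $y \in \lambda Q$, hence $\lambda Q \subset V(\varepsilon' \diam(\lambda Q))$. Using \eqref{eq:H_(14)}, $\diam(\lambda Q) \sim_\lambda \diam(Q)$, so $\lambda Q \subset V(C_1 \varepsilon' \diam(Q))$ for a constant $C_1 = C_1(\lambda, k, C_E)$. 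On the other hand, $KQ$ is the set of $x \in E$ with $d(x,Q) \leq (K-1)\diam(Q)$, so $KQ \subset \lambda Q$ precisely when $\lambda - 1 \geq K - 1$, i.e.\ $\lambda \geq K$. To avoid making $\lambda$ depend on $K$ (which would break the dimensional dependence of the constants in \eqref{eq:WGL}), I instead use Remark \ref{r_independenceoflambda}: the weak geometric lemma for $\beta_{\infty,\mathcal{V}_k}$ holds for one $\lambda > 1$ iff it holds for all, with quantitatively controlled constants. So I may take $\lambda = K$ from the outset. Then $KQ \subset \lambda Q \subset V(C_1 \varepsilon' \diam(Q))$, and choosing $\varepsilon' := \varepsilon^2/C_1$ gives $KQ \subset V(\varepsilon^2 \diam(Q))$, i.e.\ $Q \in \mathcal{G}_1(\varepsilon, K)$. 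Equivalently: $Q \in \mathcal{D}\setminus \mathcal{G}_1 \implies \beta_{\infty,\mathcal{V}_k}(KQ) > \varepsilon^2/C_1$.

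With this established, \eqref{eq_carlesoncondition} is immediate: for $R \in \mathcal{D}$,
\[
\sum_{Q \in \mathcal{D}\setminus \mathcal{G}_1,\, Q \subset R} \mu(Q) \leq \sum_{Q \in \mathcal{D}_R,\, \beta_{\infty,\mathcal{V}_k}(KQ) > \varepsilon^2/C_1} \mu(Q) \leq C\big(\tfrac{\varepsilon^2}{C_1}, K\big)\, \mu(R),
\]
where the last inequality is \eqref{eq:WGL} from $E \in \mathrm{WGL}(\beta_{\infty,\mathcal{V}_k})$ with $Q_0 = R$, threshold $\varepsilon^2/C_1$, and dilation $\lambda = K$. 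Since $C_1$ depends only on $K$, $k$, $C_E$, the resulting constant in \eqref{eq_carlesoncondition} depends only on $\varepsilon$ and $K$ (and the suppressed dimensional data), as claimed.

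I do not expect a genuine obstacle here; the statement is essentially a dictionary translation between the definition of $\mathcal{G}_1$ and the weak geometric lemma. The only mild subtlety is the bookkeeping between the ``$KQ$'' notation of Definition \ref{dt:dyad} (distance-enlargement of the Borel cube) and the ``$\lambda Q$'' appearing in Definition \ref{d:WGL}: both are defined by the same formula $\{x \in E : d(x,Q) \leq (\lambda-1)\diam(Q)\}$, so there is in fact nothing to reconcile, and one should simply take $\lambda = K$ and invoke the $\lambda$-independence of the weak geometric lemma (Remark \ref{r_independenceoflambda}, applied to the $\mathrm{WGL}$ via the comment after Definition \ref{d:WGL}) so that the constant does not deteriorate in an uncontrolled way when $K$ is later chosen large.
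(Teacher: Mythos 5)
Your argument is correct and is essentially the paper's proof: one shows that $Q\notin\mathcal G_1$ forces $\beta_{\infty,\mathcal V_k}(KQ)\gtrsim_K\varepsilon^2$ (the paper does this directly via $\diam(KQ)\leq(2K-1)\diam(Q)$, you do it in contrapositive form) and then invokes $\mathrm{WGL}(\beta_{\infty,\mathcal V_k})$ with dilation $\lambda=K$ and threshold comparable to $\varepsilon^2/K$. The only cosmetic remark is that Definition \ref{d:WGL} already quantifies over all $\lambda>1$, so the appeal to Remark \ref{r_independenceoflambda} is not needed, and the near-optimal plane in the infimum defining $\beta_{\infty,\mathcal V_k}$ should be taken up to a harmless factor since the infimum need not be attained.
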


If 
a good cube $Q\in \mathcal{G}_1$ is well approximated by \emph{some} horizontal plane $V\in \mathcal{V}_k$,
then this plane $V$ must make a small angle with $V_Q$, as made precise in the following lemma.

\begin{lemma}\label{l:HahLem3.2}
    If $Q\in \mathcal{G}_1$ and $V\in \mathcal{V}_k$ is such that $Q\subset V(2K\varepsilon^2\diam(Q))$ then $\angle(V_Q ,V)\leq 1+\varepsilon$.
\end{lemma}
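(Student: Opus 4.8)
The plan is to exploit the near-isometry property of horizontal projections onto planes that closely approximate the same set, reducing everything to elementary estimates on $\mathbb{R}^{2n}$ via the characterization of the angle recalled after Remark~\ref{r:angle}. First I would set $N := \diam(Q)$, fix the distinguished plane $V_Q \in \mathcal{V}_k$ with $KQ \subset V_Q(\varepsilon^2 N)$, and take an arbitrary $V \in \mathcal{V}_k$ with $Q \subset V(2K\varepsilon^2 N)$. Since $B(x_Q, D^{-1}\rho^j) \cap E \subset Q$ for the appropriate generation $j$, property~(4) of Definition and Theorem~\ref{dt:dyad} together with \eqref{eq:H_(14)} guarantees that $Q$ (even $Q$ alone, without the $K$-dilation) contains points of $E$ that are $\sim N$-separated and, more importantly, spread out in all $k$ ``directions'' of the approximating planes. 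Concretely, one can find $x_0, x_1, \dots, x_k \in Q$ such that the points $P_{V_Q}(x_i)$ (equivalently $\pi(x_i)$ projected onto $\pi(V_Q)$) are $\gtrsim N$-separated and affinely span a $k$-dimensional simplex with all edge lengths $\sim N$; this uses only $k$-regularity of $E$ and the fact that $Q$ is $\varepsilon^2 N$-close to the $k$-plane $V_Q$, so for $\varepsilon$ small the projected points inherit the non-degeneracy.

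Next I would transfer these points to $V$: because both $x_i \in V_Q(\varepsilon^2 N)$ and $x_i \in V(2K\varepsilon^2 N)$, the Korányi distance $d(P_{V_Q}(x_i), x_i) \lesssim \varepsilon^2 N$ and $d(P_V(x_i), x_i) \lesssim K\varepsilon^2 N$, so $d(P_{V_Q}(x_i), P_V(x_i)) \lesssim K\varepsilon^2 N$. Hence the $(k+1)$-tuple $(P_{V_Q}(x_i))_i$ in $V_Q$ and the $(k+1)$-tuple $(P_V(x_i))_i$ in $V$ are $O(K\varepsilon^2 N)$-close to each other in $\mathbb{H}^n$, while pairwise distances within each tuple are $\sim N$ and the tuples are quantitatively non-degenerate. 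Now recall from the remark after Remark~\ref{r:angle} that $\angle(V_Q, V) = \min\{C \ge 1 : |z - z'| \le C\,|\pi_{\pi(V)}(z) - \pi_{\pi(V)}(z')|\ \text{for all } z,z' \in \pi(V_Q)\}$. Since $\pi$ is $1$-Lipschitz and distances on the horizontal planes agree with Euclidean distances on their $\pi$-images, the affine maps $\mathbb{R}^k \cong \pi(V_Q) \to \pi(V) \cong \mathbb{R}^k$ sending the simplex $(\pi(x_i))$ to its perturbation $(\pi_{\pi(V)}(x_i)^{\text{nearby}})$ differ from a linear isometry by an error of relative size $O(K\varepsilon^2)$; composing, the restriction of $\pi_{\pi(V)}$ to $\pi(V_Q)$ is, up to relative error $O(K\varepsilon^2)$, an isometry from $\pi(V_Q)$ onto its image. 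Therefore $|z - z'| \le (1 + O(K\varepsilon^2))\,|\pi_{\pi(V)}(z) - \pi_{\pi(V)}(z')|$ for all $z, z' \in \pi(V_Q)$, which is exactly $\angle(V_Q, V) \le 1 + O(K\varepsilon^2)$.

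Finally I would simply choose $\varepsilon_0 = \varepsilon_0(k, C_E, K)$ small enough that the implicit constant times $K\varepsilon^2$ is $\le \varepsilon$ for all $\varepsilon < \varepsilon_0$; since in the construction $\varepsilon$ is taken as small as we like relative to $K$ (see the ordering of constants in Section~\ref{ss:PrelimThm}), this is legitimate, and we obtain $\angle(V_Q, V) \le 1 + \varepsilon$ as claimed. The main obstacle I anticipate is the non-degeneracy step: making precise that a $k$-regular set which is $\varepsilon^2 N$-close to a $k$-plane inside a cube $Q$ of diameter $N$ must contain a quantitatively fat $k$-simplex's worth of points, with constants depending only on $k$ and $C_E$. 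This is standard — it follows from a volume/packing argument using \eqref{def_Ahlfors} and \eqref{eq:H_(14)}, exactly the kind of estimate underlying \cite[Lemma 2.3]{Hahlomaa} — so in the write-up I would either cite the corresponding statement in \cite{Hahlomaa} or dispatch it with a short covering argument rather than reproving it from scratch. Everything else reduces to the triangle inequality in $(\mathbb{H}^n, d)$ and linear algebra on $\mathbb{R}^{2n}$ via the projection characterization of the angle.
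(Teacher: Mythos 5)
Your argument is sound, but note that for this lemma the paper itself supplies no proof: it simply invokes \cite[Lemma 3.2]{Hahlomaa} (remarking only that it is similar in spirit to \cite[Lemma 5.13]{David1}), so your write-up is a self-contained reconstruction along the expected David--Semmes lines rather than a comparison with an argument printed here. The two essential ingredients are both present in your sketch and both check out: (1) the existence of $k+1$ points of $Q$ whose projections are quantitatively spread out at scale $N=\diam(Q)$ -- here it is crucial that you use \emph{both} the $k$-regularity and the hypothesis $KQ\subset V_Q(\varepsilon^2 N)$, since $k$-regularity alone gives nothing about Euclidean projections (cf.\ the vertical-axis set of Example \ref{ex:ProjBetaNotEnough}, whose projection is a point); the covering/volume argument you indicate does yield a fat simplex with fatness constant depending only on $k$ and $C_E$, once $\varepsilon$ is small relative to that constant, because $d(x,P_{V_Q}(x))\le 2\,d(x,V_Q)$ and the induced metric on a horizontal plane is Euclidean on its $\pi$-image; (2) the transfer of the two closeness conditions ($\lesssim\varepsilon^2N$ to $\pi(V_Q)$, $\lesssim K\varepsilon^2N$ to $\pi(V)$) on the vertices of a fat simplex into a bound $\sin\theta_{\max}\lesssim_{k,C_E}K\varepsilon^2$ on the largest principal angle between the linear parts, whence $\angle(V_Q,V)\le (\cos\theta_{\max})^{-1}\le 1+C K^2\varepsilon^4\le 1+\varepsilon$ for $\varepsilon$ small depending on $K$ (your cruder $1+O(K\varepsilon^2)$ also suffices), consistent with the ordering of constants in Section \ref{ss:PrelimThm}. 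Two small corrections for the write-up: Hahlomaa's Lemma 2.3 is the projection characterization of the angle quoted after Remark \ref{r:angle}, not the fat-simplex/spanning-points statement, so cite it only for reducing $\angle(V_Q,V)$ to a statement about $\pi(V_Q),\pi(V)\subset\mathbb{R}^{2n}$; and when bounding $|z-z'|\le (1/\cos\theta_{\max})|\pi_{\pi(V)}(z)-\pi_{\pi(V)}(z')|$ you should note $\theta_{\max}<\pi/2$ (guaranteed for small $\varepsilon$), so the affine projection is indeed expanding distances by at most that factor.
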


This is 
\cite[Lemma 3.2]{Hahlomaa}, which holds choosing $\varepsilon$ small enough depending on $K$.
Bearing in mind Remark \ref{r:angle} about the definition of angles, this is similar in spirit to  \cite[Lemma 5.13]{David1} in the Euclidean setting, although weaker in terms of the exponents of $\varepsilon$. Lemma \ref{l:HahLem3.2} will be applied in Section \ref{sss:Step 2} to construct a Lipschitz function $g:\mathbb{R}^k \to \mathbb{R}^{2n-k}$ for each tree $\mathcal{S}$ in the coronization that we are going to construct in the following. 

\subsection{Building the coronization}\label{ss:BuildingCoronization}
We now describe how to build the coronization $\mathcal{D}=\mathcal{G}\dot{\cup}\mathcal{B}$. This is done exactly as in \cite[p.6--7]{Hahlomaa} taking the parameter $\delta=\eta$ (see also \cite[Section 7]{David1} for the original Euclidean construction). The reader can think that the good cubes $\mathcal{G}$ are simply the family $\mathcal{G}_1$ from Section \ref{ss:CoronaStep1}. This will be true for bounded $E$, and in the case of unbounded $E$ one selects a family $\mathcal{G}\subset \mathcal{G}_1$ to run a suitable localization argument similar to  \cite[p.38]{David1}. The family $\mathcal{G}$ still satisfies \eqref{eq_carlesoncondition} and is then divided into a forest $\mathcal{F}\subset \mathcal P(\mathcal G)$ of disjoint trees $\mathcal S$, 
which is constructed in such a way that the following properties hold:
\begin{enumerate}
\item Each $\mathcal S\in\mathcal F$ has a maximal element (with respect to inclusion), which will be denoted by $Q(\mathcal S)$;
\item If $Q\in\mathcal S, R\in \mathcal D$ are such that $Q\subset R\subset Q(\mathcal S)$, then $R\in\mathcal S$;
\item For each $Q\in\mathcal S$, $\angle(V_Q,V_{Q(\mathcal S)})\leq 1+\eta$;
\item If $Q\in\mathcal S,\, \mathcal C(Q)\subset\mathcal G$ and $\angle(V_R,V_{Q(\mathcal S)})\leq 1+\eta$ for all $R\in \mathcal C(Q)$, then $\mathcal C(Q)\subset \mathcal S$;
\item If $Q\in \mathcal S$, then $Q\in \min(\mathcal S)$ if and only if $\mathcal C(Q)\setminus\mathcal G\neq \emptyset$ (``$Q$ has at least one bad child'') or there is $R\in \mathcal C(Q)$ such that $\angle(V_R, V_{Q(\mathcal S)})>1+\eta$ (``$Q$ has at least one child $R$ with a large angle between $V_R$ and $V_{Q(\mathcal{S})}$'').  
\end{enumerate} The forest $\mathcal F$ is further subdivided as $\mathcal{F}=\mathcal{F}_1\cup \mathcal{F}_2\cup \mathcal{F}_3$, where the conditions for the definition of the $\mathcal{F}_i$'s are stated in terms of properties of the minimal (with respect to inclusion) cubes of the trees $\mathcal{S}\in \mathcal{F}_i$. 
In particular, for a given tree $ \mathcal S\in\mathcal F$ we set
\[\begin{split} &m_1(\mathcal S):=\{Q\in\min(\mathcal S): \mathcal C(Q)\setminus\mathcal G\neq\emptyset\},
\\ &m_2(\mathcal S):=\min(\mathcal S)\setminus m_1(\mathcal S).\end{split}\]
Then we define the partition of the forest $\mathcal F$ as
\[\begin{split}\mathcal F_1&:=\left\{\mathcal S\in\mathcal F: \mu \left(\bigcup_{Q\in m_1(\mathcal S)} Q\right)\geq \mu(Q(\mathcal S))/4\right\},\\ \mathcal F_2&:=\left\{\mathcal S\in\mathcal F: \mu \left(Q(\mathcal S)\setminus\bigcup_{Q\in \rm{min}(\mathcal S)} Q\right)\geq \mu(Q(\mathcal S))/4\right\},\\\mathcal F_3&:=\left\{\mathcal S\in\mathcal F: \mu \left(\bigcup_{Q\in m_2(\mathcal S)} Q\right)\geq \mu(Q(\mathcal S))/2\right\}.\end{split}\]
Although the definitions are different, this is similar in spirit to the definitions used in the corresponding Euclidean proof \cite[p.39]{David1}. This is because, if $Q\in m_2(\mathcal S)$, then there exists $R\in \mathcal C(Q)$ with $\angle(V_R, V_{Q(\mathcal S)})>1+\eta$.
The maximal cubes $Q(\mathcal{S})$ for $\mathcal{S}\in \mathcal{F}_1 \cup \mathcal{F}_2$ are easily seen to satisfy a Carleson packing condition. This is the statement of \cite[Lemma 4.2]{Hahlomaa}, which has a purely axiomatic proof that applies verbatim in our situation, 
see also \cite[p.39, Lemma 7.4]{David1}. Precisely, the result reads as follows:
\begin{lemma}
 There exists a constant $C=C(\varepsilon, K)$ such that
    \[\sum_{\mathcal S\in\mathcal F_1\cup\mathcal F_2,\,Q(\mathcal S)\subset R}\mu(Q(\mathcal S))\leq C\mu(R),\quad R\in\mathcal D.\]    
\end{lemma}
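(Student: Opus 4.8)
The plan is to dispose of the two families $\mathcal F_1$ and $\mathcal F_2$ separately, in each case comparing $\mu(Q(\mathcal S))$ to a quantity that is already under control: for $\mathcal F_1$, the mass of the bad cubes $\mathcal D\setminus\mathcal G$, which satisfies the Carleson packing \eqref{eq_carlesoncondition}; for $\mathcal F_2$, the ``leftover'' sets $Q(\mathcal S)\setminus\bigcup_{Q\in\min(\mathcal S)}Q$, which turn out to be pairwise disjoint. Both steps are soft and use only the coherence of the trees together with the fact that $\mathcal F$ partitions $\mathcal G$; no geometry of $\H^n$ enters, exactly as indicated after the statement, which is why \cite{Hahlomaa} can cite the Euclidean argument \cite[Lemma 7.4]{David1} verbatim.

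For $\mathcal S\in\mathcal F_1$, by definition of $m_1(\mathcal S)$ every $Q\in m_1(\mathcal S)$ has a child $R_Q\in\mathcal C(Q)\setminus\mathcal G$, and I fix one such child. Since $R_Q$ lies one generation below $Q$, \eqref{eq:H_(14)} gives $\mu(Q)\lesssim\mu(R_Q)$ with constant depending only on $k$ and $C_E$. The cubes of $m_1(\mathcal S)$ are pairwise disjoint, being minimal cubes of a coherent tree, so $\mu(Q(\mathcal S))\le 4\sum_{Q\in m_1(\mathcal S)}\mu(Q)$ by definition of $\mathcal F_1$; moreover the assignment $Q\mapsto R_Q$ is injective across \emph{all} trees, because $R_Q$ determines its dyadic parent $Q$, and $Q$ belongs to at most one tree of $\mathcal F$. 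Hence, for $R\in\mathcal D$,
\[
\sum_{\substack{\mathcal S\in\mathcal F_1\\ Q(\mathcal S)\subset R}}\mu(Q(\mathcal S))\;\le\;4\sum_{\substack{\mathcal S\in\mathcal F_1\\ Q(\mathcal S)\subset R}}\sum_{Q\in m_1(\mathcal S)}\mu(Q)\;\lesssim\;\sum_{\substack{\mathcal S\in\mathcal F_1\\ Q(\mathcal S)\subset R}}\sum_{Q\in m_1(\mathcal S)}\mu(R_Q)\;\le\;\sum_{\substack{R'\in\mathcal D\setminus\mathcal G\\ R'\subset R}}\mu(R')\;\le\;C\mu(R),
\]
where the penultimate inequality uses that the $R_Q\subset Q\subset Q(\mathcal S)\subset R$ are distinct bad cubes, and the last step is \eqref{eq_carlesoncondition} for the family $\mathcal G$.

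For $\mathcal S\in\mathcal F_2$, put $Z(\mathcal S):=Q(\mathcal S)\setminus\bigcup_{Q\in\min(\mathcal S)}Q$, so that $\mu(Z(\mathcal S))\ge\mu(Q(\mathcal S))/4$ by definition of $\mathcal F_2$. A point $x\in Z(\mathcal S)$ lies in $Q(\mathcal S)$ but in no minimal cube of $\mathcal S$, so by coherence every dyadic cube containing $x$ and contained in $Q(\mathcal S)$ belongs to $\mathcal S$. If $x\in Z(\mathcal S_1)\cap Z(\mathcal S_2)$ with $\mathcal S_1\ne\mathcal S_2$, then $Q(\mathcal S_1)$ and $Q(\mathcal S_2)$ are nested (both contain $x$); assuming $Q(\mathcal S_1)\subseteq Q(\mathcal S_2)$, the previous observation applied to $\mathcal S_2$ would force $Q(\mathcal S_1)\in\mathcal S_2$, whence $Q(\mathcal S_1)\in\mathcal S_1\cap\mathcal S_2$, contradicting that $\mathcal F$ partitions $\mathcal G$ into disjoint trees, while $Q(\mathcal S_1)=Q(\mathcal S_2)$ forces $\mathcal S_1=\mathcal S_2$. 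Thus the sets $Z(\mathcal S)$, $\mathcal S\in\mathcal F_2$, are pairwise disjoint, and since $Z(\mathcal S)\subset Q(\mathcal S)$,
\[
\sum_{\substack{\mathcal S\in\mathcal F_2\\ Q(\mathcal S)\subset R}}\mu(Q(\mathcal S))\;\le\;4\sum_{\substack{\mathcal S\in\mathcal F_2\\ Q(\mathcal S)\subset R}}\mu(Z(\mathcal S))\;\le\;4\mu(R).
\]
Adding the two estimates gives the claim with a constant $C=C(\varepsilon,K)$ inherited from \eqref{eq_carlesoncondition}. I do not anticipate a genuine obstacle here; the only points needing a line of care are the injectivity of $Q\mapsto R_Q$ across distinct trees and the pairwise disjointness of the $Z(\mathcal S)$, both purely combinatorial consequences of the tree structure.
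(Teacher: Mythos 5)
Your argument is correct and is precisely the standard axiomatic proof that the paper invokes by citing \cite[Lemma 4.2]{Hahlomaa} and \cite[Lemma 7.4]{David1}: control the $\mathcal F_1$-trees via the bad-cube Carleson packing \eqref{eq_carlesoncondition} (using that each $Q\in m_1(\mathcal S)$ has a bad child of comparable measure, with the assignment injective across trees), and the $\mathcal F_2$-trees via the pairwise disjointness of the leftover sets $Q(\mathcal S)\setminus\bigcup_{Q\in\min(\mathcal S)}Q$, which follows from coherence and the disjointness of the trees. No changes needed.
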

The crux of the matter is to prove the corresponding Carleson packing condition for the remaining trees $\mathcal{S}\in \mathcal{F}_3$. It is in the proof of this statement, which corresponds to 
\cite[(19)]{Hahlomaa}, that our assumption ``$E\in \mathrm{GLem}(\beta_{1,\pi,A(2n,k)},2)$'' will finally become relevant.
This is the context of the Section \ref{ss:VerifP-C}, which forms the core of the proof. More precisely, the main deviation of our proof from Hahlomaa's will appear in Section \ref{ss:Step3}.

\subsection{Verifying the corona decomposition by horizontal planes (P-C)}\label{ss:VerifP-C}

We conclude that $E$ has a corona decomposition by horizontal planes as follows:
\begin{itemize}
    \item[\ref{sss:Step1}] Step 1: Projections are roughly bi-Lipschitz on top cubes
    \item[\ref{sss:Step 2}] Step 2: Associating a Lipschitz function $g:\mathbb{R}^k \to \mathbb{R}^{2n-k}$ to a tree $\mathcal{S}\in \mathcal{F}$
   \item[\ref{ss:Step3}] Step 3: Pushing estimates on $\beta_{1,\pi}$ from $E$ down to the graph of $g$
   \item[\ref{sss:Step4}] Step 4: Bounding $\mu(Q(\mathcal{S}))$ for $\mathcal{S}\in \mathcal{F}_3$ from above in terms of the $\beta_{1,\pi,A(2n,k)}$-numbers
   \item[\ref{sss:Step5}]Step 5: 
   A Carleson packing condition for top cubes of trees $\mathcal S\in \mathcal F_3$
\end{itemize}

\subsubsection{Step 1: Projections are roughly bi-Lipschitz on top cubes}\label{sss:Step1}\phantom{}\vspace{0.1cm}\\
In this step, we verify property \eqref{eq:P-C} in the definition of Corona decomposition by horizontal planes (P-C). If $\mathcal S\in\mathcal F$ is constructed as in Section \ref{ss:BuildingCoronization}, recall that
\begin{equation}\label{maph}h_\mathcal S(x):=\inf\{d(x,Q)+\diam(Q):Q\in\mathcal S\}.\end{equation}
The main core of this step is the following result (see \cite[Lemma 8.4]{David1} for its Euclidean counterpart).
\begin{lemma}\label{l:HahlLem4.3} Choosing $K$ large enough (depending on $K_0$ and on $\eta$) and choosing $\varepsilon$ small enough (depending on $\eta$) one has the following property for the trees constructed in Section \ref{ss:BuildingCoronization}.
If $\mathcal{S}\in \mathcal{F}$ and $x,y\in K_0 Q(\mathcal{S})$ with $d(x,y)>\eta\, \min\{h_{\mathcal{S}}(x),h_{\mathcal{S}}(y)\}$, then 
\begin{displaymath}
d(x,y)\leq (1+2\eta) d(P_{V_{Q(\mathcal{S})}}(x),P_{V_{Q(\mathcal{S})}}(y)).
\end{displaymath}
\end{lemma}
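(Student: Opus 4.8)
The plan is to adapt \cite[Lemma 8.4]{David1}, using only the structural properties of the trees built in Section \ref{ss:BuildingCoronization} — the $\mathcal{G}_1$-membership of every $Q\in\mathcal{S}$ and the angle bound $\angle(V_Q,V_{Q(\mathcal S)})\le 1+\eta$ from property (3) — so that, in fact, the geometric lemma $\mathrm{GLem}(\beta_{1,\pi,A(2n,k)},2)$ plays no role at this step. Abbreviate $V:=V_{Q(\mathcal S)}$ and, for the given pair $x,y\in K_0Q(\mathcal S)$, assume without loss of generality that $h_{\mathcal S}(x)\le h_{\mathcal S}(y)$, so $d(x,y)>\eta\,h_{\mathcal S}(x)$.

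First I would select a cube $R\in\mathcal S$ at a scale comparable to $d(x,y)$ to which both $x$ and $y$ belong after a $K$-fold enlargement. Concretely: by definition of $h_{\mathcal S}$ there is $Q_0\in\mathcal S$ with $d(x,Q_0)+\diam(Q_0)<\eta^{-1}d(x,y)$; provided $K\ge K_0$ one has $x,y\in KQ(\mathcal S)$, so one may take $R$ to be the smallest cube of $\mathcal S$ containing $Q_0$ with $x,y\in KR$. The point is then to bound $\diam(R)\le\eta^{-1}d(x,y)$: if $R=Q_0$ this is immediate, and otherwise minimality forces a child $\widehat R\in\mathcal S$ of $R$ (containing $Q_0$, by coherence of $\mathcal S$) to miss $x$ or $y$ under $K$-enlargement, which via \eqref{eq:H_(14)} (so $\diam(\widehat R)\ge \rho D^{-2}\diam(R)$) together with $d(x,\widehat R)\le d(x,Q_0)<\eta^{-1}d(x,y)$ gives $\diam(R)<\frac{D^2}{(K-1)\rho\eta}d(x,y)$, hence $\le\eta^{-1}d(x,y)$ once $K$ is chosen large (depending on $K_0$ and $\eta$). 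Since $R\in\mathcal S\subset\mathcal{G}_1$, we obtain $x,y\in KR\subset V_R(\varepsilon^2\diam(R))$.

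The second step compares $d(x,y)$ with $d(P_V(x),P_V(y))$ by routing through the plane $V_R$: pick $x',y'\in V_R$ within $\varepsilon^2\diam(R)$ of $x,y$; property (3) gives $d(x',y')\le(1+\eta)d(P_V(x'),P_V(y'))$; and the triangle inequality together with the $1$-Lipschitz continuity of $P_V$ and the bound $\diam(R)\le\eta^{-1}d(x,y)$ from the first step yields $d(x,y)\le(1+\eta)d(P_V(x),P_V(y))+6\varepsilon^2\eta^{-1}d(x,y)$ (using $\eta\le 1$). Choosing $\varepsilon$ small enough in terms of $\eta$ so that $6\varepsilon^2\eta^{-1}<1$ and $\frac{1+\eta}{1-6\varepsilon^2\eta^{-1}}\le 1+2\eta$, one absorbs the error term into the left-hand side and concludes. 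I expect the only real subtlety to be the existence-and-diameter bound for $R$ in the first step, which is what fixes the order of parameter selection — $K_0$ first, then $K$ large depending on $(K_0,\eta)$, then $\varepsilon$ small depending on $(\eta,K)$ — and this careful bookkeeping is precisely what upgrades Hahlomaa's threshold $D^{-2}\min\{h_{\mathcal S}(x),h_{\mathcal S}(y)\}$ to our $\eta\min\{h_{\mathcal S}(x),h_{\mathcal S}(y)\}$; everything else is routine.
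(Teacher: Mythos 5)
Your proposal is correct and takes essentially the same route as the paper's proof: locate a tree cube $R\in\mathcal S$ of diameter comparable to $d(x,y)$ with $x,y\in KR$ (the paper stops at the minimal $R$ with $\mathrm{diam}(K_0R)\geq d(x,y)$ and then enlarges $K$, while you minimize directly over $x,y\in KR$ — a cosmetic difference), and then use $x,y\in V_R(\varepsilon^2\mathrm{diam}(R))$, the angle property (3), and the $1$-Lipschitz continuity of $P_{V_{Q(\mathcal S)}}$ to absorb an error of size $\lesssim\varepsilon^2\eta^{-1}d(x,y)$. The only (harmless) omission is the subcase where the child $\widehat R$ misses $y$ rather than $x$: there one extra triangle inequality gives $d(y,\widehat R)\leq(1+\eta^{-1})d(x,y)$, so the diameter bound for $R$ survives with a slightly worse constant and the rest of your argument goes through unchanged.
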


\begin{proof}
    We follow the proof given in \cite[Lemma 4.3]{Hahlomaa} with $\alpha=1/\varrho$, paying attention to our weaker assumption on $d(x,y)$. Let $\mathcal S\in\mathcal F$, let $x,y\in K_0Q(\mathcal S)$ and assume for instance that $d(x,y)>\eta\,h_\mathcal S(x)$. Then there exists $Q\in\mathcal S$ such that \[d(x,y)>\eta(d(x,Q)+\diam(Q)).\]
    Let $R\in\mathcal S$ be the minimal cube such that $Q\subset R$ and $\diam (K_0R)\geq d(x,y)$. Notice that such a cube exists, since by assumption $\diam(K_0 Q(\mathcal S))\geq d(x,y)$. Then we have the following properties:
    \begin{align}\label{1prop}d(y,R)\leq d(x,y)+d(x,R)&\leq d(x,y)+d(x,Q)<d(x,y)+\eta^{-1}d(x,y);\\ \label{2prop}\diam(R)&\leq (D^2\rho^{-1}+\eta^{-1})d(x,y).\end{align}
    Property \eqref{2prop} can be deduced in the following way: if $Q=R$, then \[\diam(R)=\diam(Q)<\eta^{-1} d(x,y)<(D^2\rho^{-1}+\eta^{-1})\,d(x,y).\] If instead $R\neq Q$, then there exists $\widehat R$ which is a child of $R$ and contains $Q$. Then, by properties of dyadic cubes, see \eqref{eq:H_(14)}, $\diam(R)\leq D^2\rho^{-1}\, \diam(\widehat R)$. Therefore, using the minimality assumption on $R$, we get
    \[\diam(R)\leq D^2\rho^{-1} \,\diam(\widehat R)\leq D^2\rho^{-1} \,\diam(K_0\widehat R)\leq D^2\rho^{-1} \,d(x,y)\leq (D^2\rho^{-1}+\eta^{-1})\,d(x,y).\]
    By \eqref{1prop} and the fact that $\diam(K_0 R)\leq 2K_0\diam(R)$, we get
    \[\begin{split}d(y,R)&\leq (1+\eta^{-1})\,d(x,y)\leq (1+\eta^{-1})\diam(K_0R)\leq 2K_0(1+\eta^{-1})\diam(R),\\ d(x,R)&\leq d(x,Q)\leq \eta^{-1}d(x,y)\leq \dots\leq 2K_0(1+\eta^{-1})\diam(R). \end{split}\] Hence, choosing $K$ big enough depending only on $K_0$ and $\eta$ (in particular $K\geq2K_0(1+\eta^{-1})+1)$, we get $x,y\in KR$. Recall that $R$ is well approximated by the horizontal plane $V_R\in \mathcal V_k$, so that $x,y\in V_R(\varepsilon^2 \diam(R))$. 
    Let $z,w\in V_R$ be the points realizing $d(x,z)=d(x,V_R)$ and $d(y,w)=d(y,V_R)$ and let us  denote $P:=P_{V_{Q(\mathcal S)}}$. From the previous observation we get $d(x,z)\leq \varepsilon^2 \diam(R)$ and $d(y,w)\leq \varepsilon^2 \diam(R)$. Moreover, by property (3) in the construction of the forest $\mathcal F$, we can assume $d(z,w)\leq (1+\eta)\,d(P(z),P(w))$.
    Combining the previous estimates, using the triangular inequality and the 1-Lipschitz continuity of $P$,
    \[\begin{split}d(P(x),P(y))&\geq d(P(z),P(w))-d(P(x),P(z))-d(P(y),P(w))\\ &\geq d(P(z),P(w))-d(x,z)-d(y,w) \\ &\geq (1+\eta)^{-1}\,d(z,w)-2\varepsilon^2 \diam(R)\\ &\geq (1+\eta)^{-1}(d(x,y)-2\varepsilon^2 \diam(R))-2\varepsilon^2 \diam(R)\\&\stackrel{\eqref{2prop}}\geq((1+\eta)^{-1}-4\varepsilon^2(D^2\rho^{-1}+\eta^{-1}))d(x,y)\\ &\geq (1+2\eta)^{-1}d(x,y),\end{split}\]
    where the last estimate holds if we choose $\varepsilon>0$ small enough depending on $\eta$.
\end{proof}
Having proved Lemma \ref{l:HahlLem4.3}, the last step to get a corona decomposition by horizontal planes is a Carleson packing condition for the trees $\mathcal S\in\mathcal F_3$. This is the content of the remaining steps.

\subsubsection{Step 2: Associating a Lipschitz function  $g:\mathbb{R}^k \to \mathbb{R}^{2n-k}$ to a tree $\mathcal{S}\in \mathcal{F}$}\label{sss:Step 2}
\phantom{}\vspace{0.1cm}\\
In this step, we follow \cite[Section 5]{Hahlomaa}: there is no modification in the argument, but we reproduce it here for the convenience of the reader and to fix the notation that will be used in the next steps. This construction relies on the original Euclidean arguments contained in \cite[Section 8]{David1}. We emphasize the perhaps surprising fact that for the construction of the corona decomposition, it suffices to construct good Lipschitz functions $g: \mathbb{R}^k \to \mathbb{R}^{2n-k}$, rather than Lipschitz functions $g:\mathbb{R}^k \to \mathbb{H}^n$. This is the fundamental reason why in Theorem \ref{t:FromWeakAssToCoronaByNormed} we only need to assume a geometric lemma for the \emph{projection} $\beta$-numbers.

Fix $\mathcal S\in\mathcal F$ and assume, up to a translation and a rotation of $\H^n$ (which are isometries), that $V_{Q(\mathcal S)}=\{x\in\H^n:x_i=0\,\text{ for all }i>k\}$.
Here, \emph{rotation} is defined as in Definition \ref{d:rot}.
Our goal is to construct a (Euclidean) Lipschitz function $g:\R^k\to\R^{2n-k}$, 
 with Lipschitz constant controlled by $\sqrt{\eta}$, whose graph approximates $\pi(K_0Q(\mathcal S))$ well  at  the  scales associated to cubes $Q\in\mathcal S$; see Lemma \ref{lem_propertiesofg} for the precise statement. \medskip\\ 
For this construction, we let $P:\H^n\to \R^k$ be the projection on the first $k$-coordinates, that is, $P(x):=(x_1,\dots,x_k)$. Identifying $V_{Q(\mathcal S)}\equiv\R^k$, we can regard $P$ as the horizontal projection $P_{V_{Q(\mathcal S)}}$. We also denote by $P^\perp:\H^n\to\R^{2n-k}$ to be the orthogonal projection 
defined as $P^\perp(x):=(x_{k+1},\dots,x_{2n}).$
We then define $H:\R^k\to\R$ by setting
\[H(p):=\inf\{h_\mathcal S(x): x\in P^{-1}(\{p\})\},\] where the map $h_\mathcal S$ is defined in \eqref{maph}. It is easy to see that $H$ can be equivalently computed by
\begin{equation}\label{def_H}H(p)=\inf\{d_{\mathrm{Eucl}}(p,P(Q))+\diam(Q): Q\in\mathcal S\}.\end{equation}
Hence it follows that $H$ is a 1-Lipschitz map, with respect to the Euclidean distance on $\R^k$.
We let $Z:=\{x\in E: h_\mathcal S(x)=0\}$ and it follows from the Bolzano-Weierstrass theorem that $p\in P(Z)$ if and only if $H(p)=0$.

The idea is then to construct $g$ by setting $g(P(z))=P^{\bot}(z)$ for $z\in Z$ (using that $P|_Z$ is injective with Lipschitz inverse by Lemma \ref{l:HahlLem4.3}) and apply Whitney-type arguments to extend $g$ to  $\mathbb{R}^k$ (or more precisely, to a ball $U_0 \subset \mathbb{R}^k$).
To this end, one partitions $\R^k\setminus P(Z)$ into a countable union of disjoint (up to edges) standard dyadic cubes $\{R_i:i\in \N\}$ of $\R^k$ such that the following Whitney-type condition holds
\begin{equation}\label{eq:choiceofRi}20 \,\diam(R_i)\leq H(p)\quad\text{for every }p\in R_i.\end{equation} We also require each $R_i$ to be maximal (with respect to inclusion) among all cubes satisfying \eqref{eq:choiceofRi}.
Fix $x_0\in Q(\mathcal S)$ and set 
\begin{equation}\label{eq:Uj}
U_j:=B^k\big(P(x_0), 2^{-j}K_0\diam(Q(\mathcal S))\big),
\end{equation}
where $B^k$ denotes the Euclidean ball in $\R^k$.
We then associate a good cube $Q_i\in\mathcal S$ to each dyadic cube $R_i$ intersecting $U_0$. Precisely, for each $i\in I_0:=\{j\in \N:R_j\cap U_0\neq\emptyset \}$, there exists $Q_i\in\mathcal S$ with diameter comparable to $R_i$, in the sense that 
\[C^{-1}\diam(R_i)\leq \diam(Q_i)\leq C\diam(R_i)\]
for a suitable constant $C$ depending only on $K_0$. The choice of $Q_i$ can be performed in the following way: by maximality of $R_i$, it follows that $H(p)\leq 42\, \diam(R_i)$ for every $p\in R_i$. Fix $p\in R_i\cap U_0$. By \eqref{def_H}, there exists $Q\in\mathcal S$ such that $d(p,P(Q))+\diam(Q)<90\,\diam(R_i)$. If $d(p,P(Q))\leq K_0\,\diam(Q)$, then we set $Q_i=Q$. Otherwise, we replace $Q$ with a suitable ancestor $Q_i$ such that $K_0^{-1}d(p, P(Q_i))\leq \diam(Q_i)\leq K_0\, d(p,P(Q_i))$. The existence of such an ancestor is guaranteed by the fact that $d(p, P(Q(\mathcal S)))\leq K_0\,\diam (Q(\mathcal S))$, since $p\in U_0$. \medskip\\
We are now going to construct the desired map $g$. First, for each $i\in I_0$, let $A_i:\R^k\to\R^{2n-k}$ be the Euclidean affine function whose graph is $\pi(V_{Q_i})$. We also pick $\widetilde \phi_i:\R^k\to[0,1]$ to be a $C^2$ cutoff function assuming value 1 on $2R_i$, value 0 on $\R^k\setminus 3R_i$ and such that \[|\partial_j\widetilde \phi_i|\leq C\diam(R_i)^{-1},\quad |\partial_j\partial_m\widetilde \phi_i|\leq C\diam(R_i)^{-2}\quad\text{ for all }j,m\in\{1,\dots,k\}.\] These functions will be used for a Whitney-type construction on $U_0\setminus P(Z)$.
On the other hand, for each $p\in P(Z)$ there exists a unique point $x(p)\in E$ such that $P^{-1}(\{p\})\cap K_0Q(\mathcal S)=\{x(p)\}$: recall that this 
is an immediate consequence of Lemma \ref{l:HahlLem4.3}.
Hence we can finally define the function $g:U_0\to\R^{2n-k}$ by setting
\begin{equation}\label{eq:Defg}g(p):=\begin{cases}
   \displaystyle\frac{\sum_{i\in I_0} \widetilde \phi_i(p)A_i(p)}{\sum_{i\in I_0}\widetilde \phi_i(p)} & \text{ if } p\in U_0\setminus P(Z)\\ P^\perp(x(p)) &\text{ if }p\in P(Z)
\end{cases}\end{equation}
The main features of the map $g$ are summarized in the following Lemma, which mimics the corresponding statement \cite[Proposition 8.2]{David1} in the construction of the classical corona decomposition.
\begin{lemma}\label{lem_propertiesofg} Choosing $\eta\leq 1$, $K$ large enough depending on $K_0$ and $\eta$, and choosing $\varepsilon$ small enough depending on $\eta$ and $K$, the following property holds: if $g:U_0\subset\R^k\to\R^{2n-k}$ is the function constructed as above, then there is a positive constant $C$ such that
    \begin{enumerate}
        \item[(i)] $g$ is $C\sqrt\eta$-Lipschitz;
        \item[(ii)] $|P^\perp (x)-g(P(x))|\leq C\sqrt\varepsilon h_\mathcal S(x)$ for all $x\in K_0Q(\mathcal S)$;
        \item [(iii)] $|g(p)|\leq CK_0\sqrt\eta\, \diam(Q(\mathcal S))$ for all $p\in U_0$.
    \end{enumerate}
\end{lemma}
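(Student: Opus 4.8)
The plan is to follow the Euclidean blueprint of \cite[Proposition 8.2]{David1} and its Heisenberg adaptation in \cite[Section 5]{Hahlomaa}, verifying that each of the three estimates survives under our weaker hypotheses. The key geometric input is Lemma \ref{l:HahLem3.2}, which tells us that for any two Whitney cubes $R_i, R_j$ with $3R_i \cap 3R_j \neq \emptyset$, the associated affine planes $\pi(V_{Q_i})$ and $\pi(V_{Q_j})$ are close: both $Q_i$ and $Q_j$ belong to $\mathcal{S}$ and have diameter comparable (via a constant depending on $K_0$) to $\diam(R_i) \sim \diam(R_j)$, and because $d(p,P(Q_i)) \lesssim_{K_0} \diam(Q_i)$ one checks that $Q_j \subset V_{Q_i}(2K\varepsilon^2 \diam Q_i)$ provided $K$ is large enough relative to $K_0$; hence $\angle(V_{Q_i}, V_{Q_j}) \leq 1+\varepsilon$, and by property (3) of the forest construction, both planes also satisfy $\angle(V_{Q_i}, V_{Q(\mathcal{S})}) \leq 1+\eta$. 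Translating angle bounds into the Remark-after-\ref{r:angle} form, this gives that the affine maps $A_i, A_j$ differ, on the overlap region $3R_i$, by an amount controlled by $\sqrt{\eta}\,\diam(R_i)$ in value and by $\sqrt{\eta}$ in slope --- this is the content of the computation in \cite[p.\ 9--11]{Hahlomaa} and uses the explicit relationship between the angle $\angle$ and the slope of a horizontal plane noted in Remark \ref{r:angle}.

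Granting that, the three items are obtained as follows. For \textbf{(i)}, on $U_0 \setminus P(Z)$ the function $g$ is a partition-of-unity average of the affine pieces $A_i$, so its gradient is a sum of two contributions: the averaged slopes $\nabla A_i$, each $\lesssim \sqrt{\eta}$ since $A_i$ has the slope of a horizontal plane making angle $\leq 1+\eta$ with $V_{Q(\mathcal{S})}$; and the terms involving $\nabla \widetilde\phi_i$, which by the standard Whitney telescoping trick can be rewritten using $\sum_i \nabla\widetilde\phi_i = 0$ so that only the \emph{differences} $A_i - A_j$ over overlapping cubes appear, each weighted by $|\nabla\widetilde\phi_i| \lesssim \diam(R_i)^{-1}$ and contributing $\lesssim \diam(R_i)^{-1} \cdot \sqrt{\eta}\,\diam(R_i) = \sqrt{\eta}$. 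Near $P(Z)$ one uses Lemma \ref{l:HahlLem4.3}: the restriction $P|_Z$ has inverse that is $(1+2\eta)$-Lipschitz as a map into $(\mathbb{H}^n, d)$, hence $\sqrt{2}$-Lipschitz into $\mathbb{R}^{2n}$ up to the $\sqrt{\eta}$-correction, and a matching-up argument between the Whitney region and $P(Z)$ (as in \cite[Section 8]{David1}) yields the global Lipschitz bound $C\sqrt{\eta}$. For \textbf{(ii)}, fix $x \in K_0 Q(\mathcal{S})$ and set $p = P(x)$. If $p \in P(Z)$ then $h_{\mathcal{S}}(x) = 0$ forces $x = x(p)$ by Lemma \ref{l:HahlLem4.3}, so $P^\perp(x) = g(p)$ and the left side vanishes. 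Otherwise $p \in R_i$ for some $i$ with $\diam(R_i) \sim H(p) \sim h_{\mathcal{S}}(x)$ (the last comparison uses the definition of $H$ and that $x \in P^{-1}(p)$), and $g(p)$ is within $\lesssim \sqrt{\eta}\,\diam(R_i)$ of $A_i(p)$; since $x \in K_0 Q(\mathcal{S})$ and $Q_i \in \mathcal{S}$ with the right diameter, one has $x \in K Q_i$ so $d(x, V_{Q_i}) \leq \varepsilon^2 \diam(Q_i)$, and projecting this to $\mathbb{R}^{2n}$ gives $|P^\perp(x) - A_i(p)| \lesssim \varepsilon^2 \diam(R_i) + \sqrt{\eta}\, d_{\mathrm{Eucl}}(p, \pi(x)) \lesssim \sqrt{\varepsilon}\, h_{\mathcal{S}}(x)$ after absorbing constants. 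Finally \textbf{(iii)} follows by integrating the Lipschitz bound (i) from a base point: pick $x_0 \in Q(\mathcal{S})$, note $|g(P(x_0))| \lesssim \varepsilon^2 \diam(Q(\mathcal{S}))$ by (ii) applied at $x_0$ (where $h_{\mathcal{S}}(x_0) \leq \diam(Q(\mathcal{S}))$), and for any $p \in U_0$ we have $|p - P(x_0)| \leq 2 K_0 \diam(Q(\mathcal{S}))$, so $|g(p)| \leq |g(P(x_0))| + C\sqrt{\eta}\,|p - P(x_0)| \lesssim K_0 \sqrt{\eta}\, \diam(Q(\mathcal{S}))$.

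The main obstacle I expect is \textbf{(i)}, specifically the bookkeeping near the interface between the Whitney region $U_0 \setminus P(Z)$ and the ``frozen'' part $P(Z)$: one must show the two definitions of $g$ glue together Lipschitz-continuously, which requires comparing $P^\perp(x(p))$ for $p \in P(Z)$ with the averaged value $A_i(p')$ for nearby $p' \in R_i$, using that points of $Z$ lie close (at scale $\lesssim \diam(R_i)$) to the cube $Q_i$ and hence to $V_{Q_i}$, and then invoking Lemma \ref{l:HahlLem4.3} to control the vertical ($P^\perp$) discrepancy. This is exactly where the interplay between the metric bi-Lipschitz statement of Lemma \ref{l:HahlLem4.3} and the \emph{Euclidean} affine approximations $A_i$ is most delicate; everything else is the standard Whitney-extension calculus once the angle bound from Lemma \ref{l:HahLem3.2} and forest-property (3) are in hand. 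Since this argument is carried out in full in \cite[Section 5]{Hahlomaa} with no use of features special to the metric beyond what we have already reproduced, it transfers verbatim, and I would only spell out the points where the projection $\beta$-numbers (rather than the full horizontal $\beta$-numbers) enter --- namely, nowhere in this lemma, which is precisely the observation emphasized before its statement.
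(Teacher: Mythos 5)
Your overall route coincides with the paper's: the proof in the text is essentially a pointer to \cite[Lemmas 5.3, 5.6, 5.9]{Hahlomaa}, i.e.\ to precisely the Whitney-type calculus you sketch, with Lemma~\ref{l:HahLem3.2}, Lemma~\ref{l:HahlLem4.3} and property (3) of the trees as the geometric inputs; your treatment of (i) and (iii) is consistent with that. The genuine gap is in your step (ii), and it sits exactly at the point the remark after Lemma~\ref{lem_propertiesofg} singles out as the reason for the statement's form (one must get $\varepsilon$, not $\eta$, in (ii)). You translate the angle information into ``overlapping $A_i,A_j$ differ by $\lesssim\sqrt{\eta}\,\diam(R_i)$'' and then use ``$g(p)$ is within $\lesssim\sqrt{\eta}\,\diam(R_i)$ of $A_i(p)$''. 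With that input the chain only yields $|P^\perp(x)-g(P(x))|\lesssim(\sqrt{\eta}+\varepsilon^2)\,h_{\mathcal S}(x)$, and the concluding ``$\lesssim\sqrt{\varepsilon}\,h_{\mathcal S}(x)$ after absorbing constants'' is not legitimate: in the parameter hierarchy $\varepsilon$ is chosen small depending on $\eta$ (and $K$), so $\sqrt{\varepsilon}\ll\sqrt{\eta}$ and a $\sqrt{\eta}$-error cannot be absorbed into a $C\sqrt{\varepsilon}$-bound. A version of (ii) with $\sqrt{\eta}$ would be useless downstream, since in Lemmas~\ref{l:HLem6.1} and~\ref{l:HLem7.1} it is the smallness in $\varepsilon$ (chosen last) that beats the $\varepsilon^{-3k}$-type losses.

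The repair is already implicit in your first paragraph, but you must actually use it in (ii): for overlapping Whitney cubes, $Q_j\subset KQ_i$ for $K$ large, hence $Q_j\subset V_{Q_i}(2K\varepsilon^2\diam(Q_i))$ and Lemma~\ref{l:HahLem3.2} gives $\angle(V_{Q_i},V_{Q_j})\leq 1+\varepsilon$, so by Remark~\ref{r:angle} the slopes of $A_i$ and $A_j$ differ by $\lesssim\sqrt{\varepsilon}$; since both graphs also pass within $\lesssim\varepsilon^2\diam(R_i)$ of $\pi(y)$ for points $y\in Q_j\subset E$, one gets $|A_i-A_j|\lesssim\sqrt{\varepsilon}\,\diam(R_i)$ on $3R_i$, hence $|g(p)-A_i(p)|\lesssim\sqrt{\varepsilon}\,\diam(R_i)$ on $R_i$. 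The $\eta$-angle with $V_{Q(\mathcal S)}$ from property (3) should enter only where a bound uniform over the whole tree is needed, namely in (i) and (iii). Combining the $\sqrt{\varepsilon}$-overlap estimate with $|P^\perp(x)-A_i(P(x))|\lesssim\varepsilon^2\diam(R_i)$ (which uses $x\in KQ_i$; note this membership itself requires a short argument via Lemma~\ref{l:HahlLem4.3}, since a priori only $P(x)$, not $x$, is known to be close to $Q_i$) then yields (ii) as stated. As written, your (ii) does not deliver the bound the later sections need.
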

\begin{proof}
Properties (i)-(iii) are proven in \cite[Lemma 5.3, Lemma 5.6, and Lemma 5.9]{Hahlomaa}, using, in particular, the properties of the planes $V_Q$ stated in Lemma \ref{l:HahLem3.2} and Lemma \ref{l:HahlLem4.3}, as well as property (3) of the tree $\mathcal{S}$ in Section \ref{ss:BuildingCoronization}. The definition \eqref{eq:Defg} of $g$ with the ($2\sqrt{\eta}$-Lipschitz) affine functions $A_i$ allows one to deduce the $C\sqrt{\eta}$-Lipschitz continuity of $g$, the approximation of $\pi(K_0 Q(\mathcal{S}))$ by the graph of $g$, and the estimate for $|g(p)|$ from this geometric information.\end{proof}
\begin{remark}
The exponents of $\eta$ and $\varepsilon$ in Lemma \ref{lem_propertiesofg} look different from the corresponding ones in the Euclidean predecessor \cite[Proposition 8.2]{David1}. However, the reader should keep in mind the different conventions used for the definition of angles (Remark \ref{r:angle}) and pay attention to the exponents in Lemma \ref{l:HahLem3.2}.
What is important for the construction of the corona decomposition, is that we obtain estimate (ii) with $\varepsilon$ (and not with $\eta$). This will allow us to prove Lemma \ref{l:HLem7.1} by choosing $\varepsilon$ small enough depending on $\eta$.
\end{remark}

\subsubsection{Step 3: 
Pushing estimates on $\beta_{1,\pi}$ from $E$ down to the graph of $g$
}\label{ss:Step3}\phantom{}\vspace{0.1cm}\\
The proof in this section follows the outline in \cite[Section 6]{Hahlomaa}, which in turn is modeled after the Euclidean predecessor \cite[Section 13]{David1}. However, there is a crucial difference in that the upper bound in Lemma \ref{l:HLem6.1} is expressed in terms of the \emph{projection} $\beta$-numbers, rather than the potentially larger ``full'' $\beta$-numbers.

As in \cite[Section 6]{Hahlomaa}, we use Lemma \ref{lem_propertiesofg} (i),(iii) to extend the map $g$ constructed in the previous step to a $C\sqrt\eta$-Lipschitz function defined on $\R^k$ with support in $U_{-1}$, where the latter is defined as in \eqref{eq:Uj} for $j=-1$. We then introduce new flatness coefficients associated to $g$: for $p\in\R^k,\, t>0$, we set
\[\gamma(p,t):=t^{-k-1}\inf_a\int_{B^k(p,t)}|g(u)-a(u)|\,du,\]
where the infimum is taken over all affine functions $a:\R^k\to\R^{2n-k}$, and integration is with respect to the Euclidean $k$-dimensional Hausdorff measure on $\mathbb{R}^k$. By Lemma \ref{lem_propertiesofg} (i), if $\eta$ is chosen small enough, then 
\begin{equation}\label{estimateforgamma}
\gamma(p,t)\leq 2t^{-k-1}\inf_M\int_{B^k(p,t)}d_{\rm Eucl}((u,g(u)),M)\,du
\end{equation}
being the infimum computed among all $k$-planes $M$ in $\R^{2n}$.\medskip\\
The next lemma, which improves \cite[Lemma 6.1]{Hahlomaa}, shows that actually it is possible to bound the $\gamma$-numbers by means of the projection $\beta$-numbers associated to $E$.
\begin{lemma}\label{l:HLem6.1}
    Let $T=K_0 \diam (Q(\mathcal{S}))/2$. Let $\{U_i\}_{i\in \N}$ be as Section \ref{sss:Step 2}, \eqref{eq:Uj}.
    There exists a positive constant $C=C(K_0)>0$ such that
\begin{equation}\label{eq_step3}
\int_0^T \int_{U_1} \gamma(p,t)^2\, dp\frac{dt}{t}\leq C\varepsilon \mu(Q(\mathcal{S})) + C\varepsilon^{-6k}\int_{K_0 Q(\mathcal{S})}\int_{h_{\mathcal{S}}(x)/K_0}^T \beta_{1,\pi,A(2n,k)}(x, K_0 t)^2 \frac{dt}{t}d\mu(x).
\end{equation}
\end{lemma}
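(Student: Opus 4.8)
The plan is to follow the scheme of \cite[Lemma 6.1]{Hahlomaa} (itself modeled on \cite[Section 13]{David1}), while keeping track that \emph{all} approximation errors can be expressed through the projection $\beta$-numbers $\beta_{1,\pi,A(2n,k)}$ rather than the horizontal ones. The starting point is that, since $g$ takes values in $\R^{2n-k}$, the coefficient $\gamma(p,t)$ measures the \emph{Euclidean} flatness of $\mathrm{graph}(g)\subset\R^{2n}$; by \eqref{estimateforgamma} it suffices to bound $t^{-k-1}\int_{B^k(p,t)}d_{\mathrm{Eucl}}((u,g(u)),M)\,du$ for a well-chosen affine $k$-plane $M\subset\R^{2n}$. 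By the very definition \eqref{eq:Defg} of $g$ on $P(Z)$ and the identity $\pi=(P,P^\perp)$, one has $(u,g(u))=\pi(x(u))\in\pi(Z)\subset\pi(E)$ for $u\in P(Z)$, so the natural candidate for $M$ is a \emph{best-approximating affine $k$-plane in $\R^{2n}$} for $\pi$ of a suitable piece of $E$ --- precisely an object competing in the infimum defining $\beta_{1,\pi,A(2n,k)}$, and in general \emph{not} of the form $\pi(V)$ with $V\in\mathcal V_k$. This is the structural reason only the projection $\beta$-numbers enter.

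Next I would split $U_1\times(0,T)$ according to whether $t$ is small or large compared with the Lipschitz height $H(p)$ (equivalently $d_{\mathrm{Eucl}}(p,P(Z))$). In the \textbf{near regime} $t\lesssim H(p)$, the ball $B^k(p,t)$ lies in $\R^k\setminus P(Z)$ and meets only boundedly many Whitney cubes $R_i$, all of diameter $\sim H(p)$; on the support of the partition of unity there, $g$ is a convex combination of the affine maps $A_i$ whose graphs are $\pi(V_{Q_i})$, and any two such planes approximate $\pi(E)$ near the mutually close cubes $Q_i$ to within $\varepsilon^2\,\diam(Q_i)\sim\varepsilon^2 H(p)$, so they differ by $\lesssim\varepsilon^2 H(p)$ on $B^k(p,t)$. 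Taking $M=\mathrm{graph}(A_{i_0})$ for a single $i_0$ gives $\gamma(p,t)\lesssim\varepsilon^2 H(p)/t$, and since $\gamma(p,t)=0$ once $B^k(p,t)$ sits inside the flat core $2R_{i_0}$, the relevant range of $t$ is logarithmically bounded; integrating $\gamma^2\,\tfrac{dt}{t}\,dp$ over the near regime therefore contributes $\lesssim\varepsilon^4|U_1|\lesssim\varepsilon\,\mu(Q(\mathcal S))$, using $|U_1|\sim\diam(Q(\mathcal S))^k\sim\mu(Q(\mathcal S))$ and $k$-regularity.

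In the \textbf{far regime} $t\gtrsim H(p)$ the ball $B^k(p,t)$ contains a point of $P(Z)$; I would pick $Q\in\mathcal S$ with $d_{\mathrm{Eucl}}(p,P(Q))+\diam(Q)\lesssim t$, set $x:=x_Q\in E$, choose $M=W\in A(2n,k)$ a best affine $k$-plane for $\pi(E\cap B(x,K_0 t))$, and estimate $d_{\mathrm{Eucl}}((u,g(u)),W)$ for $u\in B^k(p,t)$: for $u\in P(Z)$ the defect equals $d_{\mathrm{Eucl}}(\pi(x(u)),W)$ with $x(u)\in E\cap B(x,K_0 t)$ (using the bi-Lipschitz behaviour of $P$ on $K_0Q(\mathcal S)$ from Lemma \ref{l:HahlLem4.3}); for $u\notin P(Z)$ one uses Lemma \ref{lem_propertiesofg}(ii), namely $|P^\perp(x)-g(P(x))|\le C\sqrt\varepsilon\,h_{\mathcal S}(x)$, together with the $C\sqrt\eta$-Lipschitz continuity of $g$ and the Whitney structure, to reduce to a nearby point of $P(Z)$ at an additive cost $\lesssim\sqrt\varepsilon\,t$. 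Averaging over $B^k(p,t)$ and transporting the integral over $u\in P(Z)$ to one over $y=x(u)\in E$ via $P|_Z$ gives $\gamma(p,t)\lesssim\sqrt\varepsilon+t^{-k}\int_{E\cap B(x,K_0 t)}\tfrac{d_{\mathrm{Eucl}}(\pi(y),W)}{t}\,d\mu(y)\lesssim\sqrt\varepsilon+\beta_{1,\pi,A(2n,k)}(x,K_0 t)$. Squaring via $(a+b)^2\le 2a^2+2b^2$ and a final Fubini/change of variables $(p,t)\mapsto(x,t)$ --- whose Jacobian and radius-enlargement losses account for the factor $\varepsilon^{-6k}$ --- produces the asserted inequality \eqref{eq_step3}.

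The main obstacle is the \textbf{far-regime bookkeeping}: selecting $x\in E$ and the comparison plane $W$ uniformly, and carefully passing from the Euclidean average of $d_{\mathrm{Eucl}}((u,g(u)),W)$ over $u\in B^k(p,t)$ to the $\mu$-average over $E\cap B(x,K_0 t)$ --- this is where the bi-Lipschitz property of $P|_Z$, the $k$-regularity of $E$, and the controlled enlargement of radii interact to yield the polynomial loss $\varepsilon^{-6k}$ --- together with checking that the $u\notin P(Z)$ interface contributes only $O(\sqrt\varepsilon)$. All of this follows the structure of \cite[Section 6]{Hahlomaa} but must be rerun with $\beta_{1,\pi,A(2n,k)}$ in place of the horizontal $\beta$-numbers; crucially, no step uses anything about $E$ beyond $k$-regularity, the graph estimates of Step 2, and the bi-Lipschitz behaviour of $P$ on $Z$, so the weaker hypothesis $E\in\mathrm{GLem}(\beta_{1,\pi,A(2n,k)},2)$ genuinely suffices.
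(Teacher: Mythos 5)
Your overall architecture is the right one and matches the paper's (which follows Hahlomaa, Section 6): reduce $\gamma(p,t)$ via \eqref{estimateforgamma} to the Euclidean distance from $\mathrm{graph}(g)$ to a single affine $k$-plane $W\in A(2n,k)$ chosen as a near-optimal competitor for $\beta_{1,\pi,A(2n,k)}$ at a point $z\in E$ with $|p-P(z)|\lesssim t$, transport the integral over $B^k(p,t)\cap P(Z)$ to a $\mu$-integral over $E\cap B(z,K_0t)$ using the bi-Lipschitz behaviour of $P$ from Lemma \ref{l:HahlLem4.3}, treat the Whitney region separately, then square, integrate and apply Fubini. You also correctly identified the structural point that only the \emph{projection} $\beta$-numbers are needed because $W$ is never required to be of the form $\pi(V)$, $V\in\mathcal V_k$.

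However, there is a genuine gap precisely at the step the paper flags as the crux, namely the bound on $d_{\mathrm{Eucl}}((u,g(u)),W)$ for $u\in B^k(p,t)\setminus P(Z)$. First, your premise for the far regime is false: $H(p)\lesssim t$ only produces a cube $Q\in\mathcal S$ with $d_{\mathrm{Eucl}}(p,P(Q))+\diam(Q)\lesssim t$; it does \emph{not} give a point of $P(Z)$ in $B^k(p,t)$ (indeed $Z$ may be empty, e.g.\ when all minimal cubes of $\mathcal S$ have definite size), so "reducing to a nearby point of $P(Z)$ at additive cost $\lesssim\sqrt\varepsilon\,t$" has nothing to reduce to. Second, even where $E$-points are available, Lemma \ref{lem_propertiesofg}(ii) only places the graph of $g$ near $\pi(x)$ for $x\in E$, while $W$ approximates $\pi(E)$ only in an $L^1$-averaged sense; over the Whitney cubes $R_i$ the graph of $g$ is an interpolation of the affine maps $A_i$ (graphs of $\pi(V_{Q_i})$) and is not parametrized by $E$, so one cannot push the $du$-integral there onto $\mu$. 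The actual argument (Hahlomaa's Lemmas 6.3--6.5, quoted in the paper to obtain the bound $\lesssim \sum_i\sqrt\varepsilon\,\diam(R_i)^{k+1}+\varepsilon^{-3k}\int_{B(z,K_0t)}d_{\mathrm{Eucl}}(\pi(x),W)\,d\mu$) compares each plane $\pi(V_{Q_i})$ with $W$ by using Ahlfors regularity and Lemma \ref{l:HahlLem4.3} to find well-separated, affinely independent points of $\pi(E)$ near $Q_i$ that are simultaneously $\varepsilon^2\diam(Q_i)$-close to $\pi(V_{Q_i})$ and (after a Chebyshev argument) close to $W$; this plane-comparison is exactly where the factor $\varepsilon^{-3k}$ (hence $\varepsilon^{-6k}$ after squaring) originates. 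Your attribution of $\varepsilon^{-6k}$ to Jacobian/radius-enlargement losses in the final Fubini step is therefore a misdiagnosis, and your claimed far-regime bound $\gamma(p,t)\lesssim\sqrt\varepsilon+\beta_{1,\pi,A(2n,k)}(x,K_0t)$ is unsubstantiated. A smaller inaccuracy: $\gamma(p,t)$ does not vanish when $B^k(p,t)\subset 2R_{i_0}$, since $g$ is a partition-of-unity blend even there; the small-$t$ contribution is instead controlled through the second-derivative bounds on the cutoffs $\widetilde\phi_i$, which give $|g-A_{i_0}|\lesssim\varepsilon$-type decay at scale $t$.
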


\begin{proof}
For any $p\in U_1$ and $H(p)/60<t\leq T$, by \eqref{def_H} there exists a point $z_{p,t}\in Q(\mathcal S)$ such that $|p-P(z_{p,t})|\leq 60 t$. For any $z\in B(z_{p,t},t)\cap E$ and $\delta>0$, we find $W=W_{z,p,t,\delta}\in A(2n,k)$ such that
\begin{equation}\label{eq_Vz,p,t}
\int_{B(z,K_0t)} d_{\rm Eucl}(\pi(x), W)\, d\mu(x)\leq (K_0 t)^{k+1}[\beta_{1,\pi,A(2n,k)}(z, K_0t)+\delta],
\end{equation}
by definition of $\beta_{1,\pi,A(2n,k)}$-numbers. Thanks to \eqref{estimateforgamma}, we get that
\begin{align}\label{estimateforgamma2}\gamma(p,t)&\leq 2t^{-k-1}\int_{B^k(p,t)}d_{\rm Eucl}((u,g(u)),W)\,du
.\end{align}
The core of the proof is now to estimate the integral 
with respect to $\mathcal{H}^k$
on the Euclidean ball $B^k(p,t)$ by the corresponding one with respect to $\mu=\mathcal{H}^k|_E$ on  $B(z, K_0t)$, in order to combine \eqref{eq_Vz,p,t} and \eqref{estimateforgamma2} and get a pointwise estimate for $\gamma(p,t)$ in terms of $\beta_{1,\pi, A(2n,k)}(z, K_0t)$. If $u\in B^k(p,t)\cap P(Z)$, then there exists a (unique, by Lemma \ref{l:HahlLem4.3}) $x\in K_0Q(\mathcal S)$ such that $P(x)=u$. Since $x\in Z$, then $h(x)=0$ and $\pi(x)=(u,g(u))\in\R^{2n}$.
It then follows by Lemma \ref{l:HahlLem4.3} and the triangular inequality that $d(x,z)\leq K_0t$, provided that $K_0$ is chosen big enough. By 1-Lipschitz continuity of $P$, we have $\mathcal{H}_{\mathrm{Eucl}}^k(\Omega)\leq  
P_{\sharp} \mu(\Omega) $ for $\Omega\subset B^k(p,t)\cap P(Z)$, and combining all this with \eqref{eq_Vz,p,t}, we find 
\begin{equation}\label{estimateinP(Z)}\begin{aligned}\int_{B^k(p,t)\cap P(Z)}d_{\rm Eucl}((u,g(u)),W)\,du&\leq\int_{B^k(p,t)\cap P(Z)}d_{\rm Eucl}((u,g(u)),W)\,dP_{\sharp} \mu(u) \\&\leq \int_{P(B(z,K_0t))}d_{\rm Eucl}((u,g(u)),W)\,dP_{\sharp} \mu(u)\\ &\leq \int_{B(z,K_0t)}d_{\rm Eucl}(\pi(x),W)\,d\mu(x)\\ &\leq (K_0t)^{k+1}[\beta_{1,\pi,A(2n,k)}(z,K_0t)+\delta].\end{aligned}\end{equation}
On the other hand, we need to estimate $d_{\rm Eucl}((u,g(u)),W)$ for $u\in B^k(p,t)\setminus P(Z)$. This corresponds to \cite[Lemma 6.3, inequality (34), Lemma 6.4 and Lemma 6.5]{Hahlomaa}: here we limit ourselves to collect the intermediate estimates together. Let $I(p,t):=\{i\in I_0:R_i\cap B^k(p,t)\neq \emptyset\}$ and notice that $B^k(p,t)\setminus P(Z)=\bigcup_{i\in I(p,t)}(B^k(p,t)\cap R_i)$, 
since $B^k(p,t)\subset U_0$. Combining this observation with the aforementioned estimates in \cite{Hahlomaa} one gets
\[\begin{split}&\int_{B^k(p,t)\setminus P(Z)}d_{\rm Eucl}((u,g(u)),W)\,du
%
\\&\lesssim \sum_{i\in I(p,t)} \sqrt\varepsilon \diam(R_i)^{k+1}+\sum_{i\in I(p,t)}\varepsilon \diam(R_i)\mathcal  L^k(R_i)+\varepsilon^{-3k}\int_{B(z,K_0 t)}d_{\rm Eucl}(\pi(x),W)\,d\mu.\end{split}\]
We can now use our refined estimate \eqref{eq_Vz,p,t} to get
\begin{equation}\label{estimateoutP(Z)}
    \int_{B^k(p,t)\setminus P(Z)}d_{\rm Eucl}((u,g(u)),W)\,du\lesssim \sum_{i\in I(p,t)}\sqrt\varepsilon \diam(R_i)^{k+1}+\varepsilon^{-3k}\,t^{k+1}[\beta_{1,\pi,A(2n,k)}(z, K_0t)+\delta].
\end{equation}
Combining \eqref{estimateforgamma2}, \eqref{estimateinP(Z)} and \eqref{estimateoutP(Z)}, using the arbitrariness of $\delta>0$, we conclude that
\begin{equation}
   \label{eq_estimateforgamma} 
\gamma(p,t)\lesssim \varepsilon^{-3k}\beta_{1,\pi,A(2n,k)}(z, K_0t)+\sqrt\varepsilon\, t^{-k-1}\sum_{i\in I(p,t)} \diam(R_i)^{k+1} .\end{equation}
The final part of the proof, which consists in deducing the integral estimate \eqref{eq_step3} from the pointwise inequality \eqref{eq_estimateforgamma}, can be verbatim carried out as in Hahlomaa's paper, starting from equation \cite[(38)]{Hahlomaa} onward, simply replacing the horizontal $\beta$-numbers $\beta_{1,\mathcal V_k}$ with the projection $\beta$-numbers $\beta_{1,\pi,A(2n,k)}$.
\end{proof}

\subsubsection{Step 4: Bounding $\mu(Q(\mathcal{S}))$ for $\mathcal{S}\in \mathcal{F}_3$ from above in terms of the $\beta_{1,\pi,A(2n,k)}$-numbers}\label{sss:Step4}
In order to prove a packing condition for trees in $\mathcal F_3$ defined at the beginning of Section \ref{ss:BuildingCoronization}, we estimate the measure of the corresponding top cubes in terms of the $\beta_{1,\pi,A(2n,k)}$-numbers.  This will allow us in Step 5 to make use of our assumption $E\in \mathrm{GLem}(\beta_{1,\pi,A(2n,k)},2)$ to obtain the conclusion.
\begin{lemma}\label{l:HLem7.1}
   Let $\mathcal{S}\in \mathcal{F}_3$. Then
    \begin{equation}\label{eq:lemma7.1}
\mu(Q(\mathcal{S}))<{\varepsilon}^{-6k-1}  \int_{{K_0}Q(\mathcal{S})}\int_{h_{\mathcal{S}}(x)/{K_0}}^{{K_0}\mathrm{diam}(Q(\mathcal{S}))}\beta_{1,\pi,A(2n,k)}(x,{K_0}t)^2\frac{dt}{t}\,d\mu(x).   
    \end{equation}
\end{lemma}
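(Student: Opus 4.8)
The plan is to follow Hahlomaa's argument for the corresponding statement \cite[Lemma 7.1]{Hahlomaa}, replacing the horizontal $\beta$-numbers $\beta_{1,\mathcal V_k}$ with the projection $\beta$-numbers $\beta_{1,\pi,A(2n,k)}$ throughout, and feeding in the improved bound from Lemma \ref{l:HLem6.1}. The key point is that for a tree $\mathcal S\in \mathcal F_3$, a definite proportion (at least half) of the mass of $Q(\mathcal S)$ sits in the cubes $Q\in m_2(\mathcal S)$; by definition of $m_2(\mathcal S)$, each such $Q$ has a child $R\in \mathcal C(Q)$ with $\angle(V_R,V_{Q(\mathcal S)})>1+\eta$. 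Thus on a large portion of $Q(\mathcal S)$ the best-approximating horizontal plane at the scale of the children is significantly rotated relative to $V_{Q(\mathcal S)}$, and this forced rotation must be ``paid for'' by flatness defects of the Lipschitz graph $\graph{g}$ built in Step 2, hence ultimately by the $\gamma$-numbers controlled in Lemma \ref{l:HLem6.1}.

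First I would recall from Step 2 and Step 3 the set-up: after translating and rotating, $V_{Q(\mathcal S)}=\{x_i=0:\ i>k\}$, $P$ and $P^\perp$ are the coordinate projections, $H$ is the $1$-Lipschitz height function, and $g:\R^k\to\R^{2n-k}$ is the $C\sqrt\eta$-Lipschitz function of Lemma \ref{lem_propertiesofg}, extended to all of $\R^k$ with support in $U_{-1}$, with $\gamma(p,t)$ its flatness coefficients. For each $Q\in m_2(\mathcal S)$, pick a child $R=R_Q$ with $\angle(V_{R},V_{Q(\mathcal S)})>1+\eta$. Using the translate-and-rotate formulation of the angle (the remark after Lemma \ref{l:HahLem3.2}, i.e.\ that $\angle$ only sees the projections $\pi(V_R)$ and $\pi(V_{Q(\mathcal S)})=\R^k$), together with Lemma \ref{l:HahLem3.2} and Lemma \ref{l:HahlLem4.3} (which give $KR\subset V_R(\varepsilon^2\diam R)$ and the roughly bi-Lipschitz behaviour of $P$ on $\graph{g}\cap$ the relevant region), one shows that near $P(Q)$ and at scale $\sim\diam(Q)$ the graph of $g$ cannot be $\varepsilon$-flat: quantitatively, $\gamma(p,t)\gtrsim \sqrt{\eta}$ (or at least $\gtrsim \varepsilon^{something}$ — one follows the exact numerology of \cite[\S 7]{Hahlomaa}) for $p$ in a ball around $P(x_Q)$ of radius $\sim \diam(Q)$ and $t$ in a dyadic range $\sim[\diam(Q),C\diam(Q)]$. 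This is precisely \cite[Lemma 7.2, Lemma 7.3]{Hahlomaa} and the argument is insensitive to whether one uses $\beta_{1,\mathcal V_k}$ or $\beta_{1,\pi,A(2n,k)}$, since at this stage only the geometry of $g$ and the angle condition enter.

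Next I would integrate: by the disjointness (up to edges) of the cubes $Q\in m_2(\mathcal S)$ of a fixed generation, the $k$-regularity of $E$, and \eqref{eq:H_(14)}, the regions $B^k(P(x_Q),c\diam Q)\times[\diam Q, C\diam Q]$ have bounded overlap in $(p,t)$-space, so summing the pointwise lower bound over $Q\in m_2(\mathcal S)$ gives
\[
\int_0^{T}\int_{U_1}\gamma(p,t)^2\,dp\,\frac{dt}{t}\ \gtrsim_{\eta}\ \sum_{Q\in m_2(\mathcal S)}\mathcal L^k(P(Q))\ \gtrsim\ \sum_{Q\in m_2(\mathcal S)}\diam(Q)^k\ \gtrsim\ \mu\Bigl(\bigcup_{Q\in m_2(\mathcal S)}Q\Bigr)\ \geq\ \tfrac12\mu(Q(\mathcal S)),
\]
where $T=K_0\diam(Q(\mathcal S))/2$ as in Lemma \ref{l:HLem6.1}, and the third inequality uses $\mathcal L^k(P(Q))\gtrsim\diam(Q)^k$, which follows since $P$ restricted to $Z\cap K_0Q(\mathcal S)$ is roughly bi-Lipschitz (Lemma \ref{l:HahlLem4.3}) and each $Q\in m_2(\mathcal S)\subset \min(\mathcal S)$ still contains a definite amount of $Z$, or alternatively one argues with $\mathcal L^k(P(2R_i))$ for the associated Whitney cubes. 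Tracking constants, $\mu(Q(\mathcal S))\lesssim_{\eta,\varepsilon}\int_0^T\int_{U_1}\gamma^2$. Now apply Lemma \ref{l:HLem6.1}: the term $C\varepsilon\,\mu(Q(\mathcal S))$ can be absorbed into the left-hand side provided $\varepsilon$ is small enough depending on $\eta$, leaving
\[
\mu(Q(\mathcal S))\ \lesssim_{\eta}\ \varepsilon^{-6k}\int_{K_0Q(\mathcal S)}\int_{h_{\mathcal S}(x)/K_0}^{T}\beta_{1,\pi,A(2n,k)}(x,K_0t)^2\,\frac{dt}{t}\,d\mu(x),
\]
and adjusting the range from $T$ to $K_0\diam(Q(\mathcal S))$ and absorbing the $\eta$-dependent constant into the extra power of $\varepsilon^{-1}$ gives exactly \eqref{eq:lemma7.1}. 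The main obstacle, and the only place requiring genuine care rather than bookkeeping, is the first step: establishing the pointwise lower bound $\gamma(p,t)\gtrsim$ (definite power of $\eta$) from the angle condition $\angle(V_{R_Q},V_{Q(\mathcal S)})>1+\eta$, since one must pass from the rotation of the \emph{horizontal} plane $V_{R_Q}$ to non-flatness of the \emph{Euclidean} graph $\graph{g}$ in $\R^{2n}$, using Lemma \ref{lem_propertiesofg}(ii) to transfer between $E$ (where the angle lives) and $\graph{g}$ (where $\gamma$ lives), and one must make sure the various error terms ($\varepsilon^2$ from $V_R(\varepsilon^2\diam R)$, $\sqrt\varepsilon$ from Lemma \ref{lem_propertiesofg}(ii), $\sqrt\eta$ from the Lipschitz constant of $g$) do not swallow the main term; here one simply follows \cite[\S 7]{Hahlomaa} verbatim since the argument never used which $\beta$-numbers were assumed, only the properties (1)--(5) of the trees and Lemmas \ref{l:HahLem3.2}, \ref{l:HahlLem4.3}, \ref{lem_propertiesofg}.
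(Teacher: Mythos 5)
Your overall frame — reduce the statement to one about the $\gamma$-numbers of $g$ via Lemma \ref{l:HLem6.1}, and exploit that the cubes in $m_2(\mathcal S)$ carry at least half the mass of $Q(\mathcal S)$ — matches the paper, but the engine you propose for the key step does not work, and it is not what Hahlomaa or the paper do. You claim that for each $Q\in m_2(\mathcal S)$ the condition $\angle(V_{R_Q},V_{Q(\mathcal S)})>1+\eta$ forces a \emph{single-scale, pointwise} lower bound $\gamma(p,t)\gtrsim\sqrt\eta$ for $p$ near $P(x_Q)$ and $t\sim\diam(Q)$. This is false: $\gamma(p,t)$ measures the distance of $g$ to the \emph{best} affine map, not to the plane $V_{Q(\mathcal S)}$. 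Near such a $Q$ the graph of $g$ follows $E$ (Lemma \ref{lem_propertiesofg}(ii)), hence follows the tilted plane $V_{R_Q}$; that is, it is locally extremely well approximated by an affine map with slope of order $\sqrt\eta$, which is perfectly compatible with the $C\sqrt\eta$-Lipschitz bound on $g$, so $\gamma(p,t)$ at the scale of $Q$ can be as small as $O(\sqrt\varepsilon)$ there. Note also that the angle between $V_{R_Q}$ and the parent plane $V_Q$ need not be bounded below (property (3) only gives $\angle(V_Q,V_{Q(\mathcal S)})\leq 1+\eta$, and $V_Q$ may be tilted just below threshold), so there is no definite tilt at any single scale: the tilt of size $\sqrt\eta$ is acquired somewhere along the whole chain from $Q(\mathcal S)$ down to $R_Q$. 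If you try to recover it by telescoping the angles between best planes at consecutive scales and applying Cauchy--Schwarz, you lose a factor of the number of generations separating $Q$ from $Q(\mathcal S)$, which is unbounded over $Q\in m_2(\mathcal S)$, so no uniform lower bound on $\int\int\gamma^2$ per unit measure results. Hence your claimed inequality $\int_0^T\int_{U_1}\gamma^2\gtrsim_\eta\mu(Q(\mathcal S))$ is unproven, and the subsequent absorption step has nothing to absorb into.

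The paper (following \cite[Section 7]{Hahlomaa} and \cite[Sections 11, 12, 14]{David1}) avoids exactly this by arguing by contradiction and globally in $L^2$: if \eqref{eq:lemma7.1} failed, the $\beta_{1,\pi,A(2n,k)}$-term in Lemma \ref{l:HLem6.1} would be at most $C\varepsilon\,\mu(Q(\mathcal S))$, hence $\int_0^{K_0\diam(Q(\mathcal S))/2}\int_{U_1}\gamma(p,t)^2\,dp\,\frac{dt}{t}\leq C(K_0)\varepsilon\,\mu(Q(\mathcal S))$; then Calder\'on's reproducing formula applied to $g$ (a quasi-orthogonality argument with no loss in the number of scales) bounds the oscillation of $\nabla g$, showing the approximating directions can deviate by $\gtrsim\sqrt\eta$ from the top direction only above a set of measure $\leq C(K_0)\varepsilon^{1/3}\mu(Q(\mathcal S))$, so $\mu\bigl(\bigcup_{Q\in m_2(\mathcal S)}Q\bigr)\leq C(K_0)\varepsilon^{1/3}\mu(Q(\mathcal S))$, contradicting $\mathcal S\in\mathcal F_3$ once $\varepsilon$ is small depending on $K_0$. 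In this scheme no absorption is needed, and the normalization $V_{Q(\mathcal S)}=\{x_i=0,\ i>k\}$ must be justified for the new coefficients, which the paper does via the invariance statements of Lemmas \ref{lem_translations} and \ref{l:RotInv}. Two smaller points: your assertion that each $Q\in m_2(\mathcal S)$ ``still contains a definite amount of $Z$'' is unjustified (minimal cubes may be disjoint from $Z$; $Z$ can even be empty), and the bounded-overlap summation is only needed in your direct scheme, not in the paper's. The essential gap, however, is the single-scale pointwise lower bound on $\gamma$.
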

\begin{proof}
With Lemma \ref{l:HLem6.1} in place, the proof (by contradiction) reduces to a statement about the $\gamma$-numbers associated to $g$. This corresponds to \cite[Section 7]{Hahlomaa} and \cite[Sections 11 and 14]{David1}. In fact, let $\mathcal S\in\mathcal F_3$ and notice that we can assume $V_{Q(\mathcal S)}=\{x\in\H^n:x_i=0\,\text{ for all }i>k\}$ since all quantities in Lemma \ref{l:HLem7.1} are invariant under group translations and rotations, including $\beta_{1,\pi,A(2n,k)}$, cf. Lemmas \ref{lem_translations} and \ref{l:RotInv}. By contradiction, if \eqref{eq:lemma7.1} does not holds, then we would get by Lemma \ref{l:HLem6.1} 
\begin{equation}\label{eq:H41}\int_0^{K_0 \diam(Q(\mathcal S))/2} \int_{U_1} \gamma(p,t)^2\, dp\frac{dt}{t}\leq C(K_0)\varepsilon \mu(Q(\mathcal{S})).\end{equation}
This is exactly \cite[equation (41)]{Hahlomaa}; thus the rest of the proof works verbatim the same as in Hahlomaa's paper. Arguments in this vein have appeared for instance also in \cite[Section 5]{MR1709304} or \cite[Section 14]{David1}.  One uses \eqref{eq:H41} to estimate the oscillation of the function $g$ in a way that is incompatible with the assumption $\mathcal S\in\mathcal F_3$, recalling that $\mathcal S\in\mathcal F_3$ implies that a large (in terms of $\mu$-measure) part of $Q(\mathcal{S})$ is covered by cubes $Q\in m_2(\mathcal{S})$, which have at least one child $R$ with $\angle(V_R, V_{Q(\mathcal S)})>1+\eta$. Indeed, \eqref{eq:H41}, together with an application of Calder\'{o}n's reproducing formula (see for instance \cite[Chapter 5]{MR2359017}) for the function $g$, yields
\begin{displaymath}
    \mu\left(\bigcup_{Q\in m_2(\mathcal{S})}Q\right) \leq C(K_0) \varepsilon^{1/3}\mu(Q(\mathcal{S})), 
\end{displaymath}
provided that the parameters $K_0, \eta, \varepsilon, K$ are chosen appropriately. This leads to a contradiction with the definition of $\mathcal F_3$, by selecting $\varepsilon$ small enough depending on $K_0$.
\end{proof}

\subsubsection{Step 5: A Carleson packing condition for top cubes of trees $\mathcal S\in \mathcal F_3$}\label{sss:Step5}
In this step we prove the final ingredient for the construction of the corona decomposition by horizontal planes. We explain how to derive the Carleson packing condition for the set $\mathcal F_3$, which allows us to deduce property \eqref{carlesonpackingcond} in the definition of coronization.
This can be seen as a variant of arguments in \cite[Section 12]{David1}.
Here we make use of our new assumption $E \in \mathrm{GLem}(\beta_{1,\pi,A(2n,k)}, 2)$. 
\begin{lemma}\label{l:HahLem8.1}
There is a constant $C=C(\varepsilon, K_0)>0$ such that
\begin{displaymath}
    \sum_{\mathcal{S}\in \mathcal{F}_3,Q(\mathcal{S})\subseteq R}\mu(Q(\mathcal{S}))\leq C \mu(R),\quad R\in \mathcal{D}.
\end{displaymath}
\end{lemma}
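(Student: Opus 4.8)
The plan is to combine the pointwise estimate from Lemma \ref{l:HLem7.1}, which bounds $\mu(Q(\mathcal{S}))$ for a single tree $\mathcal{S}\in\mathcal{F}_3$ by a double integral of $\beta_{1,\pi,A(2n,k)}$-numbers over $K_0 Q(\mathcal{S})$ and scales down to $h_{\mathcal{S}}(x)/K_0$, with the disjointness of the trees and the geometric lemma $E\in\mathrm{GLem}(\beta_{1,\pi,A(2n,k)},2)$. First I would fix $R\in\mathcal{D}$ and sum the inequality \eqref{eq:lemma7.1} over all $\mathcal{S}\in\mathcal{F}_3$ with $Q(\mathcal{S})\subseteq R$, obtaining
\begin{equation*}
\sum_{\mathcal{S}\in\mathcal{F}_3,\,Q(\mathcal{S})\subseteq R}\mu(Q(\mathcal{S}))\le \varepsilon^{-6k-1}\sum_{\mathcal{S}\in\mathcal{F}_3,\,Q(\mathcal{S})\subseteq R}\int_{K_0 Q(\mathcal{S})}\int_{h_{\mathcal{S}}(x)/K_0}^{K_0\mathrm{diam}(Q(\mathcal{S}))}\beta_{1,\pi,A(2n,k)}(x,K_0 t)^2\,\frac{dt}{t}\,d\mu(x).
\end{equation*}
The goal is then to show that the double sum on the right is controlled by a fixed multiple of $\int_0^{C\diam(R)}\int_{B(x_R,C\diam(R))}\beta_{1,\pi,A(2n,k)}(x,t)^2\,d\mu(x)\,\frac{dt}{t}$, which is $\lesssim\mu(R)$ by the integral form of the geometric lemma (Lemma \ref{l:DiffGeomLem}) after a harmless change of variables $t\mapsto K_0 t$ and using that $K_0 Q(\mathcal{S})\subset KR$ or a bounded enlargement of $R$ for $Q(\mathcal{S})\subseteq R$.

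The key point — and the only place where one needs to do real work — is a bounded-overlap argument for the regions of integration. For $(x,t)$ in the plane-times-scale half-space, the number of trees $\mathcal{S}\in\mathcal{F}_3$ for which $(x,t)$ contributes, i.e.\ for which $x\in K_0 Q(\mathcal{S})$ and $h_{\mathcal{S}}(x)/K_0\le t\le K_0\diam(Q(\mathcal{S}))$, must be shown to be $\lesssim 1$, with the implicit constant depending only on $k$, $C_E$, and $K_0$. I would argue as follows: if $x\in K_0 Q(\mathcal{S})$ and $t\le K_0\diam(Q(\mathcal{S}))$, then $\diam(Q(\mathcal{S}))\gtrsim_{K_0} t$; on the other hand, the condition $h_{\mathcal{S}}(x)\le K_0 t$ means $x$ lies within distance $\lesssim_{K_0} t$ of some cube $Q\in\mathcal{S}$ of diameter $\lesssim_{K_0} t$, hence $x$ is within $\lesssim_{K_0} t$ of $Q(\mathcal{S})$ at a scale comparable to the smallest scale of $\mathcal{S}$ near $x$. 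The crucial structural fact is that the trees $\mathcal{S}\in\mathcal{F}$ are \emph{pairwise disjoint} subfamilies of $\mathcal{D}$, so a given dyadic cube belongs to at most one tree; combined with the coherence of each $\mathcal{S}$ and the fact that $Q(\mathcal{S})$ sits at a definite range of scales relative to $t$, this confines the admissible top cubes $Q(\mathcal{S})$ to a bounded-cardinality family at each generation (standard volume counting, as used repeatedly in the excerpt, e.g.\ in the proof of Lemma \ref{l:DiffGeomLem}), and only a bounded number of generations can occur. This is exactly the Euclidean mechanism of \cite[Section 12]{David1}, which is purely combinatorial and transfers without change.

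Once the overlap bound is in hand, I would interchange sum and integral, use
\begin{equation*}
\sum_{\mathcal{S}\in\mathcal{F}_3,\,Q(\mathcal{S})\subseteq R}\chi_{\{x\in K_0 Q(\mathcal{S})\}}(x)\,\chi_{[h_{\mathcal{S}}(x)/K_0,\,K_0\diam(Q(\mathcal{S}))]}(t)\lesssim_{K_0} \chi_{\{x\in KR\}}(x)\,\chi_{(0,\,C\diam(R)]}(t)
\end{equation*}
(where the right-hand side records that all contributing $(x,t)$ have $x$ in a bounded enlargement of $R$ and $t\lesssim\diam(R)$), and conclude
\begin{equation*}
\sum_{\mathcal{S}\in\mathcal{F}_3,\,Q(\mathcal{S})\subseteq R}\mu(Q(\mathcal{S}))\lesssim_{\varepsilon,K_0}\int_{KR}\int_0^{C\diam(R)}\beta_{1,\pi,A(2n,k)}(x,K_0 t)^2\,\frac{dt}{t}\,d\mu(x)\lesssim_{\varepsilon,K_0}\mu(R),
\end{equation*}
the last step by Lemma \ref{l:DiffGeomLem} applied to $E\in\mathrm{GLem}(\beta_{1,\pi,A(2n,k)},2)$ (after rescaling $t$, and using $\mu(KR)\lesssim\mu(R)$ by $k$-regularity, covering $KR$ by boundedly many cubes comparable to $R$ as in the proof of Lemma \ref{l:DiffGeomLem}). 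The main obstacle is purely the bookkeeping of the overlap estimate, i.e.\ making precise that each $(x,t)$ lies in the shadow of boundedly many $Q(\mathcal{S})$; the analytic input is entirely packaged in Lemma \ref{l:HLem7.1} and the geometric lemma, so no new Heisenberg-specific difficulty arises here — this step is formally identical to its Euclidean counterpart in \cite{David1} and \cite{Hahlomaa}.
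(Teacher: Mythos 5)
Your proposal is correct and takes essentially the same route as the paper: sum the bound of Lemma \ref{l:HLem7.1} over the trees $\mathcal{S}\in\mathcal{F}_3$ with $Q(\mathcal{S})\subseteq R$, control the overlap of the integration regions by a constant $N(K_0)$ coming from Ahlfors regularity (the paper quotes this from Hahlomaa's Lemma 8.1), and conclude by $E\in\mathrm{GLem}(\beta_{1,\pi,A(2n,k)},2)$ in its integral form together with $\mu(K_0R)\lesssim\mu(R)$. One cosmetic remark: in the overlap count one should count, for each contributing tree, a cube of $\mathcal{S}$ at scale comparable to $t$ near $x$ (produced by coherence from the condition $h_{\mathcal{S}}(x)\le K_0t$), rather than the top cubes themselves, since $\diam(Q(\mathcal{S}))$ is only bounded \emph{below} by $\sim t/K_0$ and the top cubes need not lie in boundedly many generations; your sketch contains this mechanism but phrases the counting in terms of $Q(\mathcal{S})$.
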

\begin{proof}
Proceeding exactly as in \cite[Lemma 8.1]{Hahlomaa}, the Ahlfors regularity of $E$ gives a bound $N=N(K_0)\in\N$ such that, for every $x\in E$ and any $t\in\R $, there are less than $N$ trees $\mathcal S\in\mathcal F$ satisfying $x\in K_0 Q(\mathcal S)$ and $h_\mathcal S(x)/K_0<t<K_0\,\diam(Q(\mathcal S))$. Combining Lemma \ref{l:HLem7.1} of the previous step with this observation, one estimates
\[\begin{split}\sum_{\mathcal{S}\in \mathcal{F}_3,Q(\mathcal{S})\subseteq R}\mu(Q(\mathcal{S}))&\leq\sum_{\mathcal{S}\in \mathcal{F}_3,Q(\mathcal{S})\subseteq R}{\varepsilon}^{-6k-1}  \int_{{K_0}Q(\mathcal{S})}\int_{h_{\mathcal{S}}(x)/{K_0}}^{{K_0}\mathrm{diam}(Q(\mathcal{S}))}\beta_{1,\pi,A(2n,k)}(x,{K_0}t)^2\frac{dt}{t}\,d\mu(x)\\ &\leq N\varepsilon^{-6k-1}\int_{K_0R}\int_{0}^{K_0\diam(R)}\beta_{1,\pi,A(2n,k)}(x,{K_0}t)^2\frac{dt}{t}\,d\mu(x).\end{split}\]
By \eqref{eq:H_(14)} and by our main assumption $E \in \mathrm{GLem}(\beta_{1,\pi,A(2n,k)}, 2)$, we get the desired final bound.
\end{proof}
\printbibliography
\end{document}